\DeclareSymbolFont{cyrletters}{OT2}{wncyr}{m}{n}
\DeclareMathSymbol{\Lfun}{\beta}{cyrletters}{"62}
\DeclareMathSymbol{\Rone}{\beta}{cyrletters}{"01}
\DeclareMathSymbol{\Rtwo}{\beta}{cyrletters}{"02}
\DeclareMathSymbol{\Rthree}{\beta}{cyrletters}{"03}
\DeclareMathSymbol{\Rfour}{\beta}{cyrletters}{"04}
\DeclareMathSymbol{\Rfive}{\beta}{cyrletters}{"05}
\DeclareMathSymbol{\Rsix}{\beta}{cyrletters}{"06}
\DeclareMathSymbol{\Rseven}{\beta}{cyrletters}{"07}
\DeclareMathSymbol{\Reight}{\beta}{cyrletters}{"08}
\DeclareMathSymbol{\Rnine}{\beta}{cyrletters}{"09}
\DeclareMathSymbol{\Rten}{\beta}{cyrletters}{"10}
\DeclareMathSymbol{\Releven}{\beta}{cyrletters}{"11}
\DeclareMathSymbol{\Rtwelve}{\beta}{cyrletters}{"12}
\DeclareMathSymbol{\Rthirteen}{\beta}{cyrletters}{"13}
\DeclareMathSymbol{\Rfourteen}{\beta}{cyrletters}{"14}
\DeclareMathSymbol{\Rfifteen}{\beta}{cyrletters}{"15}
\DeclareMathSymbol{\Rsixteen}{\beta}{cyrletters}{"16}
\DeclareMathSymbol{\Rseventeen}{\beta}{cyrletters}{"17}
\DeclareMathSymbol{\Reighteen}{\beta}{cyrletters}{"18}
\DeclareMathSymbol{\Rnineteen}{\beta}{cyrletters}{"19}
\DeclareMathSymbol{\Rtwenty}{\beta}{cyrletters}{"20}
\DeclareMathSymbol{\Rtwentyone}{\beta}{cyrletters}{"21}
\DeclareMathSymbol{\Rtwentytwo}{\beta}{cyrletters}{"22}
\DeclareMathSymbol{\Rtwentythree}{\beta}{cyrletters}{"23}
\DeclareMathSymbol{\Rtwentyfour}{\beta}{cyrletters}{"24}
\DeclareMathSymbol{\Rtwentyfive}{\beta}{cyrletters}{"25}
\DeclareMathSymbol{\Rtwentysix}{\beta}{cyrletters}{"26}
\DeclareMathSymbol{\Rtwentyseven}{\beta}{cyrletters}{"27}
\DeclareMathSymbol{\Rtwentyeight}{\beta}{cyrletters}{"28}
\DeclareMathSymbol{\Rtwentynine}{\beta}{cyrletters}{"29}
\DeclareMathSymbol{\Rthirty}{\beta}{cyrletters}{"30}
\DeclareMathSymbol{\Rthirtyone}{\beta}{cyrletters}{"31}
\DeclareMathSymbol{\Rthirtytwo}{\beta}{cyrletters}{"32}
\DeclareMathSymbol{\Rthirtythree}{\beta}{cyrletters}{"33}
\DeclareMathSymbol{\Rthirtyfour}{\beta}{cyrletters}{"34}
\DeclareMathSymbol{\Rthirtyfive}{\beta}{cyrletters}{"35}
\DeclareMathSymbol{\Rthirtysix}{\beta}{cyrletters}{"36}
\DeclareMathSymbol{\Rthirtyseven}{\beta}{cyrletters}{"37}
\DeclareMathSymbol{\Rthirtyeight}{\beta}{cyrletters}{"38}
\DeclareMathSymbol{\Rthirtynine}{\beta}{cyrletters}{"39}
\DeclareMathSymbol{\Rforty}{\beta}{cyrletters}{"40}
\DeclareMathSymbol{\Rfortyone}{\beta}{cyrletters}{"41}
\DeclareMathSymbol{\Rfortytwo}{\beta}{cyrletters}{"42}
\DeclareMathSymbol{\Rfortythree}{\beta}{cyrletters}{"43}
\DeclareMathSymbol{\Rfortyfour}{\beta}{cyrletters}{"44}
\DeclareMathSymbol{\Rfortyfive}{\beta}{cyrletters}{"45}
\DeclareMathSymbol{\Rfortysix}{\beta}{cyrletters}{"46}
\DeclareMathSymbol{\Rfortyseven}{\beta}{cyrletters}{"47}
\DeclareMathSymbol{\Rfortyeight}{\beta}{cyrletters}{"48}
\DeclareMathSymbol{\Rfortynine}{\beta}{cyrletters}{"49}
\DeclareMathSymbol{\Rfifty}{\beta}{cyrletters}{"50}
\DeclareMathSymbol{\Rfiftyone}{\beta}{cyrletters}{"51}
\DeclareMathSymbol{\Rfiftytwo}{\beta}{cyrletters}{"52}
\DeclareMathSymbol{\Rfiftythree}{\beta}{cyrletters}{"53}
\DeclareMathSymbol{\Rfiftyfour}{\beta}{cyrletters}{"54}
\DeclareMathSymbol{\Rfiftyfive}{\beta}{cyrletters}{"55}
\DeclareMathSymbol{\Rfiftysix}{\beta}{cyrletters}{"56}
\DeclareMathSymbol{\Rfiftyseven}{\beta}{cyrletters}{"57}
\DeclareMathSymbol{\Rfiftyeight}{\beta}{cyrletters}{"58}
\DeclareMathSymbol{\Rfiftynine}{\beta}{cyrletters}{"59}
\DeclareMathSymbol{\Rsixty}{\beta}{cyrletters}{"60}
\DeclareMathSymbol{\Rsixtyone}{\beta}{cyrletters}{"61}
\DeclareMathSymbol{\Rsixtytwo}{\beta}{cyrletters}{"62}
\DeclareMathSymbol{\Rsixtythree}{\beta}{cyrletters}{"63}
\DeclareMathSymbol{\Rsixtyfour}{\beta}{cyrletters}{"64}
\DeclareMathSymbol{\Rsixtyfive}{\beta}{cyrletters}{"65}
\DeclareMathSymbol{\Rsixtysix}{\beta}{cyrletters}{"66}
\DeclareMathSymbol{\Rsixtyseven}{\beta}{cyrletters}{"67}
\DeclareMathSymbol{\Rsixtyeight}{\beta}{cyrletters}{"68}
\DeclareMathSymbol{\Rsixtynine}{\beta}{cyrletters}{"69}
\DeclareMathSymbol{\Rseventy}{\beta}{cyrletters}{"70}
\DeclareMathSymbol{\Rseventyone}{\beta}{cyrletters}{"71}
\DeclareMathSymbol{\Rseventytwo}{\beta}{cyrletters}{"72}
\DeclareMathSymbol{\Rseventythree}{\beta}{cyrletters}{"73}
\DeclareMathSymbol{\Rseventyfour}{\beta}{cyrletters}{"74}
\DeclareMathSymbol{\Rseventyfive}{\beta}{cyrletters}{"75}
\DeclareMathSymbol{\Rseventysix}{\beta}{cyrletters}{"76}
\DeclareMathSymbol{\Rseventyseven}{\beta}{cyrletters}{"77}
\DeclareMathSymbol{\Rseventyeight}{\beta}{cyrletters}{"78}
\DeclareMathSymbol{\Rseventynine}{\beta}{cyrletters}{"79}
\DeclareMathSymbol{\Reighty}{\beta}{cyrletters}{"80}
\DeclareMathSymbol{\Reightyone}{\beta}{cyrletters}{"81}
\DeclareMathSymbol{\Reightytwo}{\beta}{cyrletters}{"82}
\DeclareMathSymbol{\Reightythree}{\beta}{cyrletters}{"83}
\DeclareMathSymbol{\Reightyfour}{\beta}{cyrletters}{"84}
\DeclareMathSymbol{\Reightyfive}{\beta}{cyrletters}{"85}
\DeclareMathSymbol{\Reightysix}{\beta}{cyrletters}{"86}
\DeclareMathSymbol{\Reightyseven}{\beta}{cyrletters}{"87}
\DeclareMathSymbol{\Reightyeight}{\beta}{cyrletters}{"88}
\DeclareMathSymbol{\Reightynine}{\beta}{cyrletters}{"89}
\DeclareMathSymbol{\Rninety}{\beta}{cyrletters}{"90}
\DeclareMathSymbol{\Rninetyone}{\beta}{cyrletters}{"91}
\DeclareMathSymbol{\Rninetytwo}{\beta}{cyrletters}{"92}
\DeclareMathSymbol{\Rninetythree}{\beta}{cyrletters}{"93}
\DeclareMathSymbol{\Rninetyfour}{\beta}{cyrletters}{"94}
\DeclareMathSymbol{\Rninetyfive}{\beta}{cyrletters}{"95}
\DeclareMathSymbol{\Rninetysix}{\beta}{cyrletters}{"96}
\DeclareMathSymbol{\Rninetyseven}{\beta}{cyrletters}{"97}
\DeclareMathSymbol{\Rninetyeight}{\beta}{cyrletters}{"98}
\DeclareMathSymbol{\Rninetynine}{\beta}{cyrletters}{"99}
\DeclareMathSymbol{\Rhundred}{\beta}{cyrletters}{"100}
\DeclareMathSymbol{\Rhundredone}{\beta}{cyrletters}{"101}
\DeclareMathSymbol{\Rhundredtwo}{\beta}{cyrletters}{"102}
\DeclareMathSymbol{\Rhundredthree}{\beta}{cyrletters}{"103}
\DeclareMathSymbol{\Rhundredfour}{\beta}{cyrletters}{"104}
\DeclareMathSymbol{\Rhundredfive}{\beta}{cyrletters}{"105}
\DeclareMathSymbol{\Rhundredsix}{\beta}{cyrletters}{"106}
\DeclareMathSymbol{\Rhundredseven}{\beta}{cyrletters}{"107}
\DeclareMathSymbol{\Rhundredeight}{\beta}{cyrletters}{"108}
\DeclareMathSymbol{\Rhundrednine}{\beta}{cyrletters}{"109}
\DeclareMathSymbol{\Rhundredten}{\beta}{cyrletters}{"110}
\DeclareMathSymbol{\Rhundredeleven}{\beta}{cyrletters}{"111}
\DeclareMathSymbol{\Rhundredtwelve}{\beta}{cyrletters}{"112}
\DeclareMathSymbol{\Rhundredthirteen}{\beta}{cyrletters}{"113}
\DeclareMathSymbol{\Rhundredfourteen}{\beta}{cyrletters}{"114}
\DeclareMathSymbol{\Rhundredfifteen}{\beta}{cyrletters}{"115}
\DeclareMathSymbol{\Rhundredsixteen}{\beta}{cyrletters}{"116}
\DeclareMathSymbol{\Rhundredseventeen}{\beta}{cyrletters}{"117}
\DeclareMathSymbol{\Rhundredeighteen}{\beta}{cyrletters}{"118}
\DeclareMathSymbol{\Rhundrednineteen}{\beta}{cyrletters}{"119}
\DeclareMathSymbol{\Rhundredtwenty}{\beta}{cyrletters}{"120}
\DeclareMathSymbol{\Rhundredtwentyone}{\beta}{cyrletters}{"121}
\DeclareMathSymbol{\Rhundredtwentytwo}{\beta}{cyrletters}{"122}
\DeclareMathSymbol{\Rhundredtwentythree}{\beta}{cyrletters}{"123}
\DeclareMathSymbol{\Rhundredtwentyfour}{\beta}{cyrletters}{"124}
\DeclareMathSymbol{\Rhundredtwentyfive}{\beta}{cyrletters}{"125}
\DeclareMathSymbol{\Rhundredtwentysix}{\beta}{cyrletters}{"126}
\DeclareMathSymbol{\Rhundredtwentyseven}{\beta}{cyrletters}{"127}
\DeclareMathSymbol{\Rhundredtwentyeight}{\beta}{cyrletters}{"128}
\DeclareMathSymbol{\Rhundredtwentynine}{\beta}{cyrletters}{"129}
\newcommand{\hrho}{\varrho}
\DeclareMathSymbol{\Rfun}{\beta}{cyrletters}{"17}
\DeclareFontFamily{U}{rcjhbltx}{}
\DeclareFontShape{U}{rcjhbltx}{m}{n}{<->rcjhbltx}{}
\DeclareSymbolFont{hebrewletters}{U}{rcjhbltx}{m}{n}
\let\aleph\relax\let\beth\relax
\let\gimel\relax\let\daleth\relax
\DeclareMathSymbol{\aleph}{\mathord}{hebrewletters}{39}
\DeclareMathSymbol{\beth}{\mathord}{hebrewletters}{98}
\DeclareMathSymbol{\gimel}{\mathord}{hebrewletters}{103}
\DeclareMathSymbol{\daleth}{\mathord}{hebrewletters}{100}
\DeclareMathSymbol{\lamed}{\mathord}{hebrewletters}{108}
\DeclareMathSymbol{\mem}{\mathord}{hebrewletters}{109}
\DeclareMathSymbol{\ayin}{\mathord}{hebrewletters}{96}
\DeclareMathSymbol{\tsadi}{\mathord}{hebrewletters}{118}
\DeclareMathSymbol{\qof}{\mathord}{hebrewletters}{113}
\DeclareMathSymbol{\shin}{\mathord}{hebrewletters}{152}
\DeclareMathSymbol{\memschloss}{\mathord}{hebrewletters}{77}
\DeclareMathSymbol{\nunlange}{\mathord}{hebrewletters}{78}
\DeclareMathSymbol{\vav}{\mathord}{hebrewletters}{79}
\DeclareMathSymbol{\tet}{\mathord}{hebrewletters}{84}
\DeclareMathSymbol{\tsadiklange}{\mathord}{hebrewletters}{90}
\DeclareMathSymbol{\He}{\mathord}{hebrewletters}{104}
\DeclareMathSymbol{\kaf}{\mathord}{hebrewletters}{107}
\DeclareMathSymbol{\nun}{\mathord}{hebrewletters}{110}
\DeclareMathSymbol{\pei}{\mathord}{hebrewletters}{112}
\DeclareMathSymbol{\resh}{\mathord}{hebrewletters}{114}
\DeclareMathSymbol{\samekh}{\mathord}{hebrewletters}{115}
\DeclareMathSymbol{\Het}{\mathord}{hebrewletters}{116}
\DeclareMathSymbol{\vav}{\mathord}{hebrewletters}{119}
\DeclareMathSymbol{\het}{\mathord}{hebrewletters}{120}
\DeclareMathSymbol{\yod}{\mathord}{hebrewletters}{121}
\DeclareMathSymbol{\zayin}{\mathord}{hebrewletters}{122}
\newtheorem{thm}{Theorem}[subsection]
\newtheorem{cor}[thm]{Corollary}
\newtheorem{lem}[thm]{Lemma}
\newtheorem{prop}[thm]{Proposition}
\newtheorem*{thm-expl}{Theorem~\ref{thm:main-coeff}, Explicit form}
\theoremstyle{definition}
\newtheorem{defn}[thm]{Definition}
\newtheorem{conj}[thm]{Conjecture}
\theoremstyle{remark}
\newtheorem{rem}[thm]{Remark}
\numberwithin{equation}{subsection}
\numberwithin{figure}{section}
\newcommand{\dbar}{\bar\partial}
\newcommand{\e}{\mathrm e}
\newcommand{\C}{{\mathbb C}}
\newcommand{\D}{{\mathbb D}}
\newcommand{\T}{{\mathbb T}}
\newcommand{\R}{{\mathbb R}}
\newcommand{\Z}{{\mathbb Z}}
\newcommand{\N}{{\mathbb N}}
\newcommand{\calS}{\mathcal{S}}
\newcommand{\dA}{\mathrm{dA}}
\newcommand{\checkQ}{\hat{Q}}
\newcommand{\Mop}{\mathbf{M}}
\newcommand{\Vop}{\mathbf{\Lambda}}
\newcommand{\indset}{\Het}
\newcommand{\indsett}{\tsadiklange}
\newcommand{\indsetS}{\samekh}
\newcommand{\indsetT}{\tet}
\newcommand{\calH}{{\mathcal H}}
\newcommand{\calD}{{\mathcal D}}
\newcommand{\calT}{{\mathcal T}}
\newcommand{\calB}{{\mathcal B}}
\newcommand{\calN}{{\mathcal N}}
\newcommand{\calF}{{\mathcal F}}
\newcommand{\calK}{{\mathcal K}}
\newcommand{\calQ}{{\mathcal Q}}
\newcommand{\calG}{{\mathcal G}}
\newcommand{\frakR}{\mathfrak{R}}
\newcommand{\frakr}{\mathfrak{r}}
\newcommand{\frakS}{\mathfrak{S}}
\newcommand{\frakG}{\mathfrak{G}}
\newcommand{\frakg}{\mathfrak{g}}
\newcommand{\frakH}{\mathfrak{H}}
\newcommand{\frakh}{\mathfrak{h}}
\newcommand{\frakJ}{\mathfrak{J}}
\newcommand{\frakT}{\mathfrak{T}}
\newcommand{\Kcorrker}{\mathrm{K}}
\newcommand{\kcorrker}{\mathrm{k}}
\newcommand{\POL}{\mathrm{POL}}
\newcommand{\re}{\operatorname{Re}}
\newcommand{\Lop}{{\mathbf L}}
\newcommand{\Pop}{{\mathbf P}}
\newcommand{\Hop}{{\mathbf H}}
\newcommand{\hDelta}{{\varDelta}}
\newcommand{\diff}{{\mathrm d}}
\newcommand{\diffs}{\mathrm{ds}}
\newcommand{\diffA}{\mathrm{dA}}
\newcommand{\imag}{{\mathrm i}}
\newcommand{\Ordo}{\mathrm{O}}
\newcommand{\ordo}{\mathrm{o}}
\renewcommand{\hm}{\varpi}
\renewcommand{\Re}{\operatorname{Re}}
\renewcommand{\Im}{\operatorname{Im}}
\newcommand{\logdens}{\Pi}
\begin{document}

\title[Asymptotics of planar orthogonal polynomials]
{Planar orthogonal polynomials and 
Boundary Universality in the Random Normal Matrix model}


\author[Hedenmalm]
{Haakan Hedenmalm}

\address{Hedenmalm: Department of Mathematics
\\
The Royal Institute of Technology
\\
S -- 100 44 Stockholm
\\
SWEDEN}

\email{haakanh@math.kth.se}

\author[Wennman]{Aron Wennman}

\address{Wennman: School of mathematical sciences
\\
Tel Aviv University
\\
Tel Aviv 69978
\\
ISRAEL
}

\email{aronwennman@tauex.tau.ac.il}



\date{\today}
\begin{abstract}
We show that the planar normalized orthogonal polynomials 
$P_{m,n}(z)$ of degree $n$ with respect to an 
exponentially varying planar measure $\e^{-2mQ}\diffA$ enjoy 
an asymptotic expansion
\[
P_{m,n}(z)\sim m^{\frac{1}{4}}\sqrt{\phi_\tau'(z)}[\phi_\tau(z)]^n
\e^{m\mathcal{Q}_\tau(z)}\left(\mathcal{B}_{\tau, 0}(z)
+m^{-1}\mathcal{B}_{\tau, 1}(z)+m^{-2}
\mathcal{B}_{\tau,2}(z)+\ldots\right),
\]
as $n,m\to\infty$ while the ratio $\tau=\frac{n}{m}$ is fixed. 
Here $\mathcal{S}_\tau$ denotes the droplet, the boundary of which is 
assumed to be a smooth simple closed curve, and  $\phi_\tau$ is a 
conformal mapping from the complement $\mathcal{S}_\tau^c$ to the exterior disk
$\D_\e$.
The functions $\mathcal{Q}_\tau$ and $\mathcal{B}_{\tau, j}$ are 
bounded holomorphic functions which 
may be expressed in terms of $Q$ and $\mathcal{S}_\tau$. 
We apply these results to obtain boundary universality in the random 
normal matrix model for smooth droplets, i.e., that the 
limiting rescaled process is the random process
with correlation kernel
$$
\kcorrker(\xi,\eta)=
\e^{\xi\bar\eta\,-\frac12(\lvert\xi\rvert^2+\lvert \eta\rvert^2)}
\,\mathrm{erf}\,(\xi+\bar{\eta}).
$$
A key ingredient in the proof of the asymptotic expansion of the orthogonal
polynomials is the construction of an {\em orthogonal foliation} -- a smooth 
flow of closed curves near $\partial\calS_\tau$, on each of 
which $P_{m,n}$ is appropriately orthogonal to lower order polynomials.
To compute the coefficient functions, we develop an algorithm which 
determines the coefficients $\calB_{\tau, j}$ successively  
in terms of inhomogeneous Toeplitz kernel conditions. 
These inhomogeneous Toeplitz kernel conditions may be understood 
in terms of scalar Riemann-Hilbert problems.
\end{abstract}

\maketitle

\addtolength{\textheight}{2.2cm}

\tableofcontents

\section{Introduction}
\subsection{Orthogonal polynomials}
\label{ss:onp-def}
We consider polynomials in one complex variable of the form
\begin{equation}\label{eq:pol-def}
P(z)= c_n z^n + c_{n-1} z^{n-1} + \ldots  + c_0,
\end{equation}
where $c_0, c_1,\ldots, c_n$ are complex numbers. 
If $c_n\ne0$, we say that $P$ has degree $n$, and call
$c_n$ is the {\em leading coefficient}.
We denote the $(n+1)$-dimensional space of all
polynomials of the form \eqref{eq:pol-def} by 
$\operatorname{Pol}_{n+1}$.
Given a positive Borel measure $\mu$ with infinite support 
on the complex plane $\C$, with finite moments
\begin{equation}\label{eq:moments-finite}
\int_\C |z|^{2k}\diff\mu(z)<\infty,\qquad 0\le k\le N,
\end{equation}
for some positive integer $N$, 
we define the {\em system} $\{P_{n}(z)\}_{n=0}^N$ 
{\em of normalized orthogonal polynomials (ONPs)}
with respect to $\mu$ recursively by applying the Gram-Schmidt algorithm 
to the sequence $\{z^n\}_{n=0}^N$ of monomials. 
Equivalently, the orthogonal polynomial $P_n$ is the unique element
in $\operatorname{Pol}_{n+1}$ of unit norm
in $L^2(\C,\mu)$ with positive 
leading coefficient $c_n>0$, such that 
for all lower degree polynomials $q\in\operatorname{Pol}_n$ we have
\[
\int_\C P_n(z)\overline{q(z)}\diff\mu(z)=0.
\]
When the measure $\mu=\mu_m$ 
depends on a parameter $m$, the orthogonal 
polynomials will be denoted by $P_{m,n}$,
where the first index is the parameter 
for the measure, and the second
is the degree of the polynomial.

For additional definitions and notation we refer the reader to
Subsection~\ref{ss:notation}.

\subsection{Carleman-\texorpdfstring{Szeg\H{o}}{Szego} asymptotics}
The 1920s witnessed a rapid development in the understanding of 
orthogonal polynomials and related kernel 
functions. Among the pioneers were Gabor Szeg\H{o}, 
Stefan Bergman and Torsten Carleman. One of the early results is
that of Szeg{\H{o}} \cite{Szeg1} (see also \cite{Szeg-book}), 
who considered the orthogonal
polynomials in $L^2(\Gamma,\diffs)$, where $\Gamma$ a real-analytically 
smooth Jordan curve in the complex plane $\C$ supplied with normalized arc 
length measure $\diffs=(2\pi)^{-1}|\diff z|$.
Let $\C\setminus\Gamma=\Omega\cup\Omega_\e$ be the decomposition
of the complement into disjoint connected components, where $\Omega$ is 
bounded and $\Omega_\e$ is unbounded, and denote by $\phi$ the 
conformal mapping of the exterior domain $\Omega_\e$ onto the exterior disk 
$\D_\e:=\{z\in\C:\,|z|>1\}$, which fixes the point at infinity
with positive derivative.
Szeg\H{o}'s theorem asserts that
\begin{equation}\label{eq:szego}
P_n(z)=\sqrt{\phi'(z)}[\phi(z)]^n
\left(1+\Ordo(\rho^{n})\right),\qquad z\in\Omega_\e, 
\end{equation}
where $\rho$ is some number with $0<\rho<1$.
Due to the real-analytically smooth boundary, 
the conformal mapping $\phi$ extends 
conformally past the boundary $\partial\Omega$. With the extended mapping
still denoted by $\phi$, the asymptotic formula \eqref{eq:szego} 
remains valid in a neighborhood of $\Omega_\e\cup\Gamma$. 

Slightly later, Carleman 
\cite{Carl1, Carl2} -- inspired by the work of Szeg{\H{o}} 
-- considered instead 
the orthogonal polynomials in 
$L^2(\Omega,\diffA)$, where 
$\diffA=(2\pi\imag)^{-1}\diff z\wedge\diff \bar{z}$ denotes the
normalized area 
element and $\Omega$ is a simply connected domain 
with real-analytic boundary curve $\Gamma$. 
He found an analogous asymptotic formula for the planar orthogonal 
polynomials, which holds in a neighborhood $\tilde{\Omega}_\e$ of the
closure of the 
exterior domain $\Omega_\e$ 
and is expressed in terms of the conformal mapping
$\phi$:
\begin{equation}\label{eq:Carleman}
P_n(z)=(n+1)^{\frac12}\,\phi'(z)[\phi(z)]^n\left(1+\Ordo(\rho^n)\right),
\qquad z\in \tilde{\Omega}_\e,
\end{equation}
for some $\rho$ with $0<\rho<1$. 
In the 1960s, Suetin extended Carleman's result
to domains whose boundary has a lower degree of smoothness,
as well as to weighted cases 
(see the monograph \cite{Suet}). 
We should also mention the more recent work of 
Dragnev and Mi\~na-Diaz
(\cite{Dragnev1}, \cite{Dragnev2}, and \cite{M-D1})
which strengthens Carleman's theorem on orthogonal polynomials, 
and gives information on the asymptotic distribution of the zeros.

In the above asymptotic formul\ae{} a Jacobian factor appears, 
it is $(\phi')^{\frac12}$ in the case of Szeg\H{o}'s
theorem and $\phi'$ in Carleman's case.
By inspection, the orthogonal polynomials
are asymptotically push-forwards of the monomials 
under the conformal mapping in the relevant $L^2$-space.

We wish to contrast the above-mentioned results 
with the more classical study of orthogonal polynomials on the real line
$\R$.
Here, the earliest work is associated with Legendre, Jacobi, Chebyshev, 
Hermite, Laguerre, and Gegenbauer, with
further contributions by Markov, Stieltjes, 
Szeg{\H{o}}, Bernstein, and Akhiezer.  
The structure of orthogonal polynomials on the line is rather rigid 
with the appearance of a three-term recursion relation, which comes from 
the fact that multiplication by the independent variable is self-adjoint 
on the weighted $L^2$-space. 
Analogous rigidity applies to the orthogonal polynomials 
on the unit circle $\T$ as well.
These facts are basic in many of the standard approaches 
to the asymptotics of orthogonal polynomials, 
see e.g. \cite{simonbook1, simonbook2}.
Going beyond measures supported on the line or the circle,
the rigidity is lost (except in some special cases, including
arc length measure on ellipses \cite{Duren}).
For planar orthogonal polynomials,
recursion formul\ae{} are rare, even if we allow any
finite number of terms \cite{Putinar}.

\subsection{Exponentially varying weights}
\label{ss:varying-weights}
For a $C^2$-smooth function $Q:\C\to \R\cup\{+\infty\}$ called the {\em potential},
subject to the growth bound
\begin{equation}\label{eq:Q-growth}
\liminf_{z\to\infty}\frac{Q(z)}{\log |z|}>1
\end{equation}
and a real parameter $m>0$, we consider the weighted area measures 
of the form
\begin{equation}\label{eq:exp-weight}
\diff\mu_{2mQ}(z)=\e^{-2mQ(z)}\diffA(z),\qquad z\in\C
\end{equation}
where we recall that $\diffA$ denotes the normalized planar area element. 
The condition \eqref{eq:Q-growth} guarantees that 
the measure $\mu=\mu_{2mQ}$ has finite moments \eqref{eq:moments-finite},
with upper range given by $N=N_m:=\lceil(1+\epsilon_1) m\rceil-2$ 
for some $\epsilon_1>0$. Here, $\lceil \cdot \rceil$ denotes the standard
ceiling function.
This allows us to consider the sequence $\{P_{m,n}\}_{0\le n\le N_m}$
of normalized orthogonal polynomials (ONPs)
with respect to the measure $\diff\mu_{2mQ}$ (cf.\ Subsection~\ref{ss:onp-def}).
Under certain additional assumptions on the regularity of the weight $Q$, 
we will obtain an asymptotic expansion of $P_{m,n}$
valid as $m$ and $n$ tend to infinity with the ratio 
$\tau=\tfrac{n}{m}$ confined to an open interval around $\tau=1$.

The motivation for studying this particular class of orthogonal polynomials 
comes from the theory of Random Normal Matrix (RNM)
ensembles, a particular instance of two-dimensional Coulomb gas.
If $m$ is a positive integer, 
the connection is that the eigenvalue 
process associated to an $m\times m$ matrix
from the RNM-ensemble with potential $Q$ 
is determinantal with correlation 
kernel $\Kcorrker_m$ given by
$$
\Kcorrker_m(z,w)=
K_m(z,w)\,\e^{-m\left(Q(z)+Q(w)\right)}\;\;\text{where}\;\quad
K_m(z,w)=\sum_{j=0}^{m-1}P_{m,j}(z)\overline{P_{m,j}(w)},
$$
see Subsection~\ref{ss:rnm} below for details.
Analogous families of exponentially varying 
weights confined to the real line 
appear in connection with the study of 
random Hermitian matrices.
In the 1980s, successive progress was 
made towards understanding the 
asymptotics of weighted ONPs on
the real line, with important contributions 
by Freud, Nevai, Lubinsky, 
Mhaskar, Saff, and Totik, to mention a 
few (see e.g.\ the monographs
\cite{LubinskySaff}, \cite{StahlTotik}, 
and \cite{Totik}).
A deeper understanding 
came through the efforts of Fokas, Its, Kitaev, and Deift and Zhou,
whose work brought novel methods into play.
Their approach analyzes the ONPs with respect to 
rather general potentials $Q$ on the real line
in terms of solutions to matrix Riemann-Hilbert problems, see, e.g., 
\cite{deiftbook, DeiftZhou, Its1, Its2}.

In the work \cite{Its} of Its and Takhtajan a natural 
{\em soft Riemann-Hilbert problem}, or 
matrix $\bar\partial$-problem, is considered, whose solution would give us
the orthogonal polynomial $P_{m,n}$ for the planar measure $\mu_{2mQ}$.
However, unlike the one-dimensional situation, 
it is not clear how to constructively 
solve these soft Riemann-Hilbert problems. 
The main obstruction appears to be the complex conjugation of the matrix,
which results from the sesquilinearity of the inner product.
While our analysis of the asymptotics of the ONPs is different,
we try to connect with the Its-Takhtajan approach 
later on in Section~\ref{s:RHP}.

\subsection{The boundary universality conjecture}
We return to the study of RNM-ensembles with the associated 
correlation kernel $\Kcorrker_m$.
Macroscopically, the situation is well understood. For instance, 
in the limit as $m\to+\infty$ the eigenvalues condensate to a certain 
compact set $\calS_1$, called the {\em droplet}, or alternatively
\emph{spectral droplet} (see Subsection~\ref{ss:rnm}
below). For simplicity, we assume below that $Q$ is $C^2$-smooth
with positive Laplacian $\hDelta Q>0$ in a neighborhood of $\calS_1$.
An interesting question is how the process behaves at the microscopic level,
which we express in rescaled coordinates as follows.
For a point $z_0\in\C$ with $\hDelta Q(z_0)>0$ and
a direction ${\rm n}\in\T$, we let
\begin{equation}\label{eq:resc1}
z_m(\xi)=z_0+{{\rm n}}\frac{\xi}{\sqrt{2m\hDelta Q(z_0)}}
\end{equation}
where $\hDelta_z=\partial_z\bar\partial_z$ denotes the (quarter) Laplacian,
and consider
\begin{equation}\label{eq:resc2}
\rho_m(\xi)=\frac{1}{2m\hDelta Q(z_0)}\Kcorrker_m(z_m(\xi), z_m(\xi)).
\end{equation}
We introduce the notation $\mathcal{E}^\circ$ for the interior and 
$\overline{\mathcal{E}}$ for the closure
of a subset $\mathcal{E}\subset\C$, 
while $\mathcal{E}^c=\C\setminus \mathcal{E}$ 
denotes the complement.
Near any bulk point $z_0$, i.e., a point in the interior 
$\in\calS_1^\circ$ of the droplet, there exists a full asymptotic 
expansion of the kernel $\Kcorrker_m$, see e.g.\ \cite{ahm2, ahm3}. 
In this case $\lim_m\rho_m(\xi)= 1$, uniformly on compact subsets. 
Away from the droplet, i.e.\ for 
$z_0\in\calS_1^c$ we instead have $\lim_m\rho_m(\xi)=0$.
It remains to analyze the boundary points $z_0\in\partial\calS_1$.
An illustration of this blow-up procedure for a boundary point 
in the context of RNM-ensembles is supplied in Figure~\ref{fig:resc}.

A natural simplifying assumption is that the boundary $\partial\calS_1$ 
is smooth near $z_0$, in which case we let ${\rm n}$ be 
the outer normal to $\calS_1$ at $z_0$. 
It is not known what is the limit of the density $\rho_m$,
but the following universal behavior is expected.

\begin{figure}[t!]
\begin{subfigure}[t]{.4\textwidth}
  \centering
  \includegraphics[width=1\linewidth]{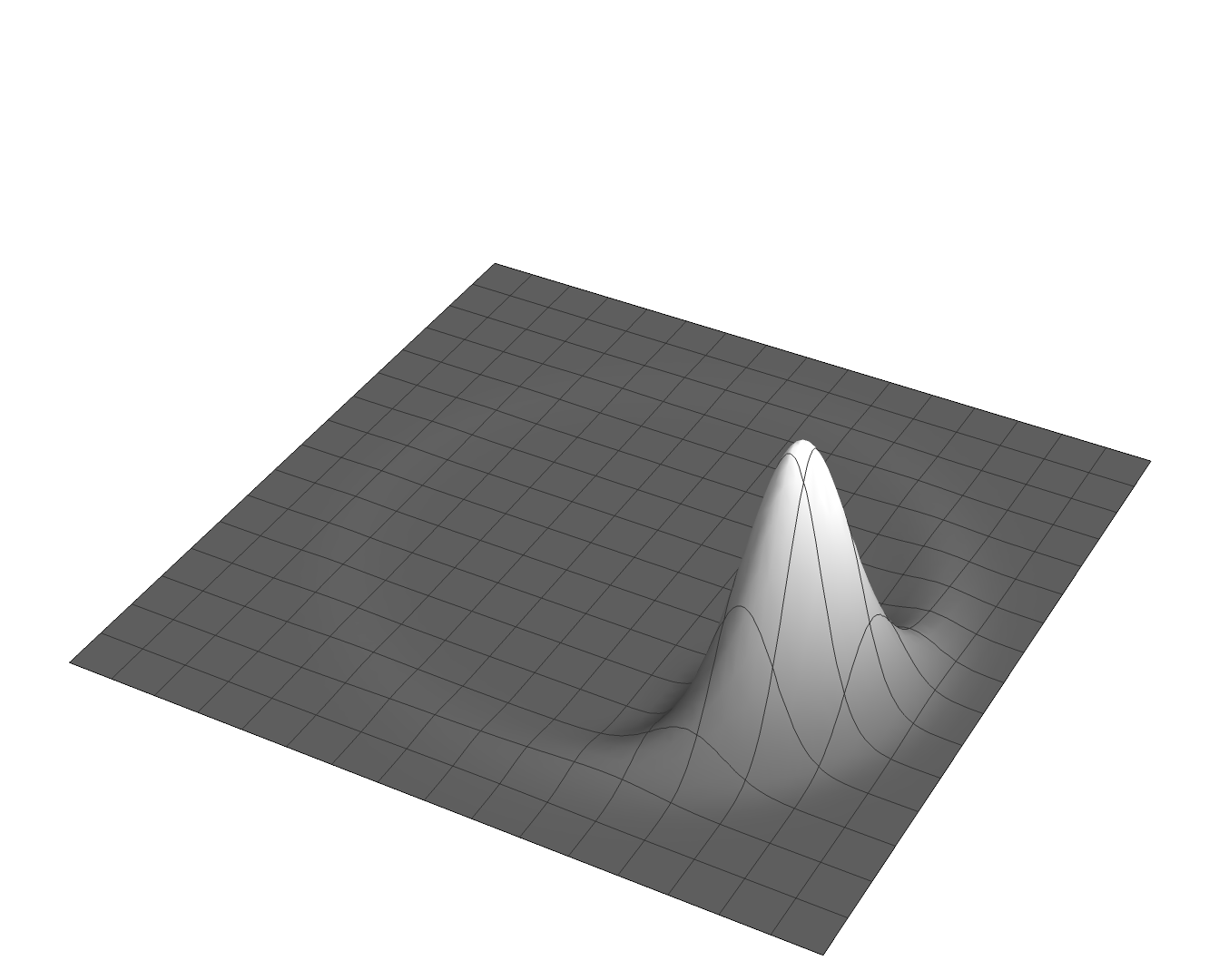}
\end{subfigure}
\begin{subfigure}[t]{.4\textwidth}
  \includegraphics[width=\linewidth]{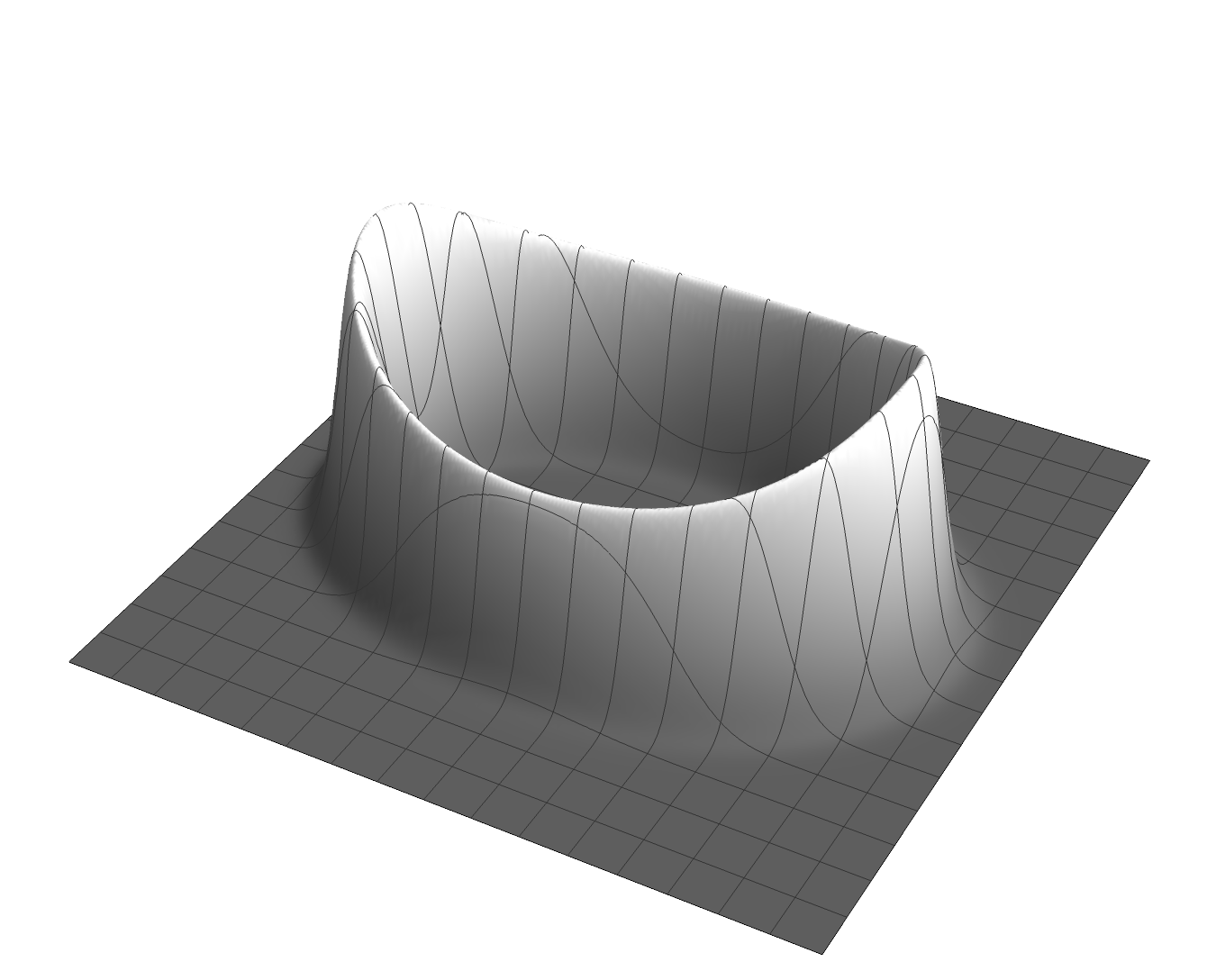}
\end{subfigure}
\caption{{\rm (left)}\, The Berezin density 
$\Kcorrker_m(z_0,z_0)^{-1}|\Kcorrker_m(z,z_0)|^2$ with 
$Q(z)=\frac12|z|^2$ for the boundary point $z_0=1$ and $m=30$. {\rm (right)}\,
The orthogonal polynomial density 
$\lvert P_{m,n}(z)\rvert^2 \e^{-2mQ(z)}$ for $n=25, m=20$ 
and $Q(z)=\tfrac12\vert z\vert^2-\Re(t z^2)$, where $t=0.2$.} 
\label{fig:Berezin-ONP}
\end{figure}

\begin{conj}[boundary universality]\label{conj:erf}
Let $z_0\in\partial\calS_1$ and assume that $\partial\calS_1$ is 
smooth in a neighborhood of $z_0$. 
Then the density $\rho_m$ converges as $m\to\infty$ to the limit
\begin{equation*}
\rho(\xi)=\mathrm{erf}\,(2\Re \xi).
\end{equation*}
\end{conj}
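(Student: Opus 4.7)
The plan is to substitute the asymptotic expansion of $P_{n,m}$ given in the main theorem directly into the diagonal of the correlation kernel,
\[
K_m(z,z)\,\e^{-2mQ(z)} \;=\; \sum_{n=0}^{m-1}\bigl|P_{n,m}(z)\bigr|^2\,\e^{-2mQ(z)},
\]
evaluate at the rescaled point $z=z_m(\xi)$, and analyze the resulting Riemann sum in the parameter $\tau=n/m\in[0,1]$. Retaining only the leading order in $1/m$, each summand has the form
\[
\bigl|P_{n,m}(z)\bigr|^2\,\e^{-2mQ(z)} \;\sim\; m^{1/2}\,|\phi_\tau'(z)|\,|\calB_{0,\tau}(z)|^2\,\e^{-m\Phi_\tau(z)},
\]
where
\[
\Phi_\tau(z)\;:=\;2Q(z)-2\Re\calQ_\tau(z)-2\tau\log|\phi_\tau(z)|.
\]
The key structural fact is that $\Phi_\tau$ vanishes identically on $\partial\calS_\tau$ -- this is the equilibrium condition for the droplet -- and is strictly positive on $\calS_\tau^c$, growing quadratically in the normal direction. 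Consequently the weight $\e^{-m\Phi_\tau(z)}$ concentrates sharply on those $\tau$ for which $z$ lies on the curve $\partial\calS_\tau$. By the orthogonal foliation this is a single value $\tau_\ast(z)$, with $\tau_\ast(z)\to 1$ as $z\to\partial\calS_1$.

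I then rescale $\tau$ near $1$ by $\tau=1-t/\sqrt m$, which matches the natural width of the concentration. A Taylor expansion of $\Phi_\tau(z_m(\xi))$ in the small parameters $t/\sqrt m$ and $\xi/\sqrt m$, using the first-order vanishing of $\Phi_\tau$ on $\partial\calS_\tau$ together with the Hele-Shaw flow rate of the foliation, yields a limiting quadratic form of the shape
\[
m\,\Phi_{1-t/\sqrt m}\bigl(z_m(\xi)\bigr)\;\longrightarrow\;2\bigl(\Re\xi+s(t)\bigr)^2,
\]
where $s(t)$ is a linear rescaling of $t$ absorbing $\Delta Q(z_0)$ and the boundary flow rate. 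Using the Riemann-sum weight $\sqrt m\,\diff t$ appropriate for the $1/\sqrt m$-scale, together with the boundary values of $|\phi_\tau'(z_0)|\,|\calB_{0,\tau}(z_0)|^2$ at $\tau=1$ (which the main theorem prescribes in terms of $\Delta Q(z_0)$), the pre-factors combine with the overall normalization $1/(2m\Delta Q(z_0))$ to produce exactly the constant $1/\sqrt\pi$ in front of the Gaussian. Integrating over the admissible range of $s$ gives
\[
\rho_m(\xi)\;\longrightarrow\;\frac{1}{\sqrt\pi}\int_{-\infty}^{2\Re\xi}\e^{-u^2}\diff u,
\]
which in the convention for $\mathrm{erf}$ adopted in Conjecture~\ref{conj:erf} is $\mathrm{erf}(2\Re\xi)$.

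The principal obstacle will be uniformity of the asymptotic expansion. The main theorem states the expansion on the exterior $\calS_\tau^c$, but for $\tau$ just below $1$ the rescaled point $z_m(\xi)$ typically lies in the \emph{interior} of $\calS_\tau$, so one must extend the expansion across $\partial\calS_\tau$ into a thin interior collar. This should be natural from the orthogonal foliation: each leaf $\partial\calS_\tau$ carries a genuine orthogonality relation, so the construction provides boundary values on both sides, and a reflection/duality argument propagates the expansion inward a distance of order $1/\sqrt m$, which is all we need. A secondary but non-trivial point is to control the tail of the sum where $1-\tau$ is of order $1$: the strict positivity of $\Phi_\tau$ off $\partial\calS_\tau$ will force these contributions to be exponentially small, but this requires uniform-in-$\tau$ lower bounds and the validity of the expansion essentially over the whole $\tau$-range. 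The sub-leading coefficients $\calB_{j,\tau}$ for $j\geq 1$ add only $\ordo(1)$ corrections and do not affect the limit.
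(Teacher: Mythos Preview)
Your overall strategy coincides with the paper's proof of Theorem~\ref{thm:erf}: plug the leading asymptotics of $|P_{n,m}|^2\e^{-2mQ}$ into the sum defining $\rho_m$, observe that each term is a Gaussian ridge peaked on $\partial\calS_\tau$, Taylor-expand using the Hele--Shaw speed of the flow $\tau\mapsto\partial\calS_\tau$ (this is Lemma~\ref{lem:boundary-movement} in the paper), and identify a Riemann sum for $\mathrm{erf}$.

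Your assessment of the two ``obstacles'', however, is inverted relative to what actually matters. The interior-extension issue you call principal is already absorbed into Theorem~\ref{thm:main-pw}: the pointwise expansion is asserted on $\calK_{m,\tau}^c$, which by construction contains a collar of width $(m^{-1}\log\log m)^{1/2}$ \emph{inside} $\calS_\tau$. For bounded $\xi$ and for the relevant range $1-\tau=\Ordo(m^{-1/2}\log m)$, the rescaled point $z_m(\xi)$ lies in that collar, so no reflection or duality argument is needed --- you simply quote the theorem.

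Conversely, the tail control you sketch cannot be carried out as written. The expansion, and hence the representation $|P_{n,m}|^2\e^{-2mQ}\sim m^{1/2}(\cdots)\e^{-m\Phi_\tau}$ on which your positivity argument rests, is only available for $\tau\in I_{\epsilon_0}=[1-\epsilon_0,1+\epsilon_0]$; you have no access to it for $1-\tau$ of order $1$, and there is no reason the droplet should even be connected or smoothly bounded there. The paper sidesteps this entirely by splitting at $m_1\approx m-m^{1/2}\log m$ and bounding the low-degree block $n<m_1$ with the \emph{a~priori} growth estimate of Proposition~\ref{prop-gengrowth-ext}, namely $|P_{n,m}(z)|^2\e^{-2mQ(z)}\le Cm\,\e^{-2m(Q-\checkQ_{\tau_1})(z)}$ with $\tau_1=m_1/m$. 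Since $z_m(\xi)$ then sits at distance $\gtrsim m^{-1/2}\log m$ from $\partial\calS_{\tau_1}$, the quadratic lower bound on $Q-\checkQ_{\tau_1}$ near that curve makes the whole block $\Ordo(m^{-M})$ for every $M$, with no appeal to the asymptotic expansion at all.
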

Here, we write $\mathrm{erf}$ for the complex \emph{error function}
\[
\mathrm{erf}\,(z)=\frac{1}{\sqrt{2\pi}}\int_{z}^{\infty}\e^{-{t^2}/{2}}\diff t,
\]
where the integral is taken along a suitable contour from $z$ to 
the origin and then from
the origin to $\infty$ along the positive real line. 
This conjecture has been verified in some specific cases, and partial 
results have appeared recently. In connection with this we
want to mention the work by Ameur, Kang, and Makarov \cite{akm}
who used a limiting form of the Ward identities to show that if $\rho(\xi)$ 
is a priori known to only depend on $\Re\,\xi$, 
then it must necessarily be of the form predicted by Conjecture~\ref{conj:erf}.
The full conjecture however remains open.
In the setting of K\"ahler manifolds, a similar problem appears in the context
of partial Bergman kernels defined by vanishing to high order along a divisor. 
Under the assumption of $S^1$-invariance around the divisor, Ross and Singer 
\cite{RS} obtain the error function asymptotics near the emergent interface
around the divisor (see also the work of Zelditch and Zhou \cite{ZZ1}).
In recent work, Zelditch and Zhou \cite{ZZ2} find that this is a universal
edge phenomenon along interfaces in the context of partial 
Bergman kernels defined by a quantized Hamiltonian.

\begin{figure}
\includegraphics[width=.6\textwidth]{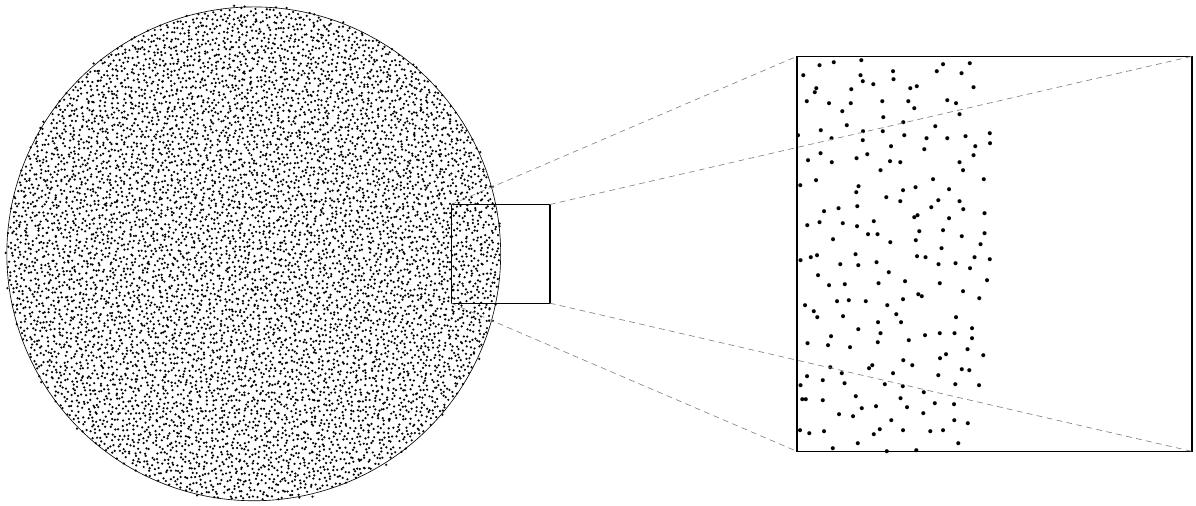}
\caption{The RNM process associated to a quadratic potential 
(The Ginibre ensemble) with
blow-up at a boundary point (courtesy of Nam-Gyu Kang).}
\label{fig:resc}
\end{figure}

Let us briefly motivate why the interface asymptotics for the RNM-ensembles
should be approached via the orthogonal polynomials.
The standard methods to obtain the asymptotics of Bergman kernels are local
in nature, both the peak section approach of Tian (see \cite{Tian})
as well as the microlocal approach of Boutet de Monvel and Sj\"ostrand, 
as explained by Berman, Berndtsson, and Sj\"ostrand \cite{BBS} 
(see also \cite{haimihed}). 
The same applies to older work of H\"ormander \cite{HormActa65} 
and Fefferman \cite{Fef}. 
One reason to expect the boundary universality conjecture to be difficult
is the apparent nonlocality of the correlation kernel. 
To illustrate this, we
consider the Berezin density (associated with secondary quantization 
and complementary to the Palm measure, cf. \cite{BFQ})
\[
B^{\langle z_0\rangle}_m(z)=K_m(z_0,z_0)^{-1}|K_m(z,z_0)|^2\e^{-2mQ(z)}
\]
considered in \cite{ahm1} and find numerically that for boundary points 
$z_0\in\partial \calS_1$, this probability 
density develops a noticeable ridge along the whole 
boundary of the spectral droplet (see Figure \ref{fig:Berezin-ONP} (left)). 
For this reason we focus our analysis on the orthogonal 
polynomials, which have an even more pronounced 
nonlocal behavior (see Figure \ref{fig:Berezin-ONP} (right)).  
Indeed, for rather general potentials $Q$, the mean field 
approximation of the random normal matrix model 
\cite{ahm2, ahm3} supplies information regarding the individual 
orthogonal polynomials, and gives the weak-star convergence of measures 
$$
\lvert P_{m,n}\rvert^2 \e^{-2mQ}\to
\hm(\cdot,\widehat{\C}\setminus\calS_1, \infty),
$$
as $n,m\to\infty$ with $n=m+\Ordo(1)$.
Here, the left-hand side is the density of a probability measure, and the 
right-hand side expression
$\hm(\cdot,\widehat{\C}\setminus\calS_1,\infty)$ stands for harmonic 
measure of the domain $\widehat{\C}\setminus\calS_1$ evaluated at the 
point at infinity, which has the interpretation of hitting probability of
Brownian motion starting at $\infty$. We observe that harmonic
measure is concentrated to the boundary, so that the above convergence 
may be interpreted as {\em boundary concentration}.
Within the random normal matrix model, the addition of a new particle 
has the net effect of adding a term 
$\lvert P_{m,n}\rvert^2 \e^{-2mQ}$ of highest degree. This means that the 
net effect of adding a particle is felt primarily along the droplet boundary.
More generally,
when $\tau=\frac{n}{m}$ is kept fixed, the concentration occurs along the 
boundary $\partial\calS_\tau$ of another
spectral
droplet $\calS_\tau$.

Finally, we mention that the orthogonal polynomial approach has proven 
to be successful in several special cases.
For instance, when $Q(z)=\frac{1}{2}\lvert z\rvert^2+a\Re(z^2)$ 
with $a>0$, Lee and Riser \cite{leeriser} obtain the orthogonal polynomials
in explicit form, and verify Conjecture~\ref{conj:erf} in this 
case.
Along the same lines, in \cite{bblm}, Balogh, Bertola, Lee and McLaughlin 
consider potentials $Q$ which are perturbations of the 
standard quadratic potential of the form
$$
Q(z)=\frac{1}{2}\lvert z\rvert^2-c\log\lvert z-a\rvert^2,
$$
for some $a\in\R$, $c>0$. 
For this $Q$, they obtain an asymptotic expansion of the orthogonal 
polynomials.
For parameters $a$ and $c$ such that the droplet $\calS_\tau$ does not 
divide the plane, 
the expansion is expressed in terms of the properly 
normalized conformal mapping of the complement $\calS_\tau^c$ 
onto the exterior disk $\D_e$, denoted $\phi_\tau$. 
After some rewriting, their formula reads 
\begin{equation}\label{eq:BBLM}
P_{m,n}(z) =\left(\frac{m}{2\pi}\right)^{1/4}
\sqrt{\phi'_\tau(z)}[\phi_\tau(z)]^n\e^{m\mathcal{Q}_\tau(z)}
\left(1+\Ordo(m^{-1})\right),
\end{equation}
valid in a neighborhood of the closed exterior of the droplet for 
$\frac{n}{m}=\tau+\Ordo(m^{-1})$, where $\mathcal{Q}_\tau$ is the bounded
holomorphic 
function on $\calS_\tau^c$, with real part equal to $Q$ on 
the boundary $\partial\mathcal{S}_\tau$, extended analytically 
across the boundary. Using the asymptotics \eqref{eq:BBLM}, they 
verify Conjecture~\ref{conj:erf} for the given collection of potentials. 
The analysis in \cite{bblm} is based on 
Riemann-Hilbert problem methods, which are accessible due to a 
miraculous identity which transforms the Hermitian orthogonality 
over the plane into bilinear orthogonality relations along curves. 
The latter approach should be compared with
the work of Bleher and Kuijlaars \cite{BleherKuijlaars} 
in the context of a cubic potential.
 
At the physical level, it is understood that the asymptotic formula 
\eqref{eq:BBLM} should hold for the wider class of potentials 
of the form $Q(z)=\frac{1}{2}\lvert z\rvert^2+H(z)$, where 
$H$ is harmonic in a neighborhood of the droplet
(the so-called Hele-Shaw potentials) \cite{WZ-conj}. What the higher order 
correction 
terms should look like appears not to be understood even then.

\subsection{Summary of the results}
\label{ss:contrib}
Here, we study the orthogonal polynomials 
with respect to a rather general exponentially varying weight $\e^{-2mQ}$ in
the complex plane. 
To be more precise, we will work with 
potentials $Q$ that are admissible in the sense of the definition below. 
Under $C^2$-smoothness and a minimal growth assumption on $Q$, 
we consider for $\tau>0$ the coincidence set
$$
\calS_\tau^\star:=\{z\in\C\,:\:\checkQ_\tau(z)=Q(z)\},
$$ 
where $\checkQ_\tau$ solves the 
obstacle problem
$$
\checkQ_\tau(z)=\sup\left\{q(z): q\in \operatorname{Subh}_\tau(\C),
\,\, q\leq Q\;\text{on}\;\C\right\}.
$$
Here, $\operatorname{Subh}_\tau(\C)$ denotes the convex body of subharmonic
functions in the plane which grow 
at most like $\tau\log\lvert z\rvert$ at infinity.
The function $\checkQ_\tau$ is $C^{1,1}$-smooth and harmonic outside the
set $\calS_\tau^\star$. Moreover, if $Q$ 
has sufficient growth, $\calS_\tau^\star$ is compact.
For a subset $\mathcal{E}\subset\C$ we write $1_{\mathcal{E}}$ for the
corresponding indicator function.
The support of the probability measure $\mu_\tau$ given by 
\[
  \diff\mu_\tau=2\tau^{-1}1_{\calS_\tau^\star}\hDelta Q\diffA
\] 
is denoted by $\calS_\tau$ and called
the {\em droplet}. Clearly, $\calS_\tau\subset\calS_\tau^\star$, and
$\calS_\tau^\star\setminus\calS_\tau$ is a null-set for the measure
$|\Delta Q|\diffA$. We note that $\mu_\tau$ is the equilibrium measure
for the weighted logarithmic energy problem in the external field $\tau^{-1}Q$.
More details are supplied in Subsection~\ref{ss:prel-pot} below.

\begin{figure}
\centering
\includegraphics[width=.31\linewidth]{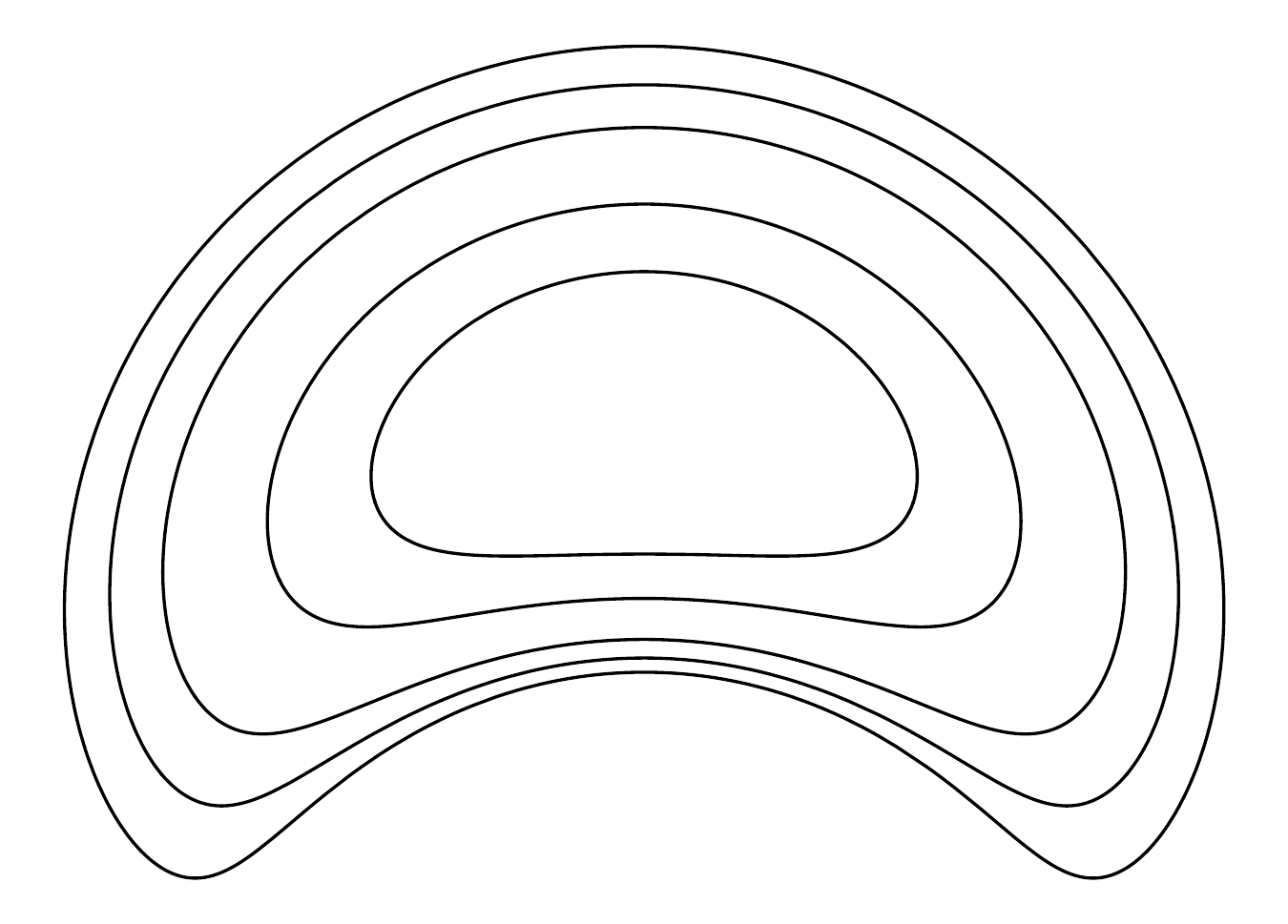}
\caption{Laplacian growth of the compacts $\calS_{\tau}$ 
for the potential $Q(z)=\frac12|z|^2-2^{-\frac12}\log|z+\imag|$ 
(boundary curves indicated).}
\label{fig:HS-flow}
\end{figure}

\begin{defn}\label{def:adm}
The potential $Q:\C\to\R$ is said to be $\tau$-{\em admissible}
at $\tau=\tau_0$ (or, in short, $\tau_0$-admissible) if 
$\calS_{\tau_0}=\calS_{\tau_0}^\ast$ and 
the following conditions are satisfied:
\begin{enumerate}[(i)]
\item $Q$ is $C^2$-smooth in the entire complex plane,
\item $Q$ is real-analytic and strictly subharmonic (i.e.\ $\hDelta Q>0$)
in a neighborhood of $\calS_{\tau_0}$,  
\item $Q$ is grows sufficiently fast at infinity:
\begin{equation}
\liminf_{|z|\to+\infty}\frac{Q(z)}{\log|z|}>\tau_0.
\label{eq-Qcond}
\end{equation}
\item The boundary $\partial\calS_{\tau_0}$ is a smooth Jordan curve. 
\end{enumerate}
\end{defn}
Note that under these conditions, it follows that $\checkQ_{\tau_0}(z)<Q(z)$
on $\calS_{\tau_0}^c$.
As a consequence, we may exclude the immediate birth of additional components of 
$\calS_\tau$ as $\tau$ increases from $\tau_0$.

In the sequel, we consider $\tau_0=1$, and 
{\em assume that $Q$ is $\tau$-admissible} at $\tau=1$.
As observed in Subsection~\ref{ss:varying-weights}, 
the condition \eqref{eq-Qcond} with $\tau=1$ guarantees that all 
polynomials of degree up to $\lceil(1+\epsilon_1)m\rceil-2$ 
belong to the space $L^2(\C,\e^{-2mQ}\diffA)$, 
for some fixed small $\epsilon_1>0$.
As $Q$ is assumed $1$-admissible, the curve $\partial\calS_1$ is smooth,
simple and closed. By known properties of Laplacian growth, this 
assumption implies that the same holds for the boundaries $\partial\calS_\tau$ for 
$\tau\in I_{\epsilon_0}:=[1-\epsilon_0, 1+\epsilon_0]$ for some $\epsilon_0>0$
(cf.\ \cite{HS, HM}).
By considering a smaller $\epsilon_0$, we can make sure
this property holds on the larger interval $I_{2\epsilon_0}$ as well.
Moreover,
the assumption of $1$-admissibility entails that the smooth curves 
$\partial\calS_\tau$ are actually real-analytically smooth for 
$\tau\in I_{\epsilon_0}$. 
This follows from the work of Sakai \cite{Sakai} on 
boundaries with a one-sided Schwarz function, 
as observed in \cite{HS}.

We now proceed to present our main theorem.
To set things up, we denote for $\tau\in I_{\epsilon_0}$ 
by $\phi_\tau$ the conformal mapping
$\phi_\tau:\calS_\tau^c\to\D_\e$,
normalized by
$\phi_\tau(\infty)=\infty$ and $\phi_\tau'(\infty)>0$.
As a consequence of $1$-admissibility, $\phi_\tau$ extends to a conformal
mapping $\calK_{\tau, 0}^c\to\D_\e(0,\rho_{0,0})$, where $0<\rho_{0,0}<1$ and 
$\calK_{\tau, 0}\subset\calS_\tau$ denotes 
an appropriate compact continuum. Here,
we use the notation $\D_\e(0,r):=\{z\in\C:|z|>r\}$ for the
exterior disk of radius $r$ centered at the origin.
We let $\mathcal{Q}_\tau$ denote the bounded holomorphic function on 
$\mathcal{S}^c_\tau$ whose real part equals the potential $Q$ 
along the boundary $\partial\mathcal{S}_\tau$, 
and whose imaginary part vanishes at infinity.
By possibly adjusting $\rho_{0,0}$, we may ensure that $\calQ_\tau$
extends holomorphically to $\calK_{\tau,0}^c$.

For a subset $\mathcal{E}\subset\C$, we use the notation
$\mathrm{dist}_\C(z,\mathcal{E})=\inf_{w\in\mathcal{E}}|z-w|$
for the Euclidean distance from $z$ to the set $\mathcal{E}$.

\begin{thm}\label{thm:main-pw}
Assume that $Q$ is $1$-admissible. Given a positive integer $\kappa$ 
there exist bounded holomorphic functions $\calB_{\tau,j}$ 
defined in a fixed neighborhood of $\calS_\tau^c$
such that for any positive real $A$, the asymptotic formula
$$
P_{m,n}(z)= 
m^{\frac14}[\phi_\tau'(z)]^{\frac12}[\phi_\tau(z)]^n\e^{m\mathcal{Q}_\tau(z)}
\bigg(\sum_{j=0}^{\kappa}m^{-j}\mathcal{B}_{\tau,j}(z)
+\Ordo\left(m^{-\kappa-1}\right)\bigg),
$$
holds, where the error term is uniform over all $z\in\C$ with 
$\mathrm{dist}_\C(z,\calS_\tau^c)\le A(m^{-1}\log m)^{\frac12}$ as 
$n=\tau m\to +\infty$ with $\tau\in I_{\epsilon_0}$.
\end{thm}

In other words, the orthogonal polynomials $P_{m,n}$ enjoy an 
asymptotic expansion
$$
P_{m,n}(z)\sim 
m^{\frac14}
[\phi_\tau'(z)]^{\frac12}[\phi_\tau(z)]^n\e^{m\mathcal{Q}_\tau(z)}
\Big(\mathcal{B}_{\tau,0}(z)+
\tfrac{1}{m}\mathcal{B}_{\tau,1}(z)+\ldots\Big), 
$$
valid provided that $\mathrm{dist}_\C(z,\calS_\tau^c)\le 
A(m^{-1}\log m)^{\frac12}$
as $n=\tau m\to+\infty$ and $\tau\in I_{\epsilon_0}$, 
for any given $A>0$.

\begin{rem}\label{rem:control-outside}
{\rm (a)}\, We derive Theorem~\ref{thm:main-pw} from an $L^2$-version of the
asymptotic expansion, given in Theorem~\ref{main} below.
An advantage of the $L^2$-version is that it holds in
a fixed $\epsilon$-neighborhood of the exterior $\calS_\tau^c$.

\noindent {\rm (b)}\, It is curious to note that the expansion of $P_{m,n}$
contains the factor $(\phi_\tau')^{\frac12}$, rather than $\phi_\tau'$ as one
might expect from Carleman's theorem. The square root is more reminiscent
of Szeg\H{o}'s theorem. We have no satisfactory explanation for this fact, other than
appealing to heuristics based on the steepest descent method.
Naturally, the expansion could be written with $\phi_\tau'$ as a factor, 
by adjusting the terms $\calB_{\tau,j}$ accordingly. However,
the term $\calB_{\tau,0}$ takes on the simplest possible form with the former
choice, as shown in Theorem~\ref{thm:main-coeff}.
\end{rem}

In the context of Theorem \ref{thm:main-pw}, we would like 
to know the coefficient functions $\calB_{\tau, j}$. 
How to do find them is explained
in the following theorem. For the formulation, we need the Szeg\H{o} 
projection $\Pop_{H^2_{-,0}}$ of $L^2(\T)$ onto the conjugate Hardy space 
$H^2_{-,0}=L^2(\T)\ominus H^2$ (cf. Subsection \ref{ss:herglotz} below).
In addition, we need the \emph{modified weight} $R_\tau$ defined in a 
neighborhood of $\bar{\D}_\e$ by
\begin{equation}\label{eq:Rtau}
R_\tau=(Q-\breve{Q}_\tau)\circ\phi_\tau^{-1},
\end{equation}
where we need to explain what is the function $\breve{Q}_\tau$. The solution 
$\hat{Q}_\tau$ to the obstacle problem is a $C^{1,1}$-smooth function which 
equals $Q$ on $\calS_\tau$, while it is strictly smaller and harmonic in
the exterior $\calS_\tau^c$. As a consequence of the smoothness of $Q$
and the boundary curve 
$\partial\calS_\tau$, the restriction $\hat{Q}_\tau|_{\calS_\tau^c}$ to the exterior
extends harmonically across the boundary for each $\tau\in I_{\epsilon_0}$. 
We denote the extended function by $\breve{Q}_\tau$.

\begin{thm}\label{thm:main-coeff}
In the asymptotic expansion of Theorem~\ref{thm:main-pw}, we have 
$\calB_{\tau, 0}=\pi^{-\frac14}
\e^{H_{Q,\tau}}$, where $H_{Q,\tau}$ is bounded and holomorphic 
in $\calS_\tau^c$ and satisfies $\Im H_{Q,\tau}(\infty)=0$, as well as
$$
\Re H_{Q,\tau}=\frac{1}{4}\log\hDelta Q,\quad\text{on}\quad \partial\calS_\tau.
$$
Moreover, if $H_{R_\tau}$ denotes the bounded holomorphic function on $\D_\e$
with
$$
\Re H_{R_\tau}=\frac{1}{4}\log(4\hDelta R_\tau)\quad\text{on}\,\,\,\T,
$$
and $\Im H_{R_\tau}(\infty)=0$,
then for $j=1,2,3,\ldots$, the coefficients 
$\calB_{\tau, j}$ have the form
$$
\calB_{\tau, j}=[\phi_\tau']^{\frac12}\,
B_{\tau, j}\circ\phi_\tau,
$$
where the functions $B_{\tau, j}$ are bounded and holomorphic in 
$\D_\e$, and given by
\[
B_{\tau, j}=c_{\tau, j}\,\e^{H_{R_\tau}}-\e^{H_{R_\tau}}
\Pop_{H^2_{-,0}}[\e^{{\bar H}_{R_\tau}}F_{\tau, j}]
\]
for some real-analytic functions $F_{\tau, j}$ on the circle $\T$ and 
constants $c_{\tau, j}\in\R$. The functions $F_{\tau, j}$ as well as the 
constants $c_{\tau, j}$ may be computed algorithmically in terms of
the potential $R_\tau$ and the functions $B_{\tau, 0},\ldots, B_{\tau, j-1}$, 
where $B_{\tau, 0}=(4\pi)^{-\frac14}\e^{H_{R_\tau}}$.
\end{thm}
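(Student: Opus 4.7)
The plan is to exploit the orthogonal foliation flow constructed in the proof of Theorem~\ref{main}: the flow reduces the orthogonality of the approximate polynomial to all polynomials of lower degree to a sequence of line-integral conditions along the foliation curves, one per order in $m^{-j}$. After pulling back to $\D_\e$ via $\phi_\tau$ and integrating out the direction transverse to $\T$ by Laplace's method, each such condition becomes a scalar Toeplitz-kernel problem on $\T$ with symbol $\e^{-2\Re H_{R_\tau}}$; this symbol is Wiener--Hopf factorizable, and its inversion produces the closed form stated in the theorem. The Ansatz $\calB_{j,\tau}=[\phi_\tau']^{1/2}B_{j,\tau}\circ\phi_\tau$ is natural because the square root factor absorbs the conformal Jacobian appearing in the foliation integrals. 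The modified weight $R_\tau$ of \eqref{eq:Rtau} is well-defined in a neighbourhood of $\bar\D_\e$ thanks to the real-analyticity of $\partial\calS_\tau$ for $\tau\in I_{\epsilon_0}$ (a consequence of $1$-admissibility), and since the subtracted $\breve{Q}_\tau$ is harmonic, $\Delta R_\tau=(\Delta Q)\circ\phi_\tau^{-1}\cdot|(\phi_\tau^{-1})'|^2>0$ on $\T$, so Laplace's method is non-degenerate.

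For $j=0$, matching the leading amplitude of the Laplace expansion to the $L^2$-normalization of $P_{n,m}$ forces $|B_{0,\tau}|^2=(4\pi)^{-1/2}(4\Delta R_\tau)^{1/2}$ on $\T$. This is a Dirichlet problem on $\D_\e$ for $\Re H_{R_\tau}=\tfrac14\log(4\Delta R_\tau)$; together with $\Im H_{R_\tau}(\infty)=0$, it pins down the holomorphic representative uniquely and gives $B_{0,\tau}=(4\pi)^{-1/4}\e^{H_{R_\tau}}$. The equivalent form $\calB_{0,\tau}=\pi^{-1/4}\e^{H_{Q,\tau}}$ with $\Re H_{Q,\tau}=\tfrac14\log\Delta Q$ on $\partial\calS_\tau$ then follows via the identity $\Delta R_\tau|_\T=\Delta Q\,|\phi_\tau'|^{-2}$ on $\partial\calS_\tau$ and careful tracking of the $[\phi_\tau']^{1/2}$ Jacobian factor.

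For the inductive step, suppose $B_{0,\tau},\ldots,B_{j-1,\tau}$ are known. Expanding the foliation orthogonality of the approximate polynomial to order $m^{-j}$ produces an equation of the form
\[
\Pop_{H^2_{-,0}}\!\left[\e^{-2\Re H_{R_\tau}}\,B_{j,\tau}\right]=\Pop_{H^2_{-,0}}\!\left[\e^{-\bar H_{R_\tau}}F_{j,\tau}\right]\quad\text{on }\T,
\]
where $F_{j,\tau}$ is a real-analytic function on $\T$ constructed algorithmically from $R_\tau$ and $B_{0,\tau},\ldots,B_{j-1,\tau}$ via differential operators that encode the higher Taylor data of $R_\tau$ transverse to $\T$. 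The Wiener--Hopf factorization $\e^{-2\Re H_{R_\tau}}=\e^{-H_{R_\tau}}\cdot\e^{-\bar H_{R_\tau}}$, with outer factor $\e^{-H_{R_\tau}}$ of definite sign on $\T$, inverts this Toeplitz equation up to the one-dimensional kernel spanned by $\e^{H_{R_\tau}}$, yielding
\[
B_{j,\tau}=c_{j,\tau}\e^{H_{R_\tau}}-\e^{H_{R_\tau}}\Pop_{H^2_{-,0}}\!\left[\e^{\bar H_{R_\tau}}F_{j,\tau}\right].
\]
The single real constant $c_{j,\tau}$ is uniquely determined at each order by the vanishing condition $\Im\mathfrak{f}_{n,m}^{\langle\kappa\rangle}(\infty)=\Ordo(m^{-\kappa-1})$ from Remark~\ref{rem:control-outside}(b), which imposes exactly one real scalar constraint at each level $j$.

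\textbf{Main obstacle.} The hard part is the inductive step: one must execute the Laplace expansion to arbitrary order in such a way that the driving term $F_{j,\tau}$ is a single, canonically defined real-analytic function on $\T$ depending linearly on the previously constructed coefficients, and then verify that the associated Toeplitz operator has exactly the one-dimensional real kernel required for the normalization at infinity to pin things down. It is at this level of bookkeeping that the scalar Riemann--Hilbert interpretation on the Schottky double of $\calS_\tau^c$, alluded to in the abstract, does the structural work of ruling out spurious moduli and ensuring that the recursion is well-posed.
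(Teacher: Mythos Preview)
Your overall strategy is reasonable, but it differs from the paper's, and there is one genuine gap.

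\textbf{Route comparison.} The paper does \emph{not} use the orthogonal foliation flow to prove Theorem~\ref{thm:main-coeff}. That flow is the engine behind Theorem~\ref{main}; once Theorem~\ref{main} is in hand, the paper argues directly. It tests the approximate orthogonality~\eqref{eq:F-orth} against the specific family $q=\Vop_{n,m}[z^{-l}]$ for $l=1,2,3,\ldots$ (corrected to genuine polynomials by H\"ormander $\bar\partial$-estimates with exponentially small error), applies the radial Laplace expansion of Proposition~\ref{prop:steep2}, and then --- this is the technical crux you do not mention --- invokes Lemma~\ref{lem:pseudo} to replace the $l$-dependent operators $\Lop_k[r^{1-l}\cdot]$ by $l$-independent operators $\Mop_k$. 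Only after that substitution do the conditions become bona fide Fourier/Hardy-space conditions on $\T$, from which the Toeplitz-kernel structure of Proposition~\ref{prop:toeplitz-ker} yields the stated formula. Your foliation-flow route could in principle also produce the $B_{j,\tau}$ (the paper itself remarks this in Section~\ref{ss:alg-flow-over}), but that algorithm carries along the auxiliary functions $\hat\psi_{j,l}$ at every stage and does not directly deliver the clean Toeplitz form; the paper explicitly notes this alternative is ``not feasible even for $\kappa=2$''. The direct testing approach is what makes the formula of Theorem~\ref{thm:main-coeff} fall out without those extra unknowns.

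\textbf{The gap.} Your determination of $c_{j,\tau}$ is wrong. The condition $\Im\mathfrak{f}_{n,m}^{\langle\kappa\rangle}(\infty)=\Ordo(m^{-\kappa-1})$ from Remark~\ref{rem:control-outside}(b) forces $\Im B_{j,\tau}(\infty)=0$, which (since $\e^{H_{R_\tau}(\infty)}>0$ and the projection term vanishes at infinity) yields only $\Im c_{j,\tau}=0$. It says nothing about the real value. In the paper the magnitude of $c_{j,\tau}$ is fixed instead by the $L^2$-normalization $\|\chi_{0,\tau}F_{n,m}^{\langle\kappa\rangle}\|_{2mQ}=1+\Ordo(m^{-\kappa-1})$: expanding this via Laplace's method order by order produces, at level $j$, a linear equation for $\Re c_{j,\tau}$ in terms of the already-known $B_{0,\tau},\ldots,B_{j-1,\tau}$ (this is exactly the content of \eqref{eq:cj}). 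Without invoking normalization you have a one-parameter family of solutions at each step, and the recursion is not well-posed.
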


\begin{rem}
  \label{rem:hol-ext-comput}
  {\rm (a)}\; In the above theorem, all the functions $\calB_{\tau, j}$, $B_{\tau, j}$ 
  as well as $H_{Q,\tau}$ and $H_{R_\tau}$ 
  extend holomorphically across their respective boundaries.

\noindent {\rm (b)}\; The functions 
$H_{Q,\tau}$ and $H_{R_\tau}$ 
are related by
\[
H_{R_\tau}\circ\phi_\tau=\frac12\log(2 \phi_\tau')+ H_{Q,\tau}. 
\]

\noindent {\rm (c)}\; We point out that 
Theorems \ref{thm:main-pw} and \ref{thm:main-coeff} together
imply that for large enough $m$, and for $\tau=\frac{n}{m}
\in I_{\epsilon_0}$, all the zeros of the polynomial $P_{m,n}(z)$
lie inside $\calS_\tau$, and stay away from the boundary curve
$\partial\calS_\tau$ by 
a distance of at least $A(m^{-1}\log m)^{\frac12}$.
\end{rem}
While Theorem \ref{thm:main-coeff} gives the asymptotic structure of the
orthogonal polynomials, it remains to specify how to algorithmically 
obtain the real-analytic functions $F_{\tau, j}$ and 
the constants $c_{\tau, j}$, for $j=1,2,3,\ldots$.
For $k=0,1,2,\ldots$, let $\Lop_k$ be the differential operator given by
\begin{equation}\label{eq:Lop}
\Lop_k[f]=\sum_{\nu=k}^{3k}\frac{(-1)^{\nu-k}2^{-\nu}}{\nu!(\nu-k)!
[\partial_r^2R_{\tau}(re^{\imag\theta})]^\nu}
\partial^{2\nu}_r\Big(\Big[R_\tau-\frac{1}{2}(r-1)^2
\partial_r^2R_\tau(e^{\imag\theta})\Big]^{\nu-k}f(r\e^{\imag \theta})\Big).
\end{equation}
This is a differential operator of order $6k$, acting on a smooth function $f$ 
defined in a neighborhood of the unit circle. 
We are specifically interested in the restriction 
$\Lop_k[f](r\e^{\imag\theta})\big\vert_{r=1}$, which expression only 
involves derivatives of order at most $2k$. The operator $\Lop_k$
results from the asymptotic analysis of definite integrals using Laplace's method, 
as in Proposition \ref{prop:steep} below.
Later on, in Lemma~\ref{lem:pseudo}, we show the existence of 
differential operators $\Mop_{k}$ with the property that
\[
\int_{\T}\e^{\imag l\theta} \big(\partial^2_r R_\tau(r\e^{\imag\theta})\big)^{-\frac12}
\Lop_{k}[r^{1-l}f(r \e^{\imag\theta})]\bigg\vert_{r=1}
\diff\theta=\int_{\T}\e^{\imag l\theta}\Mop_k [f](\e^{\imag\theta})]
\diff\theta,
\]
for $l=1,2,3,\ldots$. 
We use these operators to rid the left-hand side of any unwanted dependence 
on the parameter $l$.
In terms of the operators $\Lop_k$ and $\Mop_k$, we may now express 
$F_{\tau, j}$ and $c_{\tau, j}$ as follows:
\begin{equation}\label{eq:Fj}
  F_{\tau, j}(\theta)=\sum_{k=1}^{j}\Mop_k[B_{\tau, j-k}](\e^{\imag\theta}),
  \qquad j\geq 1,
\end{equation}
and the real constants $c_{\tau, j}$ are given by 
$c_{\tau, 0}=(4\pi)^{-1/4}$ while for $j=1,2,3,\ldots$,
\begin{equation}\label{eq:cj}
c_{\tau, j}=-\frac12(4\pi)^{\frac14}\sum_{(i,k,l)\in\indset_j}
\int_{\T}
\Mop_k\big[B_{\tau, i}\bar{B}_{\tau, l}\big](\e^{\imag\theta})
\diffs(\e^{\imag\theta})
\end{equation}
where $\indset_j=
\{(i,k,l)\in\N^3\,:\,\,i,l<j,\,\, k\geq 0,\,\, i+k+l=j\}$ and
$\N:=\{0,1,2,\ldots\}$.
The way this algorithm works is that we start with the known function
$B_{\tau,0}$, 
which in its turn gives the function $F_{\tau,1}$ and the constant $c_{\tau,1}$ via
\eqref{eq:Fj} and \eqref{eq:cj}, respectively. This then given $B_{\tau,1}$ 
from the expression in Theorem~\ref{thm:main-coeff}. 
In the next round, we obtain $F_{\tau,2}$ and $c_{\tau,2}$ followed 
by $B_{\tau,2}$ in a similar fashion. An inductive procedure gives 
$F_{\tau,j}$, $c_{\tau,j}$, and $B_{\tau,j}$ for all $j\ge 2$ as well.
Knowing $B_{\tau,j}$ then gives the coefficient function $\calB_{\tau,j}$
as well, by Theorem~\ref{thm:main-coeff}.

As a direct consequence of Theorems~\ref{thm:main-pw} and \ref{thm:main-coeff}, 
we resolve the boundary universality conjecture (Conjecture~\ref{conj:erf}) 
for $1$-admissible potentials.
For the convenience of the reader, we recall some notation. 
For $z_0\in\partial\calS_1$ we denote by $\mathrm{n}$ the 
outward unit normal to $\partial\calS_\tau$ at $z_0$, 
and write $z_m(\xi)$ for the rescaled variable around $z_0$ given by
\eqref{eq:resc1}.

\begin{cor}\label{cor:erf}
Assume that $Q$ is $1$-admissible, and denote by $\kcorrker_m$ the
rescaled kernel
\[
\kcorrker_m(\xi,\eta)=\frac{1}{2m\hDelta Q(z_0)}\Kcorrker_m(z_m(\xi),z_m(\eta)).
\]
Then, there exist unimodular continuous functions $c_m:\C\to \T$ such that
we have the convergence
\[
\lim_{m\to\infty}c_m(\xi)\bar{c}_m(\eta)\kcorrker_m(\xi,\eta)=\kcorrker(\xi,\eta),
\]
locally uniformly on $\C^2$, where the limiting kernel is the
Faddeeva plasma kernel
\[
\kcorrker(\xi,\eta)=
\e^{\xi\bar\eta-\frac{1}{2}(\lvert\xi\rvert^2+\lvert \eta\rvert^2)}
\,\mathrm{erf}\,(\xi+\bar\eta).
\] 
\end{cor}

\begin{rem}
The above kernel convergence has an interpretation in terms of 
determinantal point processes in the plane. More precisely,  
the blow-up of the eigenvalue process for the RNM-ensemble 
around $z_0$ converges to the Fadeeva plasma point field, 
with correlation kernel $\kcorrker(\xi,\eta)$.
The unimodular continuous functions $c_m$ are irrelevant, as they 
do not affect determinantal point processes.
\end{rem}

To complement the present exposition on planar orthogonal polynomials, we
explain in \cite{HW-ose} how the ideas developed here
also apply to give a full asymptotic expansion of
the Bergman kernel for exponentially varying weights when one of the 
variables is away from the corresponding droplet. In that setting, the droplet
arises typically from the repulsive effect of patches where $\hDelta Q<0$.
This result gives error function transition behavior along smooth loops 
of the droplet boundary.

In follow-up work \cite{HW-free}, we intend to explore further 
the implications of Theorem~\ref{main} and \ref{thm:main-coeff} for the 
theory of random normal matrices. In particular, 
we analyze the asymptotics of the free energy $\log \mathcal{Z}_{m,Q}$, 
where $\mathcal{Z}_{m,Q}$ denotes the partition function 
of the RNM-ensemble, and relate the analysis to the planar analogue of the 
classical Szeg\H{o} limit theorem on Toeplitz determinants.

\subsection{Sketch of the main ideas}
\label{ss:idea-sketch}
The first step towards obtaining Theorem~\ref{thm:main-pw} 
is the construction of a family of approximately orthogonal quasipolynomials,
defined outside a compact subset $\calK_\tau$ of the interior of the
droplet $\calS_\tau$. This family of functions have the property
that they are approximately orthogonal to the collection 
of lower degree polynomials, have the correct polynomial 
growth at infinity, but need not
be well-defined globally (i.e.\ on $\calK_\tau$).
In a second step, these quasipolynomials may 
be corrected to true polynomials using
H{\"o}rmander's $\bar\partial$-estimates. 
The actual construction depends on our key lemma 
(Lemma~\ref{lem:main-flow})
which establishes the existence of what we call the 
{\em orthogonal foliation flow}. 

We turn to the underlying ideas for the orthogonal foliation flow.
Our approach will take a slightly different point 
of view than what is used later on. It has the advantage of being more 
intuitively direct.
The approach begins with the following disintegration formula:
let $\{\gamma_{m,n,t}\}_{t}$ denote a smoothly 
varying family of closed simple curves, 
which foliate a region $\Omega_{m,n}$ when $t$ runs through an interval $J_m$.
If $\nu(z)$ denotes the scalar normal velocity 
of the curve flow as it passes through $z$, then
for a suitably integrable function $F$ we have
\begin{equation}\label{eq:disint}
  \int_{\Omega_{m,n}}F(z)\,\e^{-2mQ(z)}\diffA(z)=
  2\int_{J_m}\int_{\gamma_{m,n,t}}F(z)\,\e^{-2mQ(z)}\nu(z)\diffs(z)\diff t
\end{equation}
We consider the weighted arc length measure $\e^{-2mQ}\nu\diffs$ restricted 
to the curve $\gamma_{m,n,t}$, and the associated orthogonal polynomial $P_{m,n,t}$
of degree $n$.
We would like to find a foliation $\{\gamma_{m,n,t}\}_{t}$ 
of the region $\Omega_{m,n}$ 
such that $P_{m,n,t}=c(t)P_{m,n,0}$, 
where $P_{m,n,0}$ is independent of the
flow parameter $t$ and $c(t)$ is an appropriate positive constant. 
As a consequence of \eqref{eq:disint}, the polynomial $P_{m,n,0}$ is 
then orthogonal to $\mathrm{Pol}_n$ with respect to the measure
$1_{\Omega_{m,n}}\e^{-2mQ}\diffA$.
Now, if the foliation covers a sufficiently 
large enough region $\Omega_{m,n}$, 
then the resulting normalized 
orthogonal polynomial ought to be close to $P_{m,n}$ itself. 
In other words, the two-dimensional orthogonality relations
{\em foliate} into lower-dimensional orthogonality relations along
a curve family $\{\gamma_{m,n,t}\}_{t}$. 

The stationarity condition $P_{m,n,t}=c(t)P_{m,n,0}$ is
quite demanding,
and in fact we do not know that such a foliation exists, 
at least if we require it to foliate the entire plane.
Instead, we obtain the foliation in an approximate sense, 
up to any given precision, so that $\Omega_{m,n}$ covers a band around
$\partial\calS_\tau$
of width $\asymp m^{-\frac12}\log m$.
We remark that the stationarity condition may be 
thought of as a Hele-Shaw flow 
condition (see \cite{gvt}, \cite{HS})for the curves 
$\gamma_{m,n,t}$, with respect to the weight 
$|P_{m,n,0}|^2\e^{-2mQ}$. Hele-Shaw flows are 
notorious for singularity formation,
after which the foliation flow cannot be continued.
The requirement not to develop such singularities 
puts a strong requirement 
on the weight $|P_{m,n,0}|^2\e^{-2mQ}$.
This is used in an approximate fashion in 
Section~\ref{sec:flow} to devise an algorithm
to construct $P_{m,n,0}$ together with the 
foliation iteratively 
in a self-improving manner.
For technical reasons, we work with the  
flow curves $\Gamma_{m,n,t}=\phi_\tau(\gamma_{m,n,t})$ 
after applying the conformal mapping $\phi_\tau$, and 
consider quasipolynomials rather than polynomials.

\subsection{An outline of the presentation}
In Section~\ref{sect:prel}, we introduce some auxiliary material which 
will be needed later on. In particular, we 
discuss some aspects of weighted logarithmic potential theory and 
obstacle problems, and introduce the concept of weighted Laplacian growth. 
Moreover, we collect some results on H{\"o}rmander-type $L^2$-estimates 
for the $\bar\partial$-operator, and the asymptotic analysis 
of integrals based on Laplace's method.

In Section~\ref{s:existence-expansion}, we introduce the notion of
quasipolynomials, and state our key lemma on the orthogonal foliation flow
(Lemma~\ref{lem:main-flow}). 
Using H{\"o}rmander $\bar\partial$-techniques
we obtain the $L^2$-analogue of the main theorem
(Theorem~\ref{main}) from the key lemma. The main theorem
(Theorem~\ref{thm:main-pw})
then follows from Theorem~\ref{main} by a weighted Bernstein-Walsh lemma.

In Section~\ref{sect:alg}, we obtain
Theorem~\ref{thm:main-coeff}, which identifies the coefficient functions in
the asymptotic expansion. The proof is based on steepest descent analysis. 
The starting point is the existence of the expansion of
Theorem~\ref{thm:main-pw} which tells us that the probability
distribution $|P_{m,n}|^2\e^{-2mQ}$ is approximately
a Gaussian ridge centered around $\partial\calS_\tau$, so by composing with the
conformal mapping $\phi_\tau$ we obtain a Gaussian ridge around the
unit circle. 
By writing the relevant integrals in polar coordinates and applying
Laplace's method in the radial direction, this structure allows us
to {\em collapse} the orthogonality conditions into integral equations
on the unit circle.
The collapsed orthogonality conditions then reduce to inhomogeneous 
Toeplitz kernel equations. 
The algorithm arises when we solve those equations.

In Section~\ref{sect:erfc}, we supply more details on  
determinantal point processes, and give the proof of 
Corollary~\ref{cor:erf} on boundary universality
in the random normal matrix model for $1$-admissible potentials.

In Section~\ref{sec:flow}, we supply the proof of key lemma on the
existence of the orthogonal foliation flow. The proof is based on
an algorithm, which determines
both the flow and the asymptotic expansion of the approximately orthogonal
quasipolynomials in an iterative and intertwined fashion.
An outline of the algorithm is provided in 
Subsections~\ref{ss:flow1} and \ref{ss:alg-flow-over}.

Finally, in Section \ref{s:RHP}, we connect our orthogonal foliation 
flow with the Its and Takhtajan approach involving soft Riemann-Hilbert
problems ($2\times2$ matrix $\bar\partial$-problems).

\subsection{Acknowledgments} We wish to thank the anonymous referee
for several helpful and insightful comments, which has significantly
improved the manuscript. 
In addition, we would like to thank Gernot Akemann, Nam-Gyu Kang, Simon Larson,
Nikolai Makarov, and Ofer Zeitouni for stimulating discussions.

\subsection{Notation and conventions}
\label{ss:notation}
We denote by $\partial_z$ and $\bar\partial_z$ the standard Wirtinger
derivatives, given by
\begin{equation}\label{eq:def-Wirtinger}
\partial_z=\frac12\big(\partial_x-\imag\partial_y\big)\quad\text{and}\quad 
\bar\partial_z=\frac12\big(\partial_x+\imag\partial_y\big),\qquad z=x+\imag y.
\end{equation}
When the dependence on $z$ is clear we will omit the subscript
and simply write $\partial$ and $\bar\partial$.
The Laplacian factorizes as $\hDelta =\partial\bar\partial$
(notice that this is a quarter of the usual Laplacian).

The Riemann sphere is denoted by $\widehat{\C}$, and
we identify it with the extended complex plane $\widehat{\C}=\C\cup\{\infty\}$
via stereographic projection. If $\Gamma$ is a bounded 
Jordan curve, and $\Omega_\e$ denotes the unbounded 
component of $\C\setminus \Gamma$, then the domain 
$\Omega_\e$ is simply connected if regarded
as a domain on the Riemann sphere $\widehat{\C}$.
As a consequence, the Riemann mapping theorem 
guarantees that there exists a conformal
mapping $\phi:\Omega_\e\to \D_\e$ onto the exterior disk $\D_\e$. 
This mapping is uniquely determined if we require that 
\begin{equation}\label{eq:def-orthostatic}
  \phi(\infty)=\infty \quad \text{and}\quad \phi'(\infty)>0.
\end{equation}
A conformal mapping of unbounded domains which is subject to the normalization 
\eqref{eq:def-orthostatic} at infinity
is called {\em orthostatic}. 
Unless specified otherwise, a conformal mapping $\phi:\Omega_1\to\Omega_2$ is 
tacitly assumed to be onto.

We use the standard Landau notation for control of asymptotic quantities.
Namely, if $f(t)$ and $g(t)$ denote two positive functions
defined for $t\in (0,1]$, 
we say that $f(t)=\Ordo(g(t))$ as $t\to 0$ if
there exists a constant $C$ with $0<C<\infty$ 
such that $f(t)\le C g(t)$ for all $t>0$
sufficiently small.
Moreover, we say that $f(t)=\ordo(g(t))$ as $t\to 0$
if $\lim_{t\to 0} f(t)/g(t)=0$.
Moreover, we use the notation $f(t)\asymp g(t)$ to say that
$f(t)=\Ordo(g(t))$ and $g(t)=\Ordo(f(t))$, as $t\to 0$.
Similar comparisons when $f$ and $g$ are functions defined on more general sets 
are understood analogously.

For a positive Borel measure $\mu$ supported on the set $\Omega \subset \C$,
we denote by $L^2(\Omega,\mu)$ the standard $L^2$-space of square
integrable functions with respect to $\mu$, with inner product 
\[
\langle f,g\rangle_\mu=\int_\Omega f(z)\overline{g(z)}\,\diff\mu(z).
\]
For a domain $\Omega\subset \C$, we define the Bergman space
$A^2(\Omega,\mu)$ 
as the subspace of $L^2(\Omega,\mu)$ consisting of all $f\in L^2(\Omega,\mu)$
which are holomorphic on $\Omega$.
For an integer $n$ and unbounded $\Omega$, we denote by $L^2_n(\Omega,\mu)$
and $A^2_n(\Omega,\mu)$
the subspaces of functions $f$ with
\[
|f(z)|=\Ordo(|z|^{n-1}),\qquad z\in\Omega,\;\,|z|\to+\infty.
\]
If $\Omega=\C$ is the entire complex plane, we drop it from the notation. 
Measures of the form $\diff\mu=\e^{-\phi}\diffA$ play a major role
in our analysis, and for such measures
we use the shorthand notation $A^2_\phi(\Omega)$, $L^2_\phi(\Omega)$, 
$A^2_{\phi,n}(\Omega)$, and $L^2_{\phi,n}(\Omega)$ for the spaces discussed above.
The $L^2$ norm and inner products are denoted by $\lVert \cdot \rVert_\mu$ and 
$\langle \cdot,\cdot\rangle_{\mu}$, or simply by $\lVert \cdot \rVert_\phi$ and 
$\langle \cdot,\cdot\rangle_{\phi}$ in the case of measure of the form
$\diff\mu=\e^{-\phi}\diffA$.

For the convenience of the reader, we supply a list of frequently used notation.
\medskip

\begin{tabular}{ll}
$\C$, $\R$, $\T$& Complex plane, real line, and unit circle, 
respectively. 
\\
$\D$, $\D_\e$ & Open unit disk $\D=\{z:|z|< 1\}$ and exterior disk
$\D_\e=\{z:|z|>1\}$, also
\\ & for arguments $(z_0,r)$ denoting center and radius of the boundary circle.
\\
$\Z$, $\N$, $\Z_+$& Integers, natural numbers 
$\N=\{0,1,2,\ldots\}$ and positive integers \\ 
& $\Z_+=\{1,2,3,\ldots\}$, respectively.
\\
$\mathcal{E}^c, \mathcal{E}^\circ,\overline{\mathcal{E}}$& 
Complement, interior, and closure of the set $\mathcal{E}$. The complement is
\\ & understood as
$\C\setminus \mathcal{E}$, unless specified otherwise.
\\
$1_\mathcal{E}$ & Indicator function for the set $\mathcal{E}$.
\\
$\partial_z, \bar\partial_z$ & Wirtinger derivatives, 
given by $\partial_z=\frac{1}{2}(\partial_x - \imag\partial_y)$, 
$\bar\partial_z=\frac{1}{2}(\partial_x + \imag\partial_y)$, 
\\
& where $z=x+\imag y$.
\\
$\hDelta$& Laplacian, which factorizes as $\hDelta=\partial\bar\partial$. 
N.B.: this equals one-quarter of 
\\
& the usual Laplacian.
\\
$\mathrm{Pol}_n$ & Space of polynomials of degree at most $n-1$.
\\
$Q$, $\checkQ_\tau$ & The potential and the solution to 
obstacle problem with growth $\tau\log|z|$ 
\\ & at infinity, respectively.
\\
$\breve{Q}_\tau$& Harmonic extension of $\checkQ_\tau\big\vert_{\calS_\tau^c}$ 
across $\partial\calS_\tau$.\\
$Q_\tau^\circledast$& Bounded harmonic extension of 
$Q\big\vert_{\partial\calS_\tau}$ to $\calS_\tau^c$. 
\\
$\calQ_\tau$& Holomorphic function on $\calS_\tau^c$ with 
$\Re \calQ_\tau=Q_\tau^\circledast$ and $\Im \calQ_\tau(\infty)=0$.
\\
$\calS_\tau,\calS_\tau^\star$ & The droplet and the coincidence 
set for the obstacle problem, respectively.
\\
& These are equal under the $\tau_0$-admissibility assumption, for
$|\tau-\tau_0|$ small.
\\
$\calK_{0,\tau}$, $\calK_\tau$ & Compact subsets of $\calS_\tau^\circ$
related with the radii $\rho_{0,0}$ and $\rho_0$, respectively.
\\
$I_{\epsilon_0}$ & $I_{\epsilon_0}=[1-\epsilon_0,1+\epsilon_0]$ for a small
parameter $\epsilon_0>0$.
\\
$\phi_\tau$& Conformal mapping $\phi_\tau:\calS_\tau^c\to\D_\e$ with 
$\phi_\tau(\infty)=\infty$ and $\phi'_\tau(\infty)>0$.
\\
$R_\tau$ & The modified potential, given by
$(Q-\breve{Q}_\tau)\circ\phi_\tau^{-1}$.
\\
$\chi_{\tau,0}, \chi_{\tau,1}$ & Smooth cut-off functions related via 
$\chi_{\tau,0}=\chi_{\tau,1}\circ \phi_\tau$.  
\\
$\hm(E,\Omega,z_0)$& Harmonic measure of $E$ relative to $(\Omega,z_0)$.
\\
$H^2$, $H^2_{-}, H^2_{-,0}$ & Hardy spaces, cf.\ Subsection~\ref{ss:herglotz}.
\\
$\Hop_\Omega$&The Herglotz operator for a domain $\Omega$ 
containing the point at infinity. 
\\
$\Pop_{H^2},\Pop_{H^2_{-}}$&Orthogonal projection onto Hardy spaces. 
\\
$\indset$, $\indsetS$, $\indsetT$, $\indsett$ & Index sets, appearing with 
various subscripts and superscripts. 
\\ 
& See pp. 36, 43 and 49.
\\$\Lop_k$, $\Mop_k$ &Differential operators arising in steepest descent
 calculations.
\\
$\calB_{\tau,j}$, $B_{\tau,j}$ & Coefficient functions 
in asymptotic expansions of ONPs, related through 
\\
& the conformal mapping $\phi_\tau$ (see Theorem~\ref{thm:main-coeff}).
\\
$\psi_{s,t}$, $\hat{\psi}_{j,l}$, $b_j$ & Conformal mappings related to the
orthogonal foliation flow, their Taylor \\ & coefficients in $(s,t)$, 
and bounded holomorphic coefficient functions.
\\
$\Gamma_{m,n,t}$, $\calD_{m,n}$ & The curves of the orthogonal foliation
and the foliated region, 
respectively.
\\
$\logdens_{s,t}$, $\hat{\logdens}_{j,l}$ & The logarithmic density in the
master equation and its Taylor coefficients in $(s,t)$, 
see 
\\ & Subsection~\ref{ss:alg-flow-over}.
\\
$\Vop_{m,n}$ & Canonical positioning operator, cf.\ Subsection~\ref{ss:reduct}.
\\
$F_{m,n}^{\langle \kappa\rangle}$, $f_{m,n}^{\langle \kappa\rangle}$ & 
Quasipolynomial and analogous bounded function, related through $\Vop_{m,n}$.
\\
$\delta_m$ & The number $\delta_m=m^{-\frac12}\log m$.
\\
$\hat{\mathbb{A}}(\hrho,\sigma)$ & 
The $2\sigma$-fattened diagonal annulus, cf.\ 
Subsection~\ref{ss:flow0}.
\\
$\prec_{\mathrm{L}}$, $\prec_{\mathrm{OL}}$ & Lexicographic and
order-lexicographic orderings.
\\
$\mathrm{POL}(\cdot)$ & Polynomial complexity classes, 
cf.\ Subsection~\ref{ss:pol-complexity}.
\\
$\calG_{\boldsymbol{\mu},\boldsymbol{\nu}}$, 
$\calH_{\boldsymbol{\mu},\boldsymbol{\nu}}$ &
Non-linear differential expressions for Fa{\`a} di Bruno's formula.
\end{tabular}

\section{Preliminaries}\label{sect:prel}
\subsection{An obstacle problem and logarithmic potential 
theory}\label{ss:prel-pot}
In this section, we follow the presentation of \cite{HM}.
The standard reference for the potential theoretic aspects of 
this material is the monograph \cite{SaffTotik} by Saff and Totik.

For a positive real parameter $\tau$, let $\mathrm{Subh}_\tau(\C)$
denote the convex set of all subharmonic 
functions $q:\C\to\R\cup\{-\infty\}$ on the complex plane $\C$ which
meet the growth bound
\[
q(z)\le \tau \log |z|+\Ordo(1)
\] 
as $\lvert z\rvert\to\infty$. 
For lower semicontinuous potentials $Q$ 
subject to the growth condition \eqref{eq-Qcond}, we 
let $\checkQ_\tau$ be the solution to the obstacle problem
\begin{equation}\label{eq:subh}
\checkQ_\tau(z):=\sup\big\{q(z):\,q\in\mathrm{Subh}_\tau(\C)\,\,\,
\text{and}\,\,\,
\,q\le Q\,\,\,\text{on}\,\,\,\C\big\},
\end{equation}
and observe that trivially $\checkQ_\tau\le Q$, and if we regularize 
$\checkQ_\tau$ on a set of logarithmic capacity $0$ (and keep the 
same notation for the regularized function) 
then $\checkQ_\tau\in\mathrm{Subh}_\tau(\C)$ holds.
Suppose now that $Q$ is $C^2$-smooth.
Standard regularity results then give that
$\checkQ_\tau$ is $C^{1,1}$-smooth, so that the partial derivatives of 
order $2$ of $\checkQ_\tau$ are locally bounded (in the sense of distribution 
theory), see e.g. \cite{Berman} for a simple argument to this effect. 
As a consequence of the growth condition \eqref{eq-Qcond} on $Q$,
the \emph{coincidence set} defined by
\[
\mathcal{S}_\tau^\star:=\{z\in\C:\, \checkQ_\tau(z)=Q(z)\}
\]
is compact, and moreover, a Perron-type argument shows that
$\checkQ_\tau$ is harmonic off $\calS_\tau^\star$.
It now follows from the $C^{1,1}$-smoothness that
$\hDelta \checkQ_\tau=1_{\mathcal{S}_\tau^\star}\hDelta Q$ 
holds in the sense of 
distribution theory (see \cite[p. 53]{KS}).

The above obstacle problem has a direct 
relation with weighted potential theory. 
The weighted logarithmic energy, with respect to a continuous weight 
function $V:\C\to\R$, of a compactly supported finite real
Borel measure $\mu$ is defined as
$$
I_V[\mu]=\int_{\C\times\C} \log\frac{1}{\lvert z-w\rvert}\diff \mu(z)
\diff\mu(w)+2\int_{\C}V(z)\diff\mu(w).
$$
With $V=\tau^{-1}Q$, we set out to minimize the energy $I_{\tau^{-1}Q}[\mu]$ 
over all compactly supported Borel {\em probability} measures $\mu$. 
There is a unique such minimizer, called the \emph{equilibrium measure}, 
which we denote by $\mu_\tau$. The connection with the obstacle problem 
is via the relation   
\begin{equation}
\label{eq:HSevol}
\diff \mu_{\tau}(z)=
2\tau^{-1}
\hDelta\checkQ_\tau\diffA=
2\tau^{-1}
1_{\calS_\tau^\star}
\hDelta Q(z)\diffA.
\end{equation}
As a consequence, we may recover the logarithmic potential for the
equilibrium measure from $\checkQ_\tau$ and a real constant $F_{Q,\tau}$: 
\[
  U^{\mu_\tau}(z):=\int_\C\log\frac{1}{|z-w|}\diff\mu_\tau(z)
  =-\tau^{-1}
  \checkQ_\tau(z)+F_{Q,\tau},\qquad z\in\C.
\]
Since $\mu_\tau$ is a probability measure by definition, we see from
\eqref{eq:HSevol} that $\hDelta Q\ge 0$ a.e.\ on $\calS_\tau^\star$.
So, the coincidence set $\calS_\tau^\star$ will avoid the open subset 
of the plane where $\hDelta Q<0$, which may be nonempty.
We call the support (as a distribution) of the equilibrium 
measure $\mu_\tau$ the \emph{droplet}, and denote it by $\calS_\tau$. 
We alternatively call it the \emph{spectral droplet}, due to the spectral
interpretation as the accumulation set for the eigenvalues of random matrices. 
In general this is a subset of the coincidence (or contact) set 
$\calS_\tau^\star$. However, the difference set 
$\calS_\tau^\star\setminus\calS_\tau$ 
is small, in the sense that it is a null set with respect to the weighted
area measure $|\hDelta Q|\diffA$.
In this presentation, we will assume throughout that the potential 
$Q$ is $1$-admissible. 
Under this assumption, we have the equality $\calS_\tau=
\calS_\tau^\star$ for $\tau\in I_{\epsilon_0}:=[1-\epsilon_0,1+\epsilon_0]$
with some small but positive $\epsilon_0$.

The function $\checkQ_\tau$ is $C^{1,1}$-smooth, with 
$\checkQ_\tau=Q$ on the droplet $\calS_\tau$, whereas in the complement 
$\calS_\tau^c$ it is harmonic and determined by the boundary data  
that $\checkQ_\tau=Q$ on $\partial\calS_\tau$
and the growth $\checkQ_\tau(z)=\tau\log|z|+\Ordo(1)$ as $|z|\to+\infty$. 
We proceed to introduce some further functions related to the potential $Q$.

\begin{defn}\label{def:Q-funct} Assume that $Q$ is $1$-admissible, 
  and let $\tau\in I_{\epsilon_0}$. Then
\begin{enumerate}[(i)]
\item $\breve Q_\tau$ is defined as
the harmonic extension of the restriction of $\checkQ_\tau$ to 
$\calS_\tau^c$ across $\partial\calS_\tau$. 
\item $Q_\tau^\circledast$ is the bounded harmonic harmonic function on 
$\calS_\tau^c$ which equals $Q$ on $\partial\calS_\tau$, extended harmonically
across $\partial\calS_\tau$. 
\item $\calQ_\tau$ is the bounded holomorphic function in 
$\calS_{\tau}^c$ such that $\re \calQ_\tau=Q^\circledast_\tau$ 
on $\calS_\tau^c$ with $\Im \calQ_\tau(\infty)=0$, extended analytically
across $\partial\calS_\tau$. 
\end{enumerate}
\end{defn}
It is clear that the functions $\breve{Q}_\tau$ and $Q^\circledast_\tau$ are
related via
\begin{equation}\label{eq:Q-breve-circledast}
\breve{Q}_\tau=\tau\log|\phi_\tau| +Q^\circledast_\tau.
\end{equation}

\subsection{A weighted Bernstein-Walsh lemma}
The significance of the set $\mathcal{S}_\tau$ in relation to orthogonal 
polynomials is made clear by Proposition~\ref{prop-gengrowth-ext} 
below. We begin with a useful lemma taken from \cite{ahm1}, see Lemma~3.2.

\begin{lem}
Let $u$ be holomorphic in a disk $\D(z,m^{-1/2}\delta)$ Then
\[
\lvert u(z)\rvert^2 \e^{-2mQ(z)}\leq \frac{m\e^{A\delta^2}}{\delta^2}
\int_{\D(z,m^{-1/2})}\lvert u\rvert^2 \e^{-2mQ}\diffA,
\]
where $A$ denotes the essential supremum of $\hDelta Q$ on 
$\D(z,m^{-1/2}\delta)$. 
\label{prop-gengrowth}
\end{lem}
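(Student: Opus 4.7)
The plan is to run a submean-value argument after correcting the weight $\e^{-2mQ}$ by a quadratic term chosen to restore subharmonicity of the density. Note also that the integration disk in the statement should presumably match the disk of holomorphicity $\D(z,m^{-1/2}\delta)$ (otherwise the hypothesis on $u$ would not suffice); this is the version my argument will establish.

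First, I would introduce the auxiliary function
\[
V(w) := |u(w)|^2\exp\!\Big({-}2mQ(w) + \tfrac{mA}{2}|w-z|^2\Big).
\]
Since $u$ is holomorphic on $\D(z,m^{-1/2}\delta)$, the function $2\log|u|$ is subharmonic there, and
\[
\Delta\!\Big({-}2mQ(w) + \tfrac{mA}{2}|w-z|^2\Big) = -2m\Delta Q(w) + 2mA \geq 0
\]
by the choice of $A$ as the essential supremum of $\Delta Q$ on the disk. Hence $\log V$ is a sum of two subharmonic functions and therefore subharmonic, and since the exponential is convex and increasing, $V = \e^{\log V}$ is subharmonic on the same disk.

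Next, I would apply the submean-value inequality to $V$ on $\D(z,m^{-1/2}\delta)$. Using the paper's normalization $\diffA = \pi^{-1}\diff x\diff y$, this reads
\[
V(z) \leq \frac{m}{\delta^2}\int_{\D(z,m^{-1/2}\delta)} V\,\diffA.
\]
The quadratic correction vanishes at $w=z$, so $V(z) = |u(z)|^2\e^{-2mQ(z)}$, while for $w$ in the disk one has $m|w-z|^2\leq\delta^2$, yielding the pointwise bound $V(w)\leq \e^{A\delta^2/2}|u(w)|^2\e^{-2mQ(w)}$. Combining these gives the desired inequality (in fact with the sharper constant $\e^{A\delta^2/2}$ in place of $\e^{A\delta^2}$).

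There is no genuine obstacle in this argument; the only point to identify is the correct compensating quadratic, whose coefficient is forced by requiring the shifted Laplacian $-2m\Delta Q + 2mA$ to be nonnegative throughout the disk.
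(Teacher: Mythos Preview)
Your approach is correct and is the standard one; the paper does not supply its own proof but simply cites \cite{ahm1}, Lemma~3.2, where the argument is essentially the one you outline (correct the weight by a quadratic so that the density becomes subharmonic, then apply the sub-mean-value inequality). You are also right that the radius of the integration disk in the displayed inequality should be $m^{-1/2}\delta$.

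There is, however, a coefficient slip in your subharmonicity computation. With the paper's convention $\Delta=\partial\bar\partial$ (one quarter of the usual Laplacian, see the notation list), one has $\Delta(|w-z|^2)=1$, not $4$. Hence
\[
\Delta\Big({-}2mQ(w)+\tfrac{mA}{2}|w-z|^2\Big)=-2m\Delta Q(w)+\tfrac{mA}{2},
\]
which need not be nonnegative. The correct compensating coefficient is $2mA$, giving $V(w)=|u(w)|^2\exp(-2mQ(w)+2mA|w-z|^2)$; then the bound on the disk is $2mA|w-z|^2\le 2A\delta^2$, and the argument yields the constant $\e^{2A\delta^2}$ rather than the ``sharper'' $\e^{A\delta^2/2}$ you claim. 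This does not affect the method or any application in the paper (only the inessential constant in the exponent), but your final remark about obtaining a better constant is an artifact of mixing Laplacian normalizations.
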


This lemma is used in \cite{ahm1} to obtain growth bounds 
for polynomials of degree at most $n$. The approach works more generally, for
functions of polynomial growth 
in the space $A^2_{2mQ}(\calK^c)$ defined in Subsection~\ref{ss:notation},
where $\calK$ is a compact subset of the interior of the droplet $\calS_\tau$.
The following result generalizes the classical Bernstein-Walsh lemma, see 
e.g.\ Chapter III.$2$ in \cite{SaffTotik}.

\begin{prop}
Let $\tau=\frac{n}{m}$, and suppose $\mathcal{K}$ is a compact subset of the 
interior of $\mathcal{S}_\tau$. 
Then there exists a positive constant $C$ such that for any 
$u\in A^2_{2mQ}(\mathcal{K}^c)$ 
with the polynomial growth control 
$\lvert u(z)\rvert=\Ordo\left(\lvert z\rvert^{n}\right)$ as 
$|z|\to\infty$, we have that
$$
|u(z)|\le Cm^{\frac12}\|u\|_{L^2(\mathcal{K}^c,\e^{-2mQ})}
\e^{m\checkQ_\tau(z)},\qquad \mathrm{dist}_\C(z, \calK)\geq \delta m^{-1/2}.
$$
\label{prop-gengrowth-ext}
\end{prop}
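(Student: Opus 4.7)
The plan is to apply Lemma~\ref{prop-gengrowth} on the portion of $\calK^c$ lying inside the droplet $\calS_\tau$ (including its boundary) to get a pointwise bound involving $Q$, and then push the bound to the exterior $\calS_\tau^c$ via the maximum principle for the subharmonic function $\phi(z):=\log\lvert u(z)\rvert-m\check{Q}_\tau(z)$. The two ingredients from the obstacle problem we will use are that $\check Q_\tau=Q$ on $\calS_\tau$, and that $\hDelta\check Q_\tau=1_{\calS_\tau}\Delta Q$, so that $\check Q_\tau$ is harmonic on $\calS_\tau^c$ and $\phi$ is subharmonic there.

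First, fix $z$ with $\mathrm{dist}_\C(z,\calK)\ge\delta m^{-1/2}$ lying in $\calS_\tau$ (so the disk $\D(z,m^{-1/2}\delta)$ sits inside $\calK^c$, where $u$ is holomorphic). Since $\Delta Q$ is bounded on a neighbourhood of $\calS_\tau$ by real-analyticity, Lemma~\ref{prop-gengrowth} yields
$$
\lvert u(z)\rvert^{2}\,\e^{-2mQ(z)}\le Cm\,\lVert u\rVert_{L^2(\C\setminus\calK,\e^{-2mQ})}^{2}.
$$
Using $Q=\check Q_\tau$ on $\calS_\tau$, this rewrites as the desired pointwise inequality throughout $\calS_\tau\setminus\calK$ (at the prescribed distance). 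In particular, for $m$ large enough, the inequality holds along $\partial\calS_\tau$ and gives a boundary bound $\phi\le \log(Cm^{\frac12}\lVert u\rVert)$ for the exterior problem.

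It remains to propagate this bound to $\calS_\tau^c$. Here $\phi$ is subharmonic, and at infinity the polynomial growth $\lvert u(z)\rvert=\Ordo(\lvert z\rvert^{n})$ combines with the classical expansion $\check Q_\tau(z)=\tau\log\lvert z\rvert+\gamma_\tau+\ordo(1)$ (where $\gamma_\tau$ is the associated Robin-type constant) and the relation $n=m\tau$ to give $\phi(z)=\Ordo(1)$ as $\lvert z\rvert\to\infty$. The change of variables $w=1/z$ turns $\phi$ into a bounded subharmonic function on a punctured bounded domain, which extends subharmonically across $w=0$ by the removable singularity theorem for bounded subharmonic functions. The classical maximum principle then gives $\phi(z)\le\sup_{\partial\calS_\tau}\phi\le\log(Cm^{\frac12}\lVert u\rVert)$ throughout $\calS_\tau^c$, which after exponentiation is exactly the required bound. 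Combined with Step~1, this covers all $z$ with $\mathrm{dist}_\C(z,\calK)\ge\delta m^{-1/2}$.

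The only delicate point is the justification at infinity, since the polynomial growth assumption yields only a bound on $\phi$, not a limit with an explicit $\lVert u\rVert$-dependence. This is why the argument routes through the subharmonicity of $\phi$ and the removable singularity in the inverted coordinate, rather than attempting to estimate the leading coefficient of $u$ directly; all other steps are straightforward applications of Lemma~\ref{prop-gengrowth} and the properties of $\check Q_\tau$ recorded in Subsection on the obstacle problem.
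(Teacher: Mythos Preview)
Your proof is correct and follows essentially the same approach as the paper: apply Lemma~\ref{prop-gengrowth} on $\calS_\tau\setminus\calK$ to get the bound there (using $Q=\checkQ_\tau$ on $\calS_\tau$), then push it to $\calS_\tau^c$ by the maximum principle applied to a subharmonic function built from $\log|u|$ minus the harmonic $m\checkQ_\tau$. The only cosmetic difference is that the paper subtracts $\tau\log|\phi_\tau|$ and compares with the bounded harmonic extension of $Q\vert_{\partial\calS_\tau}$, whereas you subtract $m\checkQ_\tau$ directly and handle infinity via the removable-singularity theorem in the inverted coordinate; these are equivalent formulations of the same maximum-principle step.
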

\begin{proof}[Proof]
Assume that $z\in\calS_\tau\setminus\calK$ lies at a distance of at least 
$m^{-1/2}\delta$ from $\calK$. By Lemma~\ref{prop-gengrowth},
we have the estimate
$$
\lvert u(z)\rvert^2 \leq \frac{m\e^{2A\delta^2}}{\delta^2}\e^{2mQ(z)}
\lVert u\rVert^2_{L^2(\mathcal{K}^c,\e^{-2mQ})},
$$
which yields the claim for $z\in\calS_\tau\setminus\calK$ with the 
constant $C=C_\delta=\delta^{-1}\e^{A\delta^2}$.  
Next, suppose that $u$ has norm equal to $1$, and let $q(z)$ be the 
subharmonic function
$$
q(z)=\frac{1}{2m}\log \frac{\lvert u(z)\rvert^2}{mC_\delta^2},
\qquad z\in\calK^c.
$$
It follows from the above estimate on $\lvert u(z)\rvert^2$ that
$q(z) \leq Q$ for $z\in \calS_\tau\setminus\calK$,
and the growth bound on $\lvert u(z)\rvert$ as $|z|\to\infty$ entails that
$q(z) \leq \tau\log\lvert z\rvert+\Ordo(1)$ as $|z|\to\infty$.
Now, we consider the difference $q-\checkQ_\tau$ and 
observe that it is harmonic in $\calS_\tau^c$ and that $q-\checkQ_\tau\le0$
holds on the boundary $\partial\calS_\tau$, since $\checkQ_\tau=Q$ there.
Moreover, we see from the growth bound on $q$ that 
the difference $q-\checkQ_\tau$ is bounded from above in $\calS_\tau^c$.
It now follows from the maximum principle for subharmonic functions that 
$q(z)- \checkQ_\tau(z)\le 0$ 
throughout $z\in\calS_\tau^c$, which completes the proof.
\end{proof}

In particular, Proposition~\ref{prop-gengrowth-ext} 
tells us that $\lvert P_{m,n}(z)\rvert^2\e^{-2mQ}$ 
decays exponentially off the droplet $\mathcal{S}_\tau$ if $\tau=\frac{n}{m}$.
As alluded to in the introduction, it is possible to also locate 
the mass of the probability density 
$\lvert P_{m,n}(z)\rvert^2\e^{-2mQ(z)}$. We recall the notation 
$\hm(\cdot ,\widehat{\C}\setminus\mathcal{S}_t, \infty)$ for the harmonic 
measure of $\widehat{\C}\setminus\mathcal{S}_t$ relative to the point 
at infinity. The following is from \cite{ahm2}.

\begin{thm}
As $m,n\to\infty$ with $\tau=\frac{n}{m}=\tau_0+\Ordo(m^{-1})$ for some 
$\tau_0$ with $0<\tau_0\le1$, we have the convergence
$$
\lvert P_{m,n}\rvert^2\e^{-2mQ}\to\hm(\cdot ,
\widehat{\C}\setminus\mathcal{S}_{\tau_0}, \infty),
$$
in the sense of weak-star convergence of measures.
\end{thm}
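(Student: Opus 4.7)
My plan is to establish weak convergence by localizing the mass of $\mu_m:=\lvert P_{n,m}\rvert^2\e^{-2mQ}\diffA$ to the boundary $\partial\calS_{\tau_0}$ and then identifying the resulting boundary measure with harmonic measure. The starting observation is that each $\mu_m$ is a probability measure by unit normalization of $P_{n,m}$, so $\{\mu_m\}$ is automatically tight.

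For the exterior of $\calS_{\tau_0}$, Proposition~\ref{prop-gengrowth-ext}, applied with a fixed compact $\calK\subset\calS_{\tau_0}^\circ$, gives the pointwise bound
\[
\lvert P_{n,m}(z)\rvert^2\e^{-2mQ(z)}\le C\,m\,\e^{-2m(Q(z)-\checkQ_\tau(z))},\qquad z\in\calK^c,
\]
and since $Q-\checkQ_\tau$ vanishes on $\calS_\tau$ and is strictly positive in $\calS_\tau^c$ with at least quadratic growth normal to $\partial\calS_\tau$, integration yields $\mu_m(\{z:\mathrm{dist}(z,\calS_{\tau_0})\ge\delta\})\to 0$ for each $\delta>0$. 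Thus every weak-$\ast$ limit is supported on $\overline{\calS_{\tau_0}}$. The main obstacle is to rule out mass in the interior $\calS_{\tau_0}^\circ$, where Proposition~\ref{prop-gengrowth-ext} only supplies a pointwise bound of order $m$, not enough by itself. To handle this, I would use the telescoping identity
\[
\lvert P_{n,m}(z)\rvert^2\e^{-2mQ(z)} = \bigl[K_{n+1,m}(z,z)-K_{n,m}(z,z)\bigr]\e^{-2mQ(z)},
\]
where $K_{j,m}$ is the reproducing kernel of the subspace of polynomials of degree $<j$ in $L^2(\C,\e^{-2mQ}\diffA)$. The macroscopic Bergman law $m^{-1}K_{j,m}(z,z)\e^{-2mQ(z)}\diffA\rightharpoonup\mathbf{1}_{\calS_{j/m}}\Delta Q\,\diffA$ from \cite{ahm1,ahm2} exhibits $\mu_m$ as a discrete $\tau$-derivative of absolutely continuous measures whose limits vary smoothly in $\tau$ over $\calS_\tau^\circ$; this smoothness forces the interior contribution to vanish in the limit.

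For the identification step, passing to the limit in the telescoping identity and combining with the weighted Hele--Shaw/Laplacian growth formula \eqref{eq:HSevol} shows that any weak limit is concentrated on $\partial\calS_{\tau_0}$ with density (against arc length) proportional to the normal velocity of the flow $\tau\mapsto\calS_\tau$ times $\Delta Q$. Classical weighted potential theory gives this velocity as $v_n=(2\pi\Delta Q)^{-1}\lvert\phi_{\tau_0}'\rvert$, so the limiting density against arc length is $(2\pi)^{-1}\lvert\phi_{\tau_0}'\rvert$, which is precisely the Poisson density at infinity for $\widehat{\C}\setminus\calS_{\tau_0}$. Hence the limit equals $\hm(\cdot,\widehat{\C}\setminus\calS_{\tau_0},\infty)$. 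Alternatively, once Theorem~\ref{main} is available, one obtains a more direct proof by substituting the leading-order asymptotics into $\lvert P_{n,m}\rvert^2\e^{-2mQ}$ and applying Laplace's method to the Gaussian of width $m^{-1/2}$ in the normal variable generated by the quadratic vanishing $Q-\checkQ_\tau\approx\tfrac12\Delta Q\,s^2$; the factor $\lvert\calB_{0,\tau}\rvert^2=\pi^{-1/2}\sqrt{\Delta Q}$ from Theorem~\ref{thm:main-coeff} combines with the normal integral to yield exactly $(2\pi)^{-1}\lvert\phi_{\tau_0}'\rvert\diffs$ against test functions, reproducing harmonic measure.
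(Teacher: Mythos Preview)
The paper does not prove this statement at all: it is quoted from \cite{ahm2}, and the text immediately following it reads ``We will not need this theorem per se, but merely draw intuition from it.'' So there is no in-paper proof to compare against; your proposal should be judged on its own merits and against the argument in \cite{ahm2}.

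Your outline is essentially the strategy of \cite{ahm2}: tightness plus exterior decay from Proposition~\ref{prop-gengrowth-ext}, the telescoping identity $\lvert P_{n,m}\rvert^2\e^{-2mQ}=(K_{n+1,m}-K_{n,m})\e^{-2mQ}$, and then the weighted Laplacian growth relation to identify the boundary limit with harmonic measure. The identification step is fine once the localization is in place.

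The genuine soft spot is your interior step. Saying that the macroscopic law $m^{-1}K_{j,m}\e^{-2mQ}\diffA\rightharpoonup 1_{\calS_{j/m}}\Delta Q\,\diffA$ ``varies smoothly in $\tau$'' and that this ``forces the interior contribution to vanish'' is not yet an argument: weak convergence of $m^{-1}K_{n,m}$ and $m^{-1}K_{n+1,m}$ to the same limit tells you nothing about $K_{n+1,m}-K_{n,m}$, which is an $\Ordo(1)$ difference of two $\Ordo(m)$ quantities. What is actually needed is the \emph{pointwise} interior bulk expansion $K_{n,m}(z,z)\e^{-2mQ(z)}=m\Delta Q(z)+\Ordo(1)$ (uniformly on compacta of $\calS_{\tau_0}^\circ$, with an $n$-independent error once $n\ge n_0(z)$), so that the difference is $\Ordo(1)$ pointwise and hence, after integrating a bounded test function over a fixed compact $K\subset\calS_{\tau_0}^\circ$, contributes $\Ordo(1)$ --- which still is not $o(1)$. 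One then needs a further cancellation or a near-diagonal estimate to push this to $o(1)$; in \cite{ahm2} this is handled through the full machinery of fluctuation estimates. Your sketch elides this, and as written the interior step does not close.

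Your alternative route via Theorem~\ref{main} and Theorem~\ref{thm:main-coeff} is logically cleaner and does give the result directly (the Laplace-method computation you describe is exactly Corollary~\ref{cor:asymp-simple} combined with a normal-direction Gaussian integral), but note that Theorem~\ref{main} is only stated and proved for $\tau\in I_{\epsilon_0}=[1-\epsilon_0,1+\epsilon_0]$, so this alternative covers only $\tau_0$ near $1$, not the full range $0<\tau_0\le1$ claimed in the theorem.
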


See Figure~\ref{fig:Berezin-ONP} (right) above for an 
illustration of this convergence.

\subsection{Weighted Laplacian growth}
Weighted Laplacian growth (or weighted Hele-Shaw flow) describes the movement 
of the boundary of a viscous fluid droplet 
in a porous medium, as fluid is injected into the droplet. The weight 
appears as a result of the variable permeability of the medium, or, 
alternatively, as a result of curved geometry. For the 
mathematical formulation, 
consider a simply connected domain 
$\Omega_0$ on the Riemann sphere $\widehat{\C}:=\C\cup\{\infty\}$
containing the point at infinity.
A smoothly increasing family $\{\Omega_t\}_t$ 
of domains is said to be a Hele-Shaw flow with weight $\omega$, relative 
to the injection point at infinity, if the infinitesimal change of the measure 
$1_{\Omega_\tau}\omega(z)\diffA$
equals harmonic measure (the derivative is as usual taken in the sense of 
distribution theory):
\begin{equation}
\label{eq:HSflow}
\partial_t (1_{\Omega_t}\omega\diffA)=
\diff\hm(\cdot,\Omega_t,\infty).
\end{equation}
Alternatively, we can think in terms of the weak formulation, which amounts to
the requirement that
\[
\int_{\Omega_t\setminus\Omega_s}h\,\omega\diffA
=(t-s)h(\infty), \quad s<t,
\]
holds for all bounded harmonic functions $h$ on $\Omega_t$. 
At times, we prefer to think of the flow of the boundary
loops $\{\partial\Omega_t\}_t$ rather than the flow of domains itself.
A basic reference on Hele-Shaw flow is the book \cite{gvt} by Gustafsson, 
Teodorescu and Vasili'ev. 
The weighted Hele-Shaw flow problem appears to 
have been treated first in the paper \cite{HS} by Hedenmalm and Shimorin,
where the weight was interpreted as a Riemannian metric, motivated by 
considerations in the potential theory of clamped plates \cite{HS2}. 
This line of work 
is continued by \cite{HP}, \cite{HO}. 
In this connection, we mention the work \cite{RWN} by Ross
and Witt-Nystr{\"o}m, which deals with a less regular
situation.

In the present work, weighted Laplacian growth appears for two distinct 
families of weights that arise naturally.
For instance, the complement $\calS_\tau^c$ evolves according to Laplacian
growth with the weight $2\hDelta Q$ and injection point at infinity, 
with $\tau$ as backward time. 
The second type of Laplacian growth occurs with the weight 
$\omega=|P|^2\e^{-2mQ}$, where $P$ is an approximation of the  
orthogonal polynomial $P_{m,n}$, see the discussion in 
Subsection~\ref{ss:idea-sketch}. The latter flow of loops is what we call
the orthogonal foliation flow. 

We will need the following lemma, about the movement of the loops
$\partial\calS_\tau$ as $\tau$ varies.

\begin{lem}
\label{lem:boundary-movement}
Fix $\tau\in I_{\epsilon_0}=[1-\epsilon_0,1+\epsilon_0]$. 
Denote by ${\rm n}_\tau(\zeta)$ the outer unit normal to 
$\partial\calS_\tau$ at a 
point $\zeta\in\partial\calS_\tau$, and let
${\rm n}_\tau(\zeta)\R$ denote the straight line which contains 
${\rm n}_\tau(\zeta)$ and the origin.
Then, if for real $\varepsilon$ the point $\zeta_{\varepsilon}$ 
is closest to $\zeta$ in the intersection
$$
(\zeta-{\rm n}_\tau(\zeta)\R)\cap \partial\calS_{\tau-\varepsilon},
$$
we have as $\varepsilon\to0$ that
$$
\zeta_{\varepsilon}=\zeta-\varepsilon{\rm n}_\tau(\zeta)
\frac{\lvert \phi'_\tau(\zeta)\rvert}{4\hDelta Q(\zeta)}+\Ordo(\varepsilon^2)
$$
and the outer normal ${\rm n}_{\tau-\varepsilon}(\zeta_\varepsilon)$ satisfies
$$
{\rm n}_{\tau-\varepsilon}(\zeta_\varepsilon)
={\rm n}_\tau(\zeta)+\Ordo(\varepsilon).
$$
\end{lem}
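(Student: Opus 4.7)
The plan is to extract the formula from the weighted Laplacian growth flow that, as recalled in Subsection~2.3, governs the evolution of $\calS_\tau$ with weight $2\Delta Q$ and injection point at infinity (cf.\ \cite{HS, HM}). The key quantity to compute is the outward normal speed $v(\zeta, \tau)$ at which $\partial\calS_\tau$ propagates as $\tau$ increases. Once this is in hand, the claimed expansion of $\zeta_\varepsilon$ follows by a first-order Taylor argument.

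To compute $v(\zeta, \tau)$, I would apply the weak formulation of weighted Hele-Shaw flow to an arbitrary bounded harmonic function $\varphi$ on $\calS_\tau^c$ with finite limit at infinity. By the coarea formula, together with the normalization $\diffA = \pi^{-1}\diff x\,\diff y$, the identity $\partial_\tau \int_{\calS_\tau} \varphi \cdot 2\Delta Q \, \diffA = \varphi(\infty)$ becomes
\[
\pi^{-1}\int_{\partial\calS_\tau} \varphi(\zeta)\, 2\Delta Q(\zeta)\, v(\zeta, \tau)\, \diff\sigma(\zeta) = \varphi(\infty),
\]
where $\diff\sigma$ is standard arclength. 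Since the Green's function of $\calS_\tau^c$ with pole at infinity equals $\log|\phi_\tau(z)|$, the harmonic measure representation reads $\varphi(\infty) = \int_{\partial\calS_\tau} \varphi(\zeta)\,(2\pi)^{-1}|\phi_\tau'(\zeta)|\,\diff\sigma(\zeta)$. Equating the two densities on $\partial\calS_\tau$ yields
\[
v(\zeta, \tau) = \frac{|\phi_\tau'(\zeta)|}{4\Delta Q(\zeta)}.
\]

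With the normal speed identified, the first assertion is a first-order Taylor expansion: the flow trajectory starting at $\zeta$ and moving with velocity $-v(\zeta, \tau){\rm n}_\tau(\zeta)$ (in the variable $\tau$ decreasing by $\varepsilon$) reaches $\partial\calS_{\tau - \varepsilon}$ at the point $\zeta - \varepsilon v(\zeta, \tau){\rm n}_\tau(\zeta) + \Ordo(\varepsilon^2)$. Because the flow moves exactly in direction ${\rm n}_\tau(\zeta)$, and $\partial\calS_{\tau - \varepsilon}$ is transverse to this direction for small $\varepsilon$ by smoothness of the family $\{\partial\calS_\tau\}_{\tau \in I_{\epsilon_0}}$, the closest intersection $\zeta_\varepsilon$ of the normal line $\zeta - {\rm n}_\tau(\zeta)\R$ with $\partial\calS_{\tau - \varepsilon}$ agrees with the flow-point to order $\Ordo(\varepsilon^2)$. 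For the normal direction, since $\zeta_\varepsilon = \zeta + \Ordo(\varepsilon)$ and ${\rm n}_\tau(\zeta)$ depends smoothly jointly on $\zeta\in\partial\calS_\tau$ and on $\tau \in I_{\epsilon_0}$ (a consequence of the real-analyticity of the curves $\partial\calS_\tau$ established via \cite{Sakai} and \cite{HS}), the estimate ${\rm n}_{\tau-\varepsilon}(\zeta_\varepsilon) = {\rm n}_\tau(\zeta) + \Ordo(\varepsilon)$ is immediate.

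The main subtlety I anticipate is not the geometric picture, which is entirely standard, but the bookkeeping of the various normalization conventions (the normalized area measure $\diffA$, standard arclength $\diff\sigma$, the probability normalization of harmonic measure, and the mass relation $\int_{\calS_\tau} 2\Delta Q \, \diffA = \tau$) so as to land on the constant $\frac{1}{4}$ rather than $\frac{1}{4\pi}$ or $\frac{1}{\pi}$. All other ingredients are available from the references \cite{HS, HM}.
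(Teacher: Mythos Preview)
Your proposal is correct and follows essentially the same approach as the paper: identify the normal speed of $\partial\calS_\tau$ from the weighted Hele-Shaw flow equation $\partial_\tau(1_{\calS_\tau}2\Delta Q\,\diffA)=\lvert\phi_\tau'\rvert\,\diffs$, obtain $v(\zeta,\tau)=\lvert\phi_\tau'(\zeta)\rvert/(4\Delta Q(\zeta))$ after tracking the normalizations of $\diffA$ and $\diffs$, and then conclude both assertions by Taylor's formula using the smooth dependence of the loops on $\tau$. Your explicit derivation of the speed via the weak formulation and the coarea formula is a slightly more detailed version of what the paper summarizes in a single displayed equation, and your anticipated concern about the constant $\tfrac14$ is exactly the normalization remark the paper makes.
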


\begin{proof}
We recall that the compact sets $\calS_\tau$ evolve according to weighted 
Laplacian growth with respect to the weight $2\hDelta Q$, so that we have
\eqref{eq:HSflow} with $\Omega_\tau=\calS_\tau^c$.
For the details, we refer to Theorem~$5.22$ and Proposition~$6.10$
in \cite{HM}. This means that
\begin{equation}
  \label{eq:evol-eq}
\partial_\tau(1_{\calS_\tau}2\hDelta Q\diffA)=\diff\hm(\cdot,\calS_\tau^c,\infty)
=|\phi_\tau'|\diffs,
\end{equation}
where we recall that $\phi_\tau$ is the (surjective) conformal mapping 
$\calS_\tau^c\to\D_\e$. Informally, the boundary $\partial\calS_\tau$ moves
at local speed $(4\hDelta Q)^{-1}|\phi'_\tau|$ in the exterior normal direction,
where the number $4$ appears in place of $2$ as a result of the different 
normalizations associated with $\diffs$ and $\diffA$. 
It is known by Theorem~$6.2$ in \cite{HS}, which is based on the
Nishida-Nirenberg version of the Cauchy-Kovalevskaya theorem, 
that the loops $\partial\calS_\tau$ deform real-analytically as $\tau$ varies.
In view of this fact and the evolution equation \eqref{eq:evol-eq}, 
the claimed assertions follow from Taylor's formula.
\end{proof}

\subsection{Polynomial
\texorpdfstring{$\bar\partial$}{d-bar}-methods}
Let $\phi$ be a strictly subharmonic function on $\C$. H{\"o}rmander's 
classical result states that the inhomogeneous $\bar\partial$-equation
$$
\bar\partial u=f
$$
can be solved for any datum $f\in L^2_\text{loc}(\C)$ with the estimate
$$
\int_\C \lvert u\rvert^2\e^{-\phi}\diffA\le 
\int_{\C}\lvert f\rvert^2\frac{\e^{-\phi}}{\hDelta \phi}\diffA.
$$
Taking this a starting point, in \cite{ahm1}, Ameur, Hedenmalm, and 
Makarov investigate the case when the solution $u$ is constrained by an 
additional polynomial growth condition at infinity. 
We now describe this result. 
Recall from Subsection~\ref{ss:notation} that $L^2_{\phi,n}(\C)$ denotes 
the subspace of $L^2_\phi(\C)$ subject to the growth restraint
\[
f(z)=\Ordo(|z|^{n-1})
\]
near infinity. The polynomial growth Bergman space $A^2_{\phi,n}(\C)$ is 
analogously defined there. We will consider these spaces with $\phi=2m Q$.

The following is a direct consequence of Theorem~4.1 in \cite{ahm1}. 

\begin{prop}\label{bh}
Let $f\in L^\infty(\calS_\tau)$. Then the $L^2_{2mQ,n}(\C)$-minimal 
solution $u_{0,n}$ to the problem
$$
\bar\partial u_{0,n}=f
$$ 
satisfies
\begin{equation}
\int_{\C}\lvert u_{0,n}\rvert^2 \e^{-2mQ}\diffA\leq \frac{1}{2m}\int_{\calS_\tau}
\lvert f\rvert^2\frac{\e^{-2mQ}}{\hDelta Q}\diffA,
\end{equation}
provided that the right-hand side is finite.
\end{prop}

\begin{proof}
We apply Theorem~4.1 of \cite{ahm1} with $\calT=\calS_\tau$, $\phi=2mQ$, 
$\varrho=0$, and  
$$
\hat{\phi}=2m\Big(1-\frac{\varepsilon}{\tau}\Big)
\checkQ_\tau+\varepsilon m\log(1+\lvert z\rvert^2).
$$
Then all conditions except $(ii)$ are trivially satisfied with 
$a,b=\mathrm{o}(1)$ as $\varepsilon\to 0^+$. To see why $(ii)$ holds,
it is enough to observe that
$$
\hat{\phi}(z)=2m\tau\big(1-\frac{\varepsilon}{\tau}\big)\log \lvert z\rvert 
+2\varepsilon m\log \lvert z\rvert+\Ordo(1) =\log(\lvert z\rvert^{2n})+\Ordo(1)
$$
as $|z|\to\infty$. Hence the inclusion $A^2_{\hat\phi}\subset
\operatorname{Pol}_{n}$ follows.
Letting $\varepsilon\to0^+$ for fixed $m$ and $n$ completes the proof.
\end{proof}

\begin{rem}
In Theorem~4.1 of \cite{ahm1} there is an additional freedom to modify
the weight with a function $\varrho$, which we set to equal $\varrho=0$
in the above.
The conditions on $\varrho$ are such that there is flexibility in the interior
direction inside the droplet, but none in the exterior or along the boundary.
As $\varrho$ is used to control the norm-minimal solution to the
$\bar\partial$-equation, this flexibility tells us that decay of the datum $f$
in the interior of the droplet translates to a corresponding decay of the
solution $u_{0,n}$.
On the other hand, decay of the datum near a boundary 
point in the tangential direction will not
necessarily have the same effect.
\end{rem}

\subsection{Holomorphic boundary value problems and 
Toeplitz operators}
\label{ss:herglotz}
For the reader's convenience, we include some elementary facts from 
the theory of Herglotz kernels and Hardy spaces. 
Let $f$ be holomorphic in the unit disk $\D$ with continuous extension 
to the boundary. 
The classical Herglotz integral formula \cite[pp. 52]{harmonicmeasure} 
asserts that
$$
f(z)=\int_{\T}\frac{\zeta+ z}{\zeta- z}\Re(f(\zeta))\,\diffs(\zeta) + 
\Im(f(0)),\qquad z\in \D.
$$
If $F\in L^1(\T)$ is real-valued, this allows us to 
solve the boundary value problem
$$
\Re f\big\vert_{\T}=F
$$
where $f$ is holomorphic in the disk by the integral formula
$$
f(z)=\Hop_{\D}F(z):=\int_{\T}\frac{\zeta+ z}{\zeta -z}F(\zeta)\,
\diffs(\zeta),\quad z\in\D.
$$
Moreover, the solution is unique up to an additive imaginary constant. 
For us, it is more natural to work in the exterior disk. 
By reflection in the unit circle, we obtain the formula
$$
f(z)=\Hop_{\,\D_\e}F(z):=\int_{\T}\frac{z+\zeta}{z-\zeta}F(\zeta)\,
\diffs(\zeta),\quad z\in\D_\e,
$$
which we refer to as the {\em Herglotz transform} of $F$. If $F$ is
$L^2(\T)$-integrable, its Herglotz transform is in the Hardy space $H^2$.
If we assume slightly more smoothness, e.g. that $F$ is $C^1$-smooth, then
its Herglotz transform is continuous and bounded in the closed exterior disk
$\bar\D_\e$. Analogously, if we have a lot of smoothness, e.g. 
$F$ is $C^\omega$-smooth, then its Herglotz transform extends to a bounded
analytic function on a slightly bigger exterior disk $\D_\e(0,\rho)$ with
$\rho<1$. 
We recall the definition of the Hardy space $H^2=H^2(\D)$ mentioned above.
A function $f$ is in $H^2$ if it is holomorphic in $\D$ with
$$
\sup_{0<r<1}\int_{\T}\lvert f(r\zeta)\rvert^2\diffs(\zeta)<+\infty.
$$
Alternatively, in terms of the boundary values, $H^2$ is the closed subspace of 
$L^2(\T)$ defined by the property that the Fourier coefficients with negative
index all vanish. 
The conjugate Hardy space $H^2_{-}$ consists of all functions of 
the form $\bar{f}$, where $f\in H^2$,
which may also be viewed as Hardy space on the exterior disk $\D_\e$.
In a similar fashion, the standard $H^p$-spaces can be defined as well. 
For instance, for $p=\infty$ the space $H^\infty$ consists of the 
bounded holomorphic functions in the unit 
disk $\D$ equipped with the supremum norm.

Associated with the Hardy and conjugate Hardy subspaces of $L^2(\T)$
there are the orthogonal projections $\Pop_{H^2}:L^2(\T)\to H^2$ and
$\Pop_{H^2_-}:L^2(\T)\to H^2_-$. These are associated with the Szeg\H{o}
integral kernel:
\[
\Pop_{H^2}f(z)=\int_\T\frac{f(\zeta)}{1-z\bar\zeta}\diffs(\zeta),\qquad
z\in\D,
\]
and
\[
\Pop_{H^2_-}f(z)=\int_\T\frac{zf(\zeta)}{z-\zeta}\diffs(\zeta),\qquad
z\in\D_\e.
\]
We will also be interested in the subspace $H^2_{-,0}$ of $H^2_-$ consisting
of all functions that vanish at infinity (or equivalently, have average $0$
on the unit circle). The associated projection is
 \[
\Pop_{H^2_{-,0}}f(z)=\int_\T\frac{\zeta f(\zeta)}{z-\zeta}\diffs(\zeta),\qquad
z\in\D_\e.
\] 
It is clear from the above concrete formul\ae{} that the Herglotz 
transform $\Hop_{\D_\e}$ can be expressed in terms of projections:
$\Hop_{\D_\e}=\Pop_{H^2_-}+\Pop_{H^2_{-,0}}$. 
For an $L^\infty(\T)$-function $\Theta$, we define the (exterior) 
Toeplitz operator
$\mathbf{T}_\Theta:H^2_{-}\to H^2_{-}$ by
$$
\mathbf{T}_\Theta f=\Pop_{H^2_{-}}[\Theta f],\qquad f\in H^2_{-}.
$$
The nullspace (kernel) of this operator consists of all solutions in
$H^2_{-}$ to $\mathbf{T}_\Theta f=0$. Assuming that $\Theta$ is nonzero
almost everywhere on the circle $\T$, it follows that the condition
that $f$ belongs to the nullspace is 
equivalent to $f\in H^2_{-}\cap\Theta^{-1} H^2_0$, where $H^2_0$ consists of the 
functions in $H^2$ with mean $0$. If we implicitly define the function 
$\vartheta$ 
by $\Theta(z)=z\vartheta(z)$, we may rephrase this condition as
\begin{equation}\label{eq:toeplitz-hom}
f\in H^2_{-}\cap \vartheta^{-1}H^2,
\end{equation}
which we refer to as a {\em homogeneous (exterior) Toeplitz kernel condition}. 
For a function $F$ in the space $L^2(\T)$, we also consider the 
related condition
\begin{equation}\label{eq:toeplitz-inhom}
f\in H^2_{-}\cap \vartheta^{-1}(-F+H^2),
\end{equation}
which we refer to as an {\em inhomogeneous Toeplitz kernel condition}.
In terms of Toeplitz operators, this condition may be
written as $\mathbf{T}_{z\vartheta}f+\Pop_{H^2_{-}}[zF]=0$. The following 
proposition
provides the structure of solutions to the homogeneous and inhomogeneous 
Toeplitz kernel conditions for sufficiently regular symbols $\vartheta$.

\begin{prop}\label{prop:toeplitz-ker}
Suppose that $\vartheta$ can be written in the form $\vartheta=\e^{u+\bar v}$,
where $u$ and $v$ are in $H^\infty$, and let $F$ be a function in $L^2(\T)$. 
Then $f$ solves
$$
f\in H^2_{-}\cap \vartheta^{-1}(-F+H^2)
$$
if and only if
$$
f=C\e^{-\bar v}-\e^{-\bar v}\Pop_{H^2_{-,0}}[\e^{-u}F],
$$
for some constant $C$.
\end{prop}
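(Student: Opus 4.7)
The plan is to reduce the stated Toeplitz kernel condition to an elementary projection identity by exploiting the factorization $\vartheta=\e^{u+\bar v}$ to separate the analytic and anti-analytic parts of the multiplier.

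First I would unwind the definition. The condition $f\in H^2_-\cap \vartheta^{-1}(-F+H^2)$ means that $f\in H^2_-$ and $\vartheta f+F\in H^2$. Writing $\vartheta f=\e^u\e^{\bar v}f$ and multiplying the membership $\e^u\e^{\bar v}f+F\in H^2$ by $\e^{-u}\in H^\infty$, which preserves $H^2$ (and is invertible there with inverse $\e^u\in H^\infty$), I would get the equivalent condition
\[
\e^{\bar v}f+\e^{-u}F\in H^2.
\]
At the same time, since $v\in H^\infty$, the function $\e^{\bar v}$ belongs to the conjugate $H^\infty$, and multiplication by it is a bijection of $H^2_-$ onto itself, with inverse multiplication by $\e^{-\bar v}$. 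So the substitution $g=\e^{\bar v}f$ establishes a bijection between $H^2_-$ and itself, and the problem becomes: find $g\in H^2_-$ with $g+\e^{-u}F\in H^2$.

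Next I would exploit the orthogonal decomposition $L^2(\T)=H^2_{-,0}\oplus H^2$ to solve this explicitly. Applying the projection $\Pop_{H^2_{-,0}}$ to the condition $g+\e^{-u}F\in H^2$ and using that for any $g\in H^2_-$ one has $\Pop_{H^2_{-,0}}g=g-\hat g(0)$, I would obtain
\[
g-\hat g(0)+\Pop_{H^2_{-,0}}[\e^{-u}F]=0,
\]
equivalently $g=C-\Pop_{H^2_{-,0}}[\e^{-u}F]$ where $C=\hat g(0)$ is an arbitrary complex constant. Undoing the substitution gives the formula
\[
f=\e^{-\bar v}g=C\e^{-\bar v}-\e^{-\bar v}\Pop_{H^2_{-,0}}[\e^{-u}F].
\]

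Finally I would verify the converse direction: for any constant $C$ the function $f$ defined by the displayed formula lies in $H^2_-$ (since $\e^{-\bar v}\in\bar H^\infty$ and $\Pop_{H^2_{-,0}}[\e^{-u}F]\in H^2_{-,0}\subset H^2_-$), and multiplying by $\vartheta=\e^{u+\bar v}$ gives $\vartheta f=C\e^{u}-\e^{u}\Pop_{H^2_{-,0}}[\e^{-u}F]$, so $\vartheta f+F=C\e^{u}+\e^{u}(\e^{-u}F-\Pop_{H^2_{-,0}}[\e^{-u}F])=C\e^{u}+\e^{u}\Pop_{H^2}[\e^{-u}F]\in H^2$, because $\e^u\in H^\infty$ and $H^2$ is invariant under multiplication by $H^\infty$ functions. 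No step is a genuine obstacle here; the only mildly delicate point is keeping track of which multipliers preserve $H^2$ versus $H^2_-$, which is handled cleanly by the factorization hypothesis on $\vartheta$.
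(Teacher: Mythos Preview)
Your proof is correct and follows essentially the same route as the paper: reduce via the substitution $g=\e^{\bar v}f$ to the condition $g\in H^2_-$ with $g+\e^{-u}F\in H^2$, then apply $\Pop_{H^2_{-,0}}$ and use the decomposition $L^2(\T)=H^2\oplus H^2_{-,0}$. You additionally spell out the converse direction explicitly, which the paper leaves implicit.
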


\begin{proof}
That $f\in H^2_{-}\cap \vartheta^{-1}(-F+H^2)$ is equivalent to having
\begin{equation}
\label{eq-Tkcond1.01}
\e^{\bar v}f\in \e^{\bar v}H^2_{-}\cap (-\e^{-u}F+\e^{-u}H^2)=
H^2_{-}\cap (-\e^{-u}F+H^2).
\end{equation}
Since $\e^{\bar v}f\in H^2_{-}$, an application of the projection 
$\Pop_{H^2_{-,0}}$ gives
$$
\Pop_{H^2_{-,0}}[\e^{\bar v}f]=\e^{\bar v}f-C
$$
for some constant $C$. On the other hand, 
since $\e^{\bar v}f\in -e^{-u}F+H^2$ holds by \eqref{eq-Tkcond1.01},
it is immediate that
$$
\Pop_{H^2_{-,0}}[\e^{\bar v}f]= -\Pop_{H^2_{-,0}}[\e^{-u}F],
$$
since $H^2$ projects to $\{0\}$. It follows that
$$
\e^{\bar v}f=C+\Pop_{H^2_{-,0}}[\e^{\bar v}f]=C-\Pop_{H^2_{-,0}}[\e^{-u}F],
$$
as claimed.
\end{proof}

\begin{rem}
The Toeplitz kernel equation \eqref{eq-Tkcond1.01} may be viewed as a scalar
Riemann-Hilbert problem with jump from the inside $\D$ to the outside $\D_\e$
equal to $\e^{-u}F$. Later, we will use the conformal mapping from the 
complement of the droplet $\calS_\tau^c$ to the exterior disk $\D_\e$, 
and the interpretation of the
Toeplitz kernel equation in that context is as a scalar Riemann-Hilbert problem
on the Schottky double of $\calS_\tau^c$. 
\end{rem}

\subsection{Steepest descent analysis}
For our computational algorithm in Section~\ref{sect:alg},
we will need the following result (\cite{Horm}, p. 220, 
Theorem 7.7.5). The formulation requires some notation. For an open subset 
$\Omega$ of $\R$, we let $C^k(\Omega)$ denote the space of $k$ times 
differentiable 
functions on $\Omega$, and for a compact subset $K$ of $\R$, we let $C^k_0(K)$ 
denote the space $k$ times differentiable, compactly supported functions on 
$\R$ whose support is contained in $K$. The norm in the space $C^k(\Omega)$
is defined as
$$
\lVert u\rVert_{C^k(\Omega)}=
\sum_{j=0}^k\lVert u^{(j)}\rVert_{L^\infty(\Omega)},
$$
and the norm in $C^k_0(K)$ is analogously defined.

\begin{prop}\label{prop:steep}
Let $\mathrm{K}\subset\R$ be a compact interval, 
$\Omega$ an open neighborhood of 
$\mathrm{K}$, $x_0$ an interior point of $K$, 
and $k$ a positive integer. 
If $u\in C^{2k}_0(\mathrm{K})$, 
$V\in C^{3k+1}(\Omega)$ and $V\ge0$ in $\Omega$, $V'(x_0)=0$,
$V''(x_0)>0$, and $V'\ne0$ in $\mathrm{K}\setminus\{x_0\}$, then, for 
$\omega>0$, we have
\begin{equation}\label{eq:steep}
\bigg|
\e^{\omega V(x_0)}\int_{\mathrm{K}}u(x)\e^{-\omega V(x)}\diff x-
\bigg(\frac{2\pi}{\omega V''(x_0)}\bigg)^{\frac12}
\sum_{j=0}^{k-1}\omega^{-j}
\Lop_j u(x_0)\bigg|
\le C\omega^{-k}
\|u\|_{C^{2k}(\mathrm{K})}.
\end{equation}
Here, $C$ is bounded when $V$ stays in a 
bounded set in $C^{3k+1}(\Omega)$, 
and $|x-x_0|/|V'(x)|$ has a uniform bound. With 
\[
W_{x_0}(x):=V(x)-V(x_0)-\frac{1}{2}(x-x_0)^2V''(x_0),
\]
we have 
\[
\Lop_ju(x):= \sum_{(k,l):l-k=j,\,2l\ge 3k}\frac{(-1)^{k}2^{-l}}
{k!l![V''(x_0)]^l}\partial_x^{2l}(W^k_{x_0}u)(x).
\]
\end{prop}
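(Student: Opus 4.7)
The plan is the classical Laplace-method argument. After translation and subtraction of a constant I may assume $x_0 = 0$ and $V(x_0) = 0$, though I keep writing $x_0$ for transparency. The strategy is to localize near $x_0$, factor out the Gaussian $\e^{-\omega V''(x_0)(x-x_0)^2/2}$, Taylor expand the residual exponential, and integrate each resulting polynomial moment against the Gaussian.

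First I would localize: by the hypotheses $V'(x_0)=0$, $V''(x_0)>0$, $V\ge 0$, and $V'\neq 0$ on $\mathrm{K}\setminus\{x_0\}$, there exist $\delta,c>0$ with $V(x)\ge c$ whenever $x\in\mathrm{K}$ and $|x-x_0|\ge\delta$. Choose a cutoff $\chi\in C_0^\infty$ equal to $1$ near $x_0$ and supported in $|x-x_0|<\delta$. The contribution of $(1-\chi)u$ to the integral is at most $\e^{-\omega c}\|u\|_{L^1}$, which is $\Ordo(\omega^{-k})$. By shrinking $\delta$ if necessary, we may arrange $|W_{x_0}(x)|\le \tfrac{1}{4}V''(x_0)(x-x_0)^2$ on the support of $\chi$, so that the residual exponential $\e^{-\omega W_{x_0}}$ is tame against the Gaussian.

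Factor $\e^{-\omega V(x)}=\e^{-\omega V''(x_0)(x-x_0)^2/2}\,\e^{-\omega W_{x_0}(x)}$ and Taylor expand
$$
\e^{-\omega W_{x_0}(x)}=\sum_{\nu=0}^{2k-1}\frac{(-\omega W_{x_0}(x))^\nu}{\nu!}+R_{2k}(x,\omega).
$$
Since $W_{x_0}(x)=\Ordo(|x-x_0|^3)$, the rescaling $x-x_0=y/\sqrt{\omega V''(x_0)}$ gives $\omega W_{x_0}=\Ordo(\omega^{-1/2}|y|^3)$ on the support of $\chi$, so the $\nu$-th term has magnitude $\omega^{-\nu/2}$ and $R_{2k}$ produces an error of order $\omega^{-k}$ after integration. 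It remains to evaluate each term
$$
\frac{(-\omega)^\nu}{\nu!}\int\chi(x)\,u(x)\,W_{x_0}(x)^\nu\,\e^{-\omega V''(x_0)(x-x_0)^2/2}\,dx.
$$
Taylor expand $\chi u\,W_{x_0}^\nu$ around $x_0$ and integrate each monomial against the Gaussian using
$$
\int (x-x_0)^{2\ell}\,\e^{-\omega V''(x_0)(x-x_0)^2/2}\,dx=\bigg(\frac{2\pi}{\omega V''(x_0)}\bigg)^{1/2}\frac{(2\ell-1)!!}{(\omega V''(x_0))^\ell};
$$
odd moments vanish, and because $W_{x_0}^\nu$ vanishes to order $3\nu$ at $x_0$ only indices with $2\ell\ge 3\nu$ actually contribute. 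Using the identity $(2\ell-1)!!/(2\ell)!=1/(2^\ell \ell!)$ and identifying the Taylor coefficient with $\partial_x^{2\ell}(W_{x_0}^\nu u)(x_0)/(2\ell)!$, the total contribution at order $\omega^{-j}$, where $j=\ell-\nu$, assembles into precisely $(2\pi/(\omega V''(x_0)))^{1/2}\omega^{-j}\Lop_j u(x_0)$.

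The principal technical obstacle is the uniformity of the constant $C$. The Taylor remainder for $W_{x_0}$ is controlled in terms of $\|V\|_{C^{3k+1}}$, which is why the stated regularity hypothesis on $V$ is sharp; the remainder $R_{2k}$ and the localization constants $\delta,c$ require a quantitative non-degeneracy of the minimum, furnished precisely by the assumed uniform bound on $|x-x_0|/|V'(x)|$; and truncating the Taylor expansion of $u$ at order $2k$ is what makes $\|u\|_{C^{2k}(\mathrm{K})}$ appear on the right-hand side. Bookkeeping these uniform dependences through the rescaling and summation completes the proof, but presents no conceptual difficulty beyond standard Laplace asymptotics.
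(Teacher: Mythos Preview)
The paper does not supply its own proof of this proposition; it simply cites it as Theorem~7.7.5 on p.~220 of H\"ormander's \emph{The Analysis of Linear Partial Differential Operators~I}. Your sketch is the standard Laplace-method argument---localize, factor out the Gaussian, Taylor expand the residual $\e^{-\omega W_{x_0}}$, and integrate term by term using the Gaussian moments---which is essentially H\"ormander's proof, and it is correct.
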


In the definition of the above differential operator $\Lop_j$, it is 
implicit that the summation takes place over nonnegative integers 
$k$ and $l$. The differential operator \eqref{eq:Lop} 
mentioned in connection with 
Theorem~\ref{thm:main-coeff} is obtained from 
this formula.

The following proposition is tailored to our needs, based on Proposition
~\ref{prop:steep}.

\begin{prop}\label{prop:steep2}
Let three reals $\rho_0, \rho_1,\rho_2$ be given, with 
$0<\rho_0<1<\rho_1<\rho_2$. 
Assume that $V:[\rho_0,\infty)\to\R$ is $C^{3k+1}$-smooth, and that $V$ 
has a unique
minimum at $1$, with $V(1)=V'(1)=0$. Suppose furthermore that we have 

\noindent {\rm (a)}\; the convexity bound $V''\ge \alpha$ on 
$(\rho_0,\rho_2)$ for some 
real $\alpha>0$, 

\noindent {\rm (b)} and that $V$ has a bound from below of the form 
$V(x)\ge \vartheta\log x$ on the interval $[\rho_1,\infty)$, for some 
real constant $\vartheta>0$. 

\noindent If the function $u:(\rho_0,\infty)\to\C$ is bounded 
and continuous throughout, and in addition $u$ is $C^{2k}$-smooth on the 
interval
$[0,\rho_2]$ and vanishes on $[0,\rho_0]$,
then we have
$$
\int_{\rho_0}^\infty u(x)\e^{-\omega V(x)}\diff x=\bigg(\frac{2\pi}
{\omega V''(1)}\bigg)^{\frac12}\sum_{j=0}^{k-1}\omega^{-j}\Lop_j [u](1) +E,
$$
where the error term $E=E(\omega, k,u,\vartheta, \rho_0,\rho_1,\rho_2)$ 
enjoys the bound
$$
\lvert E\rvert \le C_1\omega^{-k}\lVert u\rVert_{C^{2k}([\rho_0,\rho_2])}+
\lVert u\rVert_{L^\infty([\rho_1,\infty))}\rho_1^{-\omega\vartheta+1},
$$
provided that $\omega>\frac{2}{\vartheta}$, where $C_1$ remains uniformly 
bounded when $V$ stays in a bounded
set of $C^{3k+1}([\rho_0,\rho_2])$.
\end{prop}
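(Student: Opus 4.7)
The plan is to split the integral at the transition point $\rho_1$ between the convex regime and the logarithmic growth regime, and to treat the two pieces by, respectively, a cutoff version of Proposition~\ref{prop:steep} and a direct tail estimate using the growth bound (b).

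First, pick $\rho_1'\in(\rho_1,\rho_2)$ and a cutoff $\chi\in C^\infty(\R)$ with $0\le\chi\le 1$, $\chi\equiv 1$ on $(-\infty,\rho_1]$ and $\chi\equiv 0$ on $[\rho_1',\infty)$. Then
\[
\int_{\rho_0}^\infty u\,\e^{-\omega V}\diff x=\int_{\rho_0}^{\rho_1'}\chi u\,\e^{-\omega V}\diff x+\int_{\rho_1}^\infty(1-\chi)u\,\e^{-\omega V}\diff x.
\]
For the tail, assumption (b) gives $\e^{-\omega V(x)}\le x^{-\omega\vartheta}$ on $[\rho_1,\infty)$, and a direct computation of $\int_{\rho_1}^\infty x^{-\omega\vartheta}\diff x$ together with the standing assumption $\omega>2/\vartheta$ (so that $\omega\vartheta-1>1$) produces
\[
\bigg|\int_{\rho_1}^\infty(1-\chi)u\,\e^{-\omega V}\diff x\bigg|\le\|u\|_{L^\infty([\rho_1,\infty))}\,\rho_1^{1-\omega\vartheta}.
\]

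For the main term, the crucial observation is that since $u$ vanishes on $[0,\rho_0]$ and is $C^{2k}$-smooth on $[0,\rho_2]$, all its derivatives at $\rho_0$ up to order $2k$ vanish, so that $\chi u$ extends by zero to a function in $C^{2k}_0(\mathrm{K})$ with $\mathrm{K}=[\rho_0-\delta,\rho_1']$ for any small $\delta>0$. Extend $V$ smoothly across $\rho_0$ to the left so that the extension is $C^{3k+1}$ on an open neighbourhood of $\mathrm{K}$, remains nonnegative, and has no new critical points; the extension does not affect the integral since $\chi u$ vanishes on $[\rho_0-\delta,\rho_0]$. Hypothesis (a), together with $V(1)=V'(1)=0$ and the uniqueness of the minimum, supplies all the conditions required by Proposition~\ref{prop:steep}, which yields
\[
\int_{\rho_0}^{\rho_1'}\chi u\,\e^{-\omega V}\diff x=\bigg(\frac{2\pi}{\omega V''(1)}\bigg)^{\frac12}\sum_{j=0}^{k-1}\omega^{-j}\Lop_j(\chi u)(1)+E_1,
\]
with $|E_1|\le C\omega^{-k}\|\chi u\|_{C^{2k}(\mathrm{K})}$ and $C$ uniform when $V$ stays in a bounded set of $C^{3k+1}$.

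Finally, since $\chi\equiv 1$ on a neighbourhood of $x=1$ and $\Lop_j$ is a differential operator whose value at $1$ depends only on the germ of its argument at $1$, we have $\Lop_j(\chi u)(1)=\Lop_j u(1)$ for every $j$; likewise $\|\chi u\|_{C^{2k}(\mathrm{K})}\le C'\|u\|_{C^{2k}([\rho_0,\rho_2])}$ with $C'$ depending only on the fixed cutoff. Assembling the two estimates yields the expansion with the asserted error bound. The only notable subtlety—hardly a real obstacle—is verifying enough regularity of $\chi u$ at the left endpoint $\rho_0$ to apply Proposition~\ref{prop:steep}, and this is guaranteed by the vanishing hypothesis on $u$ together with its $C^{2k}$-smoothness at $\rho_0$.
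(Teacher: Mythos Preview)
Your proof is correct and follows essentially the same approach as the paper's: split the integral via a smooth cutoff into a compactly supported piece near the minimum and a tail, apply Proposition~\ref{prop:steep} to the former, and bound the latter directly using the logarithmic lower bound (b). The paper's own proof is merely a sketch (``details are omitted''), and your write-up supplies exactly those details, including the verification that the convexity bound (a) yields the uniform control of $|x-1|/|V'(x)|$ needed for the constant in Proposition~\ref{prop:steep} and the observation that the vanishing of $u$ on $[0,\rho_0]$ ensures $\chi u\in C^{2k}_0$.
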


\begin{proof}[Sketch of proof]
Let $\chi$ be a smooth cut-off function with 
$0\le \chi\le 1$ throughout, which equals
$1$ on the interval $[\rho_0, \rho_1]$, and vanishes on $[\rho_2,\infty)$. 
We use the cut-off function to split the integral
$$
\int_{\rho_0}^\infty u(x)\e^{-\omega V(x)}\diff x
=\int_{\rho_0}^{\rho_2}\chi(x) u(x)
\e^{-\omega V(x)}\diff x
+\int_{\rho_1}^{\infty}(1-\chi(x))u(x)\e^{-\omega V(x)}\diff x.
$$
The first integral gives the main contribution, which is estimated using 
Proposition~\ref{prop:steep}. The other two integrals are estimated using 
the given bounds from below on $V$. The details are omitted.
\end{proof}

\section{Existence of an asymptotic expansion}
\label{s:existence-expansion}
\subsection{An \texorpdfstring{$L^2$}{L2}-version of the main theorem}
\label{ss:L2-expansion}
The proof of Theorem~\ref{thm:main-pw} goes via an expansion valid 
in weighted $L^2$-space, which is of independent interest. 
Modulo the key lemma (Lemma~\ref{lem:main-flow}) concerning the orthogonal
foliation flow, we first obtain
the weighted $L^2$-expansion, and then obtain Theorem~\ref{thm:main-pw} 
as a consequence.
The proof of the key lemma is deferred to Section~\ref{sec:flow}. 

For two sets $\mathcal{E},\calF\subset \C$ we define
the distance between them as
\[
  \mathrm{dist}_\C(\mathcal{E},\calF)
  =\inf_{z\in\mathcal{E},\,w\in\calF}|z-w|. 
\]
We shall need the following notion. 
\begin{defn}\label{def:intermediate} If $\calK$ and $\calS$ are compact sets in 
the plane with $\calK\subset\calS$ and 
\[
\mathrm{dist}_\C(\calK, \calS^c)= \varepsilon,
\]
we say that a compact set $\mathcal{X}$ is {\em intermediate} between 
$\calK$ and $\calS$ if $\calK\subset\mathcal{X}\subset\calS$ with
\[
\mathrm{dist}_\C(\calK, \mathcal{X}^c)\ge \frac{\varepsilon}{1000}\quad
\text{and}\quad \mathrm{dist}_\C(\mathcal{X}, \calS^c)\ge 
\frac{\varepsilon}{1000}.
\]
\end{defn}

We recall from the discussion following Definition~\ref{def:adm} 
the notation $I_{\epsilon_0}=[1-\epsilon_0, 1+\epsilon_0]$, where 
$\epsilon_0$ is fixed
and positive, with the property that the curves $\partial\calS_\tau$ form 
a smooth flow of simple loops for 
$\tau\in I_{\epsilon_0}$. 
\begin{thm}\label{main}
Assume that $Q$ is $1$-admissible, and fix the precision parameter 
$\kappa\in\mathbb{N}$. 
Then, for each $\tau\in I_{\epsilon_0}$ there exists a compact subset
$\calK_{\tau}\subset\calS_\tau$ with 
$\mathrm{dist}_\C(\calK_\tau,\partial\calS_\tau)\ge\varepsilon$ 
for some positive real number $\varepsilon$, such that the following holds. 
On the complement $\calK_\tau^c$, there are 
bounded holomorphic functions $\calB_{\tau, j}$ such that the 
associated function
$$
F_{m,n}^{\langle \kappa\rangle}=m^{\frac{1}{4}}
\sqrt{\phi_\tau'}\,[\phi_\tau]^n
\e^{m\mathcal{Q}_\tau}
\sum_{j=0}^{\kappa}m^{-j}\calB_{\tau, j},
$$
approximates well the normalized orthogonal polynomials $P_{m,n}$ in the 
sense that we have the norm control
$$
\big\lVert P_{m,n}-\chi_{\tau, 0}F_{m,n}^{\langle \kappa\rangle}\big\rVert_{2mQ}
=\Ordo(m^{-\kappa-1})
$$
as $n,m\to\infty$ while $\tau=\frac{n}{m}\in I_{\epsilon_0}$.
Here, $\chi_{\tau, 0}$ denotes a smooth cut-off function with 
$0\le\chi_{\tau, 0}\le 1$ and uniformly bounded gradient. In addition the function
$\chi_{\tau, 0}$ vanishes on $\calK_\tau$,
and equals $1$ on the set
$\mathcal{X}_\tau^c$ where $\mathcal{X}_\tau$ is an intermediate set
between $\calK_\tau$ and $\calS_\tau$.  In the above estimate,
the implicit constant is uniform for $\tau\in I_{\epsilon_0}$.
\end{thm}

In the above theorem, the products $\chi_{\tau, 0} F_{m,n}^{\langle \kappa\rangle}$
are understood to vanish on the set $\calK_\tau$, where 
$F_{m,n}^{\langle \kappa\rangle}$ may be undefined. 

\begin{rem}\label{rem:im-set}
{\rm (a)\;} By inserting a further family 
$\mathcal{X}_\tau'$ of intermediate sets
between $\calK_\tau$ and $\calS_\tau$ such that
$\mathcal{X}_\tau$ is intermediate 
between $\mathcal{X}_\tau'$ and $\calS_\tau$,
we can make sure that the cut-off 
function $\chi_{\tau, 0}$ vanishes on 
$\mathcal{X}_\tau'$ (and not just on $\calK_\tau$). 
We mention that the compact sets $\calK_\tau$, 
$\mathcal{X}_\tau'$ and $\mathcal{X}_\tau$ may be 
obtained, e.g., as the complements of the 
conformal images under $\phi_\tau^{-1}$ of the exterior disks 
$\D_\e(0,\rho)$ with $\rho=\rho_0$, $\rho_{0,1}$ and $\rho_{0,2}$, 
where $0<\rho_0<\rho_{0,1}<\rho_{0,2}<1$. 
As for the intermediate property of Definition~\ref{def:intermediate}
regarding the sets $\calK_\tau$, $\mathcal{X}_\tau'$, $\mathcal{X}_\tau$, 
and $\calS_\tau$, this is a little subtle, and depends on making a correct 
choice of the parameters $\rho_0$, $\rho_{0,1}$, and $\rho_{0,2}$.
At our disposal, we have the Koebe distortion theorem 
and the fact that $\log (\phi_\tau^{-1})'$ is a Lipschitz function in the
hyperbolic metric with known Lipschitz constant 
(see, e.g.,\ Corollary 1.4 and Proposition~1.2 in \cite{Pommerenke}, 
respectively). We omit the necessary details.

\noindent {\rm (b)} Without loss of generality, we may assume that the 
cut-off 
function $\chi_{\tau, 0}$ is uniformly smooth in the sense that for 
any fixed positive
integer $k$ the $C^k(\C)$-norm of $\chi_{\tau, 0}$ is uniformly bounded for 
$\tau\in I_{\epsilon_0}$.

\noindent {\rm (c)} Our method of  proof involves Toeplitz kernel
problems and the construction of an approximate orthogonal foliation flow
of loops. The underlying idea is inspired by an approach to the local
expansion of Bergman kernels, which involves a flow of loops emanating
from the point of expansion \cite{GHS}.  
\end{rem}

\subsection{Introduction of quasipolynomials}\label{ss:quasipol} 
We turn to the \emph{approximate orthogonal 
quasipolynomials} $F_{m,n}$, by which we mean certain functions which 
behave like 
orthogonal polynomials with respect to the measure $\e^{-2mQ}\diffA$, 
in a sense specified below. 
Let $\calK_{\tau}$ be an appropriately chosen compact subset of the droplet
$\calS_\tau$, which lies at a fixed positive distance from $\partial\calS_\tau$. 
Moreover, we require that the conformal mapping 
$\phi_\tau:\calS_\tau\to\D_\e$ extends to a (surjective) conformal mapping
$$
\phi_\tau\colon\calK_{\tau}^c\to\D_\e(0,\rho_0), \qquad \tau\in I_{\epsilon_0},
$$ 
for some $\rho_0$ with $0<\rho_{0,0}<\rho_0<1$, where we recall that $\rho_{0,0}$
was defined in the discussion preceding Theorem~\ref{thm:main-pw}.
In what follows, we will disregard the behavior on the compact set 
$\calK_{\tau}$. 
We will justify this a posteriori, 
using $\bar\partial$-methods.

\begin{defn}\label{def:quasipol}
We say that a function $F$ is a {\em quasipolynomial} on $\calK_\tau^c$ 
of degree $n$ if it is defined and holomorphic on $\calK_{\tau}^c$, 
with polynomial growth near infinity: 
$\lvert F(z)\rvert \asymp \lvert z\rvert^n$ as 
$|z|\to\infty$.
\end{defn}

In the context of this definition, a quasipolynomial $F$ of degree $n$ has
$F(z)=a z^n+\Ordo(\lvert z\rvert^{n-1})$ near infinity, for some complex number 
$a\neq 0$. We refer to the number $a$
as the {\em leading coefficient} of the quasipolynomial $F$.

We now fix a positive integer $\kappa$, which we think of as an 
accuracy parameter.
Moreover, we denote by $\chi_{\tau, 0}$ a smooth cut-off function that 
vanishes on $\mathcal{X}_\tau'$ and equals $1$ on $\mathcal{X}_\tau^c$, where
$\mathcal{X}_\tau'$ denotes an intermediate set between $\calK_\tau$
and $\calS_\tau$, while $\mathcal{X}_\tau$ is an intermediate set
between $\mathcal{X}_\tau'$ and $\calS_\tau$. In addition, we 
shall require that the $C^{2(\kappa+1)}$-norm of $\chi_{\tau, 0}$ remains uniformly 
bounded for $\tau\in I_{\epsilon_0}$.
 
\begin{defn}\label{def:quasi}
We say that a sequence $\{F_{m,n}\}_{m,n}$ of quasipolynomials of degree $n$ 
on $\calK_\tau^c$ is {\em normalized and approximately orthogonal 
(of accuracy $\kappa$)}
if the following asymptotic conditions (i)-(iii) are met as $m\to\infty$ while 
$\tau=\tfrac{n}{m}\in I_{\epsilon_0}$:

\noindent (i) we have the approximate orthogonality
\[
\forall p\in\mathrm{Pol}_n:\quad \int_{\C}\chi_{\tau, 0}
F_{m,n}(z)\overline{p(z)}\,\e^{-2mQ(z)}
\dA(z)=\Ordo\left(m^{-\kappa-\frac13}\|p\|_{2mQ}\right),
\]
\noindent (ii) the quasipolynomials $F_{m,n}$ have approximately unit norm,
\[
\int_{\C} \chi_{\tau, 0}^2(z)|F_{m,n}(z)|^2\e^{-2mQ(z)}\dA(z)
=1+\Ordo(m^{-\kappa-\frac13}),
\]
\noindent (iii) and the quasipolynomial $F_{m,n}$ has leading 
coefficient $c_{m,n}$
at infinity
which is approximately real and positive, in the sense that
$$
\frac{\Im c_{m,n}}{\Re c_{m,n}}=\Ordo(m^{-\kappa-\frac{1}{12}})
$$
where all the implied constants are uniform.
\end{defn}

In terms of the above definition, Theorem~\ref{main} implies in 
particular that
$F_{m,n}^{\langle \kappa\rangle}$ is a sequence of approximately orthogonal
quasipolynomials with accuracy $\kappa$. The fraction $\frac13$ which appears
in the definition is convenient in our calculations. The concept would 
be meaningful even if this number were replaced by e.g. $\frac15$. 

\medskip

\subsection{The renormalizing ansatz}\label{ss:reduct}
Since $Q$ is assumed $1$-admissible, the curves $\Gamma:=\partial\calS_\tau$ 
remain real-analytically smooth and simple for 
$\tau\in I_{\epsilon_0}= [1-\epsilon_0,1+\epsilon_0]$. 
In view of the requirement that $\calK_{\tau,0}\subset\calK_\tau$, the 
functions $Q^\circledast_\tau$ and $\breve{Q}_\tau$
are harmonic while $\calQ_\tau$ is holomorphic in the domain $\calK_{\tau}^c$
(see Definition~\ref{def:Q-funct}).
We define the operator $\Vop_{m,n}$ by
\begin{equation}\label{eq:Vop}
\Vop_{m,n}f(z):= \phi'_\tau(z)\,[\phi_\tau(z)]^n\e^{m\calQ_\tau(z)}\,
(f\circ\phi_\tau)(z),\qquad \tau=\frac{n}{m}.
\end{equation}
If $f,g$ are well-defined in $\D_\e(0,\rho_0)$, then 
$\Vop_{m,n}f$ and $\Vop_{m,n}g$ are 
well-defined in $\calK_{\tau}^c$. We observe that by a change-of-variables,
\begin{multline}\label{eq:Visom}
\int_{\calK_\tau^c}\Vop_{m,n}f\,\overline{\Vop_{m,n}g}\,\e^{-2mQ}\diffA=
\int_{\calK_\tau^c} (f\circ\phi_\tau) \overline{(g\circ\phi_\tau)}
\,\e^{-2m(Q-\tau\log\lvert \phi_\tau\rvert-\Re\calQ_\tau)}
\lvert \phi_\tau'\rvert^2\diffA
\\
=\int_{\D_\e(0,\rho_0)} f\,\overline{g}\,
\e^{-2mR_\tau}\diffA,
\end{multline}
where we write 
\[
R_\tau:=(Q-\breve Q_\tau)\circ\phi^{-1}_{\tau},
\] 
and the first equality holds by \eqref{eq:Q-breve-circledast}.

The function $R_\tau$ is a central object in our analysis, 
and we turn to some of its basic properties.
\begin{prop}\label{prop:R}
The function $R_\tau$ is defined on $\D_\e(0,\rho_0)$, and is real-analytic 
in a neighborhood of $\T$. Moreover, near the unit circle $R_\tau$ satisfies
\[
R_\tau(r\e^{\imag\theta})
=2\hDelta R_\tau(\e^{\imag\theta})(1-r)^2+\Ordo((1-r)^3), 
\qquad r\to 1,
\]
where the implied constant is uniform for $\e^{\imag\theta}\in\T$ and 
$\tau\in I_{\epsilon_0}$. Furthermore, $R_\tau$ has the growth bound from below
$$
R_\tau(z)\ge \vartheta\log\lvert z\rvert,\qquad z\in\D_\e(0,\rho_1),
$$
for some real parameters $\vartheta>0$ and $\rho_1>1$, which do not depend
on $\tau\in I_{\epsilon_0}$.
\end{prop}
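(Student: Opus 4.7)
The plan is to transfer the relevant vanishing and growth properties across the conformal map $\phi_\tau$ by unpacking the definition $R_\tau = (Q - \breve Q_\tau)\circ\phi_\tau^{-1}$. The three assertions -- real-analyticity, quadratic vanishing on $\T$, and the logarithmic lower bound -- correspond to three largely independent arguments that I would carry out in order.

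For real-analyticity near $\T$, I would first invoke $1$-admissibility to conclude that $\partial \calS_\tau$ is a real-analytic simple closed curve for $\tau \in I_{\epsilon_0}$ (already recorded in the introduction via Sakai's theorem), so that $\phi_\tau$ extends to a biholomorphism between open neighborhoods of $\partial \calS_\tau$ and $\T$. On $\calS_\tau^c$ the function $\checkQ_\tau$ is harmonic with real-analytic Cauchy data at $\partial \calS_\tau$: its boundary values equal $Q|_{\partial\calS_\tau}$, and its normal derivative equals that of $Q$, by $C^{1,1}$-smoothness of $\checkQ_\tau$ combined with $\checkQ_\tau \equiv Q$ on $\calS_\tau$. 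By reflection across the real-analytic curve (equivalently, by Cauchy-Kovalevskaya), $\checkQ_\tau|_{\calS_\tau^c}$ continues harmonically across $\partial\calS_\tau$; this extension is precisely $\breve Q_\tau$. Hence $Q - \breve Q_\tau$ is real-analytic in a neighborhood of $\partial\calS_\tau$, and composing with $\phi_\tau^{-1}$ gives real-analyticity of $R_\tau$ in a neighborhood of $\T$.

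For the quadratic expansion, the same matching argument gives $R_\tau = 0$ and $\nabla R_\tau = 0$ on $\T$: on $\partial\calS_\tau$ one has $Q = \checkQ_\tau = \breve Q_\tau$ with matching gradients (since $\checkQ_\tau \in C^{1,1}(\C)$ with $\checkQ_\tau \equiv Q$ on $\calS_\tau$), and these vanishing properties are preserved under the conformal $\phi_\tau^{-1}$. Because $R_\tau \equiv 0$ along $\T$, all tangential derivatives vanish there, so the classical Laplacian reduces at $r=1$ to $\partial_r^2 R_\tau$; using the normalization $\Delta = \partial\bar\partial = \tfrac14\Delta_{\mathrm{cl}}$ one gets $\partial_r^2 R_\tau|_{r=1} = 4\Delta R_\tau|_\T$. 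Taylor expansion in $r$ now yields
\[
R_\tau(r\e^{\imag\theta}) = \tfrac12 \partial_r^2 R_\tau(\e^{\imag\theta})(1-r)^2 + \Ordo((1-r)^3) = 2\Delta R_\tau(\e^{\imag\theta})(1-r)^2 + \Ordo((1-r)^3),
\]
with remainder uniform in $\theta$ and $\tau$ by the uniform real-analyticity from the previous step. For the logarithmic lower bound, I would note that on $\calS_\tau^c$ we have $\breve Q_\tau = \checkQ_\tau$, so $R_\tau(w) = Q(\phi_\tau^{-1}(w)) - \checkQ_\tau(\phi_\tau^{-1}(w))$ for $w \in \D_\e$. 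The growth hypothesis \eqref{eq-Qcond} provides $Q(z) \geq (\tau + \delta) \log|z| + \Ordo(1)$ as $|z|\to\infty$ for some $\delta > 0$ uniform over the compact $I_{\epsilon_0}$, while $\checkQ_\tau \in \operatorname{Subh}_\tau(\C)$ gives $\checkQ_\tau(z) = \tau\log|z| + \Ordo(1)$; together with $|\phi_\tau(z)| \asymp |z|$ at infinity, this produces $R_\tau(w) \geq \delta\log|w| - C$ for large $|w|$, and hence the desired bound with $\vartheta = \delta/2$ once $\rho_1$ is chosen sufficiently large.

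The main obstacle, and the step deserving most care, is the uniformity of all constants as $\tau$ varies in $I_{\epsilon_0}$: the width of the real-analyticity neighborhood of $\T$, the implicit constant in the $\Ordo((1-r)^3)$ remainder, and the constants $\delta, C, \rho_1$ in the growth bound must all be chosen independently of $\tau$. This is handled by the fact, already invoked in the introduction, that $1$-admissibility forces $\{\partial \calS_\tau\}_{\tau \in I_{\epsilon_0}}$ to be a smooth flow of real-analytic loops; this in turn gives smooth $\tau$-dependence of $\phi_\tau$, $\checkQ_\tau$, and $\breve Q_\tau$, which combined with the compactness of $I_{\epsilon_0}$ delivers the required uniform constants.
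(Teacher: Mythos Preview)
Your proposal is correct and follows essentially the same route as the paper's sketch: both arguments use the matching $Q=\breve Q_\tau$ and $\nabla Q=\nabla\breve Q_\tau$ on $\partial\calS_\tau$ to get vanishing of $R_\tau$ and its gradient on $\T$, identify the second radial derivative with $4\Delta R_\tau$ via the vanishing of tangential derivatives, and deduce the logarithmic lower bound from the growth gap between $Q$ and $\checkQ_\tau$ together with $|\phi_\tau^{-1}(w)|\asymp|w|$. The only cosmetic difference is that the paper additionally invokes the non-vanishing of $Q-\checkQ_\tau$ on $\calS_\tau^c$ (the coincidence set being exactly $\calS_\tau$) to control $R_\tau/\log|z|$ on all of $\D_\e(0,\rho_1)$, whereas you simply absorb the additive constant by taking $\rho_1$ large; both are fine for the stated conclusion.
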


\begin{rem}
\label{rem:growth-R} 
In particular, $R_\tau(z)\asymp(1-|z|)^2$ near the unit circle.
Indeed, since $\breve{Q}_\tau$ is harmonic on $\calK_\tau^c$, we find that
$$
\hDelta R_\tau=\hDelta (Q-\breve{Q}_\tau)\circ\phi_\tau^{-1}=\lvert 
(\phi_\tau^{-1})'\rvert^2\,(\hDelta Q)\circ\phi_\tau^{-1} ,
$$
which shows that near the circle $\T$, we have uniform bound of
$\hDelta R_\tau$ from below by a positive constant. As a consequence,
the same holds for 
$\partial^2_r R_\tau(r\e^{\imag\theta})$ for $r$ close to $1$, which will be useful 
in the context of Proposition~\ref{prop:steep2}.
\end{rem}

\begin{proof}[Sketch of proof]
The assertion on the local behavior near the circle $\T$ results from an 
application of Taylor's formula, using that along the 
boundary $\partial\calS_\tau$ we have $Q=\breve Q_\tau$, 
$\nabla Q=\nabla\breve{Q}_\tau$ while
$$
\partial^2_{\rm n}(Q-\breve{Q}_\tau)=(\partial^2_{\rm n}+\partial^2_{\rm t})
(Q-\breve{Q}_\tau)=4\hDelta Q.
$$
Here,  $\partial_{\rm n}$ and $\partial_{\rm t}$ denote 
the normal and tangential derivatives, respectively.
We turn to the global estimate from below on $R_\tau$. By the assumption 
\eqref{eq-Qcond} with $\tau=1$ on the growth of $Q$ near infinity, and 
the growth
control 
$$
\breve{Q}_\tau(z)=\checkQ_\tau(z)=\tau\log\lvert z\rvert+\Ordo(1),\qquad 
\text{as}\;\;\lvert z\rvert\to\infty,
$$
it follows that
$$
\liminf_{z\to\infty}\frac{(Q-\breve{Q}_\tau)(z)}{\log\lvert z\rvert}>0
$$
for $\tau\in I_{\epsilon_0}$, provided that $\epsilon_0$ is small enough.
Since $\lvert \phi_\tau^{-1}(z)\rvert\asymp \lvert z\rvert$ near infinity, 
we see that
$$
\lim_{\lvert z\rvert\to\infty}\frac{R_\tau(z)}{\log\lvert z\rvert}>0.
$$
There is no point in $\D_e$ where $R_\tau$ vanishes, since the coincidence set 
(where $\checkQ_\tau$ and $Q$ coincide) equals $\calS_\tau$ 
(see Definition~\ref{def:adm}). We may conclude that the ratio
$\frac{R_\tau(z)}{\log\lvert z\rvert}$
is bounded below by a positive constant $\vartheta$ on the exterior disk
$\D_\e(0,\rho_1)$. A careful analysis of this argument shows that we may assume 
that $\vartheta$ does not depend on $\tau$, as long as $\tau\in I_{\epsilon_0}$.
\end{proof}

Informally, Proposition~\ref{prop:R} tells us that 
near the unit circle, the function 
$\e^{-2mR_\tau}$ may be thought of as a 
Gaussian wave around the unit circle $\T$. 

We return to the operator $\Vop_{m,n}$, defined in \eqref{eq:Vop}. It 
renormalizes the weight, and transports 
holomorphic functions in the exterior disk $\D_\e(0,\rho_0)$ 
to holomorphic functions in the region $\calK_\tau^c$.
In the sequel, we will refer to $\Vop_{m,n}$ as {\em the canonical 
positioning operator}. Its basic properties are summarized in the following 
proposition, which involves the spaces 
$L^2_\phi(\mathcal{X}^c)$ and $A^2_\phi(\mathcal{X})$, 
as well as the restricted growth subspaces 
$L^2_{\phi,k}(\mathcal{X}^c)$ and $A^2_{\phi,k}(\mathcal{X}^c)$, 
all defined in Subsection~\ref{ss:notation}. 
Below, these appear for various choices of the weight $\phi$,
the parameter $k$, and the compact set $\mathcal{X}$.

\begin{prop}\label{prop:Vop}
The canonical positioning operator $\Vop_{m,n}$ is an isometric isomorphism
$L^2_{2mR_\tau}(\D_\e(0,\rho_0))\to L^{2}_{2mQ}(\calK_{\tau}^c)$,
and the inverse operator
is given by
\begin{equation*}
\Vop_{m,n}^{-1}g(z)=z^{-n}[\phi_\tau^{-1}]'(z)\,\e^{-m(\calQ_\tau\circ
\phi_\tau^{-1})(z)}
(g\circ\phi_\tau^{-1})(z),\qquad g\in L^{2}_{2mQ}(\calK_{\tau}^c).
\end{equation*}
Moreover, the operator $\Vop_{m,n}$ preserves holomorphicity, and in addition, 
it maps the 
subspace $A^2_{2mR_\tau,0}(\D_\e(0,\rho_0))$ onto 
$A^{2}_{2mQ,n}(\calK_{\tau}^c)$.
\end{prop}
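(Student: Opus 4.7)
The plan is to verify the four assertions in sequence; each of them follows directly from the computation \eqref{eq:Visom} together with the asymptotic behavior of $\phi_\tau$ near infinity.

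First, to establish the isometry, I would observe that the chain of equalities in \eqref{eq:Visom} holds for arbitrary measurable $f$ and $g$, not merely holomorphic ones. Indeed, the decomposition $Q=R_\tau\circ\phi_\tau+Q^\circledast_\tau+\tau\log|\phi_\tau|$ valid on $\calK_\tau^c$, combined with the conformal change of variables $w=\phi_\tau(z)$ with $\diffA(w)=|\phi_\tau'(z)|^2\diffA(z)$, yields
\[
\int_{\calK_\tau^c}\bigl|\Vop_{n,m}[f]\bigr|^2\e^{-2mQ}\diffA=\int_{\D_\e(0,\rho_0)}|f|^2\,\e^{-2mR_\tau}\diffA
\]
for every $f\in L^2_{2mR_\tau}(\D_\e(0,\rho_0))$. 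This shows $\Vop_{n,m}$ is a linear isometry into $L^2_{2mQ}(\calK_\tau^c)$. To verify the stated formula for $\Vop_{n,m}^{-1}$, I would compute the compositions of the two operators, invoking the elementary identity $(\phi_\tau^{-1})'(\phi_\tau(z))=1/\phi_\tau'(z)$, and observe that both compositions produce the identity. This both confirms the claimed inverse and shows surjectivity, so $\Vop_{n,m}$ is an isometric isomorphism.

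Second, preservation of holomorphicity is immediate from inspection. On $\calK_\tau^c$, the map $\phi_\tau$ is conformal, the factor $[\phi_\tau]^n$ is single-valued since $n$ is a non-negative integer, $\phi_\tau'$ is holomorphic and non-vanishing, and $\e^{m\calQ_\tau}$ is holomorphic because $\calQ_\tau$ is. Hence $\Vop_{n,m}[f]$ is holomorphic whenever $f$ is, and the analogous statement applies to $\Vop_{n,m}^{-1}$.

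Third, for the subspace identification, I would use the asymptotic normalization $\phi_\tau(z)=cz+\Ordo(1)$ with $c=\phi_\tau'(\infty)>0$, together with the boundedness of $\calQ_\tau$ near infinity. If $f\in A^2_{2mR_\tau,0}(\D_\e(0,\rho_0))$ vanishes at infinity, meaning $f(w)=\Ordo(|w|^{-1})$ as $|w|\to\infty$, then
\[
\Vop_{n,m}[f](z)=\phi_\tau'(z)\,[\phi_\tau(z)]^n\,\e^{m\calQ_\tau(z)}\,f(\phi_\tau(z))=\Ordo(|z|^{n-1})
\]
as $|z|\to\infty$, so $\Vop_{n,m}[f]$ belongs to $A^2_{2mQ,n}(\calK_\tau^c)$. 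The reverse inclusion is obtained by applying the same growth analysis to the explicit formula for $\Vop_{n,m}^{-1}$: if $g\in A^2_{2mQ,n}(\calK_\tau^c)$ satisfies $g(z)=\Ordo(|z|^{n-1})$, then near infinity $\Vop_{n,m}^{-1}[g](w)=\Ordo(|w|^{-n})\,\Ordo(|w|^{n-1})=\Ordo(|w|^{-1})$, placing it in $A^2_{2mR_\tau,0}(\D_\e(0,\rho_0))$. No genuine obstacle arises; the proposition is essentially a bookkeeping of the renormalization identity already derived, combined with the standard asymptotic behavior of the exterior conformal map at infinity.
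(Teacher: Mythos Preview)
Your proposal is correct and follows essentially the same approach as the paper: both proofs appeal to the isometry computation \eqref{eq:Visom}, verify the inverse formula directly, note that holomorphicity is preserved by inspection, and deduce the subspace correspondence from the growth $|\phi_\tau(z)|\asymp|z|$ at infinity. Your version spells out the growth estimates for the subspace claim in slightly more detail than the paper does, but there is no substantive difference in strategy.
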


\begin{proof}
As direct consequence of the \eqref{eq:Visom}, we see that 
$L^2_{2mR_\tau}(\D_\e(0,\rho_0))$ is mapped isometrically 
into $L^{2}_{2mQ}(\calK_{\tau}^c)$, and moreover if $\Vop_{m,n}^{-1}$ 
is given by the above formula, we see that it is actually the 
inverse to $\Vop_{m,n}$. By definition, $\Vop_{m,n}f$ is holomorphic in 
$\calK_\tau^c$, if $f$ is 
holomorphic in $\D_\e(0,\rho_0)$.  It follows that $\Vop_{m,n}$ is actually 
an isometric isomorphism $A^2_{2mR_\tau}(\D_\e(0,\rho_0))\to A^{2}_{2mQ}
(\calK_{\tau}^c)$. It remains to note that $\Vop_{m,n}$ maps bijectively 
$$ 
A^2_{2mR_\tau,0}(\D_\e(0,\rho_0))\to A^{2}_{2mQ,n}(\calK_{\tau}^c),
$$
which is a direct consequence of the fact that $\lvert \phi_\tau(z)\rvert\asymp 
\lvert z\rvert$ as $\lvert z\rvert\to\infty$.
\end{proof}

\subsection{The orthogonal foliation flow}
\label{ss:prel-flow}
We will obtain our main result (Theorem~\ref{main})
as a consequence of the 
existence of 
what we call the {\em approximate orthogonal foliation flow of simple loops} 
$\Gamma_{m,n,t}$, parameterized by the parameter $t$. For a brief sketch 
of the intuition that lies behind the construction of this
flow of curves, we refer to the discussion in
Subsection~\ref{ss:idea-sketch} above.

We recall from Subsection~\ref{ss:notation} that a conformal mapping
$\psi$ of the exterior disk $\D_\e$ onto a domain containing the point
at infinity is said to be {\em orthostatic} if it maps $\infty$ to
$\infty$, and has $\psi'(\infty)>0$.
Given a smooth family $\psi_t$ of orthostatic conformal mappings on the
exterior disk, indexed by a real parameter $t$ close to $0$, such that 
the image domains $\Omega_t:=\psi_t(\D_\e)$ increase with $t$, 
we put $\Gamma_t=\psi_t(\T)$ and denote by 
$\calD=\bigcup_t\Gamma_t$ the region covered by the flow.
We may form the {\em foliation mapping} $\Psi$ by the formula
$$
\Psi(z)=\psi_{1-\lvert z\rvert}\Big(\frac{z}{\lvert z\rvert}\Big),
$$
for $z$ in some annulus $\mathbb{A}$ containing the unit circle. The foliation 
mapping $\Psi$ maps $\mathbb{A}$ onto the domain $\mathcal{D}$ covered by 
the boundaries.
Moreover, the Jacobian $J_\Psi$ of the foliation mapping is given by
\begin{equation}\label{eq:jac}
J_\Psi(r\zeta)=-\frac{1}{r}\Re\big\{\bar \zeta\partial_t\psi_t
\big(\zeta\big)
\overline{\psi'_{t}\big(\zeta\big)}
\big\}\big\vert_{t=1-r},\qquad \zeta\in\T,
\end{equation}
for $r$ near $1$.
We may integrate over a flow encoded by a 
foliation mapping $\Psi$ as follows:
If we denote by $\mathbb{A}_\epsilon$ the annulus 
$\mathbb{A}_\epsilon=\D(0,1+\epsilon)\setminus\bar{\D}(0,1-\epsilon)$, 
we have for integrable $f$,
\begin{align}\label{eq:int-flow}
\int_{\Psi(\mathbb{A}_\epsilon)}f\diffA&=
\int_{\mathbb{A}_\epsilon}f\circ\Psi\;J_{\Psi}\diffA\\
\nonumber
&=2\int_{-\epsilon}^{\epsilon}\int_{\T}f\circ\psi_t(\zeta) (1-t)
J_\Psi\big((1-t)\zeta\big)
\diffs(\zeta)\diff t.
\end{align}

The existence of the foliation flow may be phrased as follows. We call
the relation \eqref{eq:main-flow} below the 
\emph{master equation for the orthogonal foliation flow}.
For convenience of notation, let $\delta_m$ be the number
$$
\delta_m:=m^{-1/2}\log m.
$$

\begin{lem}\label{lem:main-flow}
Fix the precision parameter $\kappa$ to be a positive integer. 
For $\tau=\frac{n}{m}\in I_{\epsilon_0}$, 
there exist $0<\rho_0<1$ and
bounded holomorphic functions $B_{\tau, j}$ on $\D_\e(0,\rho_0)$
for $j=0,\ldots,\kappa$, such that the the following properties hold.
The function $B_{\tau,0}$ is bounded away 
from zero with $B_{\tau,0}(\infty)>0$, 
while for $j=1,\ldots,\kappa$ we have $\Im B_{\tau,j}(\infty)=0$. 
Moreover, there exists a smooth family of orthostatic conformal mappings 
$\{\psi_{m,n,t}\}_{m,n,t}$ on $\bar{\D}_\e$, such that if we write
$f_{m,n}^{\langle \kappa\rangle}=\sum_{j=0}^{\kappa} m^{-j}B_{\tau,j}$, we have 
that
\begin{multline}
m^{\frac12}\big\lvert 
f_{m,n}^{\langle \kappa\rangle}\circ\psi_{m,n,t}(\zeta)\big\rvert^2
\e^{-2m(R_\tau\circ\psi_{m,n,t})(\zeta)} \,(1-t)J_{\Psi_{m,n}}((1-t)\zeta)
\\
=\frac{m^{\frac12}}{(4\pi)^{\frac12}}\e^{-mt^2}
\big(1 +\Ordo\big(m^{-\kappa-\frac13}\big)\big), \qquad \zeta\in\T,
\label{eq:main-flow}
\end{multline}
provided that $|t|\le \delta_m$.
Here, the implicit constant 
is uniform in $\tau\in I_{\epsilon_0}$. 
Moreover, if 
$\calD_{m,n}$ denotes the union 
$\calD_{m,n}=\bigcup_{\lvert t\rvert\leq\delta_m}\psi_{m,n,t}(\T)$, 
then $\mathrm{dist}_{\C}(\calD_{m,n}^c,\T)
\geq c_0\delta_m$ for some positive constant $c_0$.
\end{lem}

\begin{rem}
The equation \eqref{eq:main-flow} may be understood as an approximate
{\em weighted Polu\-bar\-in\-ova-Galin equation} with weight 
$\lvert f_{m,n}^{\langle\kappa\rangle}\rvert^2\e^{-2mR_\tau}$, and variable 
speed of expansion. 
Indeed, we should compare with equation (6.11) in \cite{HS}, which states 
in a similar context that along concentric circles,
$$
J_\Psi=\omega^{-1}\circ{\Psi},\qquad 
$$
where $\Psi$ is a foliation mapping, and $\omega$ denotes a weight. In 
comparison, our factor $(4\pi)^{-\frac12}\e^{-mt^2}$ appears as 
consequence of the variable speed.
\end{rem}

In what follows, we take this key lemma for granted. 
The proof is supplied in Section~\ref{sec:flow}.

\subsection{The \texorpdfstring{$L^2$}{L2}-expansion
for quasipolynomials}
We first find a sequence of approximately orthogonal quasipolynomials with 
an asymptotic expansion.

\begin{lem}\label{lem:quasi}
Let $\kappa\in\mathbb{N}$ be given and let 
$f_{m,n}^{\langle \kappa\rangle}=\sum_{j=0}^{\kappa}m^{-j}B_{\tau, j}(z)$
be the functions defined in Lemma~\ref{lem:main-flow}. 
Then the functions
\begin{equation}\label{eq:Fnm-def}
F_{m,n}^{\langle\kappa\rangle}(z)
=m^{\frac14}\Vop_{m,n}[f_{m,n}^{\langle \kappa\rangle}]
=m^{\frac14}\phi_\tau'(z)[\phi_\tau(z)]^n
\e^{m\mathcal{Q}_\tau(z)}(f_{m,n}^{\langle\kappa\rangle}\circ\phi_\tau)(z)
\end{equation}
constitute a family of approximately orthogonal quasipolynomials to
accuracy $\kappa$ in the sense of Definition~\ref{def:quasi}.
\end{lem}

\begin{proof}
We denote by $\chi_{\tau, 1}$ a radial
smooth cut-off function which vanishes on $\D(0,\rho_{0,1})$ and equals 1 on 
$\D_\e(0,\rho_{0,2})$, where the parameters $0<\rho_0<\rho_{0,1}<\rho_{0,2}<1$ 
are chosen in accordance 
with Remark~\ref{rem:im-set}.
The cut-off function $\chi_{\tau,0}$ is then given by 
$\chi_{\tau,0}=\chi_{\tau,1}\circ\phi_\tau$. 
The intermediate sets $\mathcal{X}_\tau'$ and
$\mathcal{X}_\tau$ are given as the complements 
of the conformal images of $\D_\e(0,\rho_{0,1})$ 
and $\D_\e(0,\rho_{0,2})$ under $\phi_\tau$, respectively.

By Lemma~\ref{lem:main-flow}, the functions $f_{m,n}^{\langle \kappa\rangle}$ 
are bounded and holomorphic on the exterior disk $\D_\e(0,\rho_0)$, 
with $f_{m,n}^{\langle \kappa\rangle}(\infty)>0$. 
As the leading term $B_{\tau, 0}$ is bounded away from $0$ on 
$\D_\e(0,\rho_0)$, it follows that for large enough $m$, the same can be
said for $f_{m,n}^{\langle \kappa\rangle}$. 
In view of this, the functions $F_{m,n}^{\langle \kappa\rangle}$ given by
\eqref{eq:Fnm-def} are 
quasipolynomials of order $n$ on 
$\calK_\tau^c:=\phi_\tau^{-1}(\D_\e(0,\rho_0))$ 
in the sense of Definition~\ref{def:quasipol}. 

It remains to verify the properties (i), (ii), and 
(iii) of Definition~\ref{def:quasi}.
To this end, we recall the definition of the domain $\calD_{m,n}$ from 
Lemma~\ref{lem:main-flow},
which is a certain closed neighborhood of the unit circle which arises from 
our orthogonal foliation flow. 
We recall that 
$$
\mathrm{dist}_\C(\calD_{m,n}^c,\T))\ge c_0\delta_m
$$ 
holds for some fixed  
constant $c_0>0$, where $\delta_m=m^{-\frac12}\log m$. 
We first check property (ii) of Definition~\ref{def:quasi}.
As a step in this direction, we claim that most of the weighted $L^2$-mass 
of the function $\chi_{\tau, 1}f_{m,n}^{\langle \kappa\rangle}$ 
lies in the domain $\calD_{m,n}$.
Indeed, a computation based on the change-of-variables 
formula \eqref{eq:int-flow} reveals that
\begin{multline}
  \label{eq:norm-fmn-Dmn}
m^{\frac12}\int_{\calD_{m,n}}\lvert f_{m,n}^{\langle \kappa\rangle}
\rvert^2\e^{-2mR_\tau}\diffA
\\=2m^{\frac12}\int_{-\delta_m}^{\delta_m}\int_{\T}
\big\lvert f_{m,n}^{\langle \kappa\rangle}\circ\psi_{m,n,t}
(\zeta)
\big\rvert^2\e^{-2m\,R_\tau\circ\psi_{m,n,t}(\zeta)}
\Re\big(-\bar{\zeta}
\partial_t\psi_{m,n,t}(\zeta)
\overline{\psi_{m,n,t}'(\zeta)}\big)\diffs(\zeta)\diff t
\\
=2m^{\frac12}\int_{-\delta_m}^{\delta_m}\big((4\pi)^{-\frac12}+
\Ordo(\delta_m^{2\kappa+1})\big)\e^{-mt^2}\diff t=1+\Ordo(\delta_m^{2\kappa+1})=
1+\Ordo(m^{-\kappa-\frac13}),
\end{multline}
where we move the integration to the flow coordinates 
$(t,\zeta)\in[-\delta_m,\delta_m]\times\T$. 

We know that the functions 
$f_{m,n}^{\langle \kappa\rangle}$ are bounded uniformly
in $\D_\e(0,\rho_0)$ independently of $m$ and $n$ 
while $\tau\in I_{\epsilon_0}$, so 
that 
\begin{equation}
\chi_{\tau, 1}\lvert f_{m,n}^{\langle \kappa\rangle}\rvert\le C_0
\label{eq:est00.11}
\end{equation}
holds in the whole plane $\C$, for some constant $C_0$.
Let $\calD_\circledast$ denote a fixed bounded domain which 
contains $\D\cup\calD_{m,n}$, such that the bound from below 
$R_\tau(z)\ge \theta_0\log\lvert z\rvert$ 
holds outside $\calD_\circledast$, for some 
$\theta_0>0$ and all $\tau\in I_{\epsilon_0}$. 
That such a domain exists for sufficiently large $m$ is 
shown in Proposition~\ref{prop:R}. 
On the other hand, in view of Remark~\ref{rem:growth-R}
we have the estimate
$$
\e^{-2mR_\tau}\le \e^{-\alpha_0(\log m)^2},\qquad \text{on }\;
\calD_\circledast\cap\D_\e(0,\rho_0)\setminus\calD_{m,n}
$$
for some constant $\alpha_0>0$ (if necessary we 
adjust $\rho_0$ and $\calD_\circledast$). 
As a consequence, we have
\begin{multline}\label{eq:0001}
m^{\frac12}\int_{\C\setminus\calD_{m,n}}\chi_{\tau, 1}^2\lvert f_{m,n}
^{\langle \kappa\rangle}\rvert^2\e^{-2mR_\tau}\diffA\le 
C_0^2m^{\frac12}\int_{\C\setminus\calD_{\circledast}}
\e^{-2m\theta_0\log \lvert z
\rvert}\diffA
\\
+C_0^2m^{\frac{1}{2}}\int_{\calD_{\circledast}
\cap\D(0,\rho_0)\setminus\calD_{m,n}}
\e^{-\alpha_0(\log m)^2}\diffA=\Ordo(m^{\frac12}\e^{-\alpha_0(\log m)^2})
=\Ordo(m^{-\alpha_0\log m+\frac12}).
\end{multline}
It now follows from \eqref{eq:norm-fmn-Dmn} and \eqref{eq:0001} that
\begin{multline*}
m^{\frac12}\int_{\C}\chi_{\tau, 1}^2\lvert f_{m,n}
^{\langle \kappa\rangle}\rvert^2\e^{-2mR_\tau}\diffA
=m^{\frac12}\int_{\calD_{m,n}}\lvert f_{m,n}
^{\langle \kappa\rangle}\rvert^2\e^{-2mR_\tau}\diffA
\\
+m^{\frac12}\int_{\C\setminus\calD_{m,n}}\chi_{\tau, 1}^2
\lvert f_{m,n}^{\langle \kappa\rangle}\rvert^2\e^{-2mR_\tau}\diffA=
1+\Ordo(m^{-\kappa-\frac13}),
\end{multline*}
where we use that $\chi_{\tau, 1}=1$ holds on $\calD_{m,n}$ together with
our foliation flow Lemma~\ref{lem:main-flow} and the estimate \eqref{eq:0001}. 
Hence, by the isometric property of $\Vop_{m,n}$ from 
Proposition~\ref{prop:Vop}, it follows that 
$$
\int_{\C}\chi_{\tau, 0}^2\lvert F_{m,n}^{\langle \kappa\rangle}
\rvert^2\e^{-2mQ}\diff A=1+\Ordo(m^{-\kappa-\frac13}),
$$
as required by property (ii) of Definition \ref{def:quasi}.

We turn to property (i) of Definition \ref{def:quasi}, the approximate 
orthogonality property. For a polynomial $p\in\operatorname{Pol}_n$ of 
degree at most $n-1$, we put $g=\Vop_{m,n}^{-1}[p]$ and note that
$g(\infty)=0$. 
For all large enough $n$ and $m$ with $\tau=\frac{n}{m}\in I_{\epsilon_0}$, 
the function $f_{m,n}^{\langle \kappa\rangle}$ is zero-free in a 
neighborhood of the extended exterior disk $\bar{\D}_\e\cup\{\infty\}$, 
which we may assume to be a fixed exterior disk $\D_\e(0,\rho_0)\cup\{\infty\}$ 
for some fixed $\rho_0<1$.
By the isometric property of $\Vop_{m,n}$, we find that
\begin{multline}
\int_\C\chi_{\tau, 0}\,p\, \overline{F_{m,n}^{\langle \kappa\rangle}}
\e^{-2mQ}\diffA
=m^{\frac14}\int_{\C}\chi_{\tau, 1}\,g\,\overline{f_{m,n}^{\langle \kappa\rangle}}
\e^{-2mR_\tau}\diffA(z)
\\
=m^{\frac14}\int_{\calD_{m,n}}\frac{g}{f_{m,n}^{\langle \kappa\rangle}}
\lvert f_{m,n}^{\langle \kappa\rangle}\rvert^2 \e^{-2mR_\tau}\diffA+
\Ordo(m^{-\frac{\alpha_0}{2}\log m+\frac34}\lVert p\rVert_{2mQ}),
\label{eq:est00.12}
\end{multline}
where we are required to justify the indicated error term estimate.
To do this, we need Proposition~\ref{prop-gengrowth-ext}, or more accurately, 
Lemma 3.5 in \cite{ahm1}, which gives the estimate for $p\in\mathrm{Pol}_n$
\begin{equation}
|p|\le C_1 m^{\frac12}\|p\|_{2mQ}\e^{m\checkQ_\tau}
\label{eq:est00.13}
\end{equation}
in the whole plane $\C$ for some constant $C_1$, independent of 
$\tau=\frac{n}{m}\in I_{\epsilon_0}$. The missing term on the right-hand side
of \eqref{eq:est00.12} equals
\[
m^{\frac14}\int_{\C\setminus\calD_{m,n}}\chi_{\tau, 1}\,g\,
\overline{f_{m,n}^{\langle \kappa\rangle}}
\e^{-2mR_\tau}\diffA=\int_{\C\setminus\phi_\tau^{-1}(\calD_{m,n})}
\chi_{\tau, 0}\,p\,\overline{F_{m,n}^{\langle \kappa\rangle}}
\e^{-2mQ}\diffA,
\] 
and if we apply the pointwise estimate \eqref{eq:est00.13}, we obtain
\begin{multline*}
\int_{\C\setminus\phi_\tau^{-1}(\calD_{m,n})}
\chi_{\tau, 0}|p\,F_{m,n}^{\langle \kappa\rangle}|\,\e^{-2mQ}\diffA\le
C_1m^{\frac12} \|p\|_{2mQ} \int_{\C\setminus\phi_\tau^{-1}(\calD_{m,n})}
\chi_{\tau, 0}|F_{m,n}^{\langle \kappa\rangle}|\,\e^{-2mQ+m\checkQ_\tau}\diffA
\\
=C_1m^{\frac34} \|p\|_{2mQ} \int_{\C\setminus\calD_{m,n}}
\chi_{\tau, 1}|f_{m,n}^{\langle \kappa\rangle}|
\,\e^{m(\checkQ_\tau-Q)\circ\phi_\tau^{-1}-mR_\tau}\diffA
\\
\le C_0C_1 m^{\frac34} \|p\|_{2mQ} \int_{\D_\e(0,\rho_0)\setminus\calD_{m,n}}
\,\e^{-mR_\tau}\diffA ,
\end{multline*}
where in the last step, we applied the estimate \eqref{eq:est00.11} and
the fact that $\checkQ_\tau\le Q$. The rest of the argument that gives 
\eqref{eq:est00.12} involves splitting the domain of integration 
using the set $\calD_\tau$, and proceeds as in \eqref{eq:0001}.
This establishes \eqref{eq:est00.12}, although we still need to control 
the main term on the right-hand side. 
To this end, we denote by $h$ the ratio 
$h={g}/{f_{m,n}^{\langle\kappa\rangle}}$. 
In view of the stated properties of $f_{m,n}^{\langle\kappa\rangle}$ and $g$, 
the function $h$ is holomorphic in the exterior disk $\D_\e(0,\rho_0)$ 
and vanishes at infinity.
Using the foliation flow as coordinates
on $\calD_{m,n}$ in terms of $(t,\zeta)\in[-\delta_m,\delta_m]\times\T$, 
we find from Lemma~\ref{lem:main-flow} that 
\begin{multline}
m^{\frac14}\int_{\calD_{m,n}}h(z)\lvert 
f_{m,n}^{\langle\kappa\rangle}(z)\rvert^2 
\e^{-2mR_\tau(z)}\diffA(z)
\\
=2m^{\frac14}\int_{-\delta_m}^{\delta_m}\int_{\T}
h\circ\psi_{m,n,t}
(\zeta)\big\lvert f_{m,n}^{\langle \kappa\rangle}\circ\psi_{m,n,t}
(\zeta)
\big\rvert^2\e^{-2m\,R_\tau\circ\psi_{m,n,t}(\zeta)}
\\
\times
\Re\big\{-\bar{\zeta}
\partial_t\psi_{m,n,t}(\zeta)
\overline{\psi_{m,n,t}'(\zeta)}\big\}\diffs(\zeta)\diff t
\\
=2m^{\frac14}\int_{-\delta_m}^{\delta_m}
\int_{\T}h\circ\psi_{m,n,t}(\zeta)\Big\{(4\pi)^{-\frac12}\e^{-mt^2}
+\Ordo\big(m^{-\kappa-\frac13}\e^{-mt^2}\big)\Big\}\diffs(\zeta)\diff t
\\
=\Ordo\bigg(m^{-\kappa-\frac1{12}}\int_{-\delta_m}^{\delta_m}\int_{\T}\lvert h
\circ\psi_{m,n,t}(\zeta)\rvert \diffs(\zeta)\,\e^{-mt^2}\diff t\bigg).
\label{eq:est00.14}
\end{multline}
Here, the crucial reduction in the last step of \eqref{eq:est00.14} 
is based on the fact that the function 
$h\circ\psi_{m,n,t}$ is holomorphic in $\bar\D_\e$ and vanishes at 
infinity, so that by the mean value property
\[
\int_{\T}h\circ\psi_{m,n,t}\,\diffs=0.
\]
Now that \eqref{eq:est00.14} is established, we need to simplify
the error term further.  
We will use the observation that all the steps before the last in 
\eqref{eq:est00.14} apply to a fairly general sufficiently integrable function 
in place of $h$, for instance $|h|$ will work. 
It then follows from \eqref{eq:est00.14} with $|h|$ instead that large enough
$m$, we have
\begin{multline*}
\int_{-\delta_m}^{\delta_m}
\int_{\T}|h\circ\psi_{m,n,t}(\zeta)|
\,\e^{-mt^2}\diffs(\zeta)\diff t
\le2\int_{\calD_{m,n}}|h(z)|\,
\lvert f_{m,n}^{\langle\kappa\rangle}(z)\rvert^2 
\e^{-2mR_\tau(z)}\diffA(z)
\\
=2\int_{\calD_{m,n}}\lvert g(z)\,
f_{m,n}^{\langle\kappa\rangle}(z)\rvert 
\e^{-2mR_\tau(z)}\diffA(z)
\le2 C_0\int_{\calD_{m,n}}|g(z)|\,
\e^{-2mR_\tau(z)}\diffA(z), 
\end{multline*}
where in the last step we applied the bound \eqref{eq:est00.11}. 
Finally, we apply the Cauchy-Schwarz inequality, and recall that 
recall that $g=\Vop_{m,n}^{-1}[p]$ where 
$\Vop_{m,n}$ has the isometry property
of Proposition~\ref{prop:Vop}:
\begin{multline}
\int_{-\delta_m}^{\delta_m}
\int_{\T}|h\circ\psi_{m,n,t}(\zeta)|
\,\e^{-mt^2}\diffs(\zeta)\diff t
\le2 C_0\int_{\calD_{m,n}}|g(z)|\,
\e^{-2mR_\tau(z)}\diffA(z)
\\
\le2C_0\|g\|_{L^2(\calD_{m,n},\e^{-2mR_\tau})}
\bigg\{\int_{\calD_{m,n}}\e^{-2mR_\tau}\diffA\bigg\}^{1/2}
=\Ordo\big(m^{-\frac14}
\|p\|_{2mQ}\big). 
\label{eq:est00.16}
\end{multline} 
Here, we used a simple decay estimate of the integral 
of the {\em Gaussian ridge} $\e^{-2mR_\tau}$. Next, 
we write $g/f_{m,n}^{\langle\kappa\rangle}$ in place of $h$,
and combine the estimates \eqref{eq:est00.14} and
\eqref{eq:est00.16}, and arrive at
\begin{multline}
m^{\frac14}\int_{\calD_{m,n}}g\,\overline{f_{m,n}^{\langle\kappa\rangle}}
\e^{-2mR_\tau(z)}\diffA(z)
=m^{\frac14}\int_{\calD_{m,n}}h(z)\lvert f_{m,n}^{\langle\kappa\rangle}(z)\rvert^2 
\e^{-2mR_\tau(z)}\diffA(z)
\\
=\Ordo\big(m^{-\kappa-\frac13}\|p\|_{2mQ}\big).
\label{eq:est00.17}
\end{multline}
In view of \eqref{eq:est00.12} and \eqref{eq:est00.17}, we find that for
all polynomials $p\in\mathrm{Pol}_n$,
\begin{equation}
\int_\C\chi_{\tau, 0}\,p\, \overline{F_{m,n}^{\langle \kappa\rangle}}
\e^{-2mQ}\diffA
=\Ordo\big(m^{-\kappa-\frac13}\|p\|_{2mQ}\big),
\label{eq:est00.018}
\end{equation}
as required. 
Since in addition, $f_{m,n}^{\langle\kappa\rangle}(\infty)>0$, 
while $\calQ_\tau(\infty)\in\R$ and 
$\phi_\tau'(\infty)>0$ hold, the leading coefficient
of the quasipolynomial $F_{m,n}^{\langle\kappa\rangle}$ is now positive, which 
settles property (iii) of Definition \ref{def:quasi} as well.  
This completes the proof.
\end{proof}

\subsection{Polynomialization of the quasipolynomials and
proof of Theorem~\ref{main}}
We have applied Lemma~\ref{lem:main-flow} to obtain the existence of 
quasi\-polynomials $F_{m,n}^{\langle \kappa\rangle}$,
of degree $n$ and accuracy $\kappa$ with an asymptotic expansion, 
and shown that they are approximately 
orthogonal and normalized. To obtain the full $L^2$-expansion, 
it remains to show that they are indeed good 
approximations of the true normalized orthogonal polynomials $P_{m,n}$.
 
\begin{proof}[Proof of Theorem~\ref{main}]
We retain the above notation, and consider the $\dbar$-problem 
\[
\dbar_z u(z)=F_{m,n}^{\langle \kappa\rangle}(z)\dbar_z\chi_{\tau, 0}(z).
\]
In view of Proposition~\ref{bh}, the $L^2_{2mQ,n}$-norm minimal solution 
$u_{0}$, which then has the growth $u_{0}(z)=\Ordo(|z|^{n-1})$ near 
infinity, enjoys the norm bound
\begin{equation}
\int_\C|u_{0}|^2\e^{-2mQ}\dA\le \frac{1}{\alpha_1 m}
\int_{\calS_\tau}|F_{m,n}^{\langle \kappa\rangle}|^2
|\dbar\chi_{\tau, 0}|^2 \e^{-2mQ}\dA,
\label{eq-dbarsol1}
\end{equation}
where $\alpha_1>0$ stands for the minimum of $\hDelta Q$ on the biggest 
droplet $\calS_\tau$ with $\tau\in I_{\epsilon_0}$ (which is attained for 
the rightmost endpoint $\tau=1+\epsilon_0$). 
Next, given that the quasipolynomials of degree $n$ are of the form
$F_{m,n}^{\langle \kappa\rangle}=m^{\frac14}
\Vop_{m,n}[f_{m,n}^{\langle \kappa\rangle}]$, 
where the functions $f_{m,n}^{\langle \kappa\rangle}$ 
are uniformly bounded in $\D_\e(0,\rho_0)$ for some radius $\rho_0<1$,
we find that
\begin{multline}
\int_{\calS_\tau}|F_{m,n}^{\langle \kappa\rangle}|^2|\dbar\chi_{\tau, 0}|^2 
\e^{-2mQ}\diffA=m^{\frac12}
\int_{\D}|f_{m,n}^{\langle\kappa\rangle}|^2|\dbar\chi_{\tau, 1}|^2
|\phi_\tau'\circ\phi_\tau^{-1}|^2 \e^{-2mR_\tau}\diffA
\\
=\Ordo(m^{\frac12}\e^{-\alpha_2m})
\label{eq:expdecay00.1}
\end{multline}
for some $\alpha_2>0$ such that $2R_\tau\ge{\alpha_2}$ on the support 
of $\bar\partial\chi_{\tau, 1}$. This exponential decay estimate is possible 
since the support of $\bar\partial\chi_{\tau, 1}$ is located inside $\D$ 
away from the boundary.
Note that in the context of the estimate \eqref{eq:expdecay00.1}
it is important as well that the expression $|\phi_\tau'\circ\phi_\tau^{-1}|^2$ 
is uniformly bounded on the support of $\bar\partial\chi_{\tau, 1}$ as well.
If we combine the above estimates \eqref{eq-dbarsol1} and 
\eqref{eq:expdecay00.1}, we find that
\begin{equation}
\int_\C|u_{0}|^2\e^{-2mQ}\dA=\Ordo(m^{-\frac12}\e^{-\alpha_2m}),
\label{eq-dbarsol1.1}
\end{equation}
as $m\to\infty$ while $\tau=\frac{n}{m}\in I_{\epsilon_0}$, with a uniform 
implicit constant.
Next, we put 
\[
P_{m,n}^\star:=F_{m,n}^{\langle \kappa\rangle}\chi_{\tau, 0}-u_{0}
\]
which is then automatically a polynomial of degree $n$, since the function is
entire and has growth $|P^\star_{m,n}(z)|\asymp |z|^n$ near infinity. 
Moreover, in view of \eqref{eq-dbarsol1.1}, this polynomial is very close to
the function $F_{m,n}^{\langle\kappa\rangle}\chi_{\tau, 0}$ in the norm 
of $L^2(\C,\e^{-2mQ})$:
\begin{equation}
\int_\C|P_{m,n}^\star-F_{m,n}^{\langle\kappa\rangle}\chi_{\tau, 0}|^2
\e^{-2mQ}\dA=\int_\C|u_{0}|^2\e^{-2mQ}\dA=\Ordo(m^{-\frac12}\e^{-\alpha_2m}).
\label{eq-dbarsol1.2}
\end{equation}
It now follows from \eqref{eq:est00.018} and \eqref{eq-dbarsol1.2} that 
for all polynomials $p\in\operatorname{Pol}_n$ of degree at most $n-1$,
we have that
 \begin{equation}
\int_\C p\, \bar{P}_{m,n}^\star\,\e^{-2mQ}\dA=\Ordo(m^{-\kappa-\frac13}\|p\|_{2mQ}),
\label{eq-orthrel1}
\end{equation}
while
\begin{equation}
\int_\C |P_{m,n}^\star|^2\e^{-2mQ}\dA=1+\Ordo(m^{-\kappa-\frac13}).
\label{eq-orthrel2}
\end{equation}
We observe that by duality, \eqref{eq-orthrel1} asserts that
\begin{equation}
\|\Pop_{m,n} P_{m,n}^\star\|_{2mQ}=\Ordo(m^{-\kappa-\frac13}),
\label{eq-projest1}
\end{equation}
where $\Pop_{m,n}$ denotes the orthogonal projection in $L^2(\C,\e^{-2mQ})$
onto the subspace $\operatorname{Pol}_n$ of polynomials of degree at
most $n-1$. If we use this to correct the polynomial $P_{m,n}^\star$, and put
$\tilde P_{m,n}:=\Pop_{m,n}^\perp P_{m,n}^\star=P_{m,n}^\star-\Pop_{m,n} P_{m,n}^\star$, 
then automatically $\tilde P_{m,n}$ has degree $n$ and it is also orthogonal to
all the lower degree polynomials. As a consequence,  $\tilde P_{m,n}$ must be
a scalar multiple of $P_{m,n}$, the orthogonal polynomial we are looking for,
which we write as $\tilde P_{m,n}=c P_{m,n}$ for a constant $c$.  Putting
things together so far, we have obtained that 
\begin{equation}
\big\|\tilde P_{m,n}-F_{m,n}^{\langle\kappa\rangle}\chi_{\tau, 0}
\big\|_{2mQ}=\Ordo(m^{-\kappa-\frac13})
\label{eq:projest1.001}
\end{equation}
with a uniform implied constant. Moreover, by \eqref{eq-orthrel2} and
\eqref{eq-projest1}, the norm of $\tilde P_{m,n}$ equals 
\begin{equation}
|c|=\|cP_{m,n}\|_{2mQ}=\big\|\tilde P_{m,n}\big\|_{2mQ}=1+\Ordo(m^{-\kappa-\frac13}),
\label{eq-orthrel3}
\end{equation}
Next, by our version of the Bernstein-Walsh lemma
(Proposition~\ref{prop-gengrowth-ext}), 
it follows from \eqref{eq:projest1.001} that 
\[
\big|c P_{m,n}-F_{m,n}^{\langle\kappa\rangle}\big|=
\big|\tilde P_{m,n}-F_{m,n}^{\langle\kappa\rangle}
\big|=\Ordo(m^{-\kappa+\frac16}\e^{m\checkQ_\tau})
\]
holds in $\calS_\tau^c$, which after division by $F_{m,n}^{\langle \kappa\rangle}$ 
gives that
\begin{equation}\label{eq:im-c-coeff}
  \bigg| c \frac{P_{m,n}}{F_{m,n}^{\langle\kappa\rangle}}-1\bigg|
=\Ordo(m^{-\kappa-\frac{1}{12}}),
\end{equation}
since $f_{m,n}^{\langle \kappa\rangle}$ is uniformly bounded
away from zero. Next, we let $|z|\to+\infty$ and observe that both 
the functions $F_{m,n}^{\langle \kappa\rangle}$ and $P_{m,n}$ have positive
leading coefficients, whose quotient is denoted by $\gamma_{m,n}$.
Since $\gamma_{m,n}>0$ we obtain from \eqref{eq:im-c-coeff} that
\[
\frac{|\Im c|}{|c|} \le \big|c\gamma_{m,n}-1\big|=\Ordo(m^{-\kappa-\frac{1}{12}}),
\]
where the left-hand side inequality is elementary. Moreover, we can also
realize from the above that $\Re(c)>0$. But then it follows from
\eqref{eq-orthrel3} that
\[
c=1+\Ordo(m^{-\kappa-\frac{1}{12}}).
\]
It now follows from this observation 
combined with \eqref{eq:projest1.001} that
$$
\lVert P_{m,n}-\chi_{\tau, 0}F_{m,n}^{\langle \kappa\rangle}\rVert_{2mQ}=
\Ordo(m^{-\kappa-\frac1{12}}).
$$
This falls slightly short of allowing us to obtain Theorem~\ref{main}
right away. The problem is that our error term is larger than what is 
claimed. However, since the precision $\kappa$ is arbitrary, 
we might as well replace $\kappa$ by $\kappa+1$ and see what we get. 
This would give that
\begin{equation}
\label{eq:Fkappa+1}
\lVert P_{m,n}-\chi_{\tau, 0}F_{m,n}^{\langle \kappa+1\rangle}\rVert_{2mQ}=
\Ordo(m^{-\kappa-1-\frac1{12}}).
\end{equation}
By analyzing the last term in the asymptotic expansion, it is easy to 
verify that
$$
\lVert \chi_{\tau, 0}F_{m,n}^{\langle \kappa+1\rangle}-
\chi_{\tau, 0}F_{m,n}^{\langle \kappa\rangle}\rVert_{2mQ}=\Ordo(m^{-\kappa-1}),
$$
and hence the assertion of the theorem immediate from this estimate and  
\eqref{eq:Fkappa+1}.
\end{proof}

\subsection{Proof of the main theorem} 
We are now ready to obtain the pointwise asymptotic 
expansion of the orthogonal polynomials.
We still assume that Lemma~\ref{lem:main-flow} holds.

\begin{proof}[Proof of Theorem~\ref{thm:main-pw}]
The quasipolynomials $F_{m,n}^{\langle\kappa\rangle}$ obtained
in Theorem~\ref{main} may be written in the form
$$
F_{m,n}^{\langle\kappa\rangle}=
m^{\frac14}\sqrt{\phi_\tau'}[\phi_\tau]^n
\e^{m\calQ_\tau}\sum_{j=0}^\kappa m^{-j}\calB_{\tau, j}, 
$$
where $\calB_{\tau, j}=[\phi_\tau']^{\frac12}\,B_{\tau, j}\circ\phi_\tau$ 
are uniformly bounded, and holomorphic in the 
exterior domain $\calK_\tau^c$. To obtain the theorem,
we need to show that $F_{m,n}^{\langle\kappa\rangle}$ is 
close to $P_{m,n}$ pointwise in the complement of the set 
\begin{equation}\label{eq:def-KtAm}
  \calK_{\tau,A,m}=
  \big\{z\in \C: \mathrm{dist}_\C(z,\calS_\tau^c)
  \ge A(m^{-1}\log m)^{\frac12}\big\}.
\end{equation}
On the complement $\calK_{\tau, A, m}^c$ we have the estimate
$$
0\le m(\checkQ_\tau-\breve{Q}_\tau)(z)\le D\log m,
$$
and hence
$$
\e^{m(\checkQ_\tau-\breve{Q}_\tau)}\le \e^{D\log m}=m^D
$$
where $D$ is some positive constant, which is uniformly bounded while 
$\tau\in I_{\epsilon_0}$. To see this, a simple Taylor expansion of the 
difference $\checkQ_\tau-\breve{Q}_\tau$ in the interior direction suffices.
In view of Theorem~\ref{main}, and the pointwise estimate of 
Proposition~\ref{prop-gengrowth-ext} applied
to the intermediate set $\mathcal{X}_\tau$ between $\calK_\tau$ and 
$\calS_\tau^c$ where the cut-off function $\chi_{\tau, 0}$ assumes the value 
$1$, we find that
\[
\lvert P_{m,n}(z)-F_{m,n}^{\langle \kappa\rangle}(z)\rvert=
\Ordo\big(m^{-\kappa-\frac12}\e^{m\checkQ_\tau(z)}\big)=
\Ordo\big(m^{-\kappa-\frac12+D}\e^{m\breve{Q}_\tau(z)}\big),
\qquad z\in\calK_{\tau,A,m}^c,
\]
where the implicit constant again is uniform in the relevant parameter 
range.
We may rephrase this as saying that
\[
P_{m,n}(z)=F_{m,n}^{\langle \kappa\rangle}(z)+\Ordo\big(m^{-\kappa-
\frac12+D}\e^{m\breve{Q}_\tau(z)}\big)=m^{\frac14}
\sqrt{\phi_\tau'}[\phi_\tau]^n\e^{m\calQ_\tau}
\Big(\sum_{j=0}^{\kappa}\calB_{\tau, j}
+\Ordo\big(m^{-\kappa-\frac{3}{4}+ D}\big)\Big),
\]
for $z\in\calK_{\tau,A,m}^c$.
This essentially proves the theorem, except that 
the error term is now 
worse than claimed. However, we may fix this by
replacing $\kappa$ by $\kappa':=\kappa+\lceil D\rceil+1$ 
in the above argument, to obtain
on $\calK_{\tau,A,m}^c$ that
\begin{multline*}
P_{m,n}(z)=m^{\frac14}\sqrt{\phi_\tau'}[\phi_\tau]^n
\e^{m\calQ_\tau}\Big(\sum_{j=0}^{\kappa'} m^{-j}\calB_{\tau, j} 
+\Ordo(m^{-\kappa-\frac74})\Big)
\\
=m^{\frac14}\sqrt{\phi_\tau'}[\phi_\tau]^n
\e^{m\calQ_\tau}\Big(\sum_{j=0}^\kappa m^{-j}\calB_{\tau, j} 
+\Ordo(m^{-\kappa-1})\Big),
\end{multline*}
where the last step follows since the functions 
$m^{-j}\calB_{\tau, j}$ are all $\Ordo(m^{-\kappa-1})$ 
for $j$ in the range
$\kappa+1\le j\le \kappa'$.
The proof is complete.
\end{proof}

\section{Algorithmic determination of the coefficients 
\texorpdfstring{$\calB_{\tau, j}$}{B-j-tau}}\label{sect:alg}
\subsection{Implementation of the radial Laplace 
method}\label{ss:alg-explicit}

We turn to the algorithm of Theorem~\ref{thm:main-coeff}. To proceed, 
we need two families of differential operators.
We recall the differential operators $\Lop_k$ defined in \eqref{eq:Lop} 
appearing in the application of 
Laplace's method in Proposition~\ref{prop:steep}.
We need to apply these operators to functions defined in a neighborhood of
the unit circle,
and we apply them in the radial direction. So, for functions 
$f(r\e^{\imag\theta})$, we put
$$
\Lop_k[f](r\e^{\imag\theta})=\sum_{\nu=k}^{3k}\frac{(-1)^{\nu-k}2^{-\nu}}
{\nu!(\nu-k)![\partial_r^2R_{\tau}(r\e^{\imag\theta})]^\nu}
\partial^{2\nu}_r\left(\big[W_{\tau}(r\e^{\imag\theta})\big]^{\nu-k}
f(r\e^{\imag\theta})\right),
$$
where
$$
W_{\tau}(r\e^{\imag\theta})=R_\tau(r\e^{\imag\theta})-\frac{1}{2}(r-1)^2
\partial_x^2R_\tau(x\e^{\imag\theta})\Big\vert_{x=1}.
$$
The second family of operators is defined implicitly in the following lemma,
which turns explicit appearances of the parameter $l$ into differential
operators.

\begin{lem}\label{lem:pseudo}
Let $k$ be a nonnegative integer. Then there exist partial differential 
operators $\Mop_k$ of order $2k$ with real-analytic coefficients, such that 
for any integer $l\geq 0$ and any function smooth function $f$ defined 
in a neighborhood of $\T$, we have that
$$
\int_{\T}\e^{\imag l\theta} \big(\partial^2_r R_\tau(r\e^{\imag\theta})\big)^{-\frac12}
\Lop_{k}[r^{1-l}f(r \e^{\imag\theta})]
\bigg\vert_{r=1}\diff\theta=\int_{\T}\e^{\imag l\theta}
\Mop_k [f](\e^{\imag\theta})\diff\theta.
$$
\end{lem}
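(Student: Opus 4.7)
The plan is to unfold $\Lop_k[r^{1-l}f]\big|_{r=1}$ by applying the Leibniz rule twice, and then to transfer the resulting polynomial dependence in $l$ to the exponential $\e^{\imag l\theta}$ via integration by parts on the circle. Two structural features drive the proof: the vanishing order of $W_\tau$ at $r=1$ (which forces a drop in effective radial order from $6k$ down to $2k$), and the fact that each radial derivative of $r^{1-l}$ is polynomial in $l$.

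From Proposition~\ref{prop:R} together with the defining formula of $W_\tau$ one reads off that $W_\tau(r\e^{\imag\theta})=\Ordo((r-1)^3)$ as $r\to 1$, so $W_\tau^{\nu-k}$ vanishes to order $3(\nu-k)$ there. In the Leibniz expansion
$$
\partial_r^{2\nu}\bigl(W_\tau^{\nu-k}\,r^{1-l}f\bigr)\Big|_{r=1}=\sum_{j=3(\nu-k)}^{2\nu}\binom{2\nu}{j}\,\partial_r^j(W_\tau^{\nu-k})\big|_{r=1}\,\partial_r^{2\nu-j}(r^{1-l}f)\big|_{r=1},
$$
the surviving terms satisfy $2\nu-j\le 3k-\nu\le 2k$. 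Expanding the inner factor once more via Leibniz, and using that $\partial_r^i(r^{1-l})\big|_{r=1}=(1-l)(-l)\cdots(2-l-i)$ is a polynomial $P_i(l)$ in $l$ of degree $i$, one rewrites $\Lop_k[r^{1-l}f]\big|_{r=1}$ as a finite sum of terms of the form
$$
c_{m,d}(\theta)\, l^d\,\partial_r^m f(r\e^{\imag\theta})\big|_{r=1},\qquad m+d\le 2k,
$$
in which the coefficients $c_{m,d}(\theta)$ are built from restrictions to $\T$ of derivatives of $R_\tau$, and hence are real-analytic in $\theta$.

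To finish, multiply the above by $\e^{\imag l\theta}\bigl(\partial_r^2 R_\tau(\e^{\imag\theta})\bigr)^{-1/2}$ (real-analytic and positive on $\T$ by the remark following Proposition~\ref{prop:R}) and integrate in $\theta$. Using the identity $l^d\e^{\imag l\theta}=(-\imag)^d\partial_\theta^d\e^{\imag l\theta}$ and integrating by parts $d$ times on $\T$ (no boundary terms), each factor $l^d$ is transferred onto the remaining product as the operator $\imag^d\partial_\theta^d$. Collecting terms yields
$$
\Mop_k=\sum_{\substack{m,d\ge 0\\ m+d\le 2k}}\imag^d\,\partial_\theta^d\circ\bigl(\tilde c_{m,d}(\theta)\,\partial_r^m\bigr)\bigg|_{r=1},
$$
with real-analytic coefficients $\tilde c_{m,d}$ obtained by absorbing the factor $(\partial_r^2 R_\tau)^{-1/2}$ into $c_{m,d}$. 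By construction $\Mop_k$ is a partial differential operator of total order at most $2k$ with real-analytic coefficients, which proves the identity.

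The main obstacle is the bookkeeping showing that the combined ``degree in $l$ plus radial order in $r$'' in each surviving term never exceeds $2k$; this hinges on the precise third-order vanishing of $W_\tau$ at $r=1$, which is exactly the purpose of the subtraction $\tfrac12(r-1)^2\partial_r^2 R_\tau|_{r=1}$ in the definition of $W_\tau$. Any weaker vanishing would push the effective order above $2k$ and the lemma would fail.
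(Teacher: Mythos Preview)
Your proof is correct and follows essentially the same route as the paper: two applications of the Leibniz rule (first to separate $W_\tau^{\nu-k}$ from $r^{1-l}f$, then to separate $r^{1-l}$ from $f$), truncation via the third-order vanishing of $W_\tau$ at $r=1$, and finally integration by parts on $\T$ to convert the polynomial dependence on $l$ into a $\partial_\theta$-operator. The only cosmetic difference is that the paper organizes the $l$-dependence in terms of Pochhammer symbols $(l-1)_{j-i}$ and the corresponding shifted operator $(\imag\partial_\theta-1)_{j-i}$, whereas you expand directly in monomials $l^d$; this is merely a change of basis for polynomials in $l$ and does not affect the argument.
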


\begin{proof}
We first observe that by integration by parts, multiplication by $l$ 
corresponds to applying the differential operator $\imag\partial_\theta$ inside
the integral:
$$
l\int_{\T}f(\theta)\e^{\imag l\theta}\diff\theta=\int_{\T}\imag\partial_\theta 
f(\theta)\e^{\imag l\theta}\diff\theta.
$$
From this it is immediate that the formula
\begin{equation}
\label{eq:part-int}
p(l)\int_{\T}f(\theta)\e^{\imag l\theta}\diff\theta=\int_{\T}p(\imag\partial_\theta) 
f(\theta)\e^{\imag l\theta}\diff\theta
\end{equation}
holds for polynomials $p$.
Structurally, 
$\Lop_{k}[r^{1-l}f(r \e^{\imag\theta})]$
can be written as
\begin{equation}\label{eq:L-struct}
 \Lop_{k}[r^{1-l}f(r\e^{\imag\theta})]=\sum_{\nu=k}^{3k}b_\nu(r\e^{\imag\theta}) 
\partial^{2\nu}_r\big[[W_{\tau}(r\e^{\imag\theta})]^{\nu-k} r^{1-l}
f(r\e^{\imag\theta})\big],
\end{equation}
where $b_\nu$ is the real-analytic function given by
$$
b_\nu(r\e^{\imag\theta})=\frac{(-1)^{\nu-k}2^{-\nu}}
{\nu!(\nu-k)![\partial_r^2R_{\tau}(r\e^{\imag\theta})]^\nu}.
$$ 
We observe that by the Leibniz rule
\begin{multline}\label{eq:rad-deriv}
\partial_r^j(r^{1-l}f(r\e^{\imag\theta}))\Big\vert_{r=1}=\sum_{i=0}^{j}\binom{j}{i}
(-1)^{j-i}(l-1)_{j-i} r^{1-l-j+i}\partial_r^{i}f(r\e^{\imag\theta})\Big\vert_{r=1} 
\\
=\sum_{i=0}^{j}\binom{j}{i}(-1)^{j-i}(l-1)_{j-i}\partial_r^i f(r\e^{\imag\theta})
\Big\vert_{r=1},
\end{multline}
where $(x)_i=x(x+1)\cdots(x+i-1)$ denotes the standard Pochhammer symbol.
We return to the formula \eqref{eq:L-struct} for $\Lop_k$. Again by the Leibniz 
formula
we have that
\begin{multline*}
\partial^{2\nu}_r[W_\tau^{\nu-k}(\e^{\imag\theta}) r^{1-l}f(r\e^{\imag\theta})]
\Big\vert_{r=1}
=\sum_{j=0}^{2\nu} \binom{2\nu}{j}\partial_r^{2\nu-j}\Big([W_{\tau}
(r\e^{\imag\theta})]^{\nu-k}\Big) \partial_r^{j}\Big(r^{1-l}
f(r\e^{\imag\theta})\Big)\bigg\vert_{r=1}
\\
=\sum_{j=0}^{3k-\nu}\binom{2\nu}{j}\partial_r^{2\nu-j}\Big([W_{\tau}
(r\e^{\imag\theta})]^{\nu-k}\Big) \partial_r^{j}\Big(r^{1-l}
f(r\e^{\imag\theta})\Big)\bigg\vert_{r=1} 
\\
=\sum_{j=0}^{3k-\nu}\sum_{i=0}^{j}(-1)^{j-i}\binom{2\nu}{j}\binom{j}{i}(l-1)_{j-i}
\partial_r^{2\nu-j}\Big([W_{\tau}
(r\e^{\imag\theta})]^{\nu-k}\Big)
\partial_r^i f(r\e^{\imag\theta})
\Big\vert_{r=1},
\end{multline*}
where the truncation of the sum follows from an application of the flatness 
of $W_\tau$ near the unit circle $\T$, and the last equality is due to 
\eqref{eq:rad-deriv}.
We write the expression for $\Lop_k[r^{1-l}f(r\e^{\imag\theta})]$ as
$$
\Lop_k[r^{1-l}f(r\e^{\imag\theta})]\Big\vert_{r=1}
=\sum_{\nu=k}^{3k}\sum_{j=0}^{3k-\nu}\sum_{i=0}^{j} (l-1)_{j-1} c_{i,j,\nu}
(\e^{\imag\theta}) \partial^i_rf(r\e^{\imag\theta})\Big\vert_{r=1},
$$
where
$$
c_{i,j,\nu}(\e^{\imag\theta})=(-1)^{j-i}\binom{2\nu}{j}
\binom{j}{i}(l-1)_{j-i}
b_\nu(\e^{\imag\theta})\partial_r^{2\nu-j}\Big([W_{\tau}
(r\e^{\imag\theta})]^{\nu-k}\Big)\Big\vert_{r=1}.
$$
Changing the order of summation, we arrive at
\begin{multline*}
\Big(\partial^2_r R_\tau(r\e^{\imag\theta})\Big)^{-\frac12}\Lop_k[r^{1-l}
f(r\e^{\imag\theta})]
\Big\vert_{r=1} 
\\
=\sum_{i=0}^{2k}\sum_{j=i}^{2k}
(-1)^{j-i}\binom{j}{i}(l-1)_{i-j}
\Big(\partial^2_r R_\tau(r\e^{\imag\theta})\Big)^{-\frac12}
d_{j}(\e^{\imag\theta})\partial^i_rf(r\e^{\imag\theta})
\Big\vert_{r=1},
\end{multline*}
where 
$$
d_{j}(\e^{\imag\theta})=\sum_{\nu=k}^{3k-j}\binom{2\nu}{j}
b_\nu(\e^{\imag\theta})\partial_r^{2\nu-j}\Big([W_{\tau}
(r\e^{\imag\theta})]^{\nu-k}\Big)\Big\vert_{r=1}.
$$
It follows from \eqref{eq:part-int} that the asserted identity holds with 
$\Mop_k$ given by
$$
\Mop_k[f](\e^{\imag\theta})=\sum_{i=0}^{2k}\sum_{j=i}^{2k}
(-1)^{j-i}\binom{j}{i}(\imag\partial_\theta-1)_{i-j}\Big[(\partial^2_r 
R_\tau(r\e^{\imag\theta}))^{-\frac12}d_{j}(\e^{\imag\theta})
\partial^i_rf(r\e^{\imag\theta})\Big]
\bigg\vert_{r=1}.
$$
The proof of the lemma is complete.
\end{proof}

\subsection{Algorithmic computation of the coefficients
 in the asymptotic expansion}\label{sec:alg}

In this section we supply the proof of Theorem~\ref{thm:main-coeff}, and 
explain the underlying computational algorithm. The main point is 
that we show how to iteratively obtain the coefficients, given that 
an asymptotic expansion exists, as formulated in Theorem~\ref{main}.

\begin{proof}[Proof of Theorem~\ref{thm:main-coeff}]
Fix the precision $\kappa$ to be a positive integer. Let $F_{m,n}
^{\langle \kappa\rangle}$ be the 
approximate orthogonal quasipolynomials from Theorem~\ref{main} with the 
expansion
$$
F_{m,n}^{\langle \kappa\rangle}(z)=m^{\frac14}\sqrt{\phi_\tau'(z)}[\phi_\tau(z)]^n
\e^{m\calQ_\tau(z)}\sum_{j=0}^\kappa m^{-j}\calB_{\tau, j}(z),
$$
where the functions $\calB_{\tau, j}$ are bounded and holomorphic on 
$\calK_\tau^c$ for some compact subset $\calK_{\tau}$ of 
$\calS_\tau^\circ$, which we may assume to be the conformal image 
of the exterior disk $\D_\e(0,\rho_0)$ under the mapping 
$\phi_\tau^{-1}$. If we make the ansatz
$$
\calB_{\tau, j}(z)=\sqrt{\phi_\tau'(z)}(B_{\tau, j}\circ\phi_\tau)(z),
$$
we may express $F_{m,n}$ using the canonical positioning operator as 
$F_{m,n}^{\langle \kappa\rangle}=m^{\frac14}\Vop_{m,n}[f_{m,n}
^{\langle \kappa\rangle}]$, where
\begin{equation}\label{eq:fnm}
f_{m,n}^{\langle \kappa\rangle}(z)=\sum_{j=0}^\kappa m^{-j}B_{\tau, j}(z),
\qquad z\in\D_\e(0,\rho_0).
\end{equation} 
According to Theorem~\ref{main}, the functions $F_{m,n}^{\langle \kappa\rangle}$
have the approximate orthogonality property
\begin{equation}\label{eq:F-orth}
\int_{\C}\chi_{\tau, 0}F_{m,n}^{\langle \kappa\rangle}\bar{p}\,\e^{-2mQ}\diffA=
\Ordo(m^{-\kappa-1}
\lVert p\rVert_{2mQ}),\qquad p\in \mathrm{Pol}_n.
\end{equation}
The function $\chi_{\tau, 0}$ is a cut-off function with $0\le \chi\le 1$ 
throughout $\C$, such that $\chi_{\tau, 0}$ vanishes on $\calK_\tau$ and 
equals $1$ on 
$\mathcal{X}_{\tau}^c$, where $\calK_\tau$ lies 
at a fixed positive distance from $\partial\calS_\tau$, and 
$\mathcal{X}_\tau$ is an 
intermediate set between them (cf.\ Definition~\ref{def:intermediate}).
We consider the associated cut-off function 
$\chi_{\tau, 1}=\chi_{\tau, 0}\circ\phi_\tau^{-1}$, tacitly extended to vanish 
where it is undefined. 
Without loss of generality, we may assume that $\chi_{\tau, 1}$ is radial.
By Remark~\ref{rem:im-set}, we may assume that 
$\chi_{\tau, 1}$ vanishes on $\D(0,\rho_0')$ for some number $\rho_0'$ with
$\rho_0<\rho_0'<1$.
In order to compute the functions $B_{\tau, j}$, we would like to apply
equation~\eqref{eq:F-orth} to 
$$
q(z)=\Vop_{m,n}[z^{-l}]=\phi_\tau'(z)[\phi_\tau(z)]^{n-l}\e^{m\calQ_\tau(z)} 
$$
for a positive integer $l$, but this function 
is unfortunately not a polynomial. To fix this, we consider the
$L^2_{2mQ,n}$-minimal 
solution $v$ to the $\bar\partial$-problem
$$
\bar\partial v=\bar\partial(\chi_{\tau, 0} q)=q\,\bar\partial \chi_{\tau, 0}. 
$$
If $v$ is the solution, then the difference $\chi_{\tau, 0}q-v$ will be an entire 
function with the polynomial growth bound $\Ordo(\lvert z\rvert^{n-1})$ at 
infinity, 
and hence a polynomial of degree less than or equal to $n-1$.
By the estimate of Proposition~\ref{bh}, we have the norm control
$$
\int_{\C}\lvert v\rvert^2\e^{-2mQ}\diffA\le \frac{1}{2m}
\int_{\C}\lvert q\rvert^2\lvert 
\bar\partial\chi_{\tau, 0}\rvert^2 \frac{\e^{-2mQ}}{\hDelta Q} \diffA 
\leq \frac{A^2}
{2m\alpha_1}\int_{\mathcal{X}_\tau\setminus\calK_\tau}\lvert 
q\rvert^2 \e^{-2mQ}\diffA,
$$
where we have used that there exists a positive real $\alpha_1$ such that 
$\hDelta Q\ge \alpha_1$ holds on $\calS_\tau$, which contains the support of 
$\bar\partial\chi_{\tau, 0}$, and that we have the bound 
$\lvert \bar\partial\chi_{\tau, 0}\rvert\le A$. Since the support of 
$\bar\partial\chi_{\tau, 0}$
lies in $\calK_\tau^c$, we may use the structure of $q$ as 
$q=\Vop_{m,n}[z^{-l}]$ and Proposition~\ref{prop:Vop}
$$
\int_{\mathcal{X}_\tau\setminus\calK_\tau}\lvert 
q\rvert^2 \e^{-2mQ}\diffA=\int_{\rho_0\le \lvert z\rvert 
\le \rho_0''}\lvert z\rvert^{-2l}
\e^{-2mR_\tau(z)}\diffA(z),
$$
where $\rho_0''$ is associated with a natural choice of the intermediate set 
$\mathcal{X}_\tau$ as the image of an exterior disk under $\phi_\tau^{-1}$, and 
satisfies $\rho_0<\rho_0'<\rho_0''<1$. 
Due to Proposition~\ref{prop:R}, this immediately 
gives that for any fixed positive integer $l$
$$
\int_{\C}\lvert v\rvert^2\e^{-2mQ}\diffA=\Ordo(\e^{-\epsilon_1 m})
$$
as $m, n$ tend to infinity while $\tau=\frac{n}{m}\in I_{\epsilon_0}$, for some 
positive real $\epsilon_1$. This means that for a fixed positive integer 
$l$, we have
for $q=\Vop[z^{-l}]$ the approximate orthogonality
\begin{equation}
\int_{\C}\chi_{\tau, 0}^2 F_{m,n}^{\langle \kappa\rangle} \bar{q}\,\e^{-2mQ}
\diffA=\Ordo(m^{-\kappa-1}),
\label{eq:orth101}
\end{equation}
where we have used that $\chi_{\tau, 0} q-v$ is a polynomial of degree at most 
$n-1$, and the above smallness of $v$.
If we use the canonical positioning operator as in Proposition \ref{prop:Vop} 
in polarized form, \eqref{eq:orth101} reads in polar coordinates
\begin{equation}\label{eq:pol-f}
m^{\frac14}\int_{\T}\e^{\imag l\theta}\int_{\rho_0}^\infty r^{1-l}\chi_{\tau, 1}^2
(r)f_{m,n}^{\langle\kappa\rangle}(r\e^{\imag\theta})
\e^{-2mR_{\tau}(r\e^{\imag\theta})}\diff r\diffs(\e^{\imag \theta})=
\Ordo\left(m^{-\kappa-1}\right),
\end{equation}
for fixed $l$. 
We now apply Proposition~\ref{prop:steep2} to the radial integral, with 
$V(r)=2R_{\tau}(r\e^{\imag\theta})$. Note that $\partial^2_r R_\tau(r\e^{i\theta})
\vert_{r=1}=4\hDelta R_\tau(\e^{\imag\theta})$.
As a consequence, the inner integral in \eqref{eq:pol-f} has an expansion
\begin{multline*}
\int_{\rho_0}^\infty r^{1-l}\chi_{\tau, 1}^2(r)f_{m,n}^{\langle\kappa\rangle}
(r\e^{\imag\theta})
\e^{-2mR_{\tau}(r\e^{\imag\theta})}\diff r
=\bigg(\frac{\pi}{4m \hDelta R_{\tau}(\e^{\imag\theta})}\bigg)^{\frac12}\sum_{j=0}
^{\kappa}m^{-j}\Lop_j [r^{1-l}f_{m,n}^{\langle\kappa\rangle}(r\e^{\imag\theta})]
\Big\vert_{r=1} 
\\
+ \Ordo\bigg(m^{-\kappa-1}\big\lVert r^{1-l}\chi_{\tau, 1}^2f_{m,n,\theta}
^{\langle\kappa\rangle}
\big\rVert_{C^{2(\kappa+1)}([\rho_0,\rho_2])}+\big\lVert r^{1-l}\chi_{\tau, 1}^2 
f_{m,n,\theta}^{\langle\kappa\rangle}
\big\rVert_{L^\infty([\rho_1,\infty))}\rho_1^{-m\vartheta+1}\bigg),
\end{multline*}
where we to simplify the notation we use the subscript $\theta$ to denote 
the radial 
restriction $f_\theta(r)= f(r\e^{\imag\theta})$. Here, $\vartheta,\alpha$ and 
$\rho_1$ 
are some real numbers with 
$\vartheta>0,\,\alpha>0$ and $1<\rho_1<\rho_2$, which are independent of 
$\tau\in I_{\epsilon_0}$. 
By applying the standard Cauchy estimates to the functions 
$f_{m,n}^{\langle \kappa\rangle}$, and
by Remark~\ref{rem:im-set} (both part (a) and (b) are needed) we have uniform 
control on the norms
$$
\big\lVert r^{1-l}\chi_{\tau, 1}^2f_{m,n,\theta}^{\langle\kappa\rangle}
\big\rVert_{C^{2(\kappa+1)}([\rho_0,\rho_2])}\quad \text{and}\quad
\big\lVert r^{1-l}\chi_{\tau, 1}^2 f_{m,n,\theta}
^{\langle\kappa\rangle}\big\rVert_{L^\infty([\rho_1,\infty))}
$$ 
provided that $l$ is fixed, and that 
$f_{m,n}^{\langle \kappa\rangle}$ are uniformly bounded.
For fixed $l$, it follows that 
\begin{multline}\label{eq:asymp-clean}
\int_{\rho_0}^\infty r^{1-l}\chi_{\tau, 1}^2(r)f_{m,n}^{\langle\kappa\rangle}
(r\e^{\imag\theta})
\e^{-2mR_{\tau}(r\e^{\imag\theta})}\diff r
\\
=\bigg(\frac{\pi}{4m \hDelta R_{\tau}(\e^{\imag\theta})}\bigg)^{\frac12}\sum_{j=0}
^{\kappa}m^{-j}\Lop_j [r^{1-l}f_{m,n}^{\langle\kappa\rangle}(r\e^{\imag\theta})]
\Big\vert_{r=1} + \Ordo\big(m^{-\kappa-1}\big),
\end{multline}
where the implied constant is uniformly bounded as long as 
$f_{m,n}^{\langle\kappa\rangle}$
is uniformly bounded on $\D_\e(0,\rho_0)$. By expanding the expression
\eqref{eq:fnm} for $f_{m,n}^{\langle\kappa\rangle}$, it follows from 
\eqref{eq:asymp-clean} that
\begin{multline}\label{eq:comp-radial}
\int_{\rho_0}^\infty r^{1-l}\chi^2_{\tau, 1}(r)f_{m,n}^{\langle \kappa\rangle}
(r\e^{\imag\theta})
\e^{-2mR_{\tau}(r\e^{\imag\theta})}\diff r 
\\
=\bigg(\frac{\pi}{4m\hDelta R_{\tau}(\e^{\imag\theta})}\bigg)^{\frac12}
\sum_{k=0}^{\kappa}m^{-k}\Lop_k[r^{1-l}f_{m,n}^{\langle \kappa\rangle}
(r\e^{\imag\theta})]\bigg\vert_{r=1}
+ \Ordo(m^{-\kappa-1})
\\
=\bigg(\frac{\pi}{4m\hDelta R_{\tau}(\e^{\imag\theta})}
\bigg)^{\frac12}\sum_{j=0}^\kappa m^{-j}\sum_{k=0}^j
\Lop_k[r^{1-l}B_{\tau, j-k}(r\e^{\imag\theta})]\bigg\vert_{r=1}+ 
\Ordo(m^{-\kappa-1}),
\end{multline}
as $m\to\infty$. We multiply the expression \eqref{eq:comp-radial} by 
$\e^{\imag l\theta}$ and integrate with respect to $\theta$ to get
\begin{multline*}
m^{\frac14}\int_{\T}\e^{\imag l\theta}
\int_{\rho_0}^\infty r^{1-l}\chi_{\tau, 1}^2
(r)f_{m,n}^{\langle\kappa\rangle}(r\e^{\imag\theta})
\e^{-2mR_{\tau}(r\e^{\imag\theta})}\diff r\diffs(\e^{\imag \theta})
\\
=\sum_{j=0}^\kappa m^{-j-\frac14}\int_{\T}\e^{\imag l\theta}\bigg(\frac{\pi}
{4\hDelta R_{\tau}(\e^{\imag\theta})}
\bigg)^{\frac12}\sum_{k=0}^j
\Lop_k[r^{1-l}B_{\tau, j-k}(r\e^{\imag\theta})]
\bigg\vert_{r=1}\diffs(\e^{\imag \theta})+
\Ordo(m^{-\kappa-\frac34}),
\end{multline*}
as $m\to\infty$. This is an asymptotic series, and so is \eqref{eq:pol-f}, 
only that all the coefficients vanish in the latter, 
and only the error term remains. Since two 
asymptotic series coincide only if they coincide term 
by term, we find that for 
integers $j=0,\ldots, \kappa$,
$$
\int_{\T}\e^{\imag l\theta}\big(4\hDelta R_{\tau}(\e^{\imag\theta})\big)^{-
\frac12}\sum_{k=0}^{j}\Lop_k[r^{1-l} B_{\tau, j-k}(r\e^{\imag\theta})]
\bigg\vert_{r=1}\diffs(\e^{\imag \theta})=0,\qquad l=1,2,3,\ldots.
$$
This condition looks like the standard condition membership in the Hardy space 
$H^2$. The problem with this is that the functions unfortunately depend on the 
parameter $l$, so the criterion does not apply. To remedy this, we apply
Lemma~\ref{lem:pseudo}, which gives
\begin{equation}\label{eq:Hardy-Mop}
\int_{\T}\e^{\imag l\theta}\sum_{k=0}^{j}\Mop_k[B_{\tau, j-k}](\e^{\imag \theta})
\diffs(\e^{\imag\theta})=0,
\qquad l=1,2,3,\ldots,
\end{equation}
which is now of the desired form. So, by the standard Fourier analytic 
characterization of the Hardy space, the equation \eqref{eq:Hardy-Mop} is 
equivalent to having
\begin{equation}\label{eq:Hardy-Mop2}
\sum_{k=0}^{j}\Mop_k[B_{\tau, j-k}]\Big\vert_{\T}\in H^2,\qquad j=0,\ldots,\kappa.
\end{equation}
We look at the case $j=0$ first. Then \eqref{eq:Hardy-Mop2} says that 
$\Mop_0[B_{\tau, 0}]\big\vert_{\T}\in H^2$. The operator $\Mop_0$, 
with the defining property given by Lemma~\ref{lem:pseudo}, has the form
\begin{equation}\label{eq:def-M0}
\Mop_0[f](\e^{\imag\theta})= (4\hDelta R_\tau(\e^{\imag\theta}))^{-\frac12}
f(\e^{\imag\theta}).
\end{equation}
We recall that it is given that $B_{\tau, 0}$ is bounded and holomorphic 
in a neighborhood of the closed exterior disk $\bar{\D}_{\e}$, so that in 
particular 
$B_{\tau, 0}\big\vert_{\T}\in H^2_{-}$. If we combine this with the 
observation that 
$\Mop_0[B_{\tau, 0}]\big\vert_{\T}\in H^2$ together with the explicit 
expression \eqref{eq:def-M0} for $\Mop_0$, we arrive at
\begin{equation}\label{eq:crit-B0}
B_{\tau, 0}\big\vert_{\T}\in (4\hDelta R_\tau)^{\frac12}H^2\cap H^2_{-}.
\end{equation}
Let $H_{R_\tau}$ be the 
bounded holomorphic function in $\D_\e$ such that 
\begin{equation}\label{eq:HR}
\Re H_{R_\tau}=\frac12\log (4\hDelta R_\tau)^{\frac12}=\frac14
\log(4\hDelta R_\tau),\qquad \text{on}\;\;\T
\end{equation}
with $\Im H_{R_\tau}(\infty)=0$.
It follows from the given regularity of $R_\tau$ that $H_{R_\tau}$ is a 
bounded holomorphic function in the exterior disk, which extends 
holomorphically 
to a neighborhood of $\bar\D_\e$. We may rewrite \eqref{eq:crit-B0} in the form
$$
B_{\tau, 0}\Big\vert_{\T} \in \e^{2\Re H_{R_\tau}} H^2\cap H^2_{-}.
$$
By Proposition~\ref{prop:toeplitz-ker} applied with $u=v=-\bar{H}_{R_\tau}$ and 
$F=0$, 
it follows that $B_{\tau, 0}$ is of the form
\begin{equation}\label{eq:B0-struct}
B_{\tau, 0}=c_{\tau, 0}\e^{H_{R_\tau}}
\end{equation}
for some constant $c_{\tau, 0}$, which must be positive by 
our normalization.

We proceed to consider more generally $j=1,2,3,\ldots$. 
If we separate out the term corresponding to $k=0$ from 
equation \eqref{eq:Hardy-Mop2}, we 
find that
\begin{equation}\label{eq:orth-1}
\frac{B_{\tau, j}}{(4\hDelta R_{\tau})^{\frac12}}
+\sum_{k=1}^{j}\Mop_k[B_{\tau, j-k}]\bigg\vert_{\T}
\in H^2,\qquad j=1,\ldots, \kappa.
\end{equation}
This equation allows us to compute $B_{\tau, j}$, given that we have already 
obtained the functions $B_{\tau, 0},\ldots, B_{\tau,j-1}$.
Indeed, if we put
\[
F_{\tau, j}=\sum_{k=1}^{j}\Mop_k[B_{\tau, j-k}],
\]
which involves only the functions $B_{\tau, 0},\ldots, B_{\tau, j-1}$, we may write 
\eqref{eq:orth-1} in the form
\[
B_{\tau, j}\big\vert_{\T}\in H^2_{-}\cap 
(4\hDelta R_\tau)^{\frac12}(-F_{\tau, j}+H^2)=
H^2_{-}\cap \e^{2\Re H_{R_\tau}}(-F_{\tau, j}+H^2),
\]
which by Proposition~\ref{prop:toeplitz-ker} has the solution
\begin{equation}\label{eq:Bj-struct}
B_{\tau, j}=c_{\tau, j}\e^{H_{R_\tau}}-\e^{H_{R_\tau}}\Pop_{H^2_{-,0}}
[\e^{\bar{H}_{R_\tau}}F_{\tau, j}],
\end{equation}
for some constant $c_{\tau, j}$, which have to be real
in view of our normalization $f_{m,n}^{\langle\kappa\rangle}(\infty)>0$. 
Since $B_{\tau, 0}$ is known up to a constant 
multiple, this allows us to iteratively derive $B_{\tau, j}$ for 
$j=1,\ldots,\kappa$.
The only remaining freedom is the choice of the constants $c_{\tau, j}$ for 
$j=0,\ldots,\kappa$. We proceed to determine them. 
Since the orthogonal polynomials $P_{m,n}$ are normalized, it follows from 
Theorem~\ref{main} together with the triangle inequality that
$$
\big\lVert \chi_{\tau, 0}F_{m,n}^{\langle\kappa\rangle}
\big\rVert_{2mQ}=1+\Ordo(m^{-\kappa-1})
$$
as $m\to\infty$. Since $\chi_{\tau, 0}F_{m,n}^{\langle\kappa\rangle}
=m^{\frac14}\Vop_{m,n}
[\chi_{\tau, 1}f_{m,n}^{\langle\kappa\rangle}]$, 
it follows from the isometric property 
described in Proposition~\ref{prop:Vop} that
\begin{equation}\label{eq:norm-approx}
m^{\frac12}\int_{\C}\chi_{\tau, 1}^2\lvert f_{m,n}^{\langle\kappa\rangle}
\rvert^2\e^{-2mR_\tau}\diffA=\int_{\C}\chi_{\tau, 0}^2\lvert F_{m,n}
^{\langle\kappa\rangle}\rvert^2\e^{-2mQ}
\diffA=1
+\Ordo(m^{-\kappa-1}).
\end{equation}
Here, the integrals are over the whole plane, although the isometry is 
only over the 
the complements of certain compact subsets. However, since we interpret the 
products with the cut-off functions as vanishing where the cut-off function 
vanishes itself, this
is of no concern to us.
We now expand $f_{m,n}^{\langle\kappa\rangle}$ according to \eqref{eq:fnm}, 
so that by equation \eqref{eq:norm-approx},
\begin{multline}\label{eq:norm-approx2}
2m^{\frac12}\sum_{j, k=0}^{\kappa}m^{-(j+k)}\int_{\T}\int_{\rho_0}^\infty
\chi_{\tau, 1}^2(r)B_{\tau, j}(r\e^{\imag\theta})
\bar{B}_{\tau, k}(r\e^{\imag\theta})\e^{-2mR_\tau(r\e^{\imag\theta})}r\diff
r\diffs(\e^{\imag\theta})
\\
=1+\Ordo(m^{-\kappa-1}),
\end{multline}
where the factor $2$ appears as a result of our normalizations.
This equation is what will give us the values of the constants $c_{\tau, j}$. 
We turn 
first to the case $j=0$.
By a trivial version of Proposition~\ref{prop:steep2}, for any integers $j,k$
with $0\le j,k\le \kappa$ we have the rough estimate
$$
\int_{\rho_0}^\infty \chi_{\tau, 1}^2(r)B_{\tau, j}(r\e^{\imag\theta})
\bar{B}_{\tau, k}(r\e^{\imag\theta})\e^{-2mR_\tau(r\e^{\imag\theta})}r\diff 
r\diffs(r\e^{\imag\theta})=\Ordo(m^{-\frac12}),
$$
where the implicit constant is uniform for $\tau\in I_{\epsilon_0}$. If we 
disregard all the contributions in \eqref{eq:norm-approx2} which are of
order $\Ordo(m^{-\frac12})$, we see that only $j=k=0$ gives a nontrivial 
contribution. The term corresponding to $j=k=0$ in \eqref{eq:norm-approx2}
can be expanded using the Laplace method of Proposition~\ref{prop:steep2}
(recall the formula \eqref{eq:B0-struct} for $B_{\tau, 0}$), to give
\begin{multline*}
2m^{\frac{1}{2}}\int_{\T}\int_{\rho_0}^\infty \chi_{\tau, 1}^2(r)\lvert B_{\tau, 0}
(r\e^{\imag\theta})\rvert^2\e^{-2mR_\tau(r\e^{\imag\theta})}
r\diff r\diffs(\e^{\imag\theta})
\\
=2m^{\frac{1}{2}}\lvert c_{\tau, 0}\rvert^2
\int_{\T}\Big(\frac{\pi}{4m\hDelta R_{\tau}
(\e^{\imag\theta})}\Big)^{\frac12}
\Lop_0[r\e^{2\Re H_{R_\tau}(r\e^{\imag\theta})}]\Big\vert_{r=1}\diffs
+\Ordo(m^{-\frac12}).
\end{multline*}
Since in general, for a smooth function $f$ we have that 
$\Lop_0[f(r)]\big\vert_{r=1}=f(1)$, the leading contribution simplifies to 
(recall the definition \eqref{eq:HR} of $H_{R_\tau}$),
\begin{multline*}
2m^{\frac{1}{2}}\lvert c_{\tau, 0}\rvert^2
\int_{\T}\Big(\frac{\pi}{4m\hDelta R_{\tau}
(\e^{\imag\theta})}\Big)^{\frac12}
\Lop_0[r\e^{2\Re H_{R_\tau}(r\e^{\imag\theta})}]\Big\vert_{r=1}\diffs 
\\
=2\pi^{\frac12}\lvert c_{\tau, 0}\rvert^2\int_{\T}\big(4\hDelta R_{\tau}
(\e^{\imag\theta})\big)^{-\frac12}
\e^{2\Re H_{R_\tau}(\e^{\imag\theta})}\diffs(\e^{\imag\theta})
\\
=2\pi^{\frac12}\lvert c_{\tau, 0}\rvert^2\int_{\T}
\diffs(\e^{\imag\theta})=2\pi^{\frac12}\lvert c_{\tau, 0}\rvert^2.
\end{multline*}
As this is the leading contribution to \eqref{eq:norm-approx2}, we must have 
$2\pi^{\frac12}\lvert c_{\tau, 0}\rvert^2=1$. This determines the constant 
$c_{\tau, 0}$
up to a unimodular factor, and by positivity we find that
$c_{\tau, 0}=(4\pi)^{-\frac14}$. 

We turn to the remaining coefficients $c_{\tau, j}$, for $j=1,\ldots,\kappa$. By 
applying the Laplace method of Proposition~\ref{prop:steep} to the radial 
integral in 
the formula \eqref{eq:norm-approx2}, we arrive at
$$
2\pi^{\frac12}\sum_{j= 0}^{\kappa}m^{-j}\sum_{(i,k,l)\in\indset^\star_j}
\int_{\T}(4 \hDelta R_{\tau}(\e^{\imag\theta}))^{-\frac12}
\Lop_{k}[rB_{\tau, i}(r\e^{\imag\theta})\bar{B}_{\tau, l}(r\e^{\imag\theta})]
\Big\vert_{r=1}
\diffs(\e^{\imag\theta})=1+\Ordo(m^{-\kappa-\frac12}),
$$
where the index set is $\indset_j^\star:=
\big\{(i,k,l)\in \mathbb{N}^3\;:\; i+k+l=j\big\}$. Here,  
$\N=\{0,1,2,\ldots\}$ as usual.
As this represents an equality of asymptotic series, we may identify 
term by term. 
The term with $j=0$ was already analyzed, and it follows that for 
$j=1,\ldots,\kappa$ we have
\begin{multline}\label{eq:norm:approx3}
\sum_{(i,k,l)\in\indset^\star_j}
\int_{\T}(4 \hDelta R_{\tau}(\e^{\imag\theta}))^{-\frac12}
\Lop_{k}[rB_{\tau, i}(r\e^{\imag\theta})\bar{B}_{\tau, l}(r\e^{\imag\theta})]
\Big\vert_{r=1}\diffs (\e^{\imag\theta}) 
\\
=2\Re\int_{\T}(4 \hDelta R_{\tau}(\e^{\imag\theta}))^{-\frac12}
\Lop_{0}[rB_{\tau, j}(r\e^{\imag\theta})\bar{B}_{\tau, 0}(r\e^{\imag\theta})]
\Big\vert_{r=1}\diffs(\e^{\imag\theta})
\\
+\sum_{(i,k,l)\in\indset_j}
\int_{\T}(4 \hDelta R_{\tau}(\e^{\imag\theta}))^{-\frac12}
\Lop_{k}[rB_{\tau, i}(r\e^{\imag\theta})\bar{B}_{\tau, l}(r\e^{\imag\theta})]
\Big\vert_{r=1}\diffs(\e^{\imag\theta})=0,
\end{multline}
where $\indset_j$ denotes the restricted index set 
$\indset_j:=\big\{(i,k,l)\in\indset_j^\star\,:\, 
i,l<j\big\}$,
and where we separate out the terms involving the leading term $B_{\tau, j}$.
We successfully resolve the first term on the right-hand side of 
\eqref{eq:norm:approx3}, while the second term is much more complicated.
However, we may observe that it only depends on the functions $B_{\tau,\nu}$ with 
$\nu=0,\ldots,j-1$, and hence only on the constants $c_{\tau,\nu}$ with 
$\nu=0,\ldots,j-1$. This allows us to algorithmically determine these 
constants, 
albeit with increasing degree of complexity.
As for the first term on the right-hand side, we observe that the operator 
$\Lop_{0}\big\vert_{r=1}$ only evaluates at $r=1$. Using
the structure of $B_{\tau, j}$ as given by \eqref{eq:Bj-struct}, we find that
\begin{multline*}
\int_{\T}(4 \hDelta R_{\tau}(\e^{\imag\theta}))^{-\frac12}
\Lop_{0}[rB_{\tau, j}(r\e^{\imag\theta})\bar{B}_{\tau, 0}(r\e^{\imag\theta})]
\Big\vert_{r=1}\diffs(\e^{\imag\theta})
\\
=\int_{\T}\big(4 \hDelta R_\tau(\e^{\imag\theta})\big)^{-\frac12}
B_{\tau, j}(\e^{\imag\theta})\bar{B}_{\tau, 0}(\e^{\imag\theta})\diffs(\e^{\imag\theta})
\\
=c_{\tau, 0}\int_{\T}\big(4\hDelta R_\tau(\e^{\imag\theta})\big)^{-\frac12}
\e^{2\Re H_{R_\tau}(r\e^{\imag\theta})}\big(c_{\tau, j}-
\Pop_{H^2_{-,0}}[\e^{\bar{H}_{R_\tau}} F_{\tau, j}](\e^{\imag\theta})\big)
\diffs(\e^{\imag\theta})
\\
=c_{\tau, 0}\int_{\T}\big(c_{\tau, j}-
\Pop_{H^2_{-,0}}[\e^{\bar{H}_{R_\tau}} F_{\tau, j}](\e^{\imag\theta})\big)
\diffs(\e^{\imag\theta})=c_{\tau, 0}c_{\tau, j}.
\end{multline*}
Here we use the definition \eqref{eq:HR} of $H_{R_\tau}$ and the 
fact that the projection $\Pop_{H^2_{-,0}}$ maps into a subspace of functions
with mean $0$. Assume now that
$j$ is given, and that we have determined $c_{\tau, k}$ for $k=0,\ldots,j-1$. The 
above equality together with 
\eqref{eq:norm:approx3} then gives that 
\[
2\Re c_{\tau, j}c_{\tau, 0}=-\sum_{(i,k,l)\in\indset_j}
\int_{\T}\big(4\hDelta R_{\tau}(\e^{\imag\theta})\big)^{-\frac12}
\Lop_{k}[rB_{\tau, i}(r\e^{\imag\theta})\bar{B}_{\tau, l}(r\e^{\imag\theta})]
\Big\vert_{r=1}\diffs(\e^{\imag\theta}).
\]
Since $c_{\tau, 0}=(4\pi)^{-\frac14}$ and moreover since the constants
$c_{\tau,j}$ must be real by our normalization, we obtain that
\begin{equation*}
c_{\tau, j}=-\frac12(4\pi)^{\frac14}\sum_{(i,k,l)\in\indset_j}
\int_{\T}\big(4\hDelta R_{\tau}(\e^{\imag\theta})\big)^{-\frac12}
\Lop_{k}[rB_{\tau, i}(r\e^{\imag\theta})\bar{B}_{\tau, l}(r\e^{\imag\theta})]
\Big\vert_{r=1}\diffs(\e^{\imag\theta}),
\end{equation*}
where the integral may be expressed in terms of the operator $\Mop_k$ by
\[
\int_{\T}\big(4\hDelta R_{\tau}(\e^{\imag\theta})\big)^{-\frac12}
\Lop_{k}[rB_{\tau, i}(r\e^{\imag\theta})\bar{B}_{\tau, l}(r\e^{\imag\theta})]
\Big\vert_{r=1}\diffs(\e^{\imag\theta})
=\int_\T\Mop_k\big[B_{\tau, i}(r\e^{\imag\theta})\bar{B}_{\tau, l}(r\e^{\imag\theta})\big]
\diffs(\e^{\imag\theta}).
\]
This completes the proof.
\end{proof}

\section{Applications to random matrix theory}
\label{sect:erfc}
\subsection{The random normal matrix model} \label{ss:rnm}
For extensive treatments of the random normal matrix ensembles, see 
e.g.\ \cite{HM, ahm2, ahm3, akm, AKMW, WZ-conj}.
Here we only briefly discuss the topic, in order to fix the notation and 
recall some basic concepts.

Let $M$ be a matrix, picked with respect to the probability measure
(``tr'' stands for trace)
$$
\diff\mu_m(M)=\frac{1}{Z_{m,Q}}\,\e^{-2m\operatorname{tr}(Q(M))}\diff M,
$$
where $\diff M$ denotes the measure induced by the flat Euclidean metric of
$\C^{m^2}$ on the submanifold of normal $m\times m$ matrices, where
$Z_{m,Q}$ is a normalizing constant. 
Such a matrix $M$ has a set of $m$ random eigenvalues, which we 
denote by $\Phi_m=\{z_{1,m},\ldots, z_{m,m}\}$. 
It is known that the eigenvalues follow the law
\begin{equation}\label{eq:law}
\diff \mathbb{P}_m(z_1,\ldots, z_m)=\frac{1}{\mathcal{Z}_{m,Q}} 
\bigg[\prod_{j<k}\lvert z_j-z_k\rvert^2\bigg] \e^{-2m\sum_{j=1}^{m}Q(z_j)}
\diffA^{\otimes n}(z_1,\ldots,z_m),
\end{equation}
where $\mathcal{Z}_{m,Q}$ is a related normalizing constant, known as the 
{\em partition function} of the ensemble. Here, $\diffA^{\otimes n}$ stands 
for Euclidean volume measure in $\C^n$ normalized by the factor $\pi^{-n}$. 
We recognize this as the law for the Coulomb gas with $m$ particles at the 
inverse temperature $\beta=2$ in the external field $Q$. 
Courtesy of the fact that the product expression in \eqref{eq:law} may
be written as the square modulus of a Vandermondian determinant, 
these ensembles are determinantal. That is, if the $k$-point intensities 
$R_{k,m}(z_1,\ldots z_k)$ are defined as the intensities associated to finding
points simultaneously at the locations $z_1,\ldots,z_k$, then we may 
compute $R_{k,m}$ by 
\begin{equation}\label{eq:det-int}
R_{k,m}(z_1,\ldots, z_k)= 
\det\left({\Kcorrker}_m(z_j,z_l)\right)_{1\leq j,l\leq k}.
\end{equation}
Here ${\Kcorrker}_m$ is the {\em correlation kernel}
$$
{\Kcorrker}_m(z,w)=K_m(z,w)\,\e^{-m(Q(z)+Q(w))},\qquad z,w\in\C
$$
where $K_m$ is the reproducing kernel for the space 
$\operatorname{Pol}_m$, supplied with the inner product of the space
$L^2_{2mQ}(\C)$. We remark that the correlation kernel ${\Kcorrker}_m$  
is not uniquely determined by the above-mentioned intensities, 
since any kernel modified by a cocycle
$$
{\Kcorrker}_m^{c}(z,w)=c(z)\bar{c}(w){\Kcorrker}_m(z,w),
$$
will generate the same point process by the determinantal formula 
\eqref{eq:det-int}. Here, the cocycle is associated with 
a continuous unimodular function $c:\C\to\T$. 
This means that in terms of convergence of point processes, we need only
correlation kernel convergence modulo cocycles.
It is known (see \cite{HM, WZ-conj}) that the process $\Phi_m$ 
condensates to the droplet
$\calS_1$ as $m\to+\infty$. Indeed, if $\nu_m$ denotes the 
empirical measure
$$
\nu_m=\frac{1}{m}\sum_{z\in\Phi_m}\delta_{z},
$$
then almost surely, $\nu_m$ converges weakly to the equilibrium measure 
$\mu_{\tau}$ with $\tau=1$, the support of which equals $\calS_1$.
We rescale the point process near a 
boundary point $z_0$, in the outer normal direction ${\rm n}$,
in order to understand the microscopic behavior of $\Phi_m$.
To rescale we use the linear transformation
$$
z_m(\zeta):=z_0+{{\rm n}}
\frac{\zeta}{\sqrt{2m\hDelta Q(z_0)}}.
$$
Writing $\Phi_m=\{z_{j,m}\}_j$,
we introduce the rescaled local process by
$\Psi_m=\{\zeta_{j,m}\}_j$, where  
$$
z_{j,m}=z_m(\zeta_{j,m}),\qquad j=1,\ldots, m.
$$
Similarly, we denote by $\kcorrker_m$ the rescaled correlation kernel
$$
\kcorrker_m(\xi,\eta)=\frac{1}{2m\hDelta Q(z_0)}
{\Kcorrker}_m(z_m(\xi), z_m(\eta)).
$$
We recall the familiar notion that a function $F(\xi,\eta)$ 
is {\em Hermitian entire} if it is an entire function of the two variables 
$(\xi,\bar\eta)$ with the symmetry property $F(\xi,\eta)=\bar F(\eta,\xi)$. 
The following is from \cite{akm}. 

\begin{thm}\label{thm:conv-cocycle}
There exists a sequence of continuous unimodular functions $c_m: \C\to\T$,
such that for any given infinite sequence of positive integers $\calN$, there exist
an infinite subsequence $\calN^*\subset\calN$ and an Hermitian entire function 
$F(\xi,\eta)$ such that
$$
\lim_{\calN^*\ni m\to\infty}c_m(\xi)\bar{c}_m(\eta)\,\kcorrker_{m}(z_m(\xi), 
z_m(\eta))= \e^{\xi\bar\eta-\frac{1}{2}(|\xi|^2+|\eta|^2)}F(\xi,\eta).
$$
\end{thm}

\subsection{Uniform asymptotics near
\texorpdfstring{$\tau=1$}{tau=1}}
We take as our starting point the first term of the asymptotic expansion of 
Theorem~\ref{thm:main-pw}.
Recall the definition of the compact set $\calK_{\tau,A,m}$ in
\eqref{eq:def-KtAm}.

\begin{cor}\label{cor:asymp-simple}
Let $\mathcal{H}_{Q,\tau}$ be the bounded holomorphic function in the set
$\calK_{\tau}^c$ with real part $\Re \mathcal{H}_{Q,\tau}=
\frac{1}{4}\log (2\hDelta Q)$ 
on the boundary $\partial\calS_\tau$, which is real-valued at infinity.
Then, in the limit as $m,n\to\infty$ while 
$\tau=\frac{n}{m}\in I_{\epsilon_0}$, we have the 
asymptotics
$$
\lvert P_{m,n}(z)\rvert^2\e^{-2mQ(z)}=
m^{\frac12}\lvert \phi_\tau'(z)\rvert \,\e^{-2m(Q-\breve{Q}_\tau)(z)}
\Big(\pi^{-\frac12}\e^{2\Re\calH_{Q,\tau}(z)}+\Ordo\big(m^{-1}\big)\Big),
$$
where the implied constant is uniform for $z\in\calK_{\tau,A,m}^c$.
\end{cor}

\begin{proof}
We recall that 
$$
\breve{Q}_\tau=\Re\calQ_\tau + \tau\log\lvert \phi_\tau\rvert=\Re\calQ_\tau
+\tfrac{n}{m}\log\lvert\phi_\tau\rvert,
$$
and in view of Theorems~\ref{thm:main-pw} and 
\ref{thm:main-coeff}, we may write
\begin{multline*}
\lvert P_{m,n}\rvert^2=m^{\frac12}\lvert \phi_\tau'(z)\rvert
\lvert \phi_{\tau}\rvert^{2n}\e^{2m\Re\calQ_\tau}
\big\lvert \calB_{\tau, 0}+\Ordo(m^{-1})\big\rvert^2
\\
=m^{\frac12}\lvert \phi_\tau'(z)\rvert
\,\e^{2m\breve{Q}_\tau}
\Big(\pi^{-\frac12}\e^{2\Re\calH_{Q,\tau}(z)}+\Ordo(m^{-1})\Big),
\end{multline*}
and the assertion follows.
\end{proof}

\subsection{Error function asymptotics}
In view of Corollary~\ref{cor:asymp-simple}, we observe that the 
probability density $\lvert P_{m,n}\rvert^2\e^{-2mQ}$ resembles a Gaussian 
wave which crests around the boundary $\partial\calS_\tau$ of the droplet, 
where $\tau=\frac{n}{m}$. As a consequence, we expect the density to be 
obtained as the sum of such Gaussians. Near the droplet boundary, this 
effect is the strongest, and adding a large but finite number of such 
Gaussian waves crested along boundary curves $\partial\calS_{\tau}$ which 
move with the degree parameter $n$ results in error function asymptotics.

\medskip

\begin{prop}\label{prop:erf}
 If $Q$ is $1$-admissible and $z_0\in\partial\calS_1$ is a boundary point, 
then if $\rho_m$ is the blow-up density given by \eqref{eq:resc1} 
and \eqref{eq:resc2}, we have the convergence
$$
\lim_{m\to\infty}\rho_m(\zeta)=\mathrm{erf}\,({2}\zeta),
$$
locally uniformly on $\C$.
\end{prop}

\begin{proof}
We recall the rescaled variable from the introduction
$$
z_m(\xi)=z_0+{\rm n}\frac{\xi}{\sqrt{2m\hDelta Q(z_0)}},
$$
where $z_0\in\partial\calS_\tau$ and ${\rm n}$ is the outward 
unit normal to $\calS_\tau$ at $z_0$, and the rescaled density $\rho_m(\xi)$ 
given by \eqref{eq:resc2}.
In terms of orthogonal polynomials, the object of study is the function
$$
\rho_m(\xi)=\frac{1}{2m\hDelta Q(z_0)}\sum_{n=0}^{m-1}
\lvert P_{m,n}(z_m(\xi))\rvert^2 \e^{-2mQ(z_m(\xi))}.
$$
We begin by noting that $z_m(\xi)$ is in the set
$\calK_{\tau,A,m}^c$ (see Theorem~\ref{thm:main-pw}), 
provided that $\xi$ is confined to the disk $\D(0,r_m)$, where
$r_m=A\sqrt{\hDelta Q(z_0) \,\log m}$, and that $m$ is large enough.
We shall assume throughout that $\xi\in\D(0,r_m)$.

Next, we write
$$
\rho_{m_1,m}(\xi)=\frac{1}{2m\hDelta Q(z_0)}\sum_{n=0}^{m_1-1}
\lvert P_{m,n}(z_m(\xi))\rvert^2 \e^{-2mQ(z_m(\xi))}
$$
and split accordingly for $m_1< m$
\begin{equation}\label{eq:1212}
\rho_m(\xi)=\frac{1}{2m\hDelta Q(z_0)}\sum_{n=m_1}^{m-1}
\lvert P_{m,n}(z_m(\xi))\rvert^2 \e^{-2mQ(z_m(\xi))}+\rho_{m_1,m}(\xi).
\end{equation}
We choose $m_1$ to be the integer part of $m-m^{\frac12}\log m$. 

By Proposition~\ref{prop-gengrowth-ext} it follows that for $n\le m_1$,
\begin{equation}
\label{eq:0101}
\rvert P_{m,n}(z)\rvert^2\e^{-2mQ(z)}\le Cm\,\e^{-2m(Q-\checkQ_{\tau_1})(z)},
\end{equation}
where $\tau_1=\frac{m_1}{m}\in I_{\epsilon_0}$ for $m$ large enough.
By Taylor's formula applied to $Q-\check{Q}_{\tau_1}=
R_{\tau_1}\circ\phi_{\tau_1}$ in $\calS_{\tau_1}^c$ (Proposition~\ref{prop:R}), 
it follows that
\begin{equation}\label{eq:10001}
(Q-\checkQ_{\tau_1})(z)\ge \beta_0\mathrm{dist}_\C
(z,\partial \calS_{\tau_1})^2
\end{equation}
for some constant $\beta_0>0$, provided that $z\in\calS_{\tau_1}^c$ is 
close enough to $\partial\calS_{\tau_1}$. For instance, this estimate holds for 
$z\in\calS_1\setminus\calS_{\tau_1}$.
Moreover, as $\tau_1=\frac{m_1}{m}$ eventually is in $I_{\epsilon_0}$, the 
function $Q-\checkQ_{\tau_1}$ does not vanish on $\calS_{\tau_1}^c$, and tends 
to infinity at infinity. The latter observation shows that further away 
from the boundary $\partial\calS_{\tau_1}$, the right-hand side of 
\eqref{eq:0101} decays exponentially.

If $n\le m_1$ and $\tau=\frac{n}{m}$, then 
$1-\tau\ge m^{-\frac12}\log m=\delta_m$.
As a consequence of Lemma~\ref{lem:boundary-movement} we obtain that 
the boundary $\partial\calS_\tau$ moves at a positive speed in $\tau$.
In particular, for $\tau=\frac{n}{m}$ where $n\le m_1$ we have that the 
distance $\mathrm{dist}_\C(\partial\calS_{\tau},\partial\calS_1)$ is at least 
$2\alpha_0\delta_m$, for some fixed positive $\alpha_0$.
Since $\mathrm{dist}_\C(\partial\calS_{\tau_1},\partial\calS_1)$ is at least 
$2\alpha_0\delta_m$, we have that
\begin{equation}\label{eq:1012}
\mathrm{dist}_\C(z, \partial\calS_{\tau_1})\ge \alpha_0\delta_m,\qquad 
z\in\D(z_0,\alpha_0 \delta_m).
\end{equation}
Next, we note that if $\zeta\in\D(0,r_m)$, then for large enough $m$ we have
$z_m(\zeta)\in\D(z_0,\alpha_0 \delta_m)$. This follows from the obvious fact
that $(\log m)^{\frac12}=\ordo(\log m)$.
By a combination of \eqref{eq:10001} and \eqref{eq:1012} it follows that
$$
(Q-\checkQ_{\tau_1})(z_m(\zeta))\ge \beta_0\alpha_0^2\delta_m^2.
$$
Now, it follows from the above estimates \eqref{eq:0101} and 
\eqref{eq:10001} that for $n\le m_1$
$$
\rvert P_{m,n}(z_m(\xi))\rvert^2\e^{-2mQ(z_m(\xi))}
=\Ordo(m\,\e^{-2\beta_0\alpha_0^2(\log m)^2}),
$$
where the implicit constant is uniform in $\xi\in\D(0,r_m)$.
It follows that
$$
\rho_{m_1,m}(\xi)=\Ordo\big(m^2\e^{-\beta_0\alpha_0^2(\log m)^2}\big),\qquad 
\xi\in\D(0,r_m)
$$
which shows in particular $\rho_{m_1,m}(\xi)=\Ordo(m^{-M})$ for arbitrarily 
large $M$.

As a result of the above considerations, it follows that we may focus on 
the remaining sum in \eqref{eq:1212} over the degrees $n$ with 
$m_1\le n\le m-1$, that is, 
$\tau=\frac{n}{m}$ with $\tau_1 \le \tau\le 1$. In particular, the 
asymptotics of Corollary~\ref{cor:asymp-simple} applies in the whole range. 
Set $\tau(j)=\tau_m(j)=1-\tfrac{j}{m}$, where $j$ ranges from $1$ to 
$m-m_1$, which is approximately $m^{\frac12}\log m$.
We obtain
\begin{multline}\label{eq:density-1}
\rho_m(\xi)=\frac{(\pi m)^{-\frac12}}{2\hDelta Q(z_0)}\sum_{j=1}^{m-m_1}
\big\lvert \phi'_{\tau(j)}(z_m(\xi))\big\rvert 
\,\e^{-2m(Q-\breve{Q}_{\tau(j)})(z_m(\xi))+2\Re\calH_{Q,\tau(j)}(z_m(\xi))}
\\+\Ordo(m^{-M}).
\end{multline}
By Taylor's formula, it follows that 
$$
\lvert \phi'_{\tau(j)}(z_m(\xi))\rvert=\lvert 
\phi_1'(z_0)\rvert+\Ordo((m^{-1}\log m)^{\frac12}),
$$
and by the same token that
$$
2\Re\calH_{Q,\tau(j)}(z_m(\xi))=
\frac{1}{2}\log \hDelta Q(z_0)+\Ordo((m^{-1}\log m)^{\frac12})
$$
as $m\to\infty$ for all $j\leq m-m_1$.
The next thing to consider is the movement of $\partial\calS_\tau$, where 
$\tau=\tau(j)$ and $j$ increases. 
As ${\rm n}$ denotes the outward pointing unit normal to 
$\partial\calS_1$
at the point $z_0$, Lemma~\ref{lem:boundary-movement} tells us that the line
$z_0+{\rm n}\mathbb{R}$ intersects $\partial\calS_{\tau(j)}$ at the nearest point
$$
z_{j}=z_0- {\rm n}\frac{j}{m}
\frac{\lvert \phi_1'(z_0)\rvert}{4\hDelta Q(z_0)}
+\Ordo\Big(\Big(\frac{j}{m}\Big)^2\Big),
$$
and the outer unit normal ${\rm n}_j$ to 
$\partial\calS_{\tau(j)}$ at the point 
$z_j$ will satisfy 
$$
{\rm n}_j={\rm n}+\Ordo\Big(\frac{j}{m}\Big)
={\rm n}+\Ordo(m^{-\frac12}\log m).
$$
We may hence write
\[
(Q-\breve{Q}_{\tau(j)})(z_m(\xi))=(Q-\breve{Q}_{\tau(j)})
\Big(z_j+{\rm n}_{j}
\frac{\xi+\frac{j}{2}\frac{\lvert\phi'_1(z_0)\rvert}{\sqrt{2m\hDelta Q(z_0)}}
+\Ordo\big(m^{-\frac12}
(\log m)^2\big)}
{\sqrt{2m\hDelta Q(z_0)}}\Big).
\]
A simple Taylor series expansion in normal and tangential coordinates at 
the point $z_j$ gives that 
$$
(Q-\breve{Q}_{\tau_j})(z_j+{\rm n}_j\lambda)=
2\hDelta Q(z_j)\,(\Re\lambda)^2+\Ordo(\lvert \lambda\rvert^3)=
2\hDelta Q(z_0)(\Re\,\lambda)^2+
\Ordo\Big(\lvert \lambda\rvert^2\frac{j}{m}+\lvert \lambda\rvert^3\Big),
$$
for $\lambda$ close to $0$. From this we deduce that for $\eta$ with 
$\lvert \eta\rvert=\Ordo(\log m)$ we have
$$
2m(Q-\breve{Q}_{\tau(j)})\Big(z_{j}+
{\rm n}_j\frac{\eta}{\sqrt{2m\hDelta Q(z_0)}}\Big)
=\frac12(2\Re \eta)^2+\Ordo(m^{-1/2}(\log m)^{3}), \quad m\to \infty.
$$
We apply this with $\eta$ given by
$$
\eta=\xi+\frac{j}{2}\frac{\lvert\phi'_1(z_0)\rvert}{\sqrt{2m\hDelta Q(z_0)}}+
\Ordo\big(m^{-\frac12}(\log m)^2\big),
$$
which then gives that
$$
(2\Re \eta)^2=\Big(2\Re \xi+j\frac{\lvert\phi'_1(z_0)\rvert}
{\sqrt{2m\hDelta Q(z_0)}}\Big)^2+\Ordo\big(m^{-\frac12}(\log m)^3\big).
$$
Putting these asymptotic relations together, we find that
\begin{multline}\label{eq:density-2}
\rho_m(\xi)= \frac{1}{\sqrt{2\pi}}
\Big(1+\Ordo\big(m^{-\frac12}(\log m)^3\big)\Big)
\\
\times \sum_{j=1}^{m-m_1}
\frac{\lvert \phi_1'(z_0)\rvert}{\sqrt{2m\hDelta Q(z_0)}}
\;\exp\Big\{-\frac12\Big(2\Re \xi+j\frac{\lvert\phi'(z_0)\rvert}
{\sqrt{2m\hDelta Q(z_0)}}\Big)^2\Big\}
+\Ordo(m^{-M}).
\end{multline}
We recognize immediately \eqref{eq:density-2} as an approximate 
Riemann sum for 
$$
\mathrm{erf}\,({2}\Re \xi)=
\frac{1}{\sqrt{2\pi}}\int_{0}^{\infty}\e^{-\frac12(2\Re\xi +t)^2} \diff t
$$
with respect to a partition of the interval
$[0,\gamma_0\log m]$, with step length $m^{-\frac12}\gamma_0$,
where
$$
\gamma_0=\frac{\lvert\phi'(z_0)\rvert}{\sqrt{2\hDelta Q(z_0)}}.
$$
Since such Riemann sums converge to the corresponding integral 
with small error, this implies that
$$
\lim_{m\to\infty}\rho_m(\xi)=\mathrm{erf}\,\big({2}\Re\xi\big),
$$
which completes the proof.
\end{proof}

\subsection{Convergence of correlation kernels to the
Faddeeva plasma kernel}
\label{ss:kernel}
Finally, we turn to the convergence of the 
rescaled kernels $\kcorrker_m(z_m(\xi),z_m(\eta))$ as $m\to\infty$.
In principle, this should follow from our expansion of the orthogonal 
polynomials,
but to do this directly seems a bit tricky. 
However, given the work of Ameur, Kang, and Makarov \cite{akm}, 
it turns out to be enough to obtain the more straightforward diagonal 
convergence of the correlation kernel. 

\begin{proof}[Proof of Corollary \ref{cor:erf}]
We denote by $G(\xi,\eta)$ the Ginibre-$\infty$ kernel
$$
G(\xi,\eta)=\e^{\xi\bar\eta-\frac12(\lvert \xi\rvert^2+\lvert \eta\rvert^2)},
$$
which is the correlation kernel of a translation invariant planar
point process.
We now present some material from \cite{akm}. An important concept is that of
cocycles. By Theorem~\ref{thm:conv-cocycle}, 
there exists a sequence of continuous functions $c_{m} :\C\to\T$ such that, 
for any subsequence $\calN$ of the natural numbers $\N$, there exists a 
Hermitian entire function $F(\xi,\eta)$ 
and a further subsequence $\calN^\star\subset\calN$ such that
\begin{equation}\label{eq:999}
c_m(\xi)\bar c_m(\eta)\;\kcorrker_{m}(z_m(\xi), z_m(\eta))\to 
G(\xi,\eta)F(\xi,\eta),\quad m\in \calN^\star, m\to\infty,
\end{equation}
where the convergence is uniform on compact subsets of $\C^2$.
For Hermitian entire functions, the diagonal restriction
$F(\xi,\xi)$ determines the function uniquely. Indeed, the 
polarization of the diagonal
restriction gives back our function $F(\xi,\eta)$.
We denote by $\rho(\xi)$ the limiting density
$$
\rho(\xi)=\lim_{m\to\infty, m\in\calN^\star}\;\kcorrker_{m}(z_m(\xi), z_m(\xi))=
G(\xi,\xi)F(\xi,\xi),
$$ 
and since $G(\xi,\xi)\equiv 1$, it follows that $F(\xi,\xi)=\rho(\xi)$. 
By Proposition~\ref{prop:erf} we have that
$$
\rho(\xi)=\mathrm{erf}({2}\Re\xi).
$$
Moreover, by the uniqueness property of diagonal restriction, 
the only possibility 
for the Hermitian entire function is 
$$
F(\xi,\eta)=\mathrm{erf}\,({\xi+\bar\eta}).
$$
This shows that the limit along some subsequence of any given 
sequence of positive integers is always the same. 
We claim that this means that
the whole sequence converges. 
Indeed, in case the convergence \eqref{eq:999} were to fail along the positive 
integers, by a normal families argument, we could distill a sequence $\calN_0$ 
such that the left-hand side of \eqref{eq:999} would converge to something 
else along the subsequence $\calN_0$. This would contradict what we have 
already established, which is that the we have diagonal convergence to
the error function.
The assertion of the corollary follows. 
\end{proof}

\section{The existence of the orthogonal foliation flow}
\label{sec:flow}
\subsection{Smoothness classes and polarization of functions}
\label{ss:flow0}
In order to proceed with less obscuring notation, we consider a smooth 
family of bounded holomorphic functions $f_s(z)$, and
a smooth family of orthostatic conformal 
mappings $\psi_{s,t}$. 
Here, $f_s$ corresponds to $f_{m,n}^{\langle \kappa\rangle}$ where $s=m^{-1}$,
and $\psi_{s,t}$ corresponds to the mappings $\psi_{m,n,t}$ appearing in
Lemma~\ref{lem:main-flow}.
We suppress $\tau$ and $\kappa$ in the notation, because $\kappa$
is thought of as fixed, and we work with uniformity in the parameter $\tau$.
Moreover, we denote by $R$ a weight whose properties 
are analogous to those of $R_\tau$, 
as captured in Definition~\ref{def:classR} below. 

We denote by $\mathbb{A}(\hrho_1,\hrho_2)$ the annulus
$$
\mathbb{A}(\hrho_1,\hrho_2):=\D(0,\hrho_2)\setminus\bar{\D}(0,\hrho_1),
$$
for positive real numbers $\hrho_1$ and $\hrho_2$ with $\hrho_1<\hrho_2$ (notice that
we distinguish between the symbols $\rho$ and $\varrho$).
In addition, for parameters $\hrho_0$ and $\sigma_0$, we
denote by $\hat{\mathbb{A}}(\hrho_0,\sigma_0)$ the $2\sigma_0$-fattened
diagonal annulus in $\C^2$:
\[
\hat{\mathbb{A}}(\hrho_0,\sigma_0):=\big\{(z,w)\in\mathbb{A}(\hrho_0,\hrho_0^{-1})
\times\mathbb{A}(\hrho_0,\hrho^{-1}_0)\;:\,\; \lvert z-w\rvert\le2\sigma_0\big\},
\]

For a real-analytic function $R$ there exists a {\em polarization} $R(z,w)$, 
which is 
holomorphic in $(z,\bar w)$ and has $R(z,z)=R(z)$. This is easy to see using 
convergent local Taylor series expansions of $R(z)$ in the coordinates 
which are the real and imaginary parts, $\Re z$ and $\Im z$. By replacing 
$\Re z$ by $\frac{1}{2}(z+\bar{w})$ and $\Im z$ by
$\frac{1}{2\imag}(z-\bar{w})$ in 
this expansion, we obtain the polarization $R(z,w)$.
We observe that if $R(z,w)$ is such a polarization of a function
$R(z)$ which is real-analytically smooth near the circle $\T$ and in addition
is quadratically flat there, then $R(z)=(1-|z|^2)^2R_0(z)$, where $R_0(z)$
is real-analytic near the circle $\T$. In polarized form, $R(z,w)$ factors as 
\begin{equation}\label{eq:def-R-0}
R(z,w)=(1-z\bar w)^2R_0(z,w),
\end{equation}
where $R_0(z,w)$ is holomorphic in $(z,\bar w)$
in a neighborhood of the part of the diagonal where both variables are
near $\T$. 

\begin{defn}\label{def:classR}
For positive real numbers $\hrho_0,\sigma_0$ where $\hrho_0<1$, we denote by 
$\mathcal{W}(\hrho_0,\sigma_0)$ 
the class of $C^2$-smooth non-negative functions $R$ on $\D_\e(0,\hrho_0)$
such that the following holds:
\begin{enumerate}[(i)]
\item The functions $R$ and $\nabla R$ 
both vanish on $\T$, while $\hDelta R>0$ holds on 
$\T$. 
\item $R$ is real-analytic on $\mathbb{A}(\hrho_0,(\hrho_0)^{-1})$ and 
both $R(z,w)$ and $R_0(z,w)$ given by \eqref{eq:def-R-0} 
polarizes to bounded holomorphic functions 
in $(z,\bar w)$ on the diagonal annulus $\hat{\mathbb{A}}(\hrho_0,\sigma_0)$,
such that $R_0(z,w)$ remains bounded away from $0$ there.
\item In addition,
\[
R_0(z,z)\ge 
\alpha(R)>0,\qquad z\in\mathbb{A}(\hrho_0,\hrho_0^{-1}),
\]
and further away,
$$
\inf_{z\in\D_\e(0,\hrho_0^{-1})}\frac{R(z)}{\log\lvert z\rvert}=\theta(R)>0.
$$
\end{enumerate}
We say that a subset $\frakS\subset\mathcal{W}(\hrho_0,\sigma_0)$ is a 
\emph{uniform family}, provided that for each $R\in\frakS$, the corresponding
$R_0(z,w)$ is uniformly bounded and bounded away from $0$ on 
$\hat{\mathbb{A}}(\hrho_0,\sigma_0)$
while the controlling constants such as $\alpha(R)$ and $\theta(R)$ are 
uniformly bounded away from $0$.
\end{defn}

If a function $f(z,w)$ is holomorphic in $(z,\bar w)$, we may consider 
the associated function 
\begin{equation}\label{eq:f-T-hol}
f_{\T}(z)=f\Big(z,\frac{1}{\bar z}\Big)
\end{equation}
which is then holomorphic in $z$, wherever it is well-defined. We note that 
$f_{\T}(z)=f(z,z)$ on the circle $\T$. We recall the notation of 
Definition~\ref{def:classR}.

\begin{prop}
\label{prop:f-T-hol}
Suppose that $f(z,w)$ is holomorphic in $(z,\bar w)$ on the domain 
$\hat{\mathbb{A}}(\hrho,\sigma)$, where $0<\hrho<1$ and $\sigma>0$. 
Then the function $f_{\T}(z)$, which extends the restriction of the 
diagonal function $f(z,z)$ to $\T$, 
has a holomorphic extension to the annulus
\[
\hrho'<\lvert z\rvert<\frac{1}{\hrho'}
\]
where
\[
\hrho'=\max\Big\{\hrho, (\sqrt{1+\sigma^2}+\sigma)^{-1}\Big\}.
\]
\end{prop}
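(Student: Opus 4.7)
The plan is to first identify the set of $z$ for which $(z,1/\bar z)$ lies in $\hat{\mathbb{A}}(\rho,\sigma)$, and then use the holomorphic structure of $f$ to conclude holomorphy of $f_\T$ on the resulting annulus.

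First I would treat the annular condition. Writing $z=re^{\imag\theta}$, observe that $1/\bar z=(1/r)e^{\imag\theta}$, so both $z$ and $w:=1/\bar z$ lie in $\mathbb{A}(\rho,\rho^{-1})$ if and only if $\rho<r<\rho^{-1}$. Next I would compute the separation
\[
|z-1/\bar z|=\bigl|(r-r^{-1})e^{\imag\theta}\bigr|=|r-r^{-1}|,
\]
so the constraint $|z-w|\le 2\sigma$ becomes $|r-r^{-1}|\le 2\sigma$. Solving the quadratic inequalities $r^2-2\sigma r-1\le 0$ and $r^2+2\sigma r-1\ge 0$ gives the symmetric range
\[
\sqrt{1+\sigma^2}-\sigma\le r\le \sqrt{1+\sigma^2}+\sigma,
\]
and I would note that these endpoints are mutual reciprocals. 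Setting $\rho'':=\sqrt{1+\sigma^2}-\sigma$, the intersection of the two conditions on $r$ is therefore $\max\{\rho,\rho''\}\le r\le \min\{\rho^{-1},(\rho'')^{-1}\}$, which with $\rho'=\max\{\rho,\rho''\}$ becomes precisely $\rho'\le r\le 1/\rho'$. Thus $(z,1/\bar z)\in \hat{\mathbb{A}}(\rho,\sigma)$ exactly when $\rho'\le |z|\le 1/\rho'$, and with strict inequalities we land in the interior of the bi-annulus.

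It remains to verify holomorphy on the open annulus $\rho'<|z|<1/\rho'$. For this I would introduce the auxiliary function $g(z,\zeta):=f(z,\bar\zeta)$, which by hypothesis is a genuinely holomorphic function of the two complex variables $(z,\zeta)$ on the image of $\hat{\mathbb{A}}(\rho,\sigma)$ under the map $(z,w)\mapsto(z,\bar w)$. In these coordinates,
\[
f_\T(z)=f\bigl(z,1/\bar z\bigr)=g(z,1/z),
\]
and since $z\mapsto 1/z$ is holomorphic on $\C\setminus\{0\}$, the composition $z\mapsto g(z,1/z)$ is holomorphic on the annulus just identified. This gives the claimed holomorphic extension of the restriction of $z\mapsto f(z,z)$ from $\T$ to the annulus $\rho'<|z|<1/\rho'$.

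The only real point requiring care is the distance calculation and the observation that the resulting inner and outer radii are reciprocal, which makes the annulus symmetric about $\T$; the holomorphy step is then a straightforward application of the chain rule for holomorphic functions of several variables.
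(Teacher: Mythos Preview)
Your proposal is correct and follows exactly the approach of the paper's own proof, which simply observes that $f_\T(z)=f(z,\bar z^{-1})$ is holomorphic wherever $(z,\bar z^{-1})\in\hat{\mathbb{A}}(\rho,\sigma)$ and leaves the verification of the resulting annular condition to the reader. You have carried out that verification in full, including the quadratic computation and the reciprocity of the endpoints, so your argument is a fleshed-out version of the paper's sketch rather than a different route.
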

\begin{proof}
The function $f_{\T}(z)=f(z,\bar{z}^{-1})$ is automatically holomorphic in 
the variable $z$ in the domain 
\[
\Big\lvert z-\frac{1}{\bar{z}}\Big\rvert\le 2\sigma,
\]
provided that 
$z\in\mathbb{A}(\hrho,\hrho^{-1})$. By checking these requirements
carefully, the assertion follows.
\end{proof}

\begin{rem}
We note that if $\hrho$ is close enough to $1$ to guarantee that 
$\hrho\ge (\sqrt{1+\sigma^2}+\sigma)^{-1}$, then $\hrho'=\hrho$.
\label{rem:rho'}
\end{rem}

\begin{rem}
\label{rem:Hop-hol}
Suppose a real-analytic function $F(z)$ admits a polarization $F(z,w)$
which is holomorphic in $(z,\bar w)$ for 
$(z,w)\in\hat{\mathbb{A}}(\hrho,\sigma)$,
and let $f$ be given in terms of the Herglotz kernel by
$f=\Hop_{\D_\e}[F\vert_{\T}]$ (cf.\ Subsection~\ref{ss:herglotz}).
We note that by the properties of the Herglotz kernel, 
$f$ may be obtained by the formula
$f=2\Pop_{H^2_{-}}[F\vert_{\T}]-\langle F\rangle_{\T}$, 
where $\langle F\rangle_{\T}$ denotes the average of $F$ on the unit circle. 
Let $F_{\T}$ be as in \eqref{eq:f-T-hol},
and express it in terms of its 
Laurent series, which by Proposition~\ref{prop:f-T-hol} converges in the 
annulus 
$\mathbb{A}(\hrho',(\hrho')^{-1})$:
\[
F_{\T}(z)=\sum_{n\in\mathbb{Z}} a_nz^n.
\]
In terms of the Laurent series, $\Pop_{H^2_{-}}[F\vert_{\T}]$ equals 
$\sum_{n\le 0}a_nz^n$ and $\langle F\rangle_{\T}=a_0$.
As a consequence, $\Pop_{H^2_{-}}[F\vert_{\T}]$ defines a holomorphic function
on the exterior disk $\D_{\e}(0,\hrho')$ and hence, $f$ is holomorphic on 
$\D_{\e}(0,\hrho')$ as well.
\end{rem}

The setting which will prove useful to us is when we may control certain 
related quantities and their polarizations, which is possible on thinner 
$\C^2$-complexified annuli. The polarization of $\log\hDelta R$ appears later
in the induction algorithm, while $\log(z\partial \hat R)$ is important for
the control associated with the implicit function theorem.

\begin{prop}
If $R$ belongs to a uniform family $\frakS\subset\mathcal{W}(\hrho_0,\sigma_0)$
for some positive reals $\hrho_0,\sigma_0$ with $\hrho_0<1$, and if $\hat R=
\sqrt{R}$ is chosen so that $\hat R(z)$ is positive for $|z|>1$ and negative
for $|z|<1$, then there exist positive $\hrho_1,\sigma_1$ with 
$\hrho_0\le\hrho_1<1$, $\sigma_1\le\sigma_0$ and 
$\hrho_1\ge (\sqrt{1+\sigma_1^2}+\sigma_1)^{-1}$, 
such that the polarizations of the functions $\log\hDelta R$, $\hat R$, 
$\log(z\partial \hat R)$ are all holomorphic in $(z,\bar w)$ and uniformly 
bounded on the $2\sigma_1$-fattened diagonal annulus 
$\hat{\mathbb{A}}(\hrho_1,\sigma_1)$.
\label{prop-rho1sigma1}
\end{prop}

\begin{proof}
This follows from the assumptions on the uniform family, if we use the standard
Cauchy estimates plus the fact that $\log\hDelta R=\log (2R_0^2)$ and 
$\log(z\partial \hat R)=\frac12\log R_0$ hold on the unit circle $\T$. 
The condition $ \hrho_1\ge (\sqrt{1+\sigma_1^2}+\sigma_1)^{-1}$ is achieved by 
choosing $\hrho_1$ large enough, but still in the range $\hrho_0\le \hrho_1<1$.
\end{proof}

\subsection{The master equation for the orthogonal foliation 
flow}\label{ss:flow1}
For an integer $n$, we denote by $\indsett_n$ 
the triangular index set
\begin{equation}
\indsett_{n}=\big\{(j,l)\in\N^2:\; 2j+l\le n\big\},
\label{eq:triangindset}
\end{equation}
and supply it with the inherited lexicographic ordering $\prec_{\mathrm{L}}$:
\[
(i,k)\prec_{\mathrm{L}}(j,l) \;\;\text{if}\;\; i<j \;\;\text{or}\;\; i=j\;\;
\text{and}\;\; k<l.
\]

We recall the notation of the pair $(\hrho_1,\sigma_1)$ from Proposition 
\ref{prop-rho1sigma1}. 

The following is an analogue of Lemma~\ref{lem:main-flow}. We introduce a
parameter $s$, which is supposed to be close to $0$, and plays the role of
the Planck constant $\hbar$. Later on, we will put $s=1/m$. 

\begin{prop}
\label{prop:flow-general}
Let $\kappa$ be a given positive integer and let 
$R\in\mathcal{W}(\hrho_0,\sigma_0)$,
for some $\hrho_0,\sigma_0$ with $0<\hrho_0<1$ and $\sigma_0>0$. Then there 
exist a radius $\hrho_2$ with $\hrho_1<\hrho_2<1$, bounded holomorphic 
functions $b_j$ on the exterior disk $\D_\e(0,\hrho_1)$
for $j=0,\ldots,\kappa$, and orthostatic conformal mappings
$$
\psi_{s,t}=\psi_{0,t}+\sum_{\substack{(j,l)\in\indsett_{2\kappa+1}\\ j\ge 1}}
s^jt^l\hat{\psi}_{j,l}
$$
defined on $\D_\e(0,\hrho_2)$ with
$\psi_{s,t}(\D_\e(0,\hrho_2))
\subset\D_\e(0,\hrho_1)$  for $s$ and $t$ close to $0$, such that the 
following holds. For fixed $s$, 
the domains $\psi_{s,t}(\D_\e)$ increase with $t$: 
$\psi_{s,t}(\D_\e)\subset \psi_{s,t'}(\D_\e)$ for $t<t'$, and if we put 
$h_s=\sum_{j=0}^{\kappa}s^jb_j$ and $f_s=\exp(h_s)$, the functions $f_s$ and 
$\psi_{s,t}$ have the property that for $\zeta\in\T$
\begin{multline}\label{eq:flow-alg-0}
\lvert f_s\circ\psi_{s,t}(\zeta)\rvert^2\e^{-2s^{-1}R\circ\psi_{s,t}(\zeta)}
\Re\big(-\bar\zeta\partial_t\psi_{s,t}(\zeta)
\overline{\psi_{s,t}'(\zeta)}\big)\\
=\e^{-t^2/s}\Big\{(4\pi)^{-\frac12}+\Ordo\big(\lvert s\rvert^{\kappa+\frac12}+
\lvert t\rvert^{2\kappa+1}\big)\Big\}.
\end{multline}
Here, the implicit constant remains uniformly bounded as long as $R$ is 
confined to a 
uniform family in $\mathcal{W}(\hrho_0,\sigma_0)$, for fixed $\hrho_0,\sigma_0$.
\end{prop}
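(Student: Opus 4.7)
The plan is to construct the coefficients $(B_j)$ and $(\hat\psi_{j,l})$ by matching both sides of \eqref{eq:flow-alg-0} order-by-order in the combined expansion variable $(s,t)$. Since both sides are positive on $\T$, I take logarithms and rearrange to the equivalent form
\[
\log|f_s\circ\psi_{s,t}|^2 + \log\Re\bigl(-\bar\zeta\,\partial_t\psi_{s,t}\,\overline{\psi_{s,t}'}\bigr) + s^{-1}\bigl(2R\circ\psi_{s,t} - t^2\bigr) = -\tfrac12\log(4\pi) + \mathrm{error}.
\]
The crucial point is that, provided $\psi_{0,t}$ is chosen correctly at leading order, the singular-looking combination $s^{-1}(2R\circ\psi_{s,t} - t^2)$ extends analytically through $s = 0$ (since both the factor in parentheses and its derivative in $s$ at $s=0$ vanish on $\T$), so the rearranged equation becomes a jointly Taylor-expandable identity in $(s,t)$.

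For the leading-order step, setting $s = 0$ the analyticity requirement forces $2R\circ\psi_{0,t}(\zeta) = t^2$ on $\zeta \in \T$, so $\psi_{0,t}(\T)$ must lie on the level set $\{R = t^2/2\}$. The uniform non-degeneracy $\alpha(R) > 0$ in $\mathcal{W}(\rho,\sigma)$ guarantees that, for small $|t|$, this level set consists of two real-analytically smooth simple closed loops bracketing $\T$; take the outer branch for $t > 0$ and the inner for $t < 0$, yielding a smooth family of loops through $\T$ at $t = 0$. Standard Riemann-mapping dependence results (the weighted Hele--Shaw framework of \cite{HS,HM}) provide a real-analytic family $\{\psi_{0,t}\}$ of normalized conformal mappings from $\D_\e$ onto the corresponding exteriors, with $\psi_{0,0} = \mathrm{id}_{\D_\e}$. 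Differentiating the level-set identity twice at $t = 0$ yields $\Re(-\bar\zeta\,\partial_t\psi_{0,t}|_{t=0}) = (4\Delta R)^{-1/2}$ on $\T$ (sign fixed by the orientation of the flow). Inserting this in the residual amplitude balance gives $|B_0|^2 = (4\pi)^{-1/2}(4\Delta R)^{1/2}$ on $\T$, which the Herglotz transform solves as $B_0 = (4\pi)^{-1/4}e^{H_R}$ with $H_R = \Hop_{\D_\e}[\tfrac14\log(4\Delta R)]$.

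Next comes the inductive step. For $j \ge 1$, assuming $B_0,\ldots,B_{j-1}$ and all $\hat\psi_{i,k}$ with $(i,k)\prec(j,l)$ already constructed, insert the ansatz into the logarithmic equation and expand in $(s,t)$. At order $s^j t^l$ with $(j,l)\in\mathcal{I}_{2\kappa+1}$, the new unknown ($\hat\psi_{j,l}$, or $B_j$ when $l = 0$) enters linearly via the linearization of the logarithms around the previously constructed approximation, while all other contributions are completely determined. The resulting scalar real equation on $\T$ prescribes the real part on $\T$ of a sought-after holomorphic function on $\D_\e$; the Herglotz transform $\Hop_{\D_\e}$ recovers it uniquely up to an imaginary additive constant, which is then fixed by the normalization $\psi_{s,t}'(\infty) > 0$ (respectively $B_j(\infty) \in \R$). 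By Remark \ref{rem:Hop-hol} applied iteratively, each Herglotz image extends holomorphically to a common exterior disk $\D_\e(0,\rho'')$, with $\rho''$ controlled uniformly for $R \in \mathcal{W}(\rho,\sigma)$.

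The main obstacle lies in the leading-order step: constructing the smooth family $\{\psi_{0,t}\}$ and establishing its real-analytic dependence on $t$ with a uniform holomorphic extension past $\T$. This rests on the real-analytic smoothness of the level curves $\{R = t^2/2\}$ (ensured by $\alpha(R) > 0$) together with Kellogg-type regularity for Riemann maps onto real-analytic domains, transplanted from standard Hele--Shaw dependence theory. A secondary technical point is terminal bookkeeping: one must verify that terms omitted because $2j + l > 2\kappa + 1$ leave a residual within $O(s^{\kappa+1/2} + t^{2\kappa+1})$. In the combined scaling $t \sim s^{1/2}$ any such term has order at most $O(s^{\kappa+1} + t^{2\kappa+2})$, safely absorbed into the claimed error. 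Uniformity across $R \in \mathcal{W}(\rho,\sigma)$ then follows from the uniform Herglotz and $C^k$ estimates on the class.
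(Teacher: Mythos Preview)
Your approach is essentially the same as the paper's: construct $\psi_{0,t}$ as conformal maps onto the exteriors of the level curves $\{R=t^2/2\}$, read off $B_0$ from the leading amplitude balance, and then inductively solve real scalar equations on $\T$ via the Herglotz transform for the remaining coefficients. The paper works directly with the Taylor coefficients of the product $\omega_{s,t}=|f_s\circ\psi_{s,t}|^2\e^{-2s^{-1}(R\circ\psi_{s,t}-t^2/2)}\Re(-\bar\zeta\partial_t\psi_{s,t}\overline{\psi_{s,t}'})$ rather than its logarithm, organizing the combinatorics through Fa\`a di Bruno's formula, but this is a presentational rather than a conceptual difference.

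There is, however, an indexing slip in your inductive step that would cause the scheme as written to stall. Because of the factor $s^{-1}$ in front of $R\circ\psi_{s,t}-t^2/2$, the equation at order $s^jt^l$ with $l\ge1$ has as its \emph{highest} unknown not $\hat\psi_{j,l}$ but $\hat\psi_{j+1,l-1}$ (this comes from the $s^{j+1}t^l$ coefficient of $R\circ\psi_{s,t}$, whose leading contribution is $4\Delta R\,\Re(\bar\zeta\hat\psi_{j+1,l-1})\Re(\bar\zeta\hat\psi_{0,1})$). Meanwhile the equation at order $s^jt^0$, which determines $B_j$, already depends on \emph{all} of $\hat\psi_{j,0},\hat\psi_{j,1},\ldots$\,. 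Thus the correct order is not lexicographic in $(j,l)$: for each $j_0\ge1$ one must first solve for $\hat\psi_{j_0,0},\hat\psi_{j_0,1},\ldots$ using the equations at orders $(j_0-1,1),(j_0-1,2),\ldots$, and only afterwards solve the order-$(j_0,0)$ equation for $B_{j_0}$. This shift is also what forces the conformal-map coefficients to range over $\mathcal{I}_{2\kappa+1}$ while the matching conditions range only over $\mathcal{I}_{2\kappa}$. With this reordering your argument goes through and coincides with the paper's.
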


\begin{rem}\label{rem:flow-remark} 
{\rm{(a)}}
Strictly speaking, the functions $\psi_{s,t}$ and $h_s$ we write down depend 
on the precision parameter $\kappa$, while the coefficient functions $b_j$ 
and $\hat{\psi}_{j,l}$ do not. We observe that the orthostaticity of 
$\psi_{s,t}$ gives that $\hat{\psi}_{0,0}'(\infty)>0$, and moreover that
$\Im \hat{\psi}_{j,l}'(\infty)=0$ for all $j,l\ge 0$.

\noindent {\rm (b)} The function $f_s$ will also admit an asymptotic expansion 
of the form
\[
f_s(\zeta)=\sum_{j=0}^{\kappa}s^j B_j(\zeta)+\Ordo(s^{\kappa+1}),\qquad 
\zeta\in\D_{\e}(0,\hrho_1)),
\]
where the coefficient functions $B_j$ may be obtained algorithmically 
as multivariate polynomials in the functions $b_0,\ldots,b_j$.
\end{rem}

The first step towards finding the conformal mappings $\psi_{s,t}$ is 
to note the 
following: we find by taking 
logarithms that
\begin{equation}\label{eq:flow-alg-1}
2\Re h_s\circ\psi_{s,t}(\zeta)-2s^{-1}(R\circ\psi_{s,t})(\zeta) + 
\log\Re\Big(-\bar\zeta\partial_t\psi_{s,t}
\overline{\psi_{s,t}'(\zeta)}\Big)=
-s^{-1}t^2+\Ordo(1),
\end{equation}
as $s,t\to0$. Next, we multiply both sides by $s$, to obtain
\begin{equation}\label{eq:flow-alg-1s}
2s\Re h_s\circ\psi_{s,t}(\zeta)-2\,R\circ\psi_{s,t}(\zeta) + 
s\log\Re\Big(-\bar\zeta\partial_t\psi_{s,t}
\overline{\psi_{s,t}'(\zeta)}\Big)=
-t^2+\Ordo(s).
\end{equation}
Finally, we take the limit as $s\to0$ in \eqref{eq:flow-alg-1s}, expecting 
that $\Re h_s\circ\psi_{s,t}$ 
and $\log\Re(-\bar\zeta\partial_t\psi_{s,t}\bar{\psi}_{s,t}')$ 
remain bounded, and arrive at the equation
$$
R\circ\psi_{0,t}(\zeta)=\frac{t^2}{2}.
$$
As a consequence, $\psi_{0,t}$ should be a 
conformal mapping of $\D_\e$ onto 
the exterior of the appropriate 
level curve of the weight $R$.

\begin{prop}
\label{prop:conformal}
Let $R$ be as in Proposition~\ref{prop:flow-general}.
There exists a positive number $t_0$, and a real-analytically 
smooth family $\{\psi_{0,t}\}_{t\in(-t_0,t_0)}$ 
of orthostatic conformal mappings 
$\D_\e\to\Omega_t$, where $\Omega_t$ 
is the unbounded component of 
$\C\setminus\Gamma_t$, and where $\Gamma_t$ 
are real-analytically smooth, 
simple closed level curves of $R$:
\[
R\vert_{\Gamma_t}=\frac{t^2}{2}.
\]
Moreover, $\Omega_0=\D_\e$ and $\Omega_t$ 
increases with $t$.
\end{prop}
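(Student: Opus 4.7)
The proof proceeds in three stages. First, I would establish the existence and real-analyticity of the level curves $\Gamma_t$. Since $R$ is real-analytic in a neighborhood of $\T$ with $R|_\T = 0$, $\nabla R|_\T = 0$, and $\partial_z\bar\partial_w R(z,z) = \Delta R(z) \geq \alpha(R) > 0$ on $\T$ by Definition~\ref{def:classR}, a Taylor expansion in the radial variable yields the factorization
\[
R(r\e^{\imag\theta}) = (r-1)^2 G(r,\theta),
\]
where $G$ is real-analytic in a neighborhood of $\{r=1\}$ with $G(1,\theta) = 2\Delta R(\e^{\imag\theta})$ uniformly bounded away from zero. Setting $H(r,\theta) := (r-1)\sqrt{G(r,\theta)}$, we have $\partial_r H(1,\theta) = \sqrt{G(1,\theta)} > 0$, and applying the analytic implicit function theorem to $H(r,\theta) = -t/\sqrt{2}$ produces a unique real-analytic function $r(\theta, t)$ with $r(\theta, 0) = 1$ and $r(\theta, t) < 1$ for $t > 0$. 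The curves $\Gamma_t := \{r(\theta,t)\e^{\imag\theta} : \theta \in \R/2\pi\Z\}$ are then real-analytic Jordan curves with $R|_{\Gamma_t} = t^2/2$; they foliate an annular neighborhood of $\T$, and the unbounded components $\Omega_t$ of their complements form an increasing family with $\Omega_0 = \D_\e$.

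Second, for each $t \in (-t_0, t_0)$ the Riemann mapping theorem, together with Kellogg's theorem and Schwarz reflection across the real-analytic curve $\Gamma_t$, furnishes a unique normalized conformal mapping $\psi_{0,t}: \D_\e \to \Omega_t$ which extends holomorphically across $\T$ to a slightly larger exterior disk. To show that $\{\psi_{0,t}\}$ is real-analytic in $t$, I would write $\psi_{0,t}(z) = z + g_t(z)$ with $g_t$ a bounded holomorphic function on a fixed exterior disk $\D_\e(0, \rho^\star)$ vanishing at infinity, and regard $g_t$ as the unknown in the Banach space $\mathcal{X}$ of all such functions. The boundary condition $\psi_{0,t}(\T) = \Gamma_t$, expressed via the parameterization from the first step, yields a nonlinear real-analytic functional equation $\Phi(g,t) = 0$ with $\Phi(0,0) = 0$. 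The linearization $D_g\Phi(0,0)$ reduces essentially to the boundary trace $g \mapsto \Re(g)|_\T$, which by the Herglotz representation recalled in Section~\ref{ss:herglotz} is an isomorphism from $\mathcal{X}$ (restricted by $\Im g(\infty) = 0$) onto real-analytic real-valued functions on $\T$ of mean zero. The analytic implicit function theorem in Banach spaces then produces a real-analytic family $t \mapsto g_t$ with $g_0 = 0$, yielding the required family $\psi_{0,t}$.

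The principal technical obstacle lies in managing the function spaces so that $\Phi(\cdot, t)$ and its linearization behave real-analytically and so that the linearization is invertible, uniformly as $R$ varies in a family. The annulus of holomorphy available for $g_t$ is controlled by the annulus in which $G(r,\theta)$ extends holomorphically, and this in turn is dictated by the uniform constants in Definition~\ref{def:classR}. Thus a single choice of $\rho^\star < 1$ and a single Banach space $\mathcal{X}$ can be made to accommodate every $R$ in a uniform family, so that the construction is stable under variation of the weight $R$ --- a property that will be needed later when the proposition is applied to the family $R_\tau$ parameterized by $\tau \in I_{\epsilon_0}$.
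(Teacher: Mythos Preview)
Your first stage is precisely the paper's argument: factor $R(r\e^{\imag\theta})=(r-1)^2 G(r,\theta)$, take the signed square root $\hat R=(r-1)\sqrt{G}$, and apply the analytic implicit function theorem to $\hat R$ to obtain the real-analytic level curves $\Gamma_t$. The paper's proof then stops short: it simply invokes the Riemann mapping theorem for each fixed $t$ and declares the result, without explicitly justifying the real-analytic dependence of $\psi_{0,t}$ on $t$.

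Your second stage supplies what the paper's proof of this proposition omits. The Banach-space implicit function theorem argument you sketch is a sound way to obtain real-analyticity in $t$; the only imprecision is that the linearization of the boundary constraint $\hat R\circ\psi_{0,t}|_\T=-t/\sqrt{2}$ at $(g,t)=(0,0)$ is not literally $g\mapsto\Re g|_\T$ but rather $g\mapsto \partial_r\hat R(\e^{\imag\theta})\,\Re(\bar\zeta g(\zeta))|_\T$, which is still an isomorphism onto the appropriate target because $\partial_r\hat R|_{\T}=\sqrt{G(1,\cdot)}>0$. The paper effectively defers this analysis to the following subsection~\ref{ss:psi-ext}, where it recasts the boundary condition as a Toeplitz kernel equation and derives a nonlinear ODE \eqref{eq:psi-nl} in $t$ for $g_t=\log(\psi_{0,t}/\zeta)$, solved by Picard iteration. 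That alternative route has the advantage of yielding explicit quantitative control on the domain of holomorphy of $\psi_{0,t}$ in terms of the constants of Definition~\ref{def:classR}, whereas your implicit-function-theorem approach gives existence and real-analyticity cleanly but leaves the size of the annulus of extension less explicit. Both are valid; your version is closer to a self-contained proof of the proposition as stated.
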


\begin{proof}
The assumed strict subharmonicity of $R$ gives that there exists a 
neighborhood $U$ of $\T$ such that 
$\nabla R\big\vert_{U\setminus\T}\neq 0$. This shows that 
the level sets must be simple and closed curves, for $\lvert t\rvert$ 
sufficiently small.
Indeed, if a curve would possess a loop, then $R$ would have to have a local 
extremal point inside the loop, which is impossible. 
Since $\nabla R$ vanishes on $\T$, we cannot apply the implicit function 
theorem directly to $R$ to obtain the result. However, the function
\[
\tilde{R}(r\e^{\imag\theta}):=\frac{R(r\e^{\imag\theta})}{(r-1)^2}
\]
is, in view of Proposition~\ref{prop:R}, strictly positive and 
real-analytic in a neighborhood of the unit circle $\T$. We form the square 
root $\hat{R}=\sqrt{R}$ by
\[
\hat{R}(r\e^{\imag\theta})=(r-1)\sqrt{\tilde{R}(r\e^{\imag\theta})},
\]
where the square root on the right-hand side is the standard square root 
of a positive number. We may now apply the implicit function theorem to 
the function $\hat{R}$.
The result follows immediately by applying the Riemann mapping theorem 
to the exterior of the resulting analytic level curves of $\hat{R}$.
\end{proof}

\begin{rem}
Proposition~\ref{prop:conformal} tells us that the conformal mappings 
$\psi_{0,t}$
extend to some domain containing $\bar{\D}_\e$, but supplies little 
information on
how much bigger such a domain is allowed to be. We will discuss this issue in
Subsection~\ref{ss:psi-ext} below. Along the way, we also obtain an 
alternative proof
of Proposition~\ref{prop:conformal}, which may be viewed as a 
quantitative version
of the implicit function theorem in the given context.
\end{rem}

The Taylor coefficients $\hat{\psi}_{0,l}$ (in the flow variable $t$) of the 
conformal mappings $\psi_{0,t}$ may be explicitly computed in terms of
the weight $R$, using a higher order version of Nehari's formula 
for conformal mappings to nearly circular domains. We will 
return to this in Subsection~\ref{ss:flow2}. Before we carry on, we formulate
the following lemma, which allows us to draw the conclusion 
that the mappings $\psi_{s,t}$ of Proposition~\ref{prop:flow-general} 
are actually conformal.

\begin{lem}\label{lem:becker}
Assume that $\psi$ is a holomorphic 
function on $\D_\e(0,\hrho)$ of the form
\[
\psi(z)=z + F(z),
\]
such that $|F'|\le \frac12$ and 
\[
2|zF''(z)|\le\frac{\hrho^2}{|z|^2-\hrho^2},\qquad z\in\D_{\e}(0,\hrho).
\]
Then $\psi$ is univalent on $\D_\e(0,\hrho)$.
\end{lem}

\begin{proof}
This is immediate from the Becker-Pommerenke 
univalence criterion \cite{becker}.
\end{proof}

It is clear that the mappings $\psi_{s,t}$
meet this criterion for some $\hrho<1$, for small enough $s$ and $t$
for a fixed accuracy parameter $\kappa$. 

\subsection{The smoothness of level curves and the implicit 
function theorem}
\label{ss:psi-ext} 
In this subsection, we analyze the extension properties of conformal
mappings from $\D_\e$ onto the exterior of the level curves of $R$ near 
the unit circle. In a sense, this may be viewed as a quantitative version
of the implicit function theorem.

The function $R$ is assumed to belong to the class
$\mathcal{W}(\hrho_0,\sigma_0)$ of Definition~\ref{def:classR}, which is a 
quantitative
way to say that $R$ is real-analytic near the unit circle $\T$, and 
vanishes along with 
its normal derivative on $\T$, while $\hDelta R$ is positive on $\T$. 
We recall the definition of the choice of square root $\hat{R}$
of $R$ from the proof of Proposition~\ref{prop:conformal}. 
This function is also real-analytic near the circle, 
vanishes on $\T$ but its gradient is nonzero and points in the 
direction of the outward normal.
To make this more quantitative, we let $\hrho_1$ 
and $\sigma_1$ be the parameters of 
Proposition~\ref{prop-rho1sigma1}. 
Then, in the $2\sigma_1$-fattened diagonal annulus 
$\hat{\mathbb{A}}(\hrho_1,\sigma_1)$, we have the control
\begin{equation}
\sup_{(z,w)\in \hat{\mathbb{A}}(\hrho_1,\sigma_1)}
\lvert\log(z\partial_z\hat{R}(z,w))\rvert<+\infty.
\label{eq:Rcontrol01}
\end{equation}
We recall that the mappings $\psi_{0,t}$ are defined 
by the requirement of orthostaticity and
\begin{equation}
\hat{R}\circ\psi_{0,t}(\zeta)=-\frac{t}{\sqrt{2}},\qquad \zeta\in\T.
\label{eq:psi0t-1.01}
\end{equation}
By differentiating the relation \eqref{eq:psi0t-1.01} with respect to $t$, 
we obtain from the chain rule
$$
[(\partial_r\hat{R})\circ\psi_{0,t}]\,\partial_t\lvert \psi_{0,t}\rvert 
+ [(\partial_\theta\hat{R})\circ\psi_{0,t}]\,\partial_t\arg\psi_{0,t}=
-\frac{1}{\sqrt{2}},
$$
which we may rewrite as
\begin{multline*}
[(r\partial_r\hat{R})\circ\psi_{0,t}]\,\partial\log\lvert\psi_{0,t}\rvert 
+[(\partial_\theta\hat{R})\circ\psi_{0,t}]\;\partial_t\arg\psi_{0,t}\\
=\Re\Big\{\big[\big(r\partial_r\hat{R}
-\imag\partial_\theta\hat{R}\big)\circ\psi_{0,t}\big]\,
\partial_t\log\frac{\psi_{0,t}}{\zeta}\Big\}
=-\frac{1}{\sqrt{2}}.
\end{multline*}
Here, we divided by the coordinate function $\zeta$ in order to avoid 
issues with branch cuts of the logarithm. The differential operator acting on 
$\hat{R}$ may be written as $r\partial_r-\imag\partial_\theta=2z\partial_z$, 
so the above expression simplifies further to
\begin{equation}
\Re\Big\{\big[\big(2z\partial_z\hat{R}\big)\circ\psi_{0,t}]\,
\partial_t\log\frac{\psi_{0,t}}{\zeta}\Big\}=-\frac{1}{\sqrt{2}}\quad\text{on}
\,\,\,\T.
\label{eq:psi0t-1.02}
\end{equation}
It is on the basis of the relation \eqref{eq:psi0t-1.02} 
that we will try to recover 
information on the mappings $\psi_{0,t}$.
We introduce the notation 
\begin{equation}\label{eq:mu-001}
\mu(\zeta):=\log\big(2z\partial_z\hat{R}\big),\quad 
\mu_t=\mu\circ\psi_{0,t},\quad
F_t(\zeta)=\partial_t\log\frac{\psi_{0,t}(\zeta)}{\zeta},
\end{equation}
and observe that \eqref{eq:psi0t-1.02} may be written in the form
\begin{equation}
  \label{eq:mu-1212}
\e^{\mu_t}F_t+\e^{\bar\mu_t}\bar{F_t}=-\sqrt{2}\quad\text{on}
\,\,\,\T.
\end{equation}
We note that along the unit circle $\T$, the function 
$\e^{\mu}=2z\partial_z\hat{R}$ equals the positive function 
$\sqrt{2\hDelta R}$, so there are no problems with taking the logarithm 
in the definition of $\mu$ in a neighborhood of $\T$.
In particular, if $\psi_{0,t}$ is a perturbation 
of the identity, the function $\mu_t$ 
is well-defined and smooth.
Next, we decompose $\mu_t=\mu_t^{+}+\mu_t^{-}$, where $\mu_t^+\in H^2$ and
$\mu_t^{-}\in H^2_{-,0}$ are both smooth, and write $G_t=\e^{\mu_t^{-}}F_t$.
Given that $F_t\in H^2_-$, it is clear that $G_t\in H^2_{-}$. 
If we multiply the above equation \eqref{eq:mu-1212} by $\e^{-2\Re \mu_t^+}$, 
we arrive at
\[
\e^{-\bar{\mu_t}^+}G_t + \e^{-\mu_t^+}\bar{G}_t=
2\Re\big\{\e^{-\bar{\mu}_t^+}G_t\big\}=-\sqrt{2}\,\e^{-2\Re\mu_t^+},
\]
where we point out that $\e^{-\bar{\mu}_t^+}G_t\in H^2_-$ while
$\e^{-\mu_t^+}\bar{G}_t\in H^2$.
This is equation is solved by applying the 
Herglotz kernel, and yields the solution
\[
G_t=-\frac{1}{\sqrt{2}}\,\e^{\bar\mu_t^+}\Hop_{\D_\e}
\big[\e^{-2\Re\mu_t^+}\big],
\]
where we use the fact that $F_t$ and $\mu_t^+$ 
are real-valued at infinity 
(cf. Remark~\ref{rem:flow-remark} {\rm (a)}). 
That is,
\begin{equation}\label{eq:ft-sol}
F_t=-\frac{1}{\sqrt{2}}\,\e^{\bar\mu_t^+-\mu_t^-}
\Hop_{\D_\e}[\e^{-2\Re\mu_t^+}\big].
\end{equation}
Let us write
\begin{equation}\label{eq:g-mu}
g_t(\zeta)=\log\frac{\psi_{0,t}(\zeta)}{\zeta},
\end{equation}
so that $\partial_t g_t=G_t$ and $g_0=0$. 
Here, the logarithm is understood as the principal 
branch. In terms of these functions, the equation 
\eqref{eq:ft-sol} becomes the following nonlinear 
differential equation in $t$:
\begin{equation}\label{eq:psi-nl}
\partial_t g_t=-\frac{1}{\sqrt{2}}\exp
\Big\{\overline{\Pop_{H^2}[\mu\circ\psi_{0,t}]}-
\Pop_{H^2_{-,0}}
[\mu\circ\psi_{0,t}]\Big\}
\Hop_{\D_\e}\big[\exp\big\{-2\Re \Pop_{H^2}[\mu\circ\psi_{0,t}]\big\}\big].
\end{equation}
It is not difficult to see that the equation 
\eqref{eq:psi-nl} may be solved 
by an 
iterative procedure, if we rewrite 
it in integral form
\begin{equation}\label{eq:psi-nl2}
g_t=-\frac{1}{\sqrt{2}}\int_{0}^t\exp\Big\{\overline{\Pop_{H^2}
[\mu\circ\psi_{0,\theta}]}-
\Pop_{H^2_{-,0}}[\mu\circ\psi_{0,\theta}]\Big\}
\Hop_{\D_\e}\big[\exp\big\{-2\Re \Pop_{H^2}[\mu\circ\psi_{0,\theta}]\big\}\big]
\diff\theta.
\end{equation}
As a first order approximation, we start with 
$\psi_{0,t}^{[0]}(\zeta)=\zeta$, 
and use the formula \eqref{eq:psi-nl2} to define 
$g_{t}^{[j+1]}$ in terms of $\psi_{0,t}^{[j]}$, 
for $j=0,1,2,\ldots$ by 
integration. 
The process is interlaced with computing $\psi_{0,t}^{[j+1]}:=
\zeta\exp(g_t^{[j+1]})$, and results in 
convergent sequences $g_t^{[j]}$ 
and $\psi_{0,t}^{[j]}$.

We are interested in analyzing where the function $\psi_{0,t}$ 
extends as a holomorphic mapping. To this end, we recall that the 
function $\mu$ given by \eqref{eq:mu-001} has a well-defined polarization to 
$\hat{\mathbb{A}}(\hrho_1,\sigma_1)$.
It is clear that if for some $\hat{\hrho}_t<1$, $\psi_{0,t}$ maps the annulus 
$\mathbb{A}(\hat{\hrho}_t,(\hat{\hrho}_t)^{-1})$ into 
$\mathbb{A}(\hrho_1, \hrho_1^{-1})$, we obtain the estimate
\[
\lVert\partial_t g_t
\rVert_{H^\infty(\mathbb{A}(\hat{\hrho}_t,(\hat{\hrho}_t)^{-1}))}\le 
\frac{\sqrt{2}}{1-\hat{\hrho}_t^2}\exp\Big\{5\frac{\lVert 
\mu\rVert_{H^\infty(\mathbb{A}(\hrho_1, \hrho_1^{-1}))}}{1-\hat{\hrho}_t^2}\Big\},
\]
where we use the estimate
\[
\lVert \Pop_{H^2}[f]\rVert_{H^\infty(\D(0,(\hat{\hrho}_t)^{-1}))}\le 
\frac{\lVert f\rVert_{H^\infty(\mathbb{A}(\hat{\hrho}_t,(\hat{\hrho}_t)^{-1}))}}
{1-\hat{\hrho}_t^2},
\]
and the analogous estimate for $\Pop_{H^2_{-,0}}[f]$.
Assume for the moment that $\hat{\hrho}_t<1$ is monotonically increasing in 
$\lvert t\rvert$, and recall that $\psi_{0,t}(\zeta)=\zeta\exp(g_t)$. 
In light of the above estimate of $\partial_t g_t$, we obtain
$$
\lVert g_t\rVert_{H^\infty(\mathbb{A}(\hat{\hrho}_t,(\hat{\hrho}_t)^{-1}))}\le 
\frac{\sqrt{2}\lvert t\rvert}{1-\hrho_t^2}\exp\Big\{5\frac{\lVert 
\mu\rVert_{H^\infty(\mathbb{A}(\hrho_1,\hrho_1^{-1}))}}
{1-\hat{\hrho}_t^2}\Big\}=:C_t\lvert t\rvert,
$$
where $C_t$ is defined implicitly by the last relation.
This leads to the control  
$$
\e^{-C_t\lvert t\rvert}\hat{\hrho}_t\le \lvert \psi_{0,t}(\zeta)\rvert \le 
\e^{C_t\lvert t\rvert}(\hat{\hrho}_t)^{-1},\qquad 
\zeta\in\mathbb{A}(\hat{\hrho}_t,(\hat{\hrho}_t)^{-1}),
$$
which means that $\psi_{0,t}$ maps the annulus 
$\mathbb{A}(\hat{\hrho}_t,(\hat{\hrho}_t)^{-1})$ into
$\mathbb{A}(\hrho_1,\hrho_1^{-1})$, provided that
$$
\e^{-C_t\lvert t\rvert}\hat{\hrho}_t\ge \hrho_1.
$$
Let us make the ansatz $\hat{\hrho}_t=\hrho_1\e^{M\lvert t\rvert}$, for 
some positive constant $M$.
The above requirement is then satisfied provided that $M\ge C_t$.
If we restrict $t$ to the interval 
\begin{equation}
\label{eq:t-bound}
\lvert t\rvert\le \frac{\log\frac{1}{\hrho_1}}{2M},
\end{equation}
it is immediate that
$$
\frac{1}{1-\hat{\hrho}_t^2}\le 
\frac{1}{1-\hrho_1}.
$$
This then gives the estimate for $C_t$
$$
C_t\le \frac{\sqrt{2}}{1-\hrho_1}
\exp\Big\{5\frac{\lVert 
\mu\rVert_{H^\infty(\mathbb{A}(\hrho_1,\hrho_1^{-1})}}{1-\hrho_1}\Big\},
$$
where the right-hand side does not depend on $t$. We may finally choose 
$M$ to be this constant,
\begin{equation}
M=\frac{\sqrt{2}}{1-\hrho_1}
\exp\Big\{5\frac{\lVert 
\mu\rVert_{H^\infty(\mathbb{A}(\hrho_1,\hrho_1^{-1})}}{1-\hrho_1}\Big\}
\label{eq:Aformula}
\end{equation}
and obtain that $\psi_{0,t}$ is holomorphic in the exterior disk 
$\D_\e(0,\hat{\hrho}_t)$, where
\[
\hat{\hrho}_t=\hrho_1\e^{M\lvert t\rvert}, 
\]
provided that $t$ satisfies \eqref{eq:t-bound}. For $t$ close to $0$, 
$\hat{\hrho}_t$ is then close to $\hrho_1$ in a quantifiable fashion. 
We gather these observations in a proposition.

\begin{prop}\label{prop:psi0t-ext}
Suppose $R$ is in the class $\mathcal{W}(\rho_0,\sigma_0)$ and that  
\eqref{eq:Rcontrol01} holds. Then the conformal maps $\psi_{0,t}$, initially
defined on $\bar\D_\mathrm{e}$, extend to holomorphic functions on the
exterior disk $\D_\mathrm{e}(0,\hat{\rho}_t)$, where 
$\hat{\rho}_t=\rho_1\e^{M|t|}\le\sqrt{\rho_1}$
and $M$ is given by \eqref{eq:Aformula}, provided that $t$ is in the interval
\eqref{eq:t-bound}.
\end{prop}

\subsection{An outline of the orthogonal foliation flow algorithm}
\label{ss:alg-flow-over}
We now proceed to describe an outline of the algorithm. With the notation
\begin{equation}
\label{eq:omega-def}
\logdens_{s,t}(\zeta)=2\Re h_s\circ\psi_{s,t}(\zeta)
-\tfrac{2}{s}\big((R\circ\psi_{s,t})(\zeta)-\tfrac{1}{2}t^2\big)
+\log\Big(\Re\big\{-\bar\zeta\partial_t\psi_{s,t}(\zeta)
\overline{\psi_{s,t}'(\zeta)}\big\}\Big),
\end{equation}
we may rewrite the master equation \eqref{eq:flow-alg-0} for the
orthogonal foliation flow as
\begin{equation}
\label{eq:omega-id}
\hat{\logdens}_{j,l}(\zeta)
=\frac{\partial^j_s\partial^l_k\logdens_{s,t}(\zeta)}{j!l!}\big\vert_{s=t=0}=
\begin{cases}
-\frac12\log(4\pi)&\quad\text{for}\quad \zeta\in\T\;\;
\text{and}\;\; (j,l)=(0,0),
\\
0&\quad\text{for}\quad \zeta\in\T\;\;\text{and}\;\; 
(j,l)\in\indsett_{2\kappa}\setminus
\{(0,0)\}.
\end{cases}
\end{equation}
provided that the functions  $h_s$ and $\psi_{s,t}$ obtained by solving these 
equations do not degenerate, as long as $R$ remains in a bounded set of 
$\mathcal{W}(\hrho_0,\sigma_0)$ for some $\hrho_0$ with $0<\hrho_0<1$ and 
some $\sigma_0>0$.
We recall that $h_s$ is defined by the finite expansion
\begin{equation}\label{eq:def-h}
h_s=\sum_{j=0}^{\kappa}s^{j}b_j.
\end{equation}
As it turns out later on in Proposition~\ref{prop:exp-3}, we have
for $j,l\ge 1$,
\begin{equation}\label{eq:hatphi-det}
\hat{\logdens}_{j-1,l}(\zeta)
=-2(4\hDelta R(\zeta))^{\frac12}\Re(\bar\zeta\hat{\psi}_{j,l-1}(\zeta))
+\frakT_{j-1,l}(\zeta),
\qquad \zeta\in\T,
\end{equation}
where $\frakT_{j-1,l}$ is real-valued and real-analytic, and 
depends only on $b_0,\ldots,b_{j-1}$ and $\hat{\psi}_{p,q}$ where 
$(p,q)\prec_{\mathrm{L}} (j,l-1)$, where we recall that $\prec_{\mathrm{L}}$ 
denotes the standard lexicographic ordering. Moreover, when $l=0$ we get
\begin{equation}
\label{eq:Bj-det}
\hat{\logdens}_{j,0}(\zeta)=
2\Re b_j(\zeta)+\frakT_{j,0}(\zeta),\qquad \zeta\in\T,
\end{equation}
where $\frakT_{j,0}$ depends only on $b_0,\ldots, b_{j-1}$ and 
$\hat{\psi}_{p,q}$ for $(p,q)\prec_{\mathrm{L}} (j+1,0)$. Such dependencies
will be encoded in terms of complexity classes introduced in 
Subsection~\ref{ss:pol-complexity}.
\\
\\
\noindent{\sc Step 1.} We let $\psi_{0,t}$ be the orthostatic conformal 
mappings to 
the exterior of level curves of $R$, as given by 
Proposition~\ref{prop:conformal}. In 
particular, this determines uniquely the coefficient functions 
$\hat{\psi}_{0,l}$, for $l=0,\ldots, 2\kappa+1$ 
(for the details, see Proposition~\ref{prop:conf-expl} below).
For instance, we find that $\hat{\psi}_{0,0}(\zeta)=\zeta$, while
$\hat{\psi}_{0,1}(\zeta)=-\zeta\Hop_{\D_\e}[(4\Delta R)^{-\frac12}]$.
\\
\\
\noindent{\sc Step 2.} By evaluating 
$\hat{\logdens}_{0,0}(\zeta)=\logdens_{s,t}(\zeta)\big\vert_{s=t=0}$, 
we obtain from 
\eqref{eq:omega-id} that
\[
2\Re b_0(\zeta)+\log\Re(-\bar{\zeta}\hat{\psi}_{0,1}(\zeta)) 
=-\frac12\log(4\pi),\qquad \zeta\in\T.
\]
As $\hat{\psi}_{0,1}$ is already known and the above real part is 
strictly positive on
$\T$ (see Proposition~\ref{prop:conf-expl} below), this gives the value of
$2\Re b_0$ on the unit circle $\T$, which gives that
\[
b_0=-\frac14\log(4\pi)+\frac14\Hop_{\D_\e}\big[\log (4\hDelta R)\big].
\]
\\
\noindent {\em We proceed from Step 2 to Step 3 with $j=1$.}\\
\\
\noindent{\sc Step 3.} We have determined $b_0,\ldots,b_{j_0-1}$ and
$\hat{\psi}_{j,l}$ for all $(j,l)\prec_{\mathrm{L}}(j_0,0)$, and in this step we 
intend to determine all the coefficient functions $\hat{\psi}_{j,l}$ 
for $(j,l)\prec_{\mathrm{L}}(j_0+1,0)$.
In view of Proposition~\ref{prop:exp-3} below, we may obtain explicitly 
$\frakT_{j_0-1,1}$ in terms of this known data set, 
which by the equations~\eqref{eq:omega-id} and 
\eqref{eq:hatphi-det} gives 
an equation for $\hat{\psi}_{j_0,0}$. More generally, the equation which 
gives $\hat{\psi}_{j_0,l_0}$ takes the form 
\[
\Re (\bar\zeta\hat{\psi}_{j_0,l_0})=\tfrac12(4\hDelta R)^{-\frac12}\frakT_{j_0-1,l_0+1}
\quad \text{on }\;\T,
\] 
and we solve it with the formula 
\[
\hat{\psi}_{j_0,l_0}(\zeta)=\tfrac12\zeta\,\Hop_{\D_\e}
\big[(4\hDelta R)^{-\frac12}\frakT_{j_0-1,l_0+1}\big](\zeta).
\] 
If we apply this solution formula with $l=0$, the background data gets 
extended to all $\hat{\psi}_{j,l}$ with $(j,l)\prec_{\mathrm{L}}(j_0,1)$. 
Continuing in the same fashion, Proposition~\ref{prop:exp-3} shows that 
$\frakT_{j_0-1,2}$ may be expressed in terms of this extended data set. 
Consequently, the above solution formula also determines $\hat{\psi}_{j_0,1}$. 
More generally,  as we proceed iteratively in the same manner, we obtain 
all the coefficient functions $\hat{\psi}_{j,l}$ with $j=j_0$ and 
$(j,l)\prec_{\mathrm{L}}(j_0+1,0)$.\\
\\
\noindent{\sc Step 4.} At this stage, using Step 3, we have at our 
disposal the 
functions $b_0,\ldots,b_{j_0-1}$, and $\hat{\psi}_{j,l}$ for all 
$(j,l)\prec_{\mathrm{L}} (j_0+1,0)$.
Proposition~\ref{prop:exp-3} now allows us to compute 
$\frakT_{j_0,0}$ in terms of this data, 
and from~\eqref{eq:omega-id} and \eqref{eq:Bj-det},
we derive an equation for $b_{j_0}$:
\[
2\Re b_{j_0}=-\frakT_{j_0,0},\qquad \text{on }\T.
\]
We solve this equation explicitly by
$$
b_{j_0}(\zeta)=-\frac12\Hop_{\D_\e}
\big[\frakT_{j_0,0}\big](\zeta),\qquad 
\zeta\in\D_\e.
$$
After completing this step in the algorithm, we have extended the
data set to contain $b_0,\ldots,b_{j_0}$ and all coefficient functions 
$\hat{\psi}_{j,l}$ with $(j,l)\prec_{\mathrm{L}}(j_0+1,0)$.

\medskip

\noindent{\sc Step 5.} Finally, we iterate Steps 3 and 4 with 
$j_0$ replaced by $j_0+1$, until all coefficient functions $b_k$ and 
$\hat{\psi}_{j,l}$ have been determined, for $0\le k\le \kappa$ and
$(j,l)\in\indsett_{2\kappa+1}$. This also means that the flow equation
\eqref{eq:omega-id} is met with the given choices of coefficient functions.

\begin{rem}
If we apply the above algorithm to the function $R=R_\tau$, 
the coefficient functions 
$B_j$ in the expansion of $f_s=\exp(h_s)$ obtained here are 
the same as those appearing in 
Theorem~\ref{thm:main-coeff}. There, the algorithm was based on Laplace's 
method and inhomogeneous Toeplitz kernel equations.
The algorithm presented here is in principle an 
alternative route towards 
finding the coefficient functions. 
However, a drawback is that the algorithm requires us to compute the 
additional functions $\hat{\psi}_{j,l}$, which adds further complexity.
\end{rem}

\subsection{The general multivariate Fa{\`a} di Bruno formula}
\label{ss:bruno}
We recall Fa{\`a} di Bruno's formula in several variables, and study 
some of its properties. To prepare for the formulation, we introduce 
the well-ordering used in \cite{bruno}, which we call the order-lexicographical
ordering (OL for short). Given two multi-indices
\[
\bm{\alpha}=(\alpha_1,\ldots,\alpha_d)\quad\text{and}\quad 
\bm{\beta}=(\beta_1,\ldots,\beta_d),
\] 
we write that $\bm{\alpha} \prec_{\mathrm{OL}}\bm{\beta}$ if:

\noindent{\rm(i)} $|\bm{\alpha}|<|\bm{\beta}|$, or if

\noindent{\rm(ii)} $|\bm{\alpha}|=|\bm{\beta}|$ and 
$\bm{\alpha}\prec_{\mathrm{L}}\bm{\beta}$ (lexicographically).

\noindent Here, we recall that in the lexicographical ordering 
$\bm{\alpha}\prec_{\mathrm{L}}\bm{\beta}$ holds if either $\alpha_1<\beta_1$ or 
$\alpha_1=\beta_1,\ldots,\alpha_{k-1}=\beta_{k-1}$ while 
$\alpha_{k+1}<\beta_{k+1}$ holds for some $1\le k\le d$. 
As a matter of notation, $\bm{\alpha}\preceq\bm{\beta}$ means that 
either $\bm{\alpha}\prec\bm{\beta}$ or $\bm{\alpha}=\bm{\beta}$; this applies 
to both the lexicographical and order-lexicographical orderings.  
We use some elements of standard multi-index notation. For instance, if 
$\bm{\alpha}=(\alpha_1,\ldots,\alpha_d)$ is a $d$-dimensional multi-index, that
is, a $d$-vector of integers in $\N:=\{0,1,2,\ldots\}$, we write
\begin{align*}
\lvert \bm{\alpha}\rvert= & \sum_i \alpha_i,
\\
\bm{\alpha}!=& \prod_i (\alpha_i!),\\
\bm{\xi}^{\bm{\alpha}}=& \prod_i \xi_i^{\alpha_i},\qquad 
\bm{\xi}=(\xi_1,\ldots,\xi_d)\in\C^d, 
\\
\partial^{\bm{\alpha}} f(\bm{x})=&\partial^{\alpha_1}_{x_1}\cdots 
\partial^{\alpha_d}_{x_d} f(\bm{x}),\qquad \bm{x}=(x_1,\ldots, x_d)\in\R^d.
\end{align*}
We will need the index set
\begin{multline}
\indsetT^{\mathrm{OL}}_{k;d',d}
=\big\{(\bm{\alpha}_1,\ldots, \bm{\alpha}_k; \bm{\beta}_1,\ldots, 
\bm{\beta}_k)\in (\N^{d'})^k\times (\N^d)^k:
\\
0\prec_{\mathrm{OL}}\bm{\alpha}_1\prec_{\mathrm{OL}}
\cdots\prec_{\mathrm{OL}}\bm{\alpha}_k\;\;\text{and}\;\;
\forall i=1,\ldots,k:\; \lvert \bm{\beta}_i\rvert>0 \big\}.
\label{eq:indsetTOL}
\end{multline}
We now formulate the multivariate Fa{\`a} di Bruno's formula as it appears in
\cite{bruno}.

\begin{prop}
\label{prop:bruno}
Let $\Omega\subset\R^{d}$ and $\Omega'\subset\R^{d'}$ be domains in the 
respective Euclidean space. Let $\mathbf{g}=(g_1,\ldots,g_d):
\Omega'\to\Omega$ and $f:\Omega\to\R$ be $C^n$-smooth, so that the composition 
$f\circ\mathbf{g}:\Omega'\to\R$ is $C^n$-smooth as well. 
Then, for any $d'$-dimensional multi-index $\bm{\nu}$ with 
$\lvert\bm{\nu}\rvert=n$, we have on $\Omega'$
$$
\partial^{\bm{\nu}}(f\circ \mathbf{g})=
\sum_{1\le \lvert \bm{\mu}\rvert \le n}[(\partial^{\bm{\mu}} f)
\circ \mathbf{g}]\;\mathcal{G}_{\bm{\mu},{\bm \nu}}(\mathbf{g}),
$$
where $\bm{\mu}$ runs over the $d'$-dimensional multi-indices, and the function 
$\mathcal{G}_{\bm{\mu},{\bm \nu}}(\mathbf{g})$ is given by
\[
\mathcal{G}_{\bm{\mu},{\bm \nu}}(\mathbf{g})=
\bm{\nu}!\sum_{k=1}^{n}\sum_{(\bm{\alpha};
\bm{\beta})
\in\indsetS_k^{\mathrm{OL}}(\bm{\mu},\bm{\nu})} 
\prod_{i=1}^{k} \frac{[\partial ^{\bm{\alpha}_i}
\mathbf{g}]^{\bm{\beta}_i}}{\bm{\beta}_i![\bm{\alpha}_i!]^{\lvert\bm{\beta}_i\rvert}}.
\]
Here, the indicated index set is given by
\[
\indsetS^{\mathrm{OL}}_k(\bm{\mu},\bm{\nu}):
=\Big\{(\bm{\alpha};\bm{\beta})=(\bm{\alpha}_1,\ldots,
\bm{\alpha}_k; \bm{\beta}_1,\ldots,\bm{\beta}_k)\in 
\indsetT^{\mathrm{OL}}_{k;d',d}\,:\,
\sum_i \bm{\beta}_i=\bm{\mu},\;
\sum_i \lvert \bm{\beta}_i\rvert \bm{\alpha}_i=\bm{\nu}\Big\}.
\]
\end{prop}

Note that since $\mathbf{g}$ is assumed vector-valued, the multi-index 
partial derivative $\partial^{\bm{\alpha}_j}\mathbf{g}$ is vector-valued as well,
and the multi-index power $[\partial^{\bm{\alpha}_j}\mathbf{g}]^{\bm{\beta}_j}$ 
produces a real-valued function. 

\begin{rem}
\label{rem:reshuff}
Both the order-lexicographical and the lexicographical ordering are 
well-orderings of the multi-indices. If in \eqref{eq:indsetTOL} we replace
$\prec_{\mathrm{OL}}$ by the lexicographic ordering $\prec_{\mathrm{L}}$ to obtain 
the analogous index set $\indsetT^{\mathrm{L}}_{k;d',d}$, this amounts to a 
reshuffling of the multi-indices $\bm{\alpha}_1,\ldots, \bm{\alpha}_k$ to get
them ordered with respect to $\prec_{\mathrm{L}}$ instead. This allows us to 
define the index set $\indsetS^{\mathrm{L}}_k(\bm{\mu},\bm{\nu})$ as well, 
based on $\indsetT^{\mathrm{L}}_{k;d',d}$ instead. It is important to note that the
assertion of Proposition \ref{prop:bruno} holds with the index set
$\indsetS^{\mathrm{OL}}_{k}(\bm{\mu},\bm{\nu})$ replaced by 
$\indsetS^{\mathrm{L}}_{k}(\bm{\mu},\bm{\nu})$. The reason why this is so is that
if we reshuffle both the $\bm{\alpha}_j$ and the $\bm{\beta}_j$, then nothing
really happened and the involved sum remains the same. 
\end{rem}

\subsection{The multivariate Fa\`a di Bruno formula
adapted to our setting}
\label{ss:bruno-oursetting}

We specialize Proposition~\ref{prop:bruno} to the situation that we need to 
analyze. We will consider only the case of $d=d'=2$. We work in terms of
polar coordinates $(r,\theta)$, and put
$\Rfun(r,\theta)= R(r\e^{\imag\theta})$. Although still not specified completely,
we assume the function $\psi_{s,t}$ is sufficiently smooth in both $(s,t)$, 
and introduce the function $\bm{\Psi}_{s,t}$, 
\begin{equation}
\label{eq:bPsist}
\bm{\Psi}_{s,t}=(\lvert \psi_{s,t}\rvert,
\arg\psi_{s,t}),
\end{equation}
which maps to polar coordinates, so that 
\begin{equation}
\label{eq:Rfun-1}
R\circ\psi_{s,t}=\Rfun\circ\bm{\Psi}_{s,t}.
\end{equation}
Accordingly, we denote by $D_{r,\theta}^{\bm{\mu}}$ the differential operator
\[
D_{r,\theta}^{\bm{\mu}}=\partial_r^{\mu_1}\partial_\theta^{\mu_2}, \quad \bm{\mu}
=(\mu_1,\mu_2),
\]
and obtain by applying Proposition~\ref{prop:bruno} to 
$\Rfun\circ\bm{\Psi}_{s,t}$ with $\bm{\nu}=(j,l)$ that along the circle $\T$,
\begin{multline}
\label{eq:bruno-R}
\partial_s^j\partial_t^l (R\circ\psi_{s,t})\big\vert_{s=t=0}
=\partial_s^j\partial_t^l (\Rfun\circ\bm{\Psi_{s,t}})\big\vert_{s=t=0}
\\
=\sum_{2\le\lvert\bm{\mu}\rvert\le j+l}[(D_{r,\theta}^{\bm{\mu}} 
\Rfun)\circ\bm{\Psi_{s,t}}]\,
\mathcal{G}_{\bm{\mu},(j,l)}(\bm{\Psi}_{s,t})\Big\vert_{s=t=0}
\\
=\sum_{2\le\lvert\bm{\mu}\rvert\le j+l}[(D_{r,\theta}^{\bm{\mu}} 
R)(\psi_{s,t})]\,
\mathcal{G}_{\bm{\mu},(j,l)}(\bm{\Psi}_{s,t})\Big\vert_{s=t=0}
\\
=\sum_{2\le\lvert\bm{\mu}\rvert\le j+l}(D_{r,\theta}^{\bm{\mu}} 
R)\,
\mathcal{G}_{\bm{\mu},(j,l)}(\bm{\Psi}_{s,t})\Big\vert_{s=t=0},
\end{multline}
where the terms corresponding to indices $\bm{\mu}$ with 
$\lvert \bm{\mu}\rvert\le 1$ vanish and hence get dropped. The reason for this
is that $\psi_{0,0}(\zeta)=\zeta$ preserves $\T$ and that the function $R$ 
together with its gradient vanish along the unit circle $\T$.
More generally, we find that
\begin{equation}
\label{eq:flatR-1.01}
D_{r,\theta}^{\bm{\mu}}R\vert_{\T}=\partial_r^{\mu_1}\partial_\theta^{\mu_2}R|_\T=0, 
\qquad 
\bm{\mu}=(\mu_1,\mu_2)\in\{0,1\}\times\N.
\end{equation}
In the context of \eqref{eq:bruno-R}, it is important to point out that the 
multi-index derivatives that appear in the 
expression $\mathcal{G}_{\bm{\mu},(j,l)}(\bm{\Psi}_{s,t})$ (as defined in 
Proposition~\ref{prop:bruno}) are taken with respect to the variables $(s,t)$. 
Moreover, in the equality \eqref{eq:bruno-R} we have suppressed the 
variable $\zeta\in\T$, and consider it to be fixed.

We will be interested in identifying the {\em maximal} index $(p,q)$ 
with respect to the lexicographical ordering, such that the partial derivative 
$\partial^p_s\partial^q_t\bm{\Psi}_{s,t}$ appears nontrivially in 
the right-hand side expression of \eqref{eq:bruno-R}. 

\begin{prop}\label{prop:index}
Let $\bm{\nu},\bm{\mu}\in\N^2$ be double-indices with 
$2\le\lvert\bm{\mu}\rvert\le \lvert \bm{\nu}\rvert$ and 
$\bm{\mu}\notin \{(1,1), (0,2)\}$, and let 
$(\bm{\alpha};\bm{\beta})\in \indsetS_k^{\mathrm{L}}(\bm{\mu},\bm{\nu})$. Then
\begin{enumerate}[\rm(i)]

\item If $\bm{\nu}=(j,l)$, where $j,l\ge 1$, then for all 
$i=1,\ldots,k$, we have that
$\bm{\alpha}_i\preceq_{\mathrm{L}} (j,l-1)$. Moreover, the equality 
$\bm{\alpha}_i=(j,l-1)$ holds if and only if $i=k=2$, $\bm{\mu}=(2,0)$, and 
\[
(\bm{\alpha};\bm{\beta})=((0,1), (j,l-1); (1,0),(1,0)).
\]

\item If $\bm{\nu}=(j,0)$ with $j\ge 3$, then each $\bm{\alpha}_i$ is 
of the form $(a,0)$ with  $a\le j-1$. Moreover, the equality $a=j-1$ 
holds if and only if $i=k=2$, $\bm{\mu}=(2,0)$, and 
\[
(\bm{\alpha};\bm{\beta})=((1,0),(j-1,0); (1,0),(1,0)).
\]

\item If $\bm{\nu}=(0,l)$ with $l\ge 3$, then $\bm{\alpha}_i$ is of 
the form $(0,b)$ with  $b\le l-1$. Moreover, the equality $b=l-1$ holds if 
and only if $\bm{\mu}=(2,0)$ and 
\[
(\bm{\alpha};\bm{\beta})=((0,1),(0,l-1); (1,0),(1,0)).
\]

\item If $\bm{\nu}=(2,0)$, then necessarily $\bm{\mu}=(2,0)$ and the 
only nontrivial index 
$(\bm{\alpha};\bm{\beta})$ is 
\[
(\bm{\alpha};\bm{\beta})=((1,0);(2,0)).
\]

\item  If $\bm{\nu}=(0,2)$, then necessarily $\bm{\mu}=(2,0)$ and the 
only nontrivial index $(\bm{\alpha};\bm{\beta})$ is 
\[
(\bm{\alpha};\bm{\beta})=((0,1);(2,0)).
\]
\end{enumerate}
\end{prop}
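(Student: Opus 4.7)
The proof is a careful combinatorial unpacking of the two defining constraints of $S_m(\bm\mu,\bm\nu)$, namely
\[
\sum_{i=1}^m \bm\beta_i = \bm\mu,\qquad \sum_{i=1}^m|\bm\beta_i|\bm\alpha_i = \bm\nu,
\]
together with the strict lexicographic ordering $0\prec \bm\alpha_1\prec\cdots\prec \bm\alpha_m$ and the positivity $|\bm\beta_i|\ge 1$. Since the $\bm\alpha_i$ are strictly increasing it suffices in each case to bound $\bm\alpha_m$; all other $\bm\alpha_i$ are then automatically smaller. The strategy is to project the vector identity $\sum|\bm\beta_i|\bm\alpha_i=\bm\nu$ onto each coordinate and argue componentwise.

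For case (i), I would argue by contradiction that $\bm\alpha_m \succ (j,l-1)$ is impossible. Either $(\alpha_m)_1 > j$, which directly violates the first-coordinate identity since each term is non-negative and $|\bm\beta_m|\ge 1$, or $(\alpha_m)_1 = j$ and $(\alpha_m)_2 \ge l$. In the second sub-case the first-coordinate identity forces $|\bm\beta_m|=1$ and $(\alpha_i)_1 = 0$ for all $i<m$, and then the second-coordinate identity combined with $\bm\alpha_i\succ 0$ yields a contradiction. For the equality analysis, substituting $\bm\alpha_m=(j,l-1)$ into the same two identities yields $|\bm\beta_m|=1$, $(\alpha_i)_1=0$ for $i<m$, and $\sum_{i<m}|\bm\beta_i|(\alpha_i)_2=1$. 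The unit budget on the right-hand side, together with $\bm\alpha_i \succ 0$ and the strict ordering, admits the unique configuration $m=2$, $\bm\alpha_1 = (0,1)$, $|\bm\beta_1|=1$. The identity $\bm\beta_1+\bm\beta_2=\bm\mu$ then forces $|\bm\mu|=2$, and the exclusion $\bm\mu\notin\{(0,2),(1,1)\}$ pins down $\bm\mu=(2,0)$ and hence $\bm\beta_1=\bm\beta_2=(1,0)$.

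Cases (ii) and (iii) reduce to one-dimensional versions of the same argument: when $\bm\nu=(j,0)$ (respectively $(0,l)$), the second (respectively first) coordinate identity forces $(\alpha_i)_2=0$ (resp.\ $(\alpha_i)_1=0$) for every $i$. The strict-increase condition then becomes strict increase of positive integers $1\le a_1<a_2<\ldots<a_m$ subject to $\sum|\bm\beta_i|a_i=j$ (resp.\ $l$), and the hypothesis $j\ge 3$ (resp.\ $l\ge 3$) is precisely what is required to make $(a_1,a_m)=(1,j-1)$ with $a_1<a_m$ a legitimate configuration. Cases (iv) and (v) are degenerate: the bound $|\bm\nu|=2$ combined with $\bm\alpha_i \succ 0$ and strict increase forces $m=1$, whence $|\bm\mu|\bm\alpha_1 = \bm\nu$ immediately yields $|\bm\mu|=2$, and the exclusion again singles out $\bm\mu=(2,0)$, with $\bm\alpha_1$ equal to $(1,0)$ or $(0,1)$ respectively.

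I expect no genuine conceptual obstacle here; the argument is bookkeeping over a small number of sub-cases. The mildly delicate point is in case (i), where one must verify that the equality analysis does not inadvertently admit configurations with $m\ge 3$ or with $\bm\mu$ of weight larger than $2$. This is exactly where the strict lexicographic ordering combined with $\bm\alpha_i\succ 0$ and the tight unit budget $\sum_{i<m}|\bm\beta_i|(\alpha_i)_2=1$ plays its full role, leaving no room for any additional non-trivial $\bm\alpha_i$ and forcing $|\bm\mu|=|\bm\beta_1|+|\bm\beta_2|=2$.
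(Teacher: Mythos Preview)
Your proposal is correct and follows essentially the same approach as the paper: both argue directly from the two defining constraints of $S_m(\bm\mu,\bm\nu)$, projecting componentwise to bound $\bm\alpha_m$ and then pinning down the unique equality configuration via the remaining unit budget and the exclusion $\bm\mu\notin\{(1,1),(0,2)\}$. One small remark: in your case~(i) contradiction argument for the sub-case $(\alpha_m)_1=j$, $(\alpha_m)_2\ge l$, when $m=1$ the contradiction does not come from the second-coordinate identity and $\bm\alpha_i\succ 0$ (there being no $i<m$) but rather from $|\bm\mu|=|\bm\beta_1|=1$; this is exactly how the paper handles it, and you should make that explicit.
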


Note that since $\lvert \bm{\nu}\rvert\ge 2$, the above list covers 
all the possibilities. We will denote by $(\bm{\alpha}^\circledast;
\bm{\beta}^\circledast)$ the indicated extremal index $(\bm{\alpha};\bm{\beta})$
in each of the cases (i)--(v). 

\begin{proof}[Proof of Proposition~\ref{prop:index}]
We show how to obtain $\mathrm{(i)}$, $\mathrm{(ii)}$ and $\mathrm{(iv)}$. 
The remaining cases $\mathrm{(iii)}$ and $\mathrm{(v)}$ are analogous 
and omitted. 
We recall the compatibility conditions on the index set 
$\indsetS_k^{\mathrm{L}}(\bm{\mu},\bm{\nu})$. After all, the assertion 
$(\bm{\alpha};\bm{\beta})\in \indsetS_k^{\mathrm{L}}(\bm{\mu},\bm{\nu})$ 
means that $(\bm{\alpha};\bm{\beta})\in\indsetT_{k;2,2}^{\mathrm{L}}$ plus the
\begin{equation}\label{eq:index-bruno}
\sum_{i=1}^k\lvert \bm{\beta}_i\rvert\bm{\alpha}_i=\bm{\nu},
\qquad \sum_{i=1}^k\bm{\beta}_i=\bm{\mu},
\end{equation}
where each $\bm{\beta}_i$ has $\lvert \bm{\beta}_i\rvert\ge 1$, and the 
multi-indices $\rm{\alpha}_i$ are strictly increasing with $i$ in the 
lexicographical ordering. From these assumptions it is immediate that each 
$\bm{\alpha}_i$ satisfies $\bm{\alpha}_i\preceq_{\mathrm{L}}\bm{\nu}$. 

As for assertion $\mathrm{(i)}$, we see that equality $\bm{\alpha}_i=(j,l)$ 
could hold only if $k=1$, with $\bm{\alpha}_1=(j,l)$ and $|\bm{\beta}_1|=1$. 
But then $\lvert\bm{\mu}\rvert=1$, which would contradict our assumption that 
$\lvert \bm{\mu}\rvert\ge 2$. Hence, given the structure of the lexicographic
ordering, for any index $i$, we have 
$\bm{\alpha}_i\preceq_{\mathrm{L}} (j,l-1)$. However, if equality holds here, 
that is, if for some $i_0$ we have $\bm{\alpha}_{i_0}=(j,l-1)$, we find from 
\eqref{eq:index-bruno} that $\lvert \bm{\beta}_{i_0}\rvert=1$, whereas 
the sum on the left-hand side, taken over all other indices $i\neq i_0$, 
must equal $(0,1)$. 
As a consequence, only $k=2$ is possible, and then 
$\bm{\alpha}=((0,1),(j,l-1))$. In addition, we get that $|\bm{\beta}_1|=
|\bm{\beta_2}|=1$, so that by the second relation in \eqref{eq:index-bruno},
$\lvert \bm{\mu}\rvert=2$ holds. Given the assumptions on $\bm\mu$,
the only remaining possibility is $\bm{\mu}=(2,0)$, and then 
$\bm{\beta}_1=\bm{\beta}_2=(1,0)$. 

We turn our attention to the assertion $\mathrm{(ii)}$. 
In a similar manner as above, since the weighted sum of the multi-indices 
$\bm{\alpha}_i$ equals $(j,0)$, we see that for each index $i=1,\ldots,k$, 
$\bm{\alpha}_i=(a_i,0)$ for some $a_i\in\N$ with $0<a_i\le j$. It is
clear that $a_{i_0}=j$ could occur for some $i_0$ only if $i_0=k=1$, 
$\lvert \bm{\beta}_1\rvert=1$ and $\lvert\bm{\mu}\rvert=1$, 
which again would contradict our assumption $\lvert \bm{\mu}\rvert\ge 2$. 
It follows that $a_i\le j-1$ for each $i$. 
Next, the only way we could have $\bm{\alpha}_{i_0}=(j-1,0)$ for some 
$i_0$ is if $i_0=k=2$ and correspondingly $\bm{\alpha}=((1,0),(j-1,0))$. 
The remaining properties are immediate.

Finally, to see why $\mathrm{(iv)}$ holds, we analogously find that each 
$\bm{\alpha}_i$ is of the form $\bm{\alpha}_i=(a_i,0)$, where $0<a_i\le 2$. 
In view of \eqref{eq:index-bruno}, 
\[
a_1|\bm{\beta}_1|+\cdots +a_k|\bm{\beta}_k|=2,
\]
with $|\bm{\beta}_i|\ge1$ for each $i$. This is possible only if $1\le k\le2$.
If $k=2$, we get that $a_1=a_2=1$ and $|\bm{\beta}_1|=|\bm{\beta}_2|=1$,
which leads to $\bm{\alpha}_1=\bm{\alpha}_2=(1,0)$. This gets excluded 
on the basis of the monotonicity requirement $\bm{\alpha}_1\prec_{\mathrm{L}}
\bm{\alpha}_2$, so $k=1$ is the only possibility. So the requirement 
\eqref{eq:index-bruno} now reads $a_1|\bm{\beta}_1|=2$ and 
$\bm{\beta}_1=\bm{\mu}$. If $a_1=2$, then $\lvert \bm{\beta}_1\rvert=1$, and
consequently $\lvert\bm{\mu}\rvert=1$, which is contrary to our assumption that 
$\lvert \bm{\mu}\rvert\ge 2$. The only remaining alternative is that
$\bm{\alpha}_1=(1,0)$ and $\lvert \bm{\beta}_1\rvert=2$. Since 
$\bm{\beta}_1=\bm{\mu}$, and the only admissible $\bm{\mu}$ of length 
$2$ is $\bm{\mu}=(2,0)$, it follows that $\bm{\beta}_1=(2,0)$, and the 
claim follows.
\end{proof}

We observe that in each of the cases $\mathrm{(i)}$-$\mathrm{(v)}$, the 
lexicographically maximal $\bm{\alpha}_i$ occurs as the index $i=k$,
where $k\in\{1,2\}$ and 
$(\bm{\alpha};\bm{\beta})\in \indsetS_k^{\mathrm{L}}(\bm{\mu},\bm{\nu})$ and 
$\bm{\mu}=\bm{\mu}_0:=(2,0)$ while $|\bm{\nu}|\ge2$. If we put  
\[
A(\bm{\nu})=\max_k\max_{(\bm{\alpha};\bm{\beta})\in 
\indsetS_k^{\mathrm{L}}(\bm{\mu}_0,\bm{\nu})}\bm{\alpha}_k
\]
where the maximum is taken lexicographically over the entire range 
$k=1,\ldots,\lvert \bm{\nu}\rvert$, then the maximum occurs for $k=2$
unless if $\bm{\nu}=(2,0)$ or $\bm{\nu}=(0,2)$. Moreover, if $\bm{\nu}=(2,0)$ 
or $\bm{\nu}=(0,2)$, the maximum occurs for $k=1$. Let $k_{\bm{\nu}}\in\{1,2\}$
be the parameter value for which the maximum is attained, depending on 
$\bm\nu$, as just explained.
In any of the instances $\mathrm{(i)}$-$\mathrm{(v)}$, there exists  
a unique extremal pair $(\bm{\alpha}^\circledast;\bm{\beta}^\circledast)\in 
\indsetS_k^{\mathrm{L}}(\bm{\mu}_0,\bm{\nu})$ provided that $k=k_{\bm\nu}$. 
Next, let $\indsetS_{k,\circledast}^{\mathrm{L}}(\bm{\mu}_0,\bm{\nu})$ denote 
the depleted index set
\[
\indsetS_{k,\circledast}^{\mathrm{L}}(\bm{\mu}_0,\bm{\nu})=
\begin{cases}
\indsetS_{k}^{\mathrm{L}}(\bm{\mu}_0,\bm{\nu}),& \text{if}\;\;
k\ne k_{\bm\nu},
\\
\indsetS_k^{\mathrm{L}}(\bm{\mu}_0,\bm{\nu})\setminus
\{(\bm{\alpha}^\circledast;\bm{\beta}^\circledast)\},& 
\text{if}\;\;
k=k_{\bm\nu},
\end{cases} 
\]
and consider the associated expression in the context of the multivariate
Fa\`a di Bruno formula:
\begin{equation}
\label{eq:G-star}
\mathcal{G}^\circledast_{\bm{\mu}_0,\bm{\nu}}(\bm{\Psi}_{s,t}):=\bm{\nu}!
\sum_{k=1}^{\lvert\bm{\nu}\rvert}\sum_{(\bm{\alpha};
\bm{\beta})\in \indsetS^{\mathrm{L}}_{k,\circledast}(\bm{\mu}_0,\bm{\nu})} 
\prod_{j=1}^{k}
\frac{[\partial ^{\bm{\alpha}_j}\bm{\Psi}_{s,t}]^{\bm{\beta}_j}}{\bm{\beta}_j!
[\bm{\alpha}_j!]^{\lvert\bm{\beta}_j\rvert}}.
\end{equation}
Then $\mathcal{G}_{\bm{\mu}_0,\bm{\nu}}(\bm{\Psi}_{s,t})$ splits as follows
(where $(\bm{\alpha}^\circledast;\bm{\beta}^\circledast)
=(\bm{\alpha}^\circledast_1,\ldots,
\bm{\alpha}^\circledast_{k_{\bm{\nu}}};\bm{\beta}^\circledast_1,\ldots,
\bm{\beta}_{k_{\bm{\nu}}}^\circledast)$):
\begin{equation}
\label{eq:G-star2}
\mathcal{G}_{\bm{\mu}_0,\bm{\nu}}(\bm{\Psi}_{s,t})=
\mathcal{G}^\circledast_{\bm{\mu}_0,\bm{\nu}}(\bm{\Psi}_{s,t})+
\mathcal{H}_{\bm{\mu}_0,\bm{\nu}}(\bm{\Psi}_{s,t}),\quad
\mathcal{H}_{\bm{\mu}_0,\bm{\nu}}(\bm{\Psi}_{s,t})
:=\bm{\nu}!\prod_{j=1}^{k_{\bm{\nu}}}
\frac{[\partial ^{\bm{\alpha}^\circledast_j}
\bm{\Psi}_{s,t}]^{\bm{\beta}^\circledast_j}}{\bm{\beta}^\circledast_j!
[\bm{\alpha}^\circledast_j!]^{\lvert\bm{\beta}^\circledast_j\rvert}}.
\end{equation}
If $\bm{\nu}=(2,0)$, the depleted index set 
$\indsetS_{k,\circledast}^{\mathrm{L}}(\bm{\mu}_0,\bm{\nu})$ is empty for
$k\in\{1,2\}$, which gives that
\begin{equation}
\label{eq:G-star3}
\mathcal{G}^\circledast_{\bm{\mu}_0,\bm{\nu}}(\bm{\Psi}_{s,t})=0\quad \text{if}\,\,\,
\bm{\nu}=(2,0).
\end{equation}

\subsection{Polynomial complexity classes}\label{ss:pol-complexity}
In order to make sure that the algorithm outlined above in 
Subsection~\ref{ss:alg-flow-over} does not break down, we need to carefully 
keep track of the 
dependency structure of the coefficient functions involved. In particular, 
when solving for the coefficient function $\hat{\psi}_{j,l}$
in terms of a Herglotz operator applied to a function $g_{j,l}$, we need 
to know that $g_{j,l}$ may be computed in terms of functions already 
determined in previous steps of the algorithm.
To help with this, we introduce for a nonnegative integer $j$ and a 
subset $\Sigma\subset\N^2$
the {\em polynomial complexity class} $\POL(j,\Sigma)$, defined as the 
following function class on the unit circle $\T$:
\begin{multline*}
\POL(j,\Sigma)=\R\Big[\Re\zeta,\Im\zeta,
\partial_{r,\theta}^\alpha R(\zeta),\Re b_r^{(k)},\Im b_r^{(k)}, 
\Re \hat{\psi}_{p,q},\Im\hat{\psi}_{p,q}, \Re \hat{\psi}'_{p,q},
\Im\hat{\psi}'_{p,q}\,:
\\
\,k\ge 0,\; \,0\le r\le j,\; (p,q)\in \Sigma\Big].
\end{multline*}
Here, $\R[X:Y]$ denotes the class of multivariate polynomials with 
real coefficients in the variables $X$,
restricted by the condition $Y$. In other words, $\POL(j,\Sigma)$ is 
the collection of multivariate 
polynomials in the expressions $\Re\zeta,\Im\zeta,
\partial_{r,\theta}^\alpha R(\zeta),\Re b_r^{(k)},\Im b_r^{(k)}, 
\Re \hat{\psi}_{p,q},\Im\hat{\psi}_{p,q}, \Re \hat{\psi}'_{p,q}$, and 
$\Im\hat{\psi}'_{p,q}$, under
the conditions $k\ge 0$, $0\le r \le j$ and  $(p,q)\in \Sigma$.
If there is no dependence on any of the functions $b_j$, we simplify the 
notation and write
$\POL(\Sigma)$ for the polynomial complexity class.
In connection with these classes, we will find it useful to introduce 
for nonnegative integers $p$ and $q$ the rectangular index sets
\[
\Sigma_{p,q}=\{ (a,b)\in\N^2\,:\,a\le p\;\text{ and }\;b\le q\}.
\]

\subsection{The semiclassical case of the orthogonal foliation
flow}
\label{ss:flow2}
We first explore {\sc Step 1} of the algorithmic procedure outlined in 
Subsection~\ref{ss:alg-flow-over}. We recall the notation
${\bf \Psi}_{0,t}=(\lvert \psi_{0,t}\rvert, \arg\psi_{0,t})$ from 
\eqref{eq:bPsist}. 
Moreover, we recall that $\hrho_1$ is as in Propositions \ref{prop:f-T-hol}
and \ref{prop-rho1sigma1}.
We have already established the regularity of $\psi_{0,t}$ in the
implicit function theorem of Subsection~\ref{ss:psi-ext}. 
We proceed to compute the Taylor coefficients in $t$, 
and highlight the algorithmic aspects.

\begin{prop}\label{prop:conf-expl}
The Taylor coefficients $\hat{\psi}_{0,l}$ in the variable $t$ near $t=0$ 
of the conformal mapping $\psi_{0,t}$ with
\[
\psi_{0,t}(\zeta)=\sum_{l=0}^{2\kappa+1} t^l\hat{\psi}_{0,l}(\zeta)
+\Ordo(t^{2\kappa+2}),
\]
are uniquely determined by the level-curve requirement
$$
R\circ\psi_{0,t}(\zeta)=\frac{t^2}{2},\qquad \zeta\in\T,
$$
together with the monotonicity condition that the images $\psi_{0,t}(\D_\e)$ 
grow with $t$ and the normalization $\psi'_{0,t}(\infty)>0$. 
Moreover, as such, they are given by
\begin{align*}
\hat{\psi}_{0,0}(\zeta)&=\zeta,
\\
\hat{\psi}_{0,1}(\zeta)&=-\zeta\Hop_{\D_\e}
\big[(4\hDelta R)^{-\frac{1}{2}}\big](\zeta),
\end{align*}
and, more generally, by
\begin{align*}
\hat{\psi}_{0,l}(\zeta)&=\zeta\Hop_{\D_\e}\big[(4\hDelta R)^{-\frac{1}{2}}
\frakG_l\big]
(\zeta),\qquad l=2,\ldots,2\kappa+3,
\end{align*}
where $\frakG_l(\zeta)\in\POL(\Sigma_{0,l-1})$ is given by the formula
($\bm{\mu}_0=(2,0)$)
\begin{multline*}
\frakG_l(\zeta):
=\frac{1}{(l+1)!}\bigg\{4(\hDelta R)\,\calG_{\bm{\mu}_0,(0,l+1)}
^\circledast(\bm{\Psi}_{0,t})\big\vert_{t=0}
\\
+\sum_{3\le\lvert \bm{\mu}\rvert\le l+1}
(\partial_r^{\mu_1}\partial_\theta^{\mu_2}
R)\,\calG_{\mu,(0,l+1)}(\bm{\Psi}_{0,t})\Big\vert_{t=0}-2(l+1)(\hDelta R)^{\frac12}
\frakg_l\bigg\},
\end{multline*}
where
\[
\frakg_l:=\partial_t^l\lvert \psi_{0,t}\rvert\big\vert_{t=0}-l!
\Re(\bar{\zeta}\hat{\psi}_{0,l}).
\]
The coefficient functions $\hat{\psi}_{0,l}$ all extend holomorphically to 
the domain $\D_\e(0,\hrho_1)$.
\end{prop}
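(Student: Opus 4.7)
The plan is to treat the defining relation $R\circ\psi_{0,t}(\zeta)=t^2/2$ (valid for $\zeta\in\T$) as an infinite family of identities obtained by differentiating in $t$ at $t=0$ at successive orders. The right hand side contributes $0$ for all orders except order $2$, where it contributes $1$. Expanding the left hand side by the multivariate Fa\`a di Bruno formula (Proposition~\ref{prop:bruno}) applied to $R\circ\psi_{0,t}=F\circ\bm\Phi_{0,t}$ yields, at order $l+1$, an identity that involves the Taylor coefficients $\hat\psi_{0,0},\dots,\hat\psi_{0,l}$ and the derivatives of $R$ on $\T$. I would read off from this identity a boundary condition on $\T$ for $\Re(\bar\zeta\hat\psi_{0,l})$ in terms of $\hat\psi_{0,0},\dots,\hat\psi_{0,l-1}$, and then reconstruct $\hat\psi_{0,l}$ via the Herglotz transform, using the normalization that the Laurent coefficient of $\hat\psi_{0,l}(\zeta)/\zeta$ at infinity is real.

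The simplifications that make this strategy work come from the vanishing of $R$ and $\nabla R$ on $\T$: all Fa\`a di Bruno terms with $|\bm\mu|\le 1$ drop out; and among second derivatives of $R$ on $\T$, only $\partial_r^2 R=4\Delta R$ survives, since $\partial_\theta^2R$ and $\partial_r\partial_\theta R$ vanish identically along $\T$. At order $l=0$ the identity forces $\psi_{0,0}(\zeta)=\zeta$. At order $l=1$, case (v) of Proposition~\ref{prop:index} reduces $\mathcal G_{(2,0),(0,2)}(\bm\Phi_{0,t})\big|_{t=0}$ to $[\Re(\bar\zeta\hat\psi_{0,1})]^2$, yielding the scalar equation $4\Delta R\,[\Re(\bar\zeta\hat\psi_{0,1})]^2=1$ on $\T$. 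The monotonicity hypothesis $\{\psi_{0,t}(\D_\e)\}_t$ grows with $t$ forces $\Re(\bar\zeta\hat\psi_{0,1})<0$, so $\Re(\bar\zeta\hat\psi_{0,1})=-(4\Delta R)^{-1/2}$; the Herglotz transform then gives $\hat\psi_{0,1}(\zeta)=-\zeta\,\Hop_{\D_\e}[(4\Delta R)^{-1/2}](\zeta)$.

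For $l\ge 2$ I would proceed inductively. Assuming $\hat\psi_{0,k}$ are known for $k<l$, Proposition~\ref{prop:index}~(iii) is the decisive input: the maximal pair $(\bm\alpha;\bm\beta)=((0,1),(0,l);(1,0),(1,0))$ is the only element of $S_m((2,0),(0,l+1))$ whose top multi-index $\bm\alpha_2=(0,l)$ picks up the new coefficient $\hat\psi_{0,l}$, and no contribution from $|\bm\mu|\ge 3$ (or from $\mathcal G^\star_{(2,0),(0,l+1)}$) can see $\hat\psi_{0,l}$ either. The isolated term equals $(l+1)\Re(\bar\zeta\hat\psi_{0,1})\cdot\partial_t^l|\psi_{0,t}|\big|_{t=0}=(l+1)\Re(\bar\zeta\hat\psi_{0,1})\bigl[l!\Re(\bar\zeta\hat\psi_{0,l})+\mathfrak{H}_l\bigr]$, where $\mathfrak{H}_l$ by construction depends only on the lower-order $\hat\psi_{0,k}$. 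Substituting the already-known value $\Re(\bar\zeta\hat\psi_{0,1})=-(4\Delta R)^{-1/2}$ produces a linear equation for $\Re(\bar\zeta\hat\psi_{0,l})$ on $\T$ with coefficient $-(l+1)!(4\Delta R)^{1/2}$, which is non-vanishing since $\Delta R>0$. Solving and rearranging identifies $\Re(\bar\zeta\hat\psi_{0,l})=(4\Delta R)^{-1/2}\mathfrak{G}_l$ with $\mathfrak{G}_l$ as stated, and a Herglotz transform yields $\hat\psi_{0,l}(\zeta)=\zeta\,\Hop_{\D_\e}[(4\Delta R)^{-1/2}\mathfrak{G}_l](\zeta)$. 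Holomorphic extension of $\hat\psi_{0,l}$ across $\T$ into $\D_\e(0,\rho')$ follows from real-analyticity of the boundary data together with Remark~\ref{rem:Hop-hol}.

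The main obstacle is bookkeeping rather than genuine analysis: one must be careful to verify that no hidden instance of $\hat\psi_{0,l}$ appears in the other terms of the Fa\`a di Bruno expansion, and that the prefactor of $\Re(\bar\zeta\hat\psi_{0,l})$ obtained from the Fa\`a di Bruno calculation really is non-degenerate. Both points are exactly what Proposition~\ref{prop:index} provides, so once one has recorded the maximality assertions there, the proof reduces to a combination of linear solvability on $\T$ and the elementary inversion of the Herglotz transform.
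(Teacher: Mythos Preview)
Your proposal is correct and follows essentially the same route as the paper: differentiate the level-curve identity $R\circ\psi_{0,t}=t^2/2$ in $t$, expand via Fa\`a di Bruno applied to $F\circ\bm\Phi_{0,t}$, use the flatness of $R$ on $\T$ together with Proposition~\ref{prop:index} to isolate the top-order term carrying $\hat\psi_{0,l}$, solve the resulting linear boundary condition for $\Re(\bar\zeta\hat\psi_{0,l})$ by the Herglotz transform, and invoke Remark~\ref{rem:Hop-hol} for the holomorphic extension. One small slip: your ``isolated term'' should carry the prefactor $4\Delta R$ coming from $\partial_r^2 R\vert_{\T}$, i.e.\ it is $4\Delta R\cdot(l+1)\,\partial_t\lvert\psi_{0,t}\rvert\cdot\partial_t^{l}\lvert\psi_{0,t}\rvert\big\vert_{t=0}$; with this factor in place your substitution $\Re(\bar\zeta\hat\psi_{0,1})=-(4\Delta R)^{-1/2}$ indeed produces the coefficient $-(l+1)!\,(4\Delta R)^{1/2}$ that you quote.
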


\begin{proof}
By Proposition \ref{prop:conformal}, the conformal mappings $\psi_{0,t}$ 
are uniquely defined by the given requirements, and 
$\psi_{0,0}(\zeta)=\zeta$ holds. 
Moreover, since $t\mapsto\psi_{0,t}$ is smooth, the validity of the indicated
expansion follows from Taylor's formula, and the first coefficient then equals
$\hat{\psi}_{0,0}(\zeta)=\psi_{0,0}(\zeta)=\zeta$. 
In view of Taylor's formula applied to the function 
$t\mapsto R\circ\psi_{0,t}$, we have that
\begin{equation}\label{eq:phi-level}
(R\circ\psi_{0,t})(\zeta)=\sum_{l=0}^{2\kappa+1}\frac{t^l}{l!}
\partial_t^l(R\circ\psi_{0,t}(\zeta))\big\vert_{t=0}+
\Ordo(\lvert t\rvert^{2\kappa+2}).
\end{equation}
Since by assumption $R\circ\psi_{0,t}(\zeta)=\tfrac{t^2}{2}$ holds on $\T$, 
we find that for $\zeta\in\T$, 
\begin{equation}
\label{eq:eq-R-phi}
\partial^l_t(R\circ\psi_{0,t})(\zeta)\big\vert_{t=0}=\begin{cases}1,& 
\text{for}\;l=2,\\ 
0,&\text{for}\;l\ne2.\end{cases}
\end{equation}
It is automatic that \eqref{eq:eq-R-phi} holds for $0\le l\le1$, since
$R$ is quadratically flat on $\T$. We now consider $l=2$. 
By the multivariate Fa\`a di Bruno formula \eqref{eq:bruno-R} with $s=0$
treated as constant, together with the quadratic flatness of $R$ near the 
unit circle $\T$ a calculation shows that
\[
\partial^2_t(R\circ\psi_{0,t})\big\vert_{t=0}=(\partial_r^2 R)
(\partial_t|\psi_{0,t}|)^2\big|_{t=0}=4\hDelta R\,
[\Re(\bar\zeta\hat{\psi}_{0,1})]^2\quad\text{on}\,\,\,\T.
\]
Since the left-hand side equals $1$ by \eqref{eq:eq-R-phi}, we may solve for 
$\Re(\bar\zeta\hat{\psi}_{0,1})$ using either the positive or the negative root.
We choose the negative square root, which gives that
\begin{equation}
\label{eq:phi01-0}
\partial_t|\psi_{0,t}|\big|_{t=0}=
\Re(\bar\zeta\hat{\psi}_{0,1})=-(4\hDelta R)^{-\frac12}\quad\text{on}\;\;\T.
\end{equation}
This choice is the one which is compatible with the growth of the domains 
$\psi_{0,t}(\D_\e)$ as $t$ increases (so that the loops $\psi_{0,t}(\T)$ move 
inward). 
Finally, we solve this equation by means of the formula
\begin{equation}
\label{eq:phi01}
\hat{\psi}_{0,1}(\zeta)=-\zeta\Hop_{\D_\e}\big[(4\hDelta R)^{-\frac12}
\big](\zeta),
\end{equation}
as in {\sc Step 3} of the algorithmic procedure in 
Subsection~\ref{ss:alg-flow-over}. Here, the uniqueness of the solution
follows from Remark~\ref{rem:flow-remark} {\rm (a)}. 
Since $(4\hDelta R)^{-\frac12}$ has 
a polarization which is holomorphic in $(z,\bar w)$ for 
$(z,w)\in\hat{\mathbb{A}}(\hrho_1,\sigma_1)$, 
the function $\hat{\psi}_{0,1}$ extends holomorphically to
$\D_\e(0,\hrho_1)$, by 
Proposition~\ref{prop:f-T-hol} and Remark~\ref{rem:Hop-hol}.

As for the higher order Taylor coefficients, we again apply the multivariate
Fa\`a di Bruno formula \eqref{eq:bruno-R}.
As a result, on the circle $\zeta\in\T$ we have for $l\ge3$ that
(apply \eqref{eq:phi01-0} in the last step)
\begin{multline}
\label{eq:deriv-R-phi}
\partial_{t}^{l}(R\circ\psi_{0,t})\big\vert_{t=0}=\sum_{2\le\lvert \bm{\mu}
\rvert\le l}(\partial^{\mu_1}_r\partial^{\mu_2}_\theta R)\,\mathcal{G}
_{\bm{\mu},(0,l+1)}(\bm{\Psi}_{0,t})\Big\vert_{t=0}
\\
=4l\,(\hDelta R)(\partial^{l-1}_t\lvert 
\psi_{0,t}\rvert)\,(\partial_t\vert \psi_{0,t}\vert) 
+\mathcal{G}^\circledast_{\bm{\mu}_0,l}(\bm{\Psi}_{0,t})
\Big\vert_{t=0}
\\
+\sum_{3\le\lvert \bm{\mu}
\rvert\le l}(\partial^{\mu_1}_r\partial^{\mu_2}_\theta R)\,\mathcal{G}
_{\bm{\mu},(0,l)}(\bm{\Psi}_{0,t})\Big\vert_{t=0}
\\
=-l!(4\hDelta R)^{\frac12}\,\Re(\bar\zeta\hat{\psi}_{0,l-1})
-l(4\hDelta R)^{\frac12}\frakg_{l-1}+\mathcal{G}^\circledast_{\bm{\mu}_0,(0,l)}
(\bm{\Psi}_{0,t})\Big\vert_{t=0}
\\
+\sum_{3\le\lvert \bm{\mu}
\rvert\le l}(\partial^{\mu_1}_r\partial^{\mu_2}_\theta R)\,\mathcal{G}
_{\bm{\mu},(0,l)}(\bm{\Psi}_{0,t})\Big\vert_{t=0},
\end{multline}
where $\bm{\mu}_0=(2,0)$ and we recall that
\[
\frakg_{l-1}=\partial_t^{l-1}\lvert \psi_{0,t}\rvert\big\vert_{t=0}-(l-1)!
\Re(\bar{\zeta}\hat{\psi}_{0,l-1}).
\]
An elementary computation shows that the highest order derivatives cancel out, 
and it follows that $\frakg_{l-1}\in\POL(\Sigma_{0,l-2})$. 

We recall that the expression $\mathcal{G}^\circledast_{\bm{\mu}_0,(0,l+1)}
(\bm{\Psi}_{0,t})$ appearing in the above formula is as in \eqref{eq:G-star}.
We write
\[
\frakG_{l-1}=\frac{1}{l!}\bigg\{
-l(4\hDelta R)^{\frac12}\frakg_{l-1}+
\mathcal{G}^\circledast_{\bm{\mu}_0,(0,l)}(\bm{\Psi}_{0,t})\big\vert_{t=0}
+\sum_{3\le\lvert \bm{\mu}
\rvert\le l}(\partial^{\mu_1}_r\partial^{\mu_2}_\theta R)\,
\mathcal{G}_{\bm{\mu},(0,l)}(\bm{\Psi}_t)\Big\vert_{t=0}\bigg\},
\]
and claim that $\frakG_{l-1}\in\POL(\Sigma_{0,l-2})$. 
We already saw that $\frakg_l$ has this property, and hence 
$(\hDelta R)^{\frac12}\frakg_{l-1}=(\hDelta R)\Re(-\bar\zeta\hat{\psi}_{0,1})$ does as well. 
That the same holds for the remaining two terms of the above formula 
can be seen from Proposition~\ref{prop:index}, and hence it follows that 
$\frakG_{l-1}\in\POL(\Sigma_{0,l-2})$.
It is a consequence of \eqref{eq:deriv-R-phi} that the 
condition \eqref{eq:eq-R-phi} for $l\ge3$ may be expressed as
\[
-(4\hDelta R)^{\frac12}\Re(\bar\zeta\hat{\psi}_{0,l-1})+\frakG_{l-1}=0,
\qquad l=3,4,5,\ldots.
\]
This is an equation of a kind we have met before, and we know that 
a solution $\hat{\psi}_{0,l}$ is supplied by the formula 
(change $l$ by $l+1$ in the previous relation)
\begin{equation}
\label{eq:psi0l-formula}
\hat{\psi}_{0,l}(\zeta)=\zeta\Hop_{\D_\e}
\bigg[\frac{\frakG_{l}}{(4\hDelta R)^{\frac12}}\bigg](\zeta),\qquad 
l=2,3,4,\ldots.
\end{equation}
Let us assume for the moment that the lower order terms $\hat\psi_{0,b}$ 
with $0\le b\le l-1$ all extend holomorphically to an exterior disk 
$\D_\mathrm{e}(0,\hrho_1)$. 
Then the entire expression inside brackets in 
\eqref{eq:psi0l-formula} polarizes to extend to a $2\sigma_1$-fattened 
diagonal annulus $\hat{\mathbb{A}}(\hrho_1,\sigma_1)$ given that
various partial derivatives of $R$ do, as well as
$(\hDelta R)^{-\frac12}$, which follows from Proposition
\ref{prop-rho1sigma1}. Moreover, since $\hrho_1$ is big
enough to guarantee that $\hrho_1\ge (\sqrt{1+\sigma_1^2}+\sigma_1)^{-1}$,
then in view of Proposition \ref{prop:f-T-hol}, the expression on the
right-hand side of \eqref{eq:psi0l-formula} will be holomorphic in the
same exterior disk $\D_\mathrm{e}(0,\hrho_1)$ as well, by Remark \ref{rem:rho'}. 
But then we have enough to keep the iteration going, and obtain that all 
the terms $\hat\psi_{0,l}$ extend holomorphically to a single exterior disk 
$\D_\mathrm{e}(0,\hrho_1)$. 
\end{proof}

\subsection{Taylor expansion of the weight term in the
master equation}
\label{ss:flow2-2}
We continue with the Taylor expansion of the composition $R\circ\psi_{s,t}$ 
in terms of powers of $s$ and $t$, where the starting point is the 
application of the Fa\`a di Bruno formula in \eqref{eq:bruno-R}. 
We recall the definition \eqref{eq:triangindset} of the triangular index 
set $\indsett_n$. 
We work under the assumption that $\psi_{s,t}$ depends sufficiently smoothly 
on both $(s,t)$ near $(0,0)$. This assumption gets justified in the stepwise
proof which we outlined in Subsection \ref{ss:flow2}, which retrieves the 
Taylor coefficients of $\psi_{s,t}$ in $(s,t)$ iteratively.  
We use the notion of polynomial complexity classes $\POL(j,\Sigma)$ 
and the index sets $\Sigma_{p,q}$ from Subsection~\ref{ss:pol-complexity}.

\begin{prop}\label{prop:exp-R}
On the unit circle $\T$, the function $R\circ\psi_{s,t}$ enjoys the expansion
\begin{equation*}
\label{eq:exp-R}
2R\circ\psi_{s,t}=2R\circ\psi_{0,t}+\sum_{(j,l)\in\indsett_{2\kappa}}s^{j+1}t^l
\frakR_{j,l}+\Ordo\big(\lvert s\rvert\big(\lvert s\rvert^{\kappa+\frac12}
+\lvert t\rvert^{2\kappa+1}\big)\big),
\end{equation*}
where $\frakR_{0,0}=0$, while for the remaining indices $(j,l)\neq (0,0)$, 
we have
\[
\frakR_{j,l}=\frac{2}{(j+1)!l!}\,\Big\{
(4\hDelta R)\,
[\mathcal{H}_{\bm{\mu}_0,(j+1,l)}(\bm{\Psi}_{s,t})]\Big\vert_{s=t=0}
+\frakr_{j,l}^*\Big\},
\]
where $\bm{\mu}_0=(2,0)$. Here, the main term is given in terms of 
$\mathcal{H}_{\bm{\mu}_0,(j+1,l)}(\bm{\Psi}_{s,t})$, defined by 
\[
\mathcal{H}_{\bm{\mu}_0,(j+1,l)}(\bm{\Psi}_{s,t})=
\begin{cases}
l\,(\partial_t|\psi_{s,t}|)(\partial_s^{j+1}\partial_t^{l-1}|\psi_{s,t}|),
\quad\text{if}\,\,\,j\ge0,\,l\ge1, 
\\
(j+1)(\partial_s|\psi_{s,t}|)(\partial_s^j|\psi_{s,t}|), \quad\,\,\text{if}\,\,\,
j\ge2,\,l=0, 
\\
(\partial_s|\psi_{s,t}|)^2,\qquad\qquad\qquad\quad\;\;\text{if}\,\,\,j=1,\,l=0,
\end{cases}
\]
while the term $\frakr_{j,l}^*$, considered as a remainder, is given by
\begin{equation*}
\frakr_{j,l}^*=(4\hDelta R)\,\mathcal{G}^\circledast_{\bm{\mu}_0,(j+1,l)}
(\bm{\Psi}_{s,t})+\sum_{3\le\lvert\bm{\mu}\rvert\le j+l+1}
(D_{r,\theta}^{\bm{\mu}} R)\,\mathcal{G}_{\bm{\mu},(j+1,l)}(\bm{\Psi}_{s,t})
\Big\vert_{s=t=0},
\end{equation*}
where we recall that $\mathcal{G}^\circledast_{\bm{\mu}_0,(j+1,l)}(\bm{\Psi}_{s,t})$ 
is given by \eqref{eq:G-star}. 
For $j\ge0$ and $l\ge1$, we have 
\[
\frakr_{j,l}^*\in\POL(\Sigma),\quad \text{with}\quad 
\Sigma=\{(p,q)\in\Sigma_{j+1,l}:\,(p,q)\prec_{\mathrm{L}}(j+1,l-1)\}).
\]
In a similar fashion, for $j\ge1$, we have that the Taylor coefficient  
$\frakR_{j,0}\in \POL(\Sigma_{j,0})$.
Moreover, the implied constant in the above expansion of $R\circ\psi_{s,t}$ 
remains bounded if the weight $R$ is confined to a uniform family in 
$\mathcal{W}(\hrho_0,\sigma_0)$ for some fixed  $0<\hrho_0<1$ and $\sigma_0>0$, 
while the functions $\psi_{s,t}$ are smooth with bounded norms 
in $C^{2\kappa+4}$ with respect to $(s,t)$ in a neighborhood of $(0,0)$, 
uniformly on the circle $\T$.
\end{prop}

\begin{proof}
The fact that $R\circ\psi_{s,t}$ enjoys an expansion of the indicated form 
for some coefficients $\frakR_{j,l}$ with the given error term is an 
immediate consequence of the multivariate Taylor's formula. The coefficients 
$\frakR_{j,l}$ are then obtained from the successive partial derivatives 
\eqref{eq:bruno-R}. 
It just remains to calculate them:
\begin{multline}
\frakR_{j,l}=\frac{2}{(j+1)!l!}\,\partial_s^{j+1}\partial_t^l\big(
R\circ\psi_{s,t}-R\circ\psi_{0,t}\big)\Big|_{s=t=0}=
\frac{2}{(j+1)!l!}\,\partial_s^{j+1}\partial_t^l\big(
R\circ\psi_{s,t}\big)\Big|_{s=t=0}
\\
=\frac{2}{(j+1)!l!}\,
\sum_{2\le\lvert\bm{\mu}\rvert\le j+l+1}(D_{r,\theta}^{\bm{\mu}} 
R)\,
\mathcal{G}_{\bm{\mu},(j+1,l)}(\bm{\Psi}_{s,t})\Big\vert_{s=t=0}.
\label{eq:Aformula-01}
\end{multline}
In particular, $\frakR_{0,0}=0$, as the sum is over the empty set. 
In the right-hand side, the sum over $|\bm{\mu}|=2$ is special as the 
only nontrivial contribution comes from $\bm{\mu}=\bm{\mu}_0=(2,0)$, 
by \eqref{eq:flatR-1.01}:
\begin{equation}
\sum_{\lvert\bm{\mu}\rvert=2}(D_{r,\theta}^{\bm{\mu}} 
R)\,
\mathcal{G}_{\bm{\mu},(j+1,l)}(\bm{\Psi}_{s,t})\Big\vert_{s=t=0}=
(4\hDelta R)\,
\mathcal{G}_{\bm{\mu}_0,(j+1,l)}(\bm{\Psi}_{s,t})\Big\vert_{s=t=0}\quad
\text{on}\,\,\,\,\T.
\label{eq:Aformula-02}
\end{equation}
Here, we use the fact that $\partial_r^2R=4\hDelta R$ on $\T$.
It follows that for $(j,l)\ne(0,0)$, we have on $\T$ that
\begin{multline*}
\frakR_{j,l}=\frac{2}{(j+1)!l!}\,\Big\{
(4\hDelta R)\,
\mathcal{G}_{\bm{\mu}_0,(j+1,l)}(\bm{\Psi}_{s,t})
+\sum_{3\le\lvert\bm{\mu}\rvert\le j+l+1}(D_{r,\theta}^{\bm{\mu}} 
R)\,
\mathcal{G}_{\bm{\mu},(j+1,l)}(\bm{\Psi}_{s,t})\Big\}\Big\vert_{s=t=0}.
\end{multline*}
We write $\bm{\nu}:=(j+1,l)$, and split the expression 
$\mathcal{G}_{\bm{\mu}_0,\bm{\nu}}(\bm{\Psi}_{s,t})$ further according to 
formula \eqref{eq:G-star2}:
\[
\mathcal{G}_{\bm{\mu}_0,\bm{\nu}}(\bm{\Psi}_{s,t})=
\mathcal{G}^\circledast_{\bm{\mu}_0,\bm{\nu}}(\bm{\Psi}_{s,t})+
\mathcal{H}_{\bm{\mu}_0,\bm{\nu}}(\bm{\Psi}_{s,t}).
\]
We turn to the task of expressing 
\[
\mathcal{H}_{\bm{\mu}_0,\bm{\nu}}(\bm{\Psi}_{s,t})=\bm{\nu}!\prod_{j=1}^{k_{\bm{\nu}}}
\frac{[\partial ^{\bm{\alpha}^\circledast_j}
\bm{\Psi}_{s,t}]^{\bm{\beta}^\circledast_j}}{\bm{\beta}^\circledast_j!
[\bm{\alpha}^\circledast_j!]^{\lvert\bm{\beta}^\circledast_j\rvert}}.
\] 
in explicit form in the various cases as outlined in Proposition 
\ref{prop:index}. First, if $j\ge0$ and $l\ge1$, then $k_{\bm{\nu}}=2$ and
\[
\mathcal{H}_{\bm{\mu}_0,\bm{\nu}}(\bm{\Psi}_{s,t})=l(\partial_t|\psi_{s,t}|)
(\partial_s^{j+1}\partial_t^{l-1}|\psi_{s,t}|).
\]
It remains to consider $j\ge1$ and $l=0$. If $j=1$ and $l=0$, then
\[
\mathcal{H}_{\bm{\mu}_0,\bm{\nu}}(\bm{\Psi}_{s,t})=(\partial_s|\psi_{s,t}|)^2,
\]
while if instead $j\ge2$ and $l=0$, then
\[
\mathcal{H}_{\bm{\mu}_0,\bm{\nu}}(\bm{\Psi}_{s,t})=(j+1)(\partial_s|\psi_{s,t}|)
(\partial_s^{j}|\psi_{s,t}|).
\]

It remains to discuss the algebraic properties of $\frakr^*_{j,l}$ for
$j\ge0,l\ge1$ and those of $\frakR_{j,0}$ for $j\ge1$. In view of Proposition
\ref{prop:index}, for $j\ge0,l\ge1$ all the indices $\bm{\alpha}_i$ have
\[
0\prec_{\mathrm{L}}\bm{\alpha}_0\prec_{\mathrm{L}}
\cdots\prec_{\mathrm{L}}\bm{\alpha}_k\prec_{\mathrm{L}}(j+1,l-1),
\] 
provided that $(\bm{\alpha};\bm{\beta})\in
\indsetS_k^{\mathrm{L}}(\bm{\mu},\bm{\nu})$ holds for a  
$k=1,\ldots,|\bm{\nu}|$, given that $|\bm{\mu}|\ge3$. In addition,
the same conclusion remains valid if $\bm{\mu}=\bm{\mu}_0=(2,0)$ provided 
that it is assumed that $(\bm{\alpha};\bm{\beta})\in
\indsetS_{k,\circledast}^{\mathrm{L}}(\bm{\mu}_0,\bm{\nu})$, which excludes the
extremal multi-index. After some additional
simplifications, this shows that $\frakr^*_{j,l}$ has the claimed form.
For $j\ge1$ and $l=0$, the assertion about the algebraic properties of 
$\frakR_{j,0}$ follows from the observation that if 
$(\bm{\alpha};\bm{\beta})\in \indsetS_k^{\mathrm{L}}(\bm{\mu},\bm{\nu})$ with
$\bm{\nu}=(j+1,0)$, then for $i=1,\ldots,k$, we have
$\bm{\alpha}_i=(a_i,0)$ with $0<a_i\le j$, by Proposition \ref{prop:index}.
The computational aspects are analogous to the case already discussed.
This completes the proof.
\end{proof}

\subsection{Taylor expansion of the remaining terms in the
master equation}

We recall that $h_s$ and $\psi_{s,t}$ stand for the functions
\[
h_s(\zeta)=\sum_{j=0}^\kappa s^jb_j(\zeta)\quad \text{and}\quad
\psi_{s,t}(\zeta)=\psi_{0,t}(\zeta)+
\sum_{\substack{(j,l)\in\indsett_{2\kappa+1}\\ j\ge 1}}
s^jt^l\hat{\psi}_{j,l}(\zeta),
\]
where $\kappa$ is a (big) positive integer. 
Moreover, the $b_j$ are bounded holomorphic functions in 
the exterior disk $\D_\e(0,\hrho_1)$, and 
$\psi_{0,t}$ is a conformal mapping of the exterior 
disk onto the exterior of the level curves 
$\Gamma_{t}$ of $R$ as above, and where 
$\hat{\psi}_{j,l}$ are holomorphic functions on $\D_\e(0,\hrho_1)$
with bounded derivative.
Let us denote by $\frakH_{j,l}$, $\frakR_{j,l}$ and 
$\frakJ_{j,l}$ the corresponding coefficient functions in the following 
three 
expansions (for $\zeta\in\T$):
\begin{align}
\label{eq:exp-align-1}2\Re (h_s\circ\psi_{s,t})(\zeta)&=
\sum_{(j,l)\in\indsett_{2\kappa}}s^jt^l
\frakH_{j,l}(\zeta)+\Ordo\big(\lvert s\rvert^{\kappa+\frac12}+
\lvert t\rvert^{2\kappa+1}\big),
\\
\label{eq:exp-align-2}2s^{-1}\big(R\circ\psi_{s,t}(\zeta)-\tfrac12{t^2}\big)&=
\sum_{(j,l)\in\indsett_{2\kappa}}
s^jt^l\frakR_{j,l}(\zeta)+\Ordo\big(\lvert s\rvert^{\kappa+\frac12}+
\lvert t\rvert^{2\kappa+1}\big),
\\
\label{eq:exp-align-3}\log\, \Re\big\{-\bar\zeta\partial_t\psi_{s,t}(\zeta)
\overline{\psi_{s,t}'(\zeta)}\big\}
&=\sum_{(j,l)\in\indsett_{2\kappa}}s^jt^l\frakJ_{j,l}(\zeta)
+\Ordo\big(\lvert s\rvert^{\kappa+\frac12}+\lvert t\rvert^{2\kappa+1}\big).
\end{align}
where \eqref{eq:exp-align-2} holds since 
$R\circ\psi_{0,t}(\zeta)=\tfrac{t^2}{2}$ on $\T$ 
as a matter of definition.
Moreover, we recall that by Proposition~\ref{prop:conf-expl}
we have that
\[
\exp(\frakJ_{0,0})=\Re(-\bar\zeta\partial_t\psi_{s,t}(\zeta)
\overline{\psi_{s,t}'(\zeta)})
\big\vert_{s=t=0}=
(4\hDelta R(\zeta))^{-\frac12}\quad \text{on }\,\T.
\]
We have already analyzed the coefficient functions $\frakR_{j,l}$ for
$(j,l)\in \indsett_{2\kappa}$ back in Proposition~\ref{prop:exp-R}. Here,
we refine the analysis and obtain a more convenient splitting of $\frakR_{j,l}$
into a main term plus a remainder, and express the coefficients 
$\frakH_{j,l}$ and $\frakJ_{j,l}$ in 
terms of the successive partial derivatives 
of the functions $b_j$ and $\psi_{s,t}$.

\begin{prop}\label{prop:exp-2}
In the above context, the Taylor coefficients 
$\frakH_{j,l}$ in $(s,t)$ of the function 
$2\Re h_s\circ\psi_{s,t}$ in $(s,t)$ according to 
\eqref{eq:exp-align-1} have the following properties.
For $j\ge 0$ we have
\[
\frakH_{j,0}=2\Re b_j+\frakh_{j,0}
\]
where $\frakh_{j,0}\in \POL(j-1,\Sigma_{j,0})$. On the other hand, for 
$(j,l)\in\indsett_{2\kappa}$ with $l\ge 1$, we see that 
$\frakH_{j,l}\in \POL(j,\Sigma_{j,l})$.

As for the Taylor coefficients $\frakR_{j,l}$ associated with $R\circ\psi_{s,t}$
according to \eqref{eq:exp-align-2}, we have
for $(j,l)\in\indsett_{2\kappa}$ with $j\ge 0$ and $l\ge 1$ that
\[
\frakR_{j,l}=
2(4\hDelta R)^{\frac{1}{2}}\Re(\bar\zeta\hat{\psi}_{j+1,l-1})+\frakr_{j,l}
\]
where 
\[
\frakr_{j,l}\in\POL\big(\big\{(p,q)\in \Sigma_{j+1,l}:
\,(p,q)\prec_{\mathrm{L}}(j+1,l-1)\big\}\big).
\]
As for the coefficients $\frakJ_{j,l}$ appearing in \eqref{eq:exp-align-3}, 
$\frakJ_{0,0}$ is given by
\[
\frakJ_{0,0}=\log\,\Re(-\bar\zeta\hat{\psi}_{0,1})=-\frac12\log(4\hDelta R), 
\]
while for $(j,l)\in\indsett_{2\kappa}\setminus\{(0,0)\}$ we see that 
$\frakJ_{j,l}\in \POL(\Sigma_{j,l+1})$.
\end{prop}

\begin{proof}
This follows from an application of the multivariate Taylor's formula, 
together with the multivariate Fa{\`a} di Bruno formula 
(Proposition~\ref{prop:bruno}), and the equation~\eqref{eq:exp-align-2} above.
Let us indicate the necessary computations, starting with the 
coefficients $\frakH_{j,l}$.
For $(j,l)\in\indsett_{2\kappa}\setminus\{(r,0):r\ge 0\}$ we have
\begin{equation}\label{eq:Hjl}
\frakH_{j,l}=
2\Re \sum_{i=0}^j\sum_{1\le \mu \le j+l-i} \partial^{\mu}b_i(\zeta)
\sum_{k=1}^{j+l-i}
\sum_{(\bm{\alpha},\beta)\in \indsetS_{k}^{\mathrm{L}}(\mu,(j-i,l))}
\prod_{r=1}^{k}
\frac{[\partial_{s,t}^{\boldsymbol{\alpha_r}}\psi_{s,t}(\zeta)]^{\beta_r}}
{(\boldsymbol{\alpha_r}!)^{\beta_r}\beta_r!},
\end{equation}
and consequently $\frakH_{j,l}\in \POL(j,\Sigma_{j,l})$, 
while for indices $(j,0)$ with $j\ge 0$ we have
\[
\frakH_{j,0}=2\Re b_j+\frakh_{j,0}
\]
where $\frakh_{j,0}\in \POL(j-1,\Sigma_{j,0})$ is given by
\begin{equation}\label{eq:hj0}
\frakh_{j,0}=2\Re\sum_{i=0}^{j-1}\sum_{1\le \mu \le j-i} \partial^{\mu}b_i(\zeta)
\sum_{k=1}^{j-i}\sum_{(\alpha,\beta)\in 
\indsetS_{k}^{\mathrm{L}}(\mu,(j-i,0))}\prod_{r=1}^{k}
\frac{[\partial_{s,t}^{\alpha_r}\psi_{s,t}]^{\beta_r}}
{(\alpha_r!)^{\beta_r}\beta_r!}.
\end{equation}
Turning to the claim about the coefficient $\frakR_{j,l}$, we note that 
\begin{equation}\label{eq:rjl}
\frakr_{j,l}=\frac{2}{(j+1)!l!}\frakr^*_{j,l}+
2(4\hDelta R)^{\frac12}\Big(
\frac{\partial^{j+1}_s\partial^{l-1}_t|\psi_{s,t}|}{(j+1)!(l-1)!}
-\Re(\bar\zeta\hat{\psi}_{j+1,l-1})\Big)\Big\vert_{s=t=0}.
\end{equation}
The claim in the proposition follows
from Proposition~\ref{prop:exp-R} together with the observation that
\begin{multline*}
\frac{\partial_{s}^{j+1}\partial_t^{l-1}|\psi_{s,t}(\zeta)|}{(j+1)!(l-1)!}
\Big\vert_{s=t=0}-\Re(\bar\zeta\hat{\psi}_{j+1,l-1}(\zeta))\\
\in\POL\big(\{(p,q)\in\Sigma_{j+1,l}:\,(p,q)\prec_{\mathrm{L}}(j+1,l-1)\}\big).
\end{multline*}
In order to see why this claim holds, we simply observe that the first 
term on the left-hand side is the Taylor coefficient in $(s,t)$ 
corresponding to the multi-index $(j+1,l-1)$ of the function 
$|\psi_{s,t}|$.
The Taylor expansion of this function may be found as follows. 
We notice that $\psi_{0,0}=\zeta$, so that
if we apply the generalized binomial theorem with exponent $\frac12$, 
we obtain
\begin{multline}
\label{eq:mod-psi-exp}
|\psi_{s,t}(\zeta)|=
\bigg|\Big(1+\sum_{(p,q)\ne (0,0)}s^pt^q\bar{\zeta}\hat{\psi}_{p,q}
\Big)^{\frac12}\bigg|^{2}
=\bigg|1+\sum_{k\ge 1}\binom{\frac12}{k}
\Big(\sum_{(p,q)\ne (0,0)}s^pt^q\bar\zeta\hat{\psi}_{p,q}\Big)^k\bigg|^2
\\
=1+\sum_{k,k'\ge 1}\binom{\frac12}{k}\binom{\frac12}{k'}
\Big(\sum_{(p,q)\ne (0,0)}s^pt^q\bar\zeta\hat{\psi}_{p,q}\Big)^k
\Big(\sum_{(p,q)\ne (0,0)}s^pt^q\zeta\overline{\hat{\psi}}_{p,q}\Big)^{k'}
\\
+2\Re\sum_{k\ge 1}\binom{\frac12}{k}\Big(\sum_{(p,q)\ne (0,0)}s^pt^q
\bar\zeta\hat{\psi}_{p,q}\Big)^k,\qquad \zeta\in\T.
\end{multline}
Apart from the contribution from the conformal mapping $\psi_{0,t}$, the 
series involve a truncation given by the index set $\indsett_{2\kappa+1}$, 
and hence we have no convergence issues.
The maximal index $(p,q)$ in the lexicographical ordering for which 
$\hat{\psi}_{p,q}$ appears 
in the Taylor coefficient for $s^{j+1}t^{l-1}$ of the above expression 
\eqref{eq:mod-psi-exp},
is easily seen to be $(j+1,l-1)$. The contribution corresponding to 
the maximal index comes from the last
term on the right-hand side of \eqref{eq:mod-psi-exp}, and equals
\[
2\Re\binom{\frac12}{1}\bar\zeta\hat{\psi}_{j+1,l-1}=
\Re(\bar\zeta\hat{\psi}_{j+1,l-1}).
\]
As for all the other indices, the contribution in the above sum to the 
Taylor coefficient
lies in the complexity class 
\[
\POL\big(\big\{(p,q)\in \Sigma_{j+1,l-1}:\,(p,q)
\prec_{\mathrm{L}}(j+1,l-1)\big\}\big),
\]
and the claim follows. 

Finally, we turn to the coefficient $\frakJ_{j,l}$. We know that 
$\frakJ_{0,0}=\log\,\Re(-\bar\zeta\hat{\psi}_{0,1})$,
while for indices $(j,l)\in\indsett_{2\kappa}\setminus\{(0,0)\}$ we apply the 
Fa{\`a} di Bruno's formula to the logarithm of the Jacobian expression to obtain
\begin{equation}\label{eq:Jjl}
\frakJ_{j,l}=\sum_{1\le \mu\le j+l}(-1)^{\mu}(\mu-1)!(4\hDelta R)^{\frac{\mu}{2}}
\sum_{k=1}^{j+l}\sum_{(\bm{\alpha},\beta)\in\indsetS_k^{\mathrm{L}}(\mu;(j,l))}
\prod_{r=1}^k\frac{[\partial_{s,t}^{\bm{\alpha}_r}
\Re(-\bar\zeta\partial_t\psi_{s,t}\overline{\psi'_{s,t}})]^{\beta_r}}
{(\bm{\alpha}_r!)^{\beta_r}\beta_r!}.
\end{equation}
As we may eliminate the half-powers of $\hDelta R$ by writing
\[
(4\hDelta R)^{\frac{\mu}{2}}=(4\hDelta R)^{\mu}(4\hDelta R)^{-\frac{\mu}{2}}
=(4\hDelta R)^{\mu}\Re(-\bar{\zeta}\hat{\psi}_{0,1}(\zeta))^\mu,\qquad \zeta\in\T,
\]
it follows that $\frakJ_{j,l}\in\POL(\Sigma_{j,l+1})$. 
\end{proof}

\subsection{Taylor expansion of the density in the 
master equation}
\label{ss:flow2-3}
We recall from Subsection~\ref{ss:alg-flow-over} the function
\[
\logdens_{s,t}(\zeta)=2\re h_s\circ\psi_{s,t}(\zeta)
-\tfrac{2}{s}\big((R\circ\psi_{s,t})(\zeta)-\tfrac12 t^2\big)
+\log\Big(\Re\big\{-\bar\zeta\partial_t\psi_{s,t}(\zeta)
\overline{\psi_{s,t}'(\zeta)}\big\}\Big).
\]
We compute the Taylor coefficients $\hat{\logdens}_{j,l}$
for $(j,l)\in\indsett_{2\kappa}$ given implicitly by
\[
\logdens_{s,t}(\zeta)=\sum_{(j,l)\in\indsett_{2\kappa}}
s^j t^l\hat{\logdens}_{j,l}(\zeta)
+\Ordo\big(\lvert s\rvert^{\kappa+\frac12}+\lvert t\rvert^{2\kappa+1}\big).
\]
This will determine what the master equation for the Taylor coefficients 
\eqref{eq:omega-id} entails
for the coefficient functions $b_k$ and $\hat{\psi}_{p,q}$ for 
$k\le \kappa$ and $(j,l)\in\indsett_{2\kappa+1}$.

We recall that the Taylor coefficients 
$\frakH_{j,l}$, $\frakh_{j,0}$, $\frakJ_{j,0}$, $\frakR_{j,0}$, and
$\frakr_{j,l}$
have appeared above in Propositions~\ref{prop:exp-R} and \ref{prop:exp-2}.
See in addition the explicit formul\ae{} \eqref{eq:Hjl}, \eqref{eq:hj0},
\eqref{eq:rjl},
and \eqref{eq:Jjl}.

\begin{prop}\label{prop:exp-3}
The coefficients $\hat{\logdens}_{j,l}(\zeta)$ in the above expansion are given
explicitly as follows. For $j=l=0$, we have
\[
\hat{\logdens}_{0,0}(\zeta)=2\Re b_0(\zeta) + 
\log\Re(-\bar\zeta\hat{\psi}_{0,1}(\zeta)),
\]
while for $l=0$ and $j= 1,2,3,\ldots$ the coefficient function is given by
\[
\hat{\logdens}_{j,0}(\zeta)=2\Re b_j(\zeta)+\frakT_{j,0},
\]
where $\frakT_{j,0}:=\frakh_{j,0}-\frakR_{j,0}
+\frakJ_{j,0}\in\POL(j-1,\Sigma_{j,1})$.
Moreover, for $j=0,1,2,\ldots$ and $l= 1,2,3,\ldots$ the coefficient function 
$\hat{\logdens}_{j,l}$ is given by
\[
\hat{\logdens}_{j,l}=-2(4\hDelta R(\zeta))^{\frac12}
\Re(\bar{\zeta}\hat{\psi}_{j+1,l-1}(\zeta))+\frakT_{j,l},
\]
where $\frakT_{j,l}:=\frakH_{j,l}-\frakr_{j,l}+\frakJ_{j,l}
\in\POL(j,\Sigma)$, with $\Sigma$ as the index set
\[
\Sigma=\{(p,q)\in \Sigma_{j+1,l}\cup\Sigma_{j,l+1}:\,
(p,q)\prec_{\mathrm{L}}(j+1,l-1)\}.
\]
\end{prop}

\begin{proof}
The formula for $\hat{\logdens}_{0,0}=\logdens_{s,t}\vert_{s=t=0}$ is 
immediate from the definition
\eqref{eq:omega-def}. Indeed, $\psi_{0,0}(\zeta)=\zeta$, and the 
formula \eqref{eq:omega-def} reads, where $h_s=2\Re \log f_s$,
\[
\logdens_{0,0}(\zeta)=h_0(\zeta)
+\log\big(\Re\big\{-\bar\zeta\hat{\psi}_{0,1}(\zeta)\big\}\big),
\]
where we use the fact that $\frakR_{0,0}=0$ according to 
Proposition~\ref{prop:exp-R}. The conclusion
now follows by observing that $h_0(\zeta)=2\Re b_0(\zeta)$.

The coefficients $\hat{\logdens}_{j,0}$ for $j\ge 1$ are given by
\[
\hat{\logdens}_{j,0}=\frakH_{j,0}-\frakR_{j,0}+\frakJ_{j,0}.
\]
The main contribution will come from the term $\frakH_{j,0}$, and 
we need to prove that the 
remainder of this term, as well as both terms $\frakR_{j,0}$ and 
$\frakH_{j,0}$ belong to the polynomial complexity class
$\POL(j-1,\Sigma_{j,1})$.
By Proposition~\ref{prop:exp-2}, it follows that
\[
\frakH_{j,l}=2\Re b_j + \frakh_{j,l}
\]
with $\frakh_{j,l}\in\POL(j-1,\Sigma_{j,0})$. Moreover, by 
Proposition~\ref{eq:exp-R} 
and Proposition~\ref{prop:exp-2}, respectively,  it follows that
\[
\frakR_{j,0}\in\POL(\Sigma_{j,0})\subset\POL(j-1,\Sigma_{j,1}),
\quad\text{and}\quad\frakJ_{j,0}
\in\POL(\Sigma_{j,1})\subset\POL(j-1,\Sigma_{j,1})
\]
and hence the claim follows. 

We next turn to the coefficients $\hat{\logdens}_{j,l}$ with 
$(j,l)\in\indsett_{2\kappa}$ for which $l\ge 1$.
The main term of
\[
\hat{\logdens}_{j,l}=\frakH_{j,l}-\frakR_{j,l}+\frakJ_{j,l}
\]
will come from the term $\frakR_{j,l}$, while the total remainder, 
consisting of the remainder from the term $\frakR_{j,l}$
together with the full terms $\frakH_{j,l}$ and $\frakJ_{j,l}$, 
is supposed to lie in the correct
polynomial complexity class $\POL(j,\Sigma)$, where
\[
\Sigma=\{(p,q)\in\Sigma_{j+1,l}\cup\Sigma_{j,l+1}:\,
(p,q)\prec_{\mathrm{L}}(j+1,l-1)\}.
\] 
By Proposition~\ref{prop:exp-2}, we have for such indices $(j,l)$ that
\[
\frakR_{j,l}=-2(4\Re R)^{\frac12}\Re(\bar{\zeta}\hat{\psi}_{j+1,l-1}) 
+ \frakr_{j,l},
\]
where $\frakr_{j,l}\in\POL(j,\Sigma)$. By Proposition~\ref{prop:exp-2}
 it also follows that
\[
\frakH_{j,l}\in\POL(j,\Sigma_{j,l})\subset \POL(j,\Sigma),
\quad\text{and}\quad\frakJ_{j,l}\in\POL(\Sigma_{j,l+1})\subset\POL(j,\Sigma).
\]
which proves the claim.
\end{proof}

\subsection{Algorithmic resolution of the master equation}
\label{ss:main-alg}
We are now ready to make the algorithm outlined in
Subsection~\ref{ss:alg-flow-over} rigorous.
We recall the master equation for the 
Taylor coefficients \eqref{eq:omega-id}
\[
\hat{\logdens}_{j,l}=\begin{cases}
-\frac12\log (4\pi)&\quad\text{for}\quad \zeta\in\T\;\;\text{and}\;\; 
(j,l)=(0,0),
\\
0&\quad\text{for}\quad \zeta\in\T\;\;\text{and}\;\; 
(j,l)\in\indsett_{2\kappa}\setminus
\{(0,0)\}.
\end{cases}
\]
In order to solve this system, we solve for the coefficient functions of $h_s$ 
and $\psi_{s,t}$ iteratively, according to the algorithm outlined in 
Subsection~\ref{ss:alg-flow-over}.

\begin{proof}[Proof of Proposition~\ref{prop:flow-general}]
In view of Propositions~\ref{prop:conformal} and \ref{prop:conf-expl}, 
the conformal 
mapping $\psi_{0,t}$ and its Taylor coefficients $\hat{\psi}_{0,l}$ for 
$l=0,1,\ldots,2\kappa+1$ with respect to the time parameter $t$ of the 
flow are 
well-defined, and they satisfy the required smoothness properties: 
for $t$ near $0$, the conformal mapping $\psi_{0,t}$ extends holomorphically 
across the boundary $\T$ to an exterior disk $\D_\e(0,\sqrt{\hrho_1})$
according to Proposition~\ref{prop:psi0t-ext}.
In addition, the derivative $\psi'_{0,t}$ remains uniformly bounded 
as long as the weight $R$ is confined to a uniform family in 
$\mathcal{W}(\hrho_0,\sigma_0)$ for fixed $\hrho_0$ and $\sigma_0$. 
Moreover, the coefficient functions $\hat{\psi}_{0,l}$ extend holomorphically to 
$\D_\e(0,\hrho_1)$, by Remark~\ref{rem:Hop-hol}.
This completes {\sc Step 1} of the algorithmic procedure.

Turning our attention to {\sc Step 2}, we recall from 
Proposition~\ref{prop:conf-expl} that on the circle $\T$, we have
$\Re(-\bar\zeta\hat{\psi}_{0,1})=(4\hDelta R(\zeta))^{-\frac12}$. Hence,
by Proposition~\ref{prop:exp-3} 
the equation $\hat{\logdens}_{0,0}=-\frac12\log(4\pi)$ is equivalent to
$$
2\Re b_0-\tfrac12\log(4\hDelta R)=-\frac12\log(4\pi) \quad\text{on }\,\T.
$$
Since we want the function $f_s$ to be zero-free in the exterior disk 
and real at infinity, this tell us that
\[
b_0=-\tfrac14\log(4\pi)+\tfrac14\Hop_{\D_\e}\big[\log (4\hDelta R)\big].
\]
We note that this automatically gives the normalization $\Im b_0(\infty)=0$.
By Proposition~\ref{prop-rho1sigma1} and Remark~\ref{rem:Hop-hol},
it follows that $b_0$ extends holomorphically 
to the exterior disk $\D_\e(0,\hrho_1)$.
Moreover, $b_0$ clearly remains uniformly bounded 
provided that $R$ is confined to a uniform family in 
$\mathcal{W}(\hrho_0,\sigma_0)$.
This completes {\sc Step 2}.

We proceed to {\sc Step 3} of the algorithmic procedure. We are now in 
the following situation. For some $j_0\ge 1$, 
our known data set is $\POL(j_0-1,\Sigma)$, where
\[
\Sigma=\{(j,l)\in\indsett_{2\kappa}: (j,l)\prec_{\mathrm{L}}(j_0,0)\}
\]
and all elements of $\POL(j_0-1,\Sigma)$ meet 
the required extension conditions. 
In particular, all the functions $b_0,\ldots,b_{j_0-1}$ and 
$\hat{\psi}_{j,l}$ for all $(j,l)\in\indsett_{2\kappa+1}$ with 
$(j,l)\prec_{\mathrm{L}} (j_0,0)$
are already known. 
In addition, the relations \eqref{eq:omega-id} are met 
for all $(j,l)\in\indsett_{2\kappa}$ with $(j,l)\prec_{\mathrm{L}} (j_0-1,1)$.
We will now show how this allows us to obtain the relations \eqref{eq:omega-id} 
for all subsequent indices $(j,l)\in\indsett_{2\kappa}$ with $(j,l)
\prec_{\mathrm{L}}(j_0,0)$, 
by making appropriate 
choices of the functions $\hat{\psi}_{j_0,l}$ for $l\ge 0$ with 
$(j_0,l)\in\indsett_{2\kappa+1}$. The additional indices for which we need 
to solve
\eqref{eq:omega-id} are those $(j,l)\in \indsett_{2\kappa}$ of the form 
$(j,l)=(j_0-1,l+1)$, where $l\ge 0$.

To achieve this, we assume that for all $l$ with $0\le l\le l_0-1$, 
we have obtained the coefficient functions
$\hat{\psi}_{j_0,l}$ by solving the equation \eqref{eq:omega-id} for the 
index pair $(j_0-1,l+1)$, 
and turn to the next equation. This reads 
$\hat{\logdens}_{j_0-1,l_0+1}=0$, as long as 
$(j_0-1,l_0+1)\in\indsett_{2\kappa}$. At this point, the known data set 
is $\POL(j_0-1,\Sigma')$, where
\[
\Sigma'=\{(j,l)\in\indsett_{2\kappa+1}: (j,l)\prec_{\mathrm{L}}(j_0,l_0)\}
\]
If $l_0$ is too large for $(j_0,l_0)\in\indsett_{2\kappa+1}$ to 
hold, we are in fact done, we don't need to obtain $\hat{\psi}_{j_0,l_0}$ 
for such indices. 
On the other hand, if $(j_0,l_0)\in\indsett_{2\kappa+1}$ we proceed as follows. 
By Proposition~\ref{prop:exp-3}, the equation $\hat{\logdens}_{j_0-1,l_0+1}=0$ 
may be written in the form
$$
\hat{\logdens}_{j_0-1,l_0+1}=-2(4\hDelta R)^{\frac12}
\Re(\bar\zeta\hat{\psi}_{j_0,l_0})+
\frakT_{j_0-1,l_0+1}=0 \quad\text{on }\,\T,
$$
where $\frakT_{j_0-1,l_0+1}\in\POL(j_0-1,\Sigma')$.
We provide a solution to this equation by the formula
\begin{equation}\label{eq:def-psi-j-l}
\hat{\psi}_{j_0,l_0}=\tfrac12\zeta\Hop_{\D_\e}
\bigg[\frac{\frakT_{j_0-1,l_0+1}}
{(4\hDelta R)^{\frac12}}\bigg].
\end{equation}
The function $\frakT_{j_0-1,l_0+1}$ has a polarization which is 
holomorphic in 
$(z,\bar w)$ for $(z,w)\in\hat{\mathbb{A}}(\hrho_1,\sigma_1)$, and the same 
holds for the weight $R$. As a consequence, it follows that 
$\hat{\psi}_{j_0,l_0}$ extends holomorphically to the exterior disk
$\D_\e(0,\hrho_1)$, and that $\hat{\psi}_{j_0,l_0}(\zeta)
=\Ordo(\lvert \zeta\rvert)$ with 
an implicit constant which is uniformly bounded, 
provided that $R$ is confined to a uniform family in 
$\mathcal{W}(\hrho_0,\sigma_0)$.

The base step $l_0=0$ of the induction procedure of {\sc Step 3} is
entirely analogous. Indeed, the known data set is $\POL(j_0-1,\Sigma)$
with $\Sigma$ as above, and by Proposition~\ref{prop:exp-3} the 
relevant equation 
$\hat{\logdens}_{j_0-1,1}=0$ takes the form
\[
-2(4\hDelta R)^{\frac12}\Re(\bar\zeta\hat{\psi}_{j_0,0})+\frakT_{j_0-1,1}=0 
\quad \text{on }\,\T,
\] 
where $\frakT_{j_0-1,1}$ in particular lies in $\POL(j_0-1,\Sigma)$. 
Hence we obtain $\hat{\psi}_{j_0,0}$ by the formula \eqref{eq:def-psi-j-l}
with $l=0$ replaced by $0$. 

We now turn to {\sc Step 4}. After the completion of {\sc Step 3}, 
the situation is as follows. the known data set is $\POL(j_0-1,\Sigma)$
with
\[
\Sigma=\{(j,l):(j,l)\prec_{\mathrm{L}}(j_0+1,0)\},
\]
where every element of $\POL(j_0-1,\Sigma)$ polarizes to 
$\hat{\mathbb{A}}(\hrho_1,\sigma_1)$. 
In addition, the relations \eqref{eq:omega-id} are met for all 
$(j,l)\in\indsett_{2\kappa}$ with $(j,l)\prec_{\mathrm{L}}(j_0,0)$.
We recall that this in particular this means that the known data set 
consists of the 
coefficient functions $b_0,\ldots,b_{j_0-1}$ and
$\hat{\psi}_{j,l}$ for $(j,l)\in\indsett_{2\kappa+1}$ with 
$(j,l)\prec_{\mathrm{L}} (j_0+1,0)$ are known.
In this step, we need to find the function $b_{j_0}$, and verify that the 
relation \eqref{eq:omega-id} is then met with $(j,l)=(j_0,0)$.
To this end, we apply Proposition~\ref{prop:exp-3}, and observe that 
the equation \eqref{eq:omega-id} with $(j,l)=(j_0,0)$ is equivalent to having
$$
\hat{\logdens}_{j_0,0}=2\Re b_{j_0}
+\frakT_{j_0,0}=0\quad\text{on }\T,
$$ 
where $\frakT_{j_0,0}\in \POL(j_0-1,\Sigma)$, with the same $\Sigma$ as above. 
Since $\POL(j_0-1,\Sigma)$ is a collection of known functions, we hence 
obtain an equation for the unknown function $b_{j_0}$, with solution
$$
b_{j_0}=-\tfrac12 \Hop_{\D_\e}\big[\frakT_{j_0,0}\big].
$$
In view of Proposition~\ref{prop-rho1sigma1} and Remark~\ref{rem:Hop-hol},
the function $b_{j_0}$ extends holomorphically to the exterior disk 
$\D_\e(0,\hrho_1)$, and remains uniformly bounded if the weight $R$ is 
confined to a uniform family in $\mathcal{W}(\hrho_0,\sigma_0)$. Moreover, 
we observe that $b_{j_0}$ has the required normalization at infinity: 
$\Im b_{j_0}(\infty)=0$.

We finally turn to {\sc Step 5}. The key observation is that we are now in a 
position to return to {\sc Step 3} followed by {\sc Step 4}, 
with $j_0$ replaced by $j_0+1$. 
Since {\sc Step 1} and {\sc Step 2} combine to form
the initial data for {\sc Steps 3 and 4} with $j_0=1$, the algorithm 
produces iteratively the entire set of coefficient functions, and solves 
in the process all the equations 
\eqref{eq:omega-id} for $(j,l)\in\indsett_{2\kappa}$.

Equipped with the functions $b_j$ for $j=0,\ldots,\kappa$, the 
conformal mappings
$\psi_{0,t}$ and the coefficients $\hat{\psi}_{j,l}$ for 
$(j,l)\in\indsett_{2\kappa+1}\cap\{(j,l):j\ge 1\}$, we observe 
that the functions 
$h_s$ and $\psi_{s,t}$ given by
$$
h_s(\zeta)=\sum_{j=0}^{\kappa}s^jb_j(\zeta),\qquad \psi_{s,t}(\zeta)=
\psi_{0,t}+\sum_{\substack{(j,l)\in\indsett_{2\kappa+1}\\ j\ge 1}}s^jt^l
\hat{\psi}_{j,l}(\zeta)
$$
are well-defined, and have the desired smoothness and mapping properties.
By the Becker-Pommerenke criterion of Lemma~\ref{lem:becker}, it is 
immediate that
$\psi_{s,t}$, as defined, is univalent in a neighborhood of the closed 
exterior disk 
$\overline{\D}_\e$ for $s$ and $t$ close to $0$.
As $\psi_{s,t}$ extends holomorphically
to the exterior disk $\D_\e(0,\sqrt{\hrho}_1)$, and since $\psi_{s,t}$ is a 
smooth perturbation
of the identity it follows that $\psi_{s,t}$ is univalent on $\D_\e(0,\hrho_2)$ 
and that
\[
\psi_{s,t}(\D_\e(0,\hrho_2))\subset\D_\e(0,\hrho_1)
\]
for $s,t$ close to $0$, provided that $\hrho_2$ is chosen 
appropriately with $\sqrt{\hrho_1}\le \hrho_2<1$.

The conclusion of Proposition~\ref{prop:flow-general} is now an immediate 
consequence of the relations \eqref{eq:omega-id} for the Taylor coefficients 
of the logarithm of the function
$$
\exp(\logdens_{s,t}(\zeta))=\lvert f_s\circ\psi_{s,t}(\zeta)\rvert^2\e^{-2s^{-1}
\{(R\circ\psi_{s,t})(\zeta)-\tfrac{1}{2}t^2\}}
\Re\big(-\bar\zeta\partial_t\psi_{s,t}(\zeta)
\overline{\psi_{s,t}'(\zeta)}\big),\qquad \zeta\in\T
$$
in the variables $(s,t)$ near $(0,0)$, as verified in the above algorithm. 
\end{proof}

\subsection{Implementation of the orthogonal foliation flow for 
\texorpdfstring{$R=R_\tau$}{R=R-tau}}
\label{ss:consequence-flow}
It remains only to prove the key lemma (Lemma~\ref{lem:main-flow}). 
The hard work was completed in the previous subsection. The existence of the 
orthogonal foliation flow now follows if we use $s=1/m$ as our quantization
parameter.

\begin{proof}[Proof of Lemma~\ref{lem:main-flow}]
We first claim that $Q\circ\phi_\tau^{-1}$ is uniformly real-analytic in
the exterior disk $\D_{\e}(0,\rho_{0,0})$ for $\tau\in I_{\epsilon_0}$.
By this we mean that there exists a number $\sigma_{0,0}>0$ such that
$Q\circ\phi_\tau$ has a polarization which is uniformly bounded 
on the 2$\sigma_{0,0}$-fattened diagonal annulus 
$\hat{\mathbb{A}}(\rho_{0,0},\sigma_{0,0})$ (see Definition~\ref{def:classR}).
Let $\rho_{1,0}$ be the number given by 
$\rho_{1,0}:=\max\big\{\rho_{0,0},((1+\sigma_{0,0}^2)^{\frac12}
+\sigma_{0,0})^{-1}\big\}$.
Moreover, the function $Q^\circledast_\tau\circ\phi_\tau^{-1}$, 
which is the harmonic extension of $Q\circ\phi_\tau^{-1}\vert_{\T}$ 
to the exterior disk $\D_\e$, is uniformly bounded on 
$\hat{\mathbb{A}}(\rho_{1,0},\sigma_{0,0})$
in view of Proposition~\ref{prop:f-T-hol} and an elementary decomposition
of harmonic functions holomorphic and conjugate holomorphic functions.
In view of \eqref{eq:Q-breve-circledast}, the same holds for 
$\breve{Q}_\tau\circ\phi_\tau^{-1}$ and consequently also for 
$R_\tau=(Q-\breve{Q}_\tau)\circ\phi_\tau^{-1}$.
In view of the uniform flatness of $R_\tau$ 
near the unit circle, the function
$R_{\tau,0}$ defined by the relation 
$R_\tau(\zeta)=(1-|\zeta|^2)^2 R_{\tau,0}(\zeta)$
enjoys the same property as well, namely that its polarization is uniformly
bounded on $\hat{\mathbb{A}}(\rho_{1,0},\sigma_{0,0})$.
By possibly replacing $\sigma_{0,0}$ by a smaller positive number $\sigma_{1,0}$
we may guarantee that the polarization of 
$R_{\tau,0}$ remains bounded away from zero
in the slightly smaller fattened diagonal annulus 
$\hat{\mathbb{A}}(\rho_{1,0},\sigma_{1,0})$.
If necessary, we replace $\rho_{1,0}$ by the larger number 
$\rho_{2,0}=\max\{\rho_{1,0},((1+\sigma_{1,0}^2)^{\frac12}
+\sigma_{1,0})^{-1}\}$,
which is still smaller than $1$.

In view of the above considerations and the uniform bounds from 
Proposition~\ref{prop:R}, the family $R_\tau$ for $\tau\in I_{\epsilon_0}$
constitute a uniform family $\mathcal{W}(\hrho_0,\sigma_0)$ 
where $\hrho_0=\rho_{2,0}$ and $\sigma_0=\sigma_{1,0}$. 
By Proposition~\ref{prop-rho1sigma1} we obtain 
numbers $\hrho_1$ and $\sigma_1$ with $0<\hrho_1<1$ and $\sigma_1>0$. We set
$\rho_0=\hrho_1$ and apply Proposition~\ref{prop:flow-general} to obtain the
desired conformal mappings $\psi_{m,n,t}=\psi_{s,t}$ and 
$f_{m,n}^{\langle \kappa\rangle}=f_s$
with associated asymptotic expansion to precision $\kappa$, 
where $s=m^{-1}$. Here, the function $f_{m,n}^{\langle \kappa\rangle}$
is holomorphic and bounded on $\D_\e(0,\rho_0)$, positive at infinity
and bounded away from zero in the entire exterior disk $\D_\e(0,\rho_0)$.
Moreover, the flow equation \eqref{eq:main-flow} 
of Lemma~\ref{lem:main-flow} holds to the desired accuracy, in view of
Proposition~\ref{prop:flow-general} with $s=m^{-1}$.
\end{proof}

\section{Connection with the matrix 
\texorpdfstring{$\bar\partial$}{d-bar}-problem
of Its and Takhtajan}
\label{s:RHP}
\subsection{Matrix
\texorpdfstring{$\bar\partial$}{d-bar}-problems
and orthogonal polynomials}
Given the successful application of Riemann-Hilbert problem methods to 
the study of orthogonal polynomials in the context of the real line and 
the unit circle, it has been proposed that the planar orthogonal 
polynomials should be approached in a similar fashion. Following 
Its and Takhtajan \cite{Its}, we consider a matrix $\bar\partial$-problem 
(or a {\em soft Riemann-Hilbert problem})
and see how it fits in with our orthogonal foliation flow.

A polynomial is said to be \emph{monic} if it has leading coefficient
equal to $1$. So, let $\pi_{m,n}$ denote the monic orthogonal 
polynomial of degree $n$ with respect to the measure 
$\e^{-2mQ}\diffA$ where $Q$ is assumed $1$-admissible. In other words,
$\pi_{m,n}$ is given by
\[
\pi_{m,n}(z)=\kappa^{-1}_{m,n}P_{m,n}(z),
\]
where $\kappa_{m,n}$ is the leading coefficient of the normalized orthogonal
polynomial $P_{m,n}$.

If $f \in L^p(\C)$ for some $1<p<2$, we let $\mathbf{C}f$ be its 
Cauchy transform, given by
\[
\mathbf{C}f(z)=\int_{\C}\frac{f(w)}{z-w}\diffA(w)
\]
which is well-defined almost everywhere and represents a function which is 
locally in the Sobolev space $W^{1,p}$.
The importance of the Cauchy transform comes from the fact that 
in the sense of distribution theory, $\bar\partial\mathbf{C}f=f$.

In \cite{Its}, Its and Takhtajan propose to study the asymptotics of 
$\pi_{m,n}$ starting
from the observation that the matrix-valued function
\begin{equation}\label{eq:Y-matrix}
Y_{m,n}(z)=
\begin{pmatrix}
\pi_{m,n}(z)& -\mathbf{C}\big[\bar{\pi}_{m,n}\e^{-2mQ}\big](z)\\
-\kappa_{m,n-1}^{2}\pi_{m,n-1}(z)&
\kappa_{m,n-1}^{2}\mathbf{C}\big[\bar{\pi}_{m,n-1}
\e^{-2mQ}\big](z),
\end{pmatrix}
\end{equation}
solves the $\bar\partial$-problem
\begin{equation}\label{eq:d-bar}
\begin{cases}
\bar\partial Y(z)=-\bar{Y}(z)W(z),&\text{ for }z\in\C,\\
Y(z)=\big(I+\Ordo\big(z^{-1}\big)\big)
\begin{pmatrix}
z^n&0\\0&z^{-n}
\end{pmatrix},&\text{ as }\lvert z\rvert\to+\infty,
\end{cases}
\end{equation}
where $W(z)=W_{m}(z)$ is the matrix-valued function
$$
W(z)=
\begin{pmatrix}
0&\e^{-2mQ(z)}\\
0&0
\end{pmatrix}.
$$
Moreover, the solution is unique, as shown in \cite{Its}. 
We remark that classical Riemann-Hilbert problems where a jump occurs 
on a curve $\Gamma$ may be phrased as $\bar\partial$-problems
where $\bar\partial Y(z)$ is understood as a matrix-valued measure supported 
$\Gamma$, and the above problem is a natural generalization to a more 
genuinely two-dimensional situation.

The idea that underlies the Its-Takhtajan approach, as well as the 
classical Riemann-Hilbert approach to orthogonal polynomials, is the 
expectation that one may constructively obtain an approximate
solution $\tilde Y=\tilde{Y}_{m,n}(z)$ to the problem \eqref{eq:Y-matrix} (or 
the corresponding RHP), which should then  
produce an entry $(\tilde{Y}_{m,n})_{1,1}$ which is approximately equal 
to $\pi_{m,n}(z)$.

\subsection{Integration of Riemann-Hilbert problems along
curve families}
Unfortunately, it has proven difficult to solve the problem 
\eqref{eq:Y-matrix} constructively.
The following simple observation shows how our orthogonal foliation flow
reduces the $\bar\partial$-problem to a family of more classical
Riemann-Hilbert problems along closed curves.

In order to describe this problem, we denote by $J(z)$ a $2\times 2$ 
jump matrix 
and let $\Gamma$ be an oriented smooth simple closed curve in $\C$.
We denote by $\Omega^+$ and $\Omega^-$ the interior and exterior components of 
the complement $\C\setminus\Gamma$, respectively.
If $f$ is a function defined on $\C\setminus\Gamma$, which is 
continuous up to the boundary $\Gamma$ as seen from each component,
we define the two boundary value functions $f^+$ and $f^{-}$ on $\Gamma$ by
$$
f^\pm(\zeta)=\lim_{\substack{z\to\zeta\\ z\in \Omega^\pm}}f(z),
\qquad \zeta\in\Gamma.
$$
We consider the Riemann-Hilbert problem of finding a $2\times2$ matrix-valued 
function $Y(z)$ which meets
\begin{equation}
\label{eq:RHP}
\begin{cases}
Y \text{ is holomorphic on } \C\setminus\Gamma,& \\
Y^+(z)=Y^-(z)+\bar{Y}^-(z)J(z),&\text{ for } z\in \Gamma,\\
Y(z)=(I+\Ordo(z^{-1}))\begin{pmatrix}
z^n&0\\0&z^{-n}\end{pmatrix},&\text{ as }|z|\to+\infty.
\end{cases}
\end{equation}
In order to analyze this problem, we need a variant of the Cauchy transform,
which applies to functions defined on $\Gamma$. 
For smooth $\Gamma$ and reasonable $f$, we write
\[
{\bf C}_\Gamma f(z)=\frac{1}{2\pi \imag}\int_{\Gamma}\frac{f(w)}{w-z}\diff w,
\qquad z\in\C\setminus\Gamma.
\]
As is well-known, the classical Plemelj formula is a useful tool in the 
study of Riemann-Hilbert problems:
\begin{equation}
\label{eq:Plemelj}
({\bf C}_\Gamma f)^+(z)=({\bf C}_\Gamma f)^{-}(z)+f(z).
\end{equation}
We now connect the more classical Riemann-Hilbert problem \eqref{eq:RHP} 
with the matrix $\bar\partial$-problem \eqref{eq:d-bar}. 

\begin{prop}
\label{prop:RHP-correspondence}
Let $\{\Gamma_t\}_{t\in I}$ be a smooth strictly expanding flow of 
positively oriented simple closed curves, and denote by $\calD$ the union
$\calD=\bigcup_{t\in I}\Gamma_t$. Let $\omega(z)$ denote a smooth positive 
function on $\calD$, and denote by $\xi:\calD\to\C$ the vector field 
$\nu\bar{\eta}$, where $\eta(z)$ denotes the outward unit normal field to 
the curve family and $\nu$ denotes the scalar normal velocity of the flow. 
Then, for each $t\in I$, there is a unique solution $Y_t(z)$ to the 
Riemann-Hilbert problem \eqref{eq:RHP} with jump matrix
$$
J=\begin{pmatrix} 0 & 2\omega \xi
\\ 0 & 0\end{pmatrix}.
$$
Moreover, if there exists a continuous positive function $\lambda(t)$ 
such that $(Y_t)_{1,1}$ and $\lambda(t)(Y_t)_{2,1}$ are independent of $t$, 
then the matrix-valued function
\[
Y(z)=\Lambda_1^{-1}
\int_{I}\Lambda(t) Y_t(z)\diff t\,
\Lambda_2^{-1}
\]
is the unique solution to \eqref{eq:d-bar}, with 
$W=\begin{pmatrix}0& 
1_{\calD}\,\omega\\ 0&0\end{pmatrix}$, provided that 
\[
\Lambda(t)=
\begin{pmatrix}1& 
0\\ 0&\lambda(t)\end{pmatrix},\quad
\Lambda_1=\begin{pmatrix}1& 
0\\ 0&\int_I\lambda(t)\diff t\end{pmatrix},\quad
\Lambda_2=\begin{pmatrix}|I|& 
0\\ 0&1\end{pmatrix}.
\]
\end{prop}

\begin{proof}
We first establish the existence of solutions to the problem 
\eqref{eq:RHP} of $\Gamma_t$, which may be expressed in terms of
a family of $t$-dependent orthogonal polynomials.
We recall that $\xi$ factors as $\nu\bar{\eta}$, where $\nu$ denotes the 
speed of the boundary in the normal direction while $\eta$ denotes the 
outward pointing unit normal field. Since arc-length measure
$\lvert \diff z\rvert$ on $\Gamma_t$ relates to the complex line element 
$\diff z$ by $\diff z=\tau\lvert\diff z\rvert$ where $\tau$ denotes the unit 
tangent vector field along $\Gamma_t$, it follows that
\begin{equation}\label{eq:differentials}
\frac{1}{2\pi\imag}\diff z=
\frac{1}{2\pi}(-\imag\tau)\rvert\diff z\rvert=\eta\,\diffs 
\end{equation}
where we recall the convention $\diffs=\frac{\lvert \diff z\rvert}{2\pi}$.
From this it follows that $(2\pi\imag)^{-1}\xi\,\diff z=\nu\,\diffs$, 
and we may consequently define an inner product by
$$
\langle f,g\rangle_t:=\int_{\Gamma_t}f(z)\bar{g}(z)\nu(z)\diffs(z)=
\frac{1}{2\pi\imag}\int_{\Gamma_t}f(z)\bar{g}(z)\xi(z)\diff z.
$$
Let $\{{\pi}^\star_{n,t}\}_n$ denote the sequence of monic 
orthogonal polynomials
with respect to this inner product, such that $\pi^\star_{n,t}$ has degree $n$,
and denote by ${\kappa}^\star_{n,t}$ the sequence of leading coefficient of the 
corresponding normalized orthogonal polynomials 
${P}^\star_{n,t}={\kappa}^\star_{n,t}{\pi}^\star_{n,t}$.
It is straightforward to check that the function
\[
\begin{pmatrix}
{\pi}^\star_{n,t}&2{\bf C}_{\Gamma_t}[\bar{{\pi}}^\star_{n,t}\omega\xi]\\
-\frac12({\kappa}^\star_{n-1,t})^{2}{\pi}^\star_{n-1,t}& 
-({\kappa}_{n-1,t}^\star)^{2}
{\bf C}_{\Gamma_t}[\bar{{\pi}}^\star_{n-1,t}\omega\xi](z)
\end{pmatrix}
\]
supplies a solution to the Riemann-Hilbert problem \eqref{eq:RHP}.

As for the uniqueness, it is clear from Plemelj's formula \eqref{eq:Plemelj}
and the jump condition that any solution $Y_t(z)$ must take the form
\[
Y_t(z)=
\begin{pmatrix}
a_t(z)&u_t(z) + 2{\bf C}_{\Gamma_t}[\bar{a}_t\omega\xi](z)\\
b_t(z) & v_t(z) + 2{\bf C}_{\Gamma_t}[\bar{b}_t\omega\xi](z)\\
\end{pmatrix},
\]
where $a_t,b_t,u_t,v_t$ are entire functions. From the growth constraint 
at infinity,  we see that these four functions are all polynomials. 
Moreover, $u_t=v_t=0$ for the same reason. A standard expansion of the
Cauchy kernel at infinity shows that 
$a_t$ is a monic polynomial of degree $n$ which is orthogonal to the lower
degree polynomials $\mathrm{Pol}_n$ with respect to $\omega\xi\diff z$ on 
$\Gamma_t$. It follows that $a_t={\pi}^\star_{n,t}$.
Analogously, $b_t$ is given by 
$b_t=-\frac12({\kappa}_{n-1,t}^\star)^2{\pi}^\star_{n-1,t}$. 
We have established the unique solvability of the Riemann-Hilbert problem
\eqref{eq:RHP} with the given jump matrix.

Next, we turn to the connection with the $\bar\partial$-problem 
\eqref{eq:d-bar}.
Under the assumption that $(Y_t)_{1,1}=a_t=A$ is independent of $t$,
and that for some $t$-dependent parameter $\lambda(t)$, the expression
$\lambda(t)(Y_t)_{2,1}=\lambda(t)b_t=B$ is also 
independent of $t$, we may consequently write
$$
\Lambda(t)\,Y_t(z)=
\begin{pmatrix}
A(z)&2{\bf C}_{\Gamma_t}[\bar{A}\omega\xi](z)\\
B(z)&2{\bf C}_{\Gamma_t}[\bar{B}\omega\xi](z)
\end{pmatrix},
$$
where we recall that $\Lambda(t)$ is the matrix given in the
proposition.
Recall that we may integrate over the flow $\{\Gamma_t\}_t$ using the 
disintegration
$$
\int_{t\in I}\bigg\{2\int_{\Gamma_t}u(z)\nu(z)\diffs\bigg\}\diff t
=\int_{\calD}u(z)\diffA(z),
$$
for functions $u$ such that the indicated integrals have a well-defined
meaning. It now follows that if 
$\langle\lambda\rangle_I=\int_I\lambda(t)\diff t$, the matrix-valued function 
\begin{multline*}
\hat Y(z):=\Lambda_1^{-1}
\int_{I}\Lambda(t) Y_t(z)\diff t\,
\Lambda_2^{-1}=\Lambda_1^{-1}
\begin{pmatrix}
|I|\,A(z)&-{\bf C}[\bar{A}\omega\, 1_{\calD}](z)\\
|I|\,B(z)&-{\bf C}[\bar{B}\omega\, 1_\calD](z)
\end{pmatrix}
\Lambda_2^{-1}
\\
=
\begin{pmatrix}
A(z)&-{\bf C}[\bar{A}\omega\, 1_{\calD}](z)\\
(\langle\lambda\rangle_I)^{-1}\,B(z)
&-(\langle\lambda\rangle_I)^{-1}\,{\bf C}[\bar{B}\omega\, 1_\calD](z)
\end{pmatrix}
\end{multline*}
solves
\[
\bar\partial \hat Y(z)=
\begin{pmatrix}
0&-\bar{A}\omega\, 1_{\calD}\\
0
&-(\langle\lambda\rangle_I)^{-1}\bar{B}\omega\, 1_\calD
\end{pmatrix}
=-\overline{\hat Y(z)}\begin{pmatrix}
0&\omega\, 1_{\calD}\\
0&0
\end{pmatrix}
\]
with asymptotics
\[
\hat Y(z)=\big(I+\Ordo\big(z^{-1}\big)\big)
\begin{pmatrix}
z^n&0\\0&z^{-n}
\end{pmatrix},
\text{ as }\lvert z\rvert\to+\infty,
\]
as a consequence of the corresponding asymptotics of $Y_t$ for each $t\in I$.
\end{proof}

\begin{rem}
{\rm (a)}\, 
For the orthogonal foliation flow, in the context of a neighborhood of the 
boundary curve of the droplet $\calS_\tau$ with $\tau=\frac{n}{m}$, the
(approximate) orthogonal polynomial of degree $n$ is also
approximately orthogonal to the lower degree polynomials along the
individual flow loops corresponding to $\omega=\e^{-2mQ}$. So, in view
of Proposition \ref{prop:RHP-correspondence}, the conditions 
\begin{equation}\label{eq:compat-flow}
\partial_t(Y_t)_{1,1}=0,\quad \partial_t\big(\lambda(t)(Y_{t})_{2,1}\big)=0
\end{equation}
should be met at least approximately for some appropriate scalar-valued
function $\lambda(t)$ (cf. the presentation in
Subsection~\ref{ss:idea-sketch}). 
Alternatively, we could use \eqref{eq:compat-flow} 
as a criterion to define a flow of curves. In the given setting, this should 
give us back our orthogonal foliation flow. 
In other words, \eqref{eq:compat-flow} should be analogous to 
the condition \eqref{eq:omega-id}, once the Riemann-Hilbert problems of 
Proposition~\ref{prop:RHP-correspondence} are approximately
solved in a constructive fashion, and we would expect that in an approximate 
sense,
\[
\Gamma_t\sim\phi_\tau^{-1}(\psi_{m,n,-t}(\T)).
\]
It is entirely possible that the conditions \eqref{eq:compat-flow} would 
be more stable close to the zeros of the orthogonal polynomial $\pi_{m,n}$.
For instance, this might be the case with a highly eccentric ellipse. 

\noindent {\rm (b)}\, In their work, Its and Takhtajan use a bounded 
domain $\Omega$ to address possible
convergence issues. Here, the potential $Q$ grows sufficiently rapidly, 
so there is no need for us to consider such a truncation.
\end{rem}

\end{document}